\def\jD{{\mathscr D}}
\def\jB{{\mathscr B}}
\def\jW{{\mathscr W}}
\def\jS{{\mathscr S}}
\def\jK{{\mathscr K}}
\def\jO{{\mathscr O}}
\def\jC{{\mathscr C}}
\def\jT{{\mathscr T}}
\def\jV{{\mathscr V}}
\def\jP{{\mathscr P}}
\def\jU{{\mathscr U}}
\def\al{\alpha}
\newcommand{\bet}{\beta}	
\def\ga{\gamma}
\def\de{\delta}
\def\De{\Delta}
\def\eps{{\varepsilon}}
\def\ka{\kappa}
\def\la{\lambda}
\def\La{\Lambda}
\def\Om{\Omega}
\def\sig{{\sigma}}
\def\Sig{{\Sigma}}
\def\th{{\theta}}
\def\ph{\varphi}
\newcommand{\Demi}{\frac{1}{2}}
\newcommand{\dem}{{\tfrac{1}{2}}}
\newcommand{\quart}{\frac{1}{4}}
\newcommand{\tquart}{\frac{3}{4}}
\newcommand{\un}[1]{{\underline{#1}}}
\newcommand{\ov}{\overline}
\newcommand{\dist}{\operatorname{dist}}
\newcommand{\area}{\operatorname{area}}
\newcommand{\mult}{\operatorname{mult}}
\newcommand{\Int}{\operatorname{Int}}
\newcommand{\Id}{\mathop{\hbox{{\rm Id}}}\nolimits}
\newcommand{\pa}{\partial}
\newcommand{\na}{\nabla}
\newcommand{\ii}{^{-1}}
\newcommand{\ti}{\tilde}
\def\ov{\overline}
\def\ha{\widehat}
\def\pg{\leaders\hbox to 5mm{\hfil.\hfil}\hfill}
\newcommand{\ie}{{\it i.e.}\ }
\newcommand{\cf}{{\it cf.}\ }
\newcommand{\eg}{{\it e.g.}\ }
\newcommand{\wrt}{{with respect to\ }}
\newcommand{\lhs}{{left-hand side\ }}
\newcommand{\ens}{\enspace}
\def\dst{\displaystyle}
\def\tst{\textstyle}
\def\A{\mathbb A}
\def\R{\mathbb R}
\def\Z{\mathbb Z}
\def\C{\mathbb C}
\def\T{\mathbb T}
\def\N{{\mathbb N}}
\def\Dm{{\mathbb D}}
\def\Cal{\mathscr}
\def\P{\Cal P}
\def\D{\Cal D}
\newcommand{\cF}{\mathcal{F}}
\def\cS{{\Cal S}}
\def\cG{{\Cal G}}
\def\cB{{\Cal B}}
\def\cE{{\Cal E}}
\def\gd{{\mathfrak d}}
\def\bdef{\begin{definition}}
\def\endef{\end{definition}}
\def\bthm{\begin{theorem}}
\def\ethm{\end{theorem}}
\newcommand{\blm}{\begin{lemma}}
\newcommand{\elm}{\end{lemma}}
\def\brm{\begin{remark}}
\def\erm{\end{remark}}
\def\bprop{\begin{prop}}
\def\eprop{\end{prop}}
\def\bcor{\begin{corollary}}
\def\ecor{\end{corollary}}
\newcommand{\beq}{\begin{equation}}
\newcommand{\eeq}{\end{equation}}
\def\beal{\begin{aligned}}
\def\enal{\end{aligned}}
\def\B{{\bf L}}
\def\norm#1{\Vert #1\Vert}
\def\abs#1{\vert #1\vert}
\def\abs#1{\left\vert#1\right\vert}
\def\norm#1{\Vert#1\Vert}
\def\norm#1{\left\Vert#1\right\Vert}
\def\abs#1{\left\vert#1\right\vert}
\def\Im{{\rm Im\,}}
\def\Re{{\rm Re\,}}
\def\Inf{\mathop{\rm Inf\,}\limits}
\def\log{{\rm log\,}}
\def\ppdemi{{\frac{1}{2}}}
\def\norm#1{\left\Vert#1\right\Vert}
\def\abs#1{\left\vert#1\right\vert}
\def\un{\underline}
\def\ha{\widehat}
\def\setm{\setminus}
\def\d{\partial}
\def\ov{\overline}
\def\cN{{\Cal N}}
\def\B{{\Cal B}}
\newtheorem{thm}{Theorem}
\newtheorem*{thmnonb}{Theorem}
\newtheorem*{thmCp}{Theorem~\ref{th:lowerbounds}'}
\newtheorem{Def}{Definition}[section]
\newtheorem{lemma}[Def]{Lemma}
\newtheorem*{sublem}{Sub-Lemma}
\newtheorem{prop}[Def]{Proposition}
\newtheorem{cor}[Def]{Corollary}
\newtheorem*{Not}{Notation}
\newtheorem{rem}[Def]{Remark}
\numberwithin{equation}{section}
\newcounter{parag}[subsection]
\newcounter{paraga}[section]
\renewcommand{\theparaga}{{\bf\arabic{section}.\arabic{paraga}}}
\newcommand{\paraga}{\medskip \addtocounter{paraga}{1} 
\noindent{\theparaga\ } }
\newcounter{pparag}
\def\Bbibitem#1#2{\bibitem[#1]{#2}}
\def\Frac#1#2{{\frac{\displaystyle\strut#1}{\displaystyle\strut#2}}}
\def\pfrac#1#2{{\scriptstyle{#1\over#2}}}
\def\demi{{\Frac{1}{2}}}
\def\pdemi{{\tst{1\over2}}}
\newcommand{\be}{\mathbf{e}}
\def\h{\hbar}
\newcommand{\ex}{\mathrm{e}}
\newcommand{\ima}{\mathrm{i}}
\newcommand{\dd}{\mathrm{d}}
\newcommand{\defeq}{\coloneqq} 
\newcommand{\col}{\colon\thinspace}          
\newcommand{\GNm}{G_{N,\mu}}
\newcommand{\GMm}{G_{M,\mu}}
\newcommand{\Gpj}{G_{p_{j+2},\mu_{j,\ell}}}
\newcommand{\DNm}{D_{q,N,\mu}}
\newcommand{\DMp}{D_{p,M,\mu}}
\newcommand{\Dpj}{D_{p,p_{j+2},\mu_{j,\ell}}}
\newcommand{\DNmj}{D_{q,N_j,\mu_{j,q}}}
\newcommand{\BdeN}{\cB_{\de/N}}
\newcommand{\Bdep}{\cB_{1/p_{j+2}}}
\newcommand{\Bd}{\cB_{d}}
\newcommand{\BdeeN}{\cB_{\de/(2N)}}
\newcommand{\Bdeep}{\cB_{1/(2p_{j+2})}}
\newcommand{\Bdd}{\cB_{d/2}}
\newcommand{\FNqm}{F_{q,N,\mu}}
\newcommand{\FF}{\mathbb{F}}
\def\jF{{\mathscr F}}
\newcommand{\cM}{\mathcal{M}}
\newcommand{\Paal}{\jP^{\al,L}}
\newcommand{\Pal}{\jP^{\al,L}(\Phi^h)}
\newcommand{\Palm}{\jP_m^{\al,L}(\Phi^h)}
\newcommand{\Paa}[1]{\jP_{#1}^{\al,L}(\Phi^h)}
\newcommand{\Pa}[1]{\jP_{#1}^{\al,L}}
\newcommand{\cth}{^{[\th]}}
\newcommand{\ccr}{^{[r]}}
\newcommand{\zz}[1]{^{[#1]}}
\newcommand{\CG}{\operatorname{C_G}}
\DeclarePairedDelimiter\absD{\lvert}{\rvert}%
\DeclarePairedDelimiter\normD{\lVert}{\rVert}%
\DeclarePairedDelimiter\NormD{\lvert\lvert\lvert}{\rvert\rvert\rvert}%
\DeclarePairedDelimiter\angD{\langle}{\rangle}
\newcounter{paragaD}[subsection]
\renewcommand{\theparagaD}{{\bf\arabic{section}.\bf\arabic{subsection}.\arabic{paragaD}}}
\newcommand{\paragaD}{\medskip \addtocounter{paragaD}{1} 
\noindent{\theparagaD\ } }
\begin{document}


\title{Measure and capacity of wandering domains\\
in  Gevrey near-integrable exact symplectic systems
}
\author{%
Laurent Lazzarini\thanks{
Universit\'e Paris VI, UMR 7586, {\em Analyse alg\'ebrique},
4 place Jussieu, 75252 Paris {\sc cedex} 05.
E-mail: laurent.lazzarini@imj-prg.fr},\ \ 
Jean-Pierre Marco\thanks{
Universit\'e Paris VI, UMR 7586, {\em Analyse alg\'ebrique},
4 place Jussieu, 75252 Paris {\sc cedex} 05.
E-mail: jean-pierre.marco@imj-prg.fr},\ \ 
David Sauzin\thanks{
CNRS UMI 3483 -- Laboratorio Fibonacci,
Collegio Puteano, Scuola Normale Superiore di Pisa,
Piazza dei Cavalieri 3,
56126 Pisa.
E-mail: david.sauzin@sns.it}
}

\maketitle

\vspace{-.75cm}

\begin{abstract}
A wandering domain for a diffeomorphism~$\Psi$ of $\A^n=T^*\T^n$ is an
open connected set~$W$
such that $\Psi^k(W)\cap W=\emptyset$ for all $k\in\Z^*$.  We
endow~$\A^n$ with its usual exact symplectic structure. 
An integrable diffeomorphism, \ie the time-one map~$\Phi^h$ of a
Hamiltonian $h \col \A^n\to\R$ which depends only on the action
variables, has no nonempty wandering domains.
The aim of this paper is to estimate the size (measure and Gromov
capacity) of wandering domains in the case of an exact symplectic
perturbation of~$\Phi^h$, in the analytic or Gevrey category.
Upper estimates are related to Nekhoroshev theory, 
lower estimates are related to examples of Arnold diffusion.
This is a contribution to the ``quantitative Hamiltonian
perturbation theory'' initiated in previous works on the optimality
of long term stability estimates and diffusion times;
our emphasis here is on discrete systems because this is the natural
setting to study wandering domains.

We first prove that the measure (or the capacity) of these wandering domains
is exponentially small, with an upper bound of the form 
$\exp \big( - c \big(\frac{1}{\eps}\big)^{\frac{1}{2n\al}} \big)$, 
where~$\eps$ is the size of the perturbation, $\al\geq1$ is the Gevrey exponent ($\al=1$ for
analytic systems) and~$c$ is some positive constant depending mildly on~$h$. 
This is obtained as a consequence of an exponential stability theorem
for near-integrable exact symplectic maps, in the analytic or Gevrey category,
for which we give a complete proof based on the most recent
improvements of Nekhoroshev theory for Hamiltonian flows,
and which requires the development of specific Gevrey suspension
techniques.

The second part of the paper is devoted to the construction of
near-integrable Gevrey systems possessing wandering domains, for which
the capacity (and thus the measure) can be estimated from below. 
We suppose $n\ge2$, essentially because KAM theory precludes Arnold
diffusion in too low a dimension.
For any $\al>1$, we produce examples with lower bounds of the
form
$\exp \big( - c \big(\frac{1}{\eps}\big)^{\frac{1}{2(n-1)(\al-1)}} \big)$.
This is done by means of a ``coupling'' technique, involving
rescaled standard maps possessing wandering discs in~$\A$ and near-integrable systems
possessing periodic domains of arbitrarily large periods in~$\A^{n-1}$.
The most difficult part of the construction consists in obtaining a
perturbed pendulum-like system on~$\A$ with periodic islands of
arbitrarily large periods, whose areas are explicitly estimated from
below. Our proof is based on a version due to Herman of the translated
curve theorem.
\end{abstract}

\vfill
\pagebreak

\setcounter{tocdepth}{2}
\tableofcontents

\newpage


\addtocounter{section}{-1}
\section{Introduction}


\paraga Let  $\A^n=\T^n\times\R^n$ be the cotangent bundle of the torus~$\T^n$,
endowed with its usual angle-action coordinates $(\th,r)$ and the
usual exact symplectic form.
What we call an integrable diffeomorphism is the time-one map of the
flow generated by an integrable Hamiltonian, \ie a Hamiltonian function which
depends only on the action variables~$r$;
the phase space is then foliated into invariant tori
$\T^n\times\{r^0\}$, $r^0\in\R^n$,
carrying quasiperiodic motions.
We are interested in \emph{near-integrable systems}, \ie exact
symplectic perturbations of an integrable diffeomorphism,
and their {\em wandering sets}. 

A wandering set for a diffeomorphism~$\Psi$ of~$\A^n$ is a subset
$W\subset\A^n$ whose iterates $\Psi^k(W)$, $k\in\Z$, are pairwise
disjoint.
The Poincar\'e recurrence theorem shows that any wandering set of an
integrable diffeomorphism has zero Lebesgue measure.
It is more difficult to prove that wandering sets with positive measure
may exist for near-integrable systems.
In fact KAM theory shows that they cannot exist for $n=1$ (at least
when restricting to perturbations of non-degenerate integrable maps).
In \cite{ms}, examples of near-integrable systems possessing nonempty
wandering domains\footnote{We call \emph{domain} an open connected
  subset of~$\A^n$.}, thus with positive measure, have been constructed
for any $n\ge2$.

The aim of the present work is to estimate, from above and from below,
the possible ``size'' of the wandering sets of a near-integrable
system as a function of the ``size of the perturbation''.
We define the size of the perturbation to be the distance between the
near-integrable system and the integrable map of which it is a
perturbation, 
assuming that all our functions belong to a Gevrey class and measuring
this distance in the Gevrey sense.
For Gevrey classes, we use the notation~$G^{\al}$, where $\al\geq 1$ is a
real parameter;
recall that~$G^1$ coincides with the analytic class, while the
classes~$G^{\al}$, $\al>1$, are larger and more flexible, in
particular they contain bump functions.

As for the size of wandering sets, we consider two natural
candidates: the Lebesgue measure and the Gromov capacity.  The former
can be seen as the ``maximal'' possible one and is related to the
theory of transport.  The latter is the ``minimal'' possible one and
is directly related to the symplectic character of our problem.

Our aim is to find explicit upper bounds for the measure of wandering
Borel subsets and ``test'' their optimality by constructing examples
of near-integrable systems possessing wandering sets whose capacity is
estimated from below.

Our upper estimates are closely related to the long term stability
estimates in perturbation theory, initiated by Nekhoroshev. Since
usual estimates deal with continuous systems, we first have to
transfer the whole theory to the discrete setting, which is done by
adapted suspension techniques.  The resulting estimates hold for
near-integrable systems in all Gevrey classes~$G^\al$, $\al\geq 1$:
for such systems, the actions remain almost constant during
exponentially long times. As a consequence, taking into account the
measure-preserving character of symplectic diffeomorphisms, we prove
that for any~$G^\al$ near-integrable system the measure of a wandering
Borel subset is exponentially small with respect to the size of the
perturbation.

Our lower estimates deal with the capacity of the wandering sets.
The interest of this is twofold:
first, the measure of a set is always larger than a positive power of
its capacity (up to an explicit multiplicative factor), so lower
bounds for the capacity entail lower bounds for the measure;
second, capacity is a truly symplectic notion.
The capacity of a set is in general extremely difficult to compute,
however we will design our ``unstable'' examples so that they admit
wandering {\em polydiscs} (\ie products of discs in each factor
of~$\A^n$). In that case the capacity is just the minimum of the areas
of the factors.
%
%
%
%
Our constructions rely strongly on the existence of bump
functions. So, as in \cite{hms} and \cite{ms}, we produce our examples
in the classes~$G^\al$ for $\al>1$ only.
A striking fact is that the lower bounds on the capacity admit the
same exponential form as the upper estimates deduced from Nekhoroshev
theory.  This is the main result we get here about instability in
near-integrable systems.  Related results on the existence of
periodic domains with large periods will also be obtained in the
course of the proof of the main instability result.
 
Before stating our results more precisely, let us now give a brief
overview of the evolution of results in perturbation theory, in order
to introduce the main tools we will use in the sequel.


\paraga We begin with stability results. 
The stability problem for perturbations of integrable Hamiltonian
systems originated in the first investigations on the secular
stability of the solar system. At this early stage, the main question
was to understand the behaviour of the linearised equations along a
particular solution.  Then, under the influence of Poincar\'e, this
purely local vision experienced a drastic metamorphosis towards a
global qualitative understanding of the asymptotic behaviour of the
orbits.  He introduced a fundamental tool---amongst many others---for
such a qualitative description: the method of {\em normal forms}, that
is, the construction of simplified systems which nevertheless exhibit
the pre-eminent features of the initial one.  The theory of normal
forms was further developed by Birkhoff, and thoroughly investigated
since then by a number of authors.

It was also a fundamental contribution of Poincar\'e to distinguish
between two different modes of ``convergence'' of series:
convergence {\em au sens des astronomes} and convergence {\em au sens
  des g\'eom\`etres}.  The latter coincides with our
usual notion of convergence, while the former is related to the notion
of asymptotic expansion and does not exclude the possibility of
performing ``least term summation'' (which is itself intimately
related to the Gevrey nature of the series at hand).

This bunch of ideas was applied in particular to the {\em fundamental
  problem of dynamics}, that is, the study of the qualitative
dynamical behaviour of analytic Hamiltonian systems on~$\A^n$ which
are perturbations of integrable Hamiltonians.  The various problems of
convergence of the series giving rise to the solutions of the
perturbed problem were extensively examined by Poincar\'e, without
however reaching a definitive conclusion.

The next major breakthrough was due to Kolmogorov, who understood in
the 1950s how to take advantage of stability properties exhibited by
the unperturbed quasiperiodic tori, provided that their fundamental
frequencies are sufficiently nonresonant (\ie satisfy Diophantine conditions).
This approach was then generalised by Arnold and Moser and gave rise
to the so-called KAM theory: in appropriate function spaces (analytic,
$C^\infty$, finitely differentiable) the surviving tori form a subset
whose relative measure tends to~$1$ when the size of the perturbation
tends to~$0$. 
See \cite{KAMstory} for a non-technical historical
account and appropriate references on KAM theory.

Besides the KAM theorem, one major achievement occured in the 1980s
with the direct proof by Eliasson of the convergence of the
perturbative series under the usual assumptions of KAM theory.  This
yields directly the quasiperiodic solutions, and the KAM tori are
nothing but their closure. So, after a somewhat suprising detour, the
Poincar\'e convergence {\em au sens des g\'eom\`etres} indeed gives
rise to invariant geometric objects. 
%

In a different direction, the Poincar\'e convergence {\em au sens des
  astronomes} 
can be considered as the mechanism at work behind ``exponential stability''.
Exponential stability means that, for a perturbation of an integrable
Hamiltonian flow, the action variables of an arbitrary solution vary little
over a time interval of the order of $\exp \big( \frac{1}{\eps^a}
\big)$, where~$\eps$ is the size of the perturbation and~$a$ is a
positive exponent independent of the perturbation.
This was established by Nekhoroshev in the 1970s for the analytic case.
Again, the theory of normal forms revealed itself to
be of crucial importance in this setting. The main idea there is to
cover the whole phase space by a patchwork of domains with
various resonant structures and perform in each of them a finite---but
long---sequence of adapted normalising transformations.  An additional
geometric argument (steepness) then proves the confinement of the
actions for all initial conditions over an exponentially long
timescale.

In this text 
%
%
we will take advantage of the numerous improvements of the stability estimates
after Nekhoroshev's initial work, beginning with the work by Lochak
\cite{L92} where the question of the optimality of the stability
exponent first appeared, with a first conjecture on its value.
Then the ``likely optimal'' stability exponent was derived more
precisely in the case of quasi-convex unperturbed systems by Lochak,
Neishtadt and Niederman, and P\"oschel (see \cite{LNN,P} and
references therein).
Finally, based on Herman's ideas, these works were later generalised
to general Gevrey classes in \cite{hms}---which also clarified the
connection with Gevrey asymptotics and least term summation---and the
stability exponent was improved in \cite{BM} to reach the probably
optimal value.
Our present study relies on these latter two works to produce upper
estimates of the measure of wandering sets.

More surprisingly, the construction of our {\em unstable} examples
will also heavily rely on KAM techniques (in the form developed by
Herman for invariant curves on the annulus).  So stability results may
also help produce unstable behaviour and transport in phase
space. This has already been noticed in the context of Arnold
diffusion (see below), but we will deal here with new phenomena.


\paraga Let us now pass to the description of some unstable systems,
beginning with the seminal and highly inspiring Arnold example. In
parallel with the evolution of stability theory, Arnold
introduced in the 1960s a paradigm perturbed angle-action system
exhibiting unstable behaviour \cite{Ar}. In his example (a non-autonomous
nearly integrable Hamiltonian flow on~$\A^2$), the action variables
drift over intervals of fixed length {\em whatever the size of the
  perturbation}.  
Arnold conjectured that this instability phenomenon
(now called Arnold diffusion) should occur in the complement of the KAM tori for
``typical'' systems.  Of course, due to Nekhoroshev theory, Arnold
diffusion in analytic or Gevrey systems has to be exponentially slow
with respect to the size of the perturbation.

The key idea in Arnold's example is the possibility that a
perturbation of an integrable Hamiltonian can create a
continuous family of hyperbolic tori in a given energy level, whose
invariant manifolds also vary continuously.
An additional perturbation then makes the stable and unstable
manifolds of each torus intersect transversely in their energy
level. It is therefore possible to exhibit ordered families
$(T_m)_{1\leq m\leq m_*}$ of hyperbolic tori, extracted from the
continuous one, such that the unstable manifold $W^u(T_m)$
intersects transversely the stable manifold $W^s(T_{m+1})$, and such
that the distance between the extremal tori~$T_1$ and~$T_{m_*}$ (in
the action space) is independent of the size of the perturbation
(the number~$m_*$ of tori in such a family tends to~$+\infty$
when the size of the perturbation tends to~$0$). 
Finally, one constructs orbits which shadow the consecutive
heteroclinic orbits between the tori and pass close to both~$T_1$
and~$T_{m_*}$. The action variables of such orbits therefore
experience a drift which is independent of the size of the
perturbation.

Arnold's example has been generalised in many ways, particularly in
view of proving the ``generic'' occurrence of Arnold diffusion in
nearly integrable systems on~$\A^3$.
Notice that the existence of hyperbolic KAM tori (or
more generally hyperbolic Mather sets) is an important tool to
implement the previous scheme: this is a first example of how stability
induces instability.

Another important development was the possibility of computing the
drifting time of unstable orbits in examples of Arnold diffusion.
This program was achieved in \cite{hms} for~$G^\al$ Gevrey systems
with $\al>1$, and then in \cite{LM05,Z11} for analytic systems.


In this work, to produce examples of near-integrable systems
possessing wandering polydiscs, we develop the method of \cite{ms},
which itself builds on the techniques of \cite{hms}.
The construction of unstable orbits in \cite{hms} is rather different
from that of Arnold, even if, \emph{a posteriori}, one can see that
these drifting orbits too shadow families of heteroclinically connected
tori, which clearly shows the intrinsic complexity of their dynamics.

The key idea in \cite{hms} is to embed a well-controlled discrete
dynamical system of~$\A$, namely a renormalised standard map, into a
high iterate of a specific~$G^\al$ near-integrable system of~$\A^n$,
assuming $\al>1$ (and then into a nearly integrable non-autonomous
Hamiltonian flow).
Taking advantage of the drifting points of the standard map, one can
produce drifting orbits in this near-integrable system (and then in
the corresponding Hamiltonian flow).
In contrast with Arnold's example, such a construction yields systems
with orbits {\em biasymptotic to infinity} in action.  
This proves to be a crucial feature of the construction when one
builds on it to produce wandering sets with positive measure, since
they cannot be confined inside compact subsets due to the preservation
of volume.

In \cite{ms}, the KAM theorem was used to produce wandering polydiscs
surrounding drifting orbits in near-integrable systems of the same
kind as those of \cite{hms}, but without any quantitative estimate.
It seems that such a coexistence of stable geometric objects
(invariant tori) and highly unstable open sets had not been observed
before, although it is reminiscent of the existence of the
``periodic islands'' in ``chaotic seas'' which are ubiquitous in the
theory of two-dimensional symplectic maps.
Our wandering domains coexist with (and are contained in the
complement of) all the invariant compact subsets, including 
Lagrangian invariant tori, 
lower-dimensional invariant tori (like
the hyperbolic ones used in Arnold's mechanism) 
or Mather sets.

The novelty of the present paper is that, using more refined versions
of the KAM theorem and a (much) better control of the normal forms, we
are now able to estimate the capacity of the wandering polydiscs that
we construct.


\paraga We can now informally describe the content of this paper.
Our interest in wandering sets makes it essential that we deal with
diffeomorphisms rather than flows,
like in \cite{ms} and in contrast with most of the literature on
Hamiltonian perturbation theory;
in fact, it is the first time that wandering sets of near-integrable
discrete systems are the object of such detailed investigation.

So we first have to transfer the known stability results for
Hamiltonian flows to the setting of near-integrable discrete systems. 
%
%
The result is \textbf{Theorem~\ref{th:NekhFevMaps}}, whose simplified statement
is the following. 
We fix a real $\al\ge1$ and an integrable diffeomorphism of~$\A^n$
\[
\Phi^h \col (\th,r) \mapsto \big( \th+\na h(r), r \big),
\]
where $h \col \R^n\to\R$ is a~$G^\al$ convex function. 
Then, given $\rho>0$, for any $G^\al$ exact symplectic
diffeomorphism~$\Psi$ having
$\eps \defeq \dist_\al(\Psi,\Phi^h)$ small enough 
(see Section~\ref{secthmNekhMapsA} for the precise definition of the
distance in Gevrey classes),
the iterates $\Psi^k(\th\zz0,r\zz0)=(\th\zz k,r\zz k)$ of any initial condition satisfy
\[
\normD{r\zz k-r\zz 0}\leq \rho 
\quad\text{for}\ens 
0\leq k\leq \exp\Big({c\Big(\frac{1}{\eps}\Big)^{\frac{1}{2n\al}}}\Big),
\]
where~$c$ is a suitable positive constant depending mildly on~$h$. 

Other and more refined estimates are also available, for which the
confinement radius of the action variables tends to~$0$ with~$\eps$.

Our proof makes use of a new suspension result, \textbf{Theorem~\ref{thmsusp}},
which allows one to embed a~$G^\al$ near-integrable system into a
non-autonomous near-integrable Hamiltonian flow;
the analytic case is essentially done in \cite{KP}, while for the case
$\al>1$ we had to devise specific Gevrey techniques to adapt
quantitatively Douady's method \cite{Douady} based on generating
functions.


Next, taking into account the preservation of the Lebesgue
measure~$\mu$ by symplectic diffeomorphisms, we prove in
\textbf{Theorem~\ref{th:upperbounds}} that a wandering Borel set~$W$
of~$\Psi$,
when it is contained in a bounded region of~$\A^n$,
must satisfy
\[
\mu(W)\leq \exp\Big( -c \Big( \frac{1}{\eps} \Big)^{\frac{1}{2n\al}} \Big),
\]
where $c>0$ is a suitable constant depending mildly on~$h$. 


The rest of the paper is devoted to the construction of examples with
wandering domains, with estimates of their Gromov capacity.  
This is the content of \textbf{Theorem~\ref{th:lowerbounds}}: 
there is a sequence $(\Psi_j)_{j\ge1}$ of $G^\al$~diffeomorphisms
of~$\A^n$, with $\al>1$, such that
\[
\eps_j \defeq \dist_\al \! \Big(\Psi_j, \Phi^{\pdemi(r_1^2+\cdots+r_n^2)}\Big)
\]
tends to~$0$ when $j\to\infty$, each of which admits a wandering
polydisc~$\jW_j$ whose Gromov capacity satisfies the inequality
\[
\CG(\jW_j) \ge \exp\Big( -c' \Big( \frac{1}{\eps_j} \Big)^{\frac{1}{2(n-1)(\al-1)}} \Big),
\]
where $c'>0$ is a suitable constant.


The proof of Theorem~\ref{th:lowerbounds} is based on a version of the
``coupling lemma'' introduced in \cite{hms} and \cite{ms}, whose
application requires the construction of several controlled dynamics
on distinct subfactors of the annulus~$\A^n$. 
Here is, in a few lines, the strategy:

\begin{enumerate}[--]
\item
  On the one hand, \textbf{Theorem~\ref{th:perdomains}} provides us
  with a near-integrable system~$G$ arbitrarily close to
  $\Phi^{\dem(r_2^2+\cdots+r_n^2)}$, possessing a $q$-periodic
  polydisc $\jV \subset \A^{n-1}$ of arbitrarily large period~$q$.
  Moreover the orbit of~$\jV$ under~$G$ is controlled well enough and
  its capacity can be explicitly bounded from below.
\item
  On the other hand, we choose a Gevrey function~$U$ so that the ``rescaled standard map''
\[
\psi \col (\th_1,r_1)\mapsto \big(\th_1 + q r_1,\ r_1- \frac{1}{q}U'(\th_1+qr_1)\big)
\]
has a wandering domain $\jU\subset\A$ whose area is of order
$1/q$. We observe that this map can be written
%
%
as the composition of the time-one map of the Hamiltonian
${\frac{1}{q}U(\th_1)}$ with the $q$th iterate of the integrable map
$\Phi^{\pdemi r_1^2}$.
\item
The aforementioned coupling lemma then produces an exact
symplectic perturbation~$\Psi$ of
$\Phi^{\pdemi r_1^2} \times \Phi^{\dem(r_2^2+\cdots+r_n^2)} = \Phi^{\dem(r_1^2+r_2^2+\cdots+r_n^2)}$
whose $q$th iterate coincides with $\psi \times G^q$ when restricted
to $\A\times\jV$.
In that situation, $\jU\times\jV$ is easily seen to be a wandering set of~$\Psi$.
\end{enumerate}

Choosing an appropriate function~$U$ and estimating the area of the
wandering domain~$\jU$ are rather easy.

The application of the coupling lemma requires a Gevrey function~$g$ on~$\A^{n-1}$ satisfying a
``synchronization condition'' \wrt the orbit of~$\jV$ under~$G$.
For this, we use a bump function,
whose Gevrey norm is large, unavoidably, but somehow this
can be compensated by choosing~$q$ large enough, so as to ensure
that~$\Psi$ is indeed arbitrarily close to integrable.

Much more work will be needed to prove Theorem~\ref{th:perdomains}.
Another use of the coupling lemma will first allow us to reduce the problem to
proving the two-dimensional version of the statement, which is
essentially \textbf{Theorem~\ref{th:varpseudopend}}.
The proof of Theorem~\ref{th:varpseudopend} is then the most technical part of
the construction.
It first necessitates the introduction of a ``pseudo-pendulum'' on~$\A$,
of the form
\[
P(\th,r)=\pdemi r^2+\pfrac{1}{N^2}V(\th),
\]
where the potential~$V$ is a flat-top bump function on~$\T$ and~$N$ is a
large parameter. This pseudo-pendulum is then perturbed to produce
elliptic periodic points of any period,
surrounded by elliptic islands with controlled areas. 
The main task consists in estimating these areas, which requires the
use of Herman's quantitative version of the two-dimensional KAM
theorem and necessitates the computation of high-order parametrised
normal forms.  
Theorem~\ref{th:varpseudopend} is interesting in itself; see for instance \cite{Liv} for
related questions on standard maps.


\paraga To conclude this introduction, let us mention that the present
work can be seen as a contribution to the development of a
``quantitative Hamiltonian perturbation theory'', 
focused on the question of the size of wandering domains.
Other studies should be devoted to the numerous quantities one can
associate with a nearly integrable exact symplectic diffeomorphism or
Hamiltonian flow:
one may think of the separatrix splitting, the angles of Green bundles,
the topological entropy, the growth of isolated periodic orbits, and
so on; each of them should be estimated from above and below in an
optimal way.
While the upper estimates may be based on normal form theory, the
construction of ``optimal'' examples may reveal itself to be extremely
rich and difficult, as illustrated by the case of wandering domains
in this work.  
We see this problem as a challenging motivation to pursue these
quantitative studies and get a more developed vision of this
domain---still in its infancy.






\paraga The paper is organized as follows.

\begin{itemize}

\item Section~\ref{secpresentres} is dedicated to a precise formulation of our
  assumptions, notations and results.
  Theorem~\ref{th:NekhFevMaps} gives long time stability
  estimates for near-integrable systems of~$\A^n$.
  Theorem~\ref{th:upperbounds} and Theorem~\ref{th:lowerbounds} 
  respectively state our main results about the upper and lower bounds
  for the measure and capacity of wandering sets of near-integrable systems.
  Theorem~\ref{th:perdomains}, on the construction of near-integrable
  systems of~$\A^{n-1}$ possessing periodic domains with explicit
  lower bounds for their capacity, is stated.
  It splits into two parts: Theorem~\ref{th:perdomains}(i) deals with
  the two-dimensional case, \ie periodic domains in~$\A$, while
  Theorem~\ref{th:perdomains}(ii) is dedicated to systems on~$\A^m$,
  $m\geq 2$.

\item In Section~\ref{secGevNekhMaps}, we state and prove
  Theorem~\ref{thmsusp}, on the suspension of Gevrey near-integrable systems.
  This enables us to deduce the stability theory for Gevrey diffeomorphisms
  from the stability theory for Gevrey Hamiltonian flows and thus prove
  Theorem~\ref{th:NekhFevMaps}, and then Theorem~\ref{th:upperbounds}.

\item Section~\ref{sec:proofpseudopend} contains the most technical
  part of the paper, that is, the construction of examples of
  near-integrable systems of~$\A$ with periodic islands of
  arbitrarily large period, whose area we are able to estimate from below.
  This is the content of Theorem~\ref{th:varpseudopend}, which is a
  parametrised version of Theorem~\ref{th:perdomains}(i). The proof of
  Theorem~\ref{th:perdomains}(i) is in Section~\ref{secPfThperi}, the
  rest of Section~\ref{sec:proofpseudopend} is devoted to the proof of
  Theorem~\ref{th:varpseudopend}.

\item In Section~\ref{SecPfWander} we explain the coupling lemma and
  its use to produce periodic or wandering polydiscs. 
  The proof of Theorem~\ref{th:perdomains}(ii) is thus obtained, by
  coupling the periodic domains of Theorem~\ref{th:varpseudopend}
  (suitably rescaled) with periodic domains of an elementary
  perturbation of $\Phi^{\dem(r_3^2+\cdots+r_n^2)}$.
  Then, Theorem~\ref{th:lowerbounds} is obtained by coupling the
  wandering domain of a rescaled standard map and the periodic domains
  of Theorem~\ref{th:perdomains}.

\item The paper ends with four appendices, dealing with some
  technicalities needed in the course of the various proofs.
%

\end{itemize}


\section{Presentation of the results}   \label{secpresentres}


\subsection{Perturbation theory for analytic or Gevrey
  near-integrable maps---Theorem~\ref{th:NekhFevMaps}}


\paragaD
Let $\T \defeq \R / \Z$.
For $n\geq1$ we denote by $\A^n=\T^n\times\R^n$ the $2n$-dimensional
annulus, viewed as the cotangent bundle of~$\T^n$, with coordinates
$\th=(\th_1,\ldots,\th_n)$, $r=(r_1,\ldots,r_n)$, 
respectively called ``angles'' and ``actions'',
and Liouville exact symplectic form $\Om=- \dd \la$,
$\la \defeq \sum_{i=1}^n r_i \dd \th_i$.
Recall that a map~$\Psi$ is said to be exact symplectic (or globally canonical) if the
differential $1$-form $\Psi^*\la-\la$ is exact.
Examples of exact symplectic maps are provided by the flows of
Hamiltonian vector fields.

When the Hamiltonian vector field generated by a function~$H$ on
$\A^n$ 
\[
\frac{\pa H}{\pa r_1} \frac{\pa\ens\;}{\pa \th_1} +
\cdots + \frac{\pa H}{\pa r_n} \frac{\pa\ens\;}{\pa \th_n}
- \frac{\pa H}{\pa \th_1} \frac{\pa\ens\;}{\pa r_1} -
\cdots - \frac{\pa H}{\pa \th_n} \frac{\pa\ens\;}{\pa r_n}
\]
is complete, we denote by
$\Phi^{H} \col \A^n \righttoleftarrow$ the time-one map of the
Hamiltonian flow.
We say that a diffeomorphism of $\A^n$ is {\em integrable} when it is of
the form $\Phi^h$, where the function $h$ depends only on the action variable
$r$, thus
\begin{equation} \label{eqPhih}
\Phi^h(\th,r)=\big(\th+\langle \nabla h (r)\rangle,\,r\big),
\end{equation}
where $\angD{\,\cdot\,} \col \R^n \to \T^n$ is our notation for the canonical
projection.

We are interested in \emph{near-integrable maps}%
\footnote{
We also often call them ``near-integrable systems'' to emphasize that we are interested
in the discrete dynamical systems consisting in iterating these maps.
When we say ``near-integrable'', the exact symplectic character is understood.
}, \ie exact symplectic maps close to an integrable map~$\Phi^h$,
closeness being intended in the analytic sense or Gevrey sense.


\paragaD
Let us introduce notations for the spaces of Gevrey {functions}. 
Given $n\geq 1$, we use the Euclidean norm in~$\R^n$ and,
for~$R$ positive real or infinite, denote by $\ov B_R$ the
closed ball of radius~$R$ centred at~$0$ (so $\ov B_\infty=\R^n$).
We set
\begin{equation}
\A^n_R \defeq \T^n\times\ov B_R,
\end{equation}
in particular $\A^n_\infty=\A^n$. 
Given $\al\ge1$ and $L>0$ real, and $0<R\le\infty$, we define the
Banach spaces of real-valued functions
\begin{align}
\label{eq:defGalLR}
G^{\al,L}(\ov B_R) &\defeq \{ h\in C^\infty(\ov B_R) \mid \normD{h}_{\al,L,R} <\infty \},
\quad
\normD{h}_{\al,L,R} \defeq \sum_{\ell\in\N^{n}} \frac{L^{\absD{\ell}\al}}{\ell !^\al} \normD{\pa^\ell h}_{C^0(\ov B_R)}
\\
\label{eq:defGalL}
G^{\al,L}(\A^n_R) &\defeq \{ f\in C^\infty(\A^n_R) \mid \normD{f}_{\al,L,R} <\infty \},
\quad
\normD{f}_{\al,L,R} \defeq \sum_{\ell\in\N^{2n}}
\frac{L^{\absD{\ell}\al}}{\ell !^\al} \normD{\pa^\ell f}_{C^0(\A^n_R)}.
\end{align}
%
%
We have used the standard notations
$\absD{\ell} = \ell_1+\cdots+\ell_{2n}$, $\ell! = \ell_1!\ldots\ell_{2n}!$,
$\pa^\ell = \pa_{1}^{\ell_1}\ldots\pa_{2n}^{\ell_{2n}}$, 
$\pa_k=\pa_{x_k}$ for $k=1,\ldots,2n$,
where $(x_1,\ldots,x_{2n})=(\th_1,\ldots,\th_n,r_1,\ldots,r_n)$,
and
\[
\N \defeq \{0,1,2,\ldots\}.
\]
%
%
We shall make use of the natural inclusion 
$G^{\al,L}(\ov B_R) \hookrightarrow G^{\al,L}(\A^n_R)$
without further notice, treating an $h \in G^{\al,L}(\ov B_R)$
indifferently as an element of any of the two spaces.


For $\al=1$, one recovers real analytic functions of $\A^n_R$:
any function $f \in G^{1,L}(\A^n_R)$ is real analytic in~$\A^n_R$ and
admits a holomorphic extension in
$\jV_L\T^n \times \jV_L \ov B_R$,
with complex neighbourhoods of~$\T^n$ and~$\ov B_R$ defined by
\beq   \label{eqdefjV}
\begin{aligned}
\jV_L\T^n &\defeq \{\, 
\th \in (\C/\Z)^n \mid \abs{ (\Im\th_1,\ldots,\Im\th_n) }_\infty < L
\,\}, \\[1ex]
\jV_L \ov B_R &\defeq \bigcup_{r^*\in \ov B_R} 
\{\, r \in \C^n \mid \abs{ r-r^* }_\infty < L
\,\},
\end{aligned}
\eeq
where
$\abs{\xi}_\infty \defeq \max\{ \abs{\xi_1}, \ldots, \abs{\xi_n} \}$
for $\xi \in \R^n$ or~$\C^n$;
conversely, for any function~$f$ real analytic in~$\A^n_R$, there
exists $L>0$ such that $f \in G^{1,L}(\A^n_R)$.
%
%
For $\al>1$, one gets non-quasianalytic spaces of Gevrey functions.


Recall that $\normD{\,\cdot\,}_{\al,L}$ is an agebra norm for every $\al\ge1$:
$\normD{fg}_{\al,L} \le \normD{f}_{\al,L} \normD{g}_{\al,L}$;
see \cite{hms}---some other useful properties of these norms are recalled in
Appendix~\ref{secappremind}.


\paragaD
Our {maps} will be analytic, \ie Gevrey-$1$, or
more generally Gevrey-$\al$ for some $\al\ge1$, \ie elements of one of
the sets
\begin{multline*}
G^{\al,L}(\A^n_R,\A^n) \defeq
\{\, \Psi \col \A^n_R \to \T^n\times\R^n \mid 
\exists \ti\Psi 
\col \A^n_R \to \R^n\times\R^n \; \text{lifting $\Psi$} \\ 
\text{so that}\;
\ti\Psi_1,\ldots,\ti\Psi_{2n} 
\in G^{\al,L}(\A^n_R)
\,\},
\end{multline*}
with $L>0$ and $0 < R \le \infty$.
We set, for any $\De \in G^{\al,L}(\A^n_R,\A^n)$,
\begin{equation}	\label{eqdefNotNorm}
\NormD{\De}_{\al,L,R} \defeq
\inf\Big\{\,
\normD{\ti\De_{1}}_{\al,L,R} + \cdots + \normD{\ti\De_{2n}}_{\al,L,R} 
\mid \; \ti\De \col \A^n_R \to \R^n \times \R^n 
\; \text{lift of~$\De$} \,\Big\}
\end{equation}
(in fact the infimum is always attained);
one can check that the formula
\[ (\Psi_1,\Psi_2) \mapsto \NormD{\Psi_2-\Psi_1}_{\al,L,R} \]
defines a translation-invariant distance which makes
$G^{\al,L}(\A^n_R,\A^n)$ a complete metric space.


\paragaD 
Our first result is a version of the Nekhoroshev Theorem for Gevrey near-integrable
exact symplectic maps in the convex case:
\label{secthmNekhMapsA}
\begin{thm}[Exponential stability for maps]	\label{th:NekhFevMaps}
Let $n\geq1$ be an integer. Let $\al\ge1$ and $L,R,R_0>0$ be reals such
that $R<R_0$.
Let $h\in G^{\al,L}(\ov B_{R_0})$ have positive definite Hessian matrix on $\ov B_{R_0}$.
Then there exist positive reals $\eps_*,c_*$,
and, for each positive $\rho<R_0-R$, positive reals $\eps'_\rho \le \eps_*$ and
$c'_\rho \le c_*$,
and, for each positive $\sig < \frac{1}{n+1}$, positive reals $\eps''_\sig \le \eps_*$ and
$c''_\sig \le c_*$,
satisfying the following:

For each exact symplectic map $\Psi \in G^{\al,L}(\A^n_{R_0},\A^n)$
such that $\eps \defeq \NormD{\Psi-\Phi^h}_{\al,L,R_0} \le \eps_*$,
every point $(\th\zz0,r\zz0)$ of $\A^n_R$ has well-defined iterates
$(\th\zz k, r\zz k) \defeq \Psi^k(\th\zz0,r\zz0) \in \A^n_{R_0}$ 
for all $k\in\Z$ such that
$\absD{k} \le 
\exp \big( c_* \big(\frac{1}{\eps}\big)^{\frac{1}{2n\al}} \big)$,
and
\begin{enumerate}[(i)]
\item
$\dst \eps \le \eps'_\rho \ens\text{and}\ens
\absD{k} \le \exp \Big( c'_\rho \Big(\frac{1}{\eps}\Big)^{\frac{1}{2n\al}} \Big)
\quad\ens \Rightarrow \quad
\normD{r\zz k - r\zz0} \leq \rho$,
\item
$\dst \eps \le \eps''_\sig \ens\text{and}\ens
\absD{k} \le \exp \Big( c''_\sig \Big(\frac{1}{\eps}\Big)^{\frac{1-\sig}{2n\al}} \Big)
\quad\ens \Rightarrow \quad
\normD{r\zz k - r\zz0} \leq \begin{cases}
\tfrac{1}{c''_\sig} \eps^{\frac{\sig}{2}} & \text{if $\al=1$,} \\[1.5ex]
\tfrac{1}{c''_\sig} \eps^{\frac{\sig}{5 n^2}} & \text{if $\al>1$,} 
\end{cases}$
\item
$\dst \eps \le \eps_* \ens\text{and}\ens
\absD{k} \le \exp \Big( c_* \Big(\frac{1}{\eps}\Big)^{\frac{1}{2(n+1)\al}} \Big)
\ens \Rightarrow \quad
\normD{r\zz k - r\zz0} \leq \tfrac{1}{c_*} \eps^{\frac{1}{2(n+1)}}$.
\end{enumerate}
\end{thm}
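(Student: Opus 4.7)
The plan is to reduce the statement to the corresponding Nekhoroshev-type theorem for Gevrey Hamiltonian flows in the convex case, via the suspension result announced as Theorem~\ref{thmsusp}. Concretely, I would construct a non-autonomous Hamiltonian $H(\th,r,t)$ on $\A^n_{R_0}$, periodic in~$t$, of the form $H = h(r) + F(\th,r,t)$ with $F$ small in a suitable Gevrey norm, whose time-one map coincides with $\Psi$. Once this is done, iterates of~$\Psi$ become snapshots at integer times of a near-integrable Hamiltonian flow, and every iterate bound translates literally into a bound on the action drift of the flow.

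\textbf{Suspension step.} I would invoke Theorem~\ref{thmsusp} to produce, from the exact symplectic map $\Psi = \Phi^h + (\Psi-\Phi^h)$ with $\NormD{\Psi-\Phi^h}_{\al,L,R_0}=\eps$, a non-autonomous Hamiltonian $H$ on (a slight shrinking of)~$\A^n_{R_0}$ such that $\Phi^1_H = \Psi$, with the quantitative control
\[
\normD{H-h}_{\al,L',R} \le C\,\eps
\]
for suitable $L'\le L$ and $R<R_0$, where $C$ depends only on $n,\al,L,R,R_0$ and the $C^2$-size of~$h$. For $\al=1$ this is essentially the construction of Kuksin--P\"oschel; for $\al>1$ one follows Douady's generating-function method, which requires composing $\Psi$ with a finite sequence of integrable flows to turn it into a map close to the identity in a small-twist sense, inverting a near-identity to construct a generating function in the Gevrey category, and tracking the Gevrey norm losses through each step using the algebra, composition and implicit-function estimates recalled in Appendix~\ref{secappremind}.

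\textbf{Applying the continuous Nekhoroshev theorem.} With $H=h+F$ in hand, I would apply the quasi-convex Gevrey Nekhoroshev theorem in the form of \cite{hms,BM} (improving on \cite{LNN,P} in the analytic case): under the positive-definiteness of the Hessian of~$h$ on $\ov B_{R_0}$ and $\normD{F}_{\al,L',R}\le C\eps$, any orbit of~$X_H$ starting in $\A^n_R$ stays inside $\A^n_{R_0}$ and its action component varies by at most $C_1\eps^b$ over a time window $\absD{t}\le\exp\!\big(c\eps^{-a}\big)$, with stability exponent $a = \frac{1}{2n\al}$ and drift exponent $b$ that can be tuned by the standard trade-off between the number of normal-form steps and the analyticity loss. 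Restricting $t$ to integer values yields assertion~(i) by choosing the drift parameter equal to the prescribed~$\rho$; assertion~(ii) by letting the confinement radius shrink polynomially with~$\eps$ (choosing $b=\sig/2$ when $\al=1$, and the more modest $b=\sig/(5n^2)$ when $\al>1$, which reflects the extra Gevrey losses in the affine normal-form estimates of \cite{hms,BM}); and assertion~(iii) by taking the distinguished balance in which exponent and drift are simultaneously optimized, giving $\frac{1}{2(n+1)\al}$ and $\eps^{1/(2(n+1))}$.

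\textbf{Main obstacle.} The routine continuous Nekhoroshev part is available off the shelf; the genuine difficulty is the Gevrey suspension with sharp quantitative control. One must show that the auxiliary generating function lies in a Gevrey class with a slightly smaller width parameter but only a \emph{linear} loss between $\NormD{\Psi-\Phi^h}$ and $\normD{H-h}$, since any non-linear dependence would degrade the stability exponent. The delicate point is to perform Douady's inversion scheme while keeping explicit Gevrey bounds on all composition and inversion steps, uniformly in~$\eps$ as $\eps\to0$; once this is cleanly established, the three estimates~(i)--(iii) follow mechanically from the corresponding Hamiltonian statements.
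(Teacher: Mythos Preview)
Your strategy is exactly the paper's: suspend via Theorem~\ref{thmsusp}, then read off the discrete estimates from the continuous Nekhoroshev theorem of Bounemoura--Marco at integer times. One step you elide, though, is the passage from the non-autonomous $n$-degree-of-freedom suspension $H(\th,r,t)$ to an \emph{autonomous} $(n{+}1)$-degree system
\[
\ti H(\th,r,\th_{n+1},r_{n+1}) \;=\; r_{n+1} + H(\th,r,\th_{n+1}),\qquad
\ti h(r,r_{n+1}) \;=\; r_{n+1} + h(r),
\]
which is what the Bounemoura--Marco statement actually requires, and what produces the exponent $\tfrac{1}{2n\al}$ (namely $\tfrac{1}{2(N-1)\al}$ with $N=n{+}1$). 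There is a genuine subtlety here that your phrase ``under the positive-definiteness of the Hessian of~$h$'' hides: the Hessian of~$\ti h$ is \emph{not} positive definite (it vanishes in the $r_{n+1}$-direction), so strict convexity is lost in the autonomization. What survives is \emph{quasi-convexity}: one must verify that $m$-convexity of~$h$ implies $m_1$-quasi-convexity of~$\ti h$ for some explicit $m_1>0$, i.e.\ that $\ltrans{v}\,\dd\na\ti h\,v\ge m_1\|v\|^2$ for $v\perp\na\ti h$. The paper checks this with $m_1 = m/\bigl(1+(L-L_1)^{-2\al}E^2\bigr)$. Once that is in place, your outline is complete and matches the paper's proof.
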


\noindent
The proof of Theorem~\ref{th:NekhFevMaps} is in Section~\ref{secGevNekhMaps}.


The case $\al=1$ of~(iii) is due to S.~Kuksin and~J.~P\"oschel
\cite{KP}.
The rest of the statement is, to the best of our knowledge, new.
It relies on the most recent version of the Nekhoroshev Theorem for
Gevrey near-integrable quasi-convex Hamiltonian vector fields due to A.~Bounemoura
and J.-P.~Marco \cite{BM},
which improves the possible exponents for the stability time (at the
price of a less good control of the confinement of the orbits)---we
reproduce Bounemoura-Marco's statement in Section~\ref{secPfThmNekhMaps}.
To transfer it to the discrete dynamics induced by a near-integrable
exact symplectic map~$\Psi$, we will have to construct a
non-autonomous time-periodic Gevrey Hamiltonian function, defined for $(\th,r,t) \in
\A^n_{R_1}\times\T$ with $R<R_1<R_0$, whose flow interpolates the discrete
dynamics---this is the content of Theorem~\ref{thmsusp} of
Section~\ref{secGevNekhMaps}.

The hypothesis that the Hessian matrix of the integrable part~$h$ is
positive definite is a strict convexity assumption:
it amounts to the existence of a positive real~$m$ such that $h$ is $m$-convex, in
the sense that
\beq    \label{eqdefmconv}
\ltrans{v}\,\dd\na h(r) v \ge m \normD{v}^2
\qquad\text{for all $r\in \ov B_{R_0}$
and $v\in \R^n$,}
\eeq
where $\dd\na h(r)$ is the Hessian matrix of~$h$ ar the point~$r$.
In fact, the reals $\eps_*,c_*,\eps'_\rho,c'_\rho,\eps''_\sig,c''_\sig$
depend on the integrable part~$h$ only through~$m$ and
$\normD{h}_{\al,L,R_0}$.


\begin{rem}[On the time exponents]
Beware that, as far as Nekhoroshev theory is concerned, exact
symplectic maps in~$\A^n$ behave like $N$-degree of freedom
autonomous Hamiltonian systems with $N=n+1$.
So the ``time exponent'' and the ``confinement exponent'' in the
case~(iii) are simply $\frac{1}{2N\al}$ and $\frac{1}{2N}$, which have been familiar since
the works by Lochak-Neishtadt and P\"oschel in the analytic case, or
Marco-Sauzin in the Gevrey case.

Bounemoura-Marco's novel result of \cite{BM} was the obtention of
better stability times at the price of releasing part of the control on
the confinement property.
The counterpart for discrete systems, as demonstrated by
Theorem~\ref{th:NekhFevMaps}, is that the time exponent can be taken
as large as $a_\sig = \frac{1-\sig}{2n\al}$ with arbitrary~$\sig$ such
that $0\le\sig\le\frac{1}{n+1}$, and the corresponding confinement
radius still tends to~$0$ as $\eps\to0$ if $\sig>0$, while we only get
a fixed (but arbitrarily small) confinement radius~$\rho$ if $\sig=0$;
observe that $a_\sig$ is a decreasing function of~$\sig$, so~$\sig$
close to~$\frac{1}{n+1}$ yields worse stability exponents~$a_\sig$,
close to the exponent $\frac{1}{2(n+1)\al}$ of case~(iii) (but better
confinement properties), while $\sig=0$ gives the best time exponent,
namely $\frac{1}{2n\al}$, for general orbits.
See Remark~\ref{remmorecomm} for more comments.
\end{rem}

\begin{rem}[Stabilization by resonances]
  We leave it to the reader to devise a refined statement for orbits
  starting $O(\eps^{1/2})$-close to a resonance of multiplicity
  $m \in \{1,\ldots, n\}$) by
  exploiting the well-known stabilizing effect of resonances available
  for Hamiltonian flows---see Remark~\ref{remmreson}.
  The time and confinement exponents then jump to $\frac{1}{2(n+1-m)\al}$
  and $\frac{1}{2(n+1-m)}$.

In particular, the time exponent $\frac{1}{2n\al}$ given in~(i) for
general orbits coincides with the time exponent available for the
orbits starting close to a simple resonance, but the latter have a
better confinement property (described by the positive exponent
$\frac{1}{2n}$) than general orbits.
\end{rem}


\begin{rem}[About the steep case]
  The original Nekhoroshev theorem was proved in the analytic case for
  a wider class of near-integrable Hamiltonian flows than just those
  with quasi-convex integrable part. Nekhoroshev only needed a
  non-degeneracy assumption called \emph{steepness}, which turns out
  to be generic in quite a strong sense.
This allowed S.~Kuksin and J.~P\"oschel to give an exponential
stability theorem for analytic near-integrable maps in the case
where~$h$ is supposed to be steep but not necessarily convex \cite{KP}. 
The same could be done for Gevrey near-integrable maps if the original
Nekhorohev statement could be generalised to the Gevrey steep case.
\end{rem}


\begin{rem}[KAM theorem for analytic or Gevrey near-integrable maps]    \label{remKAM}
The assumption that~$h$ be non-degenerate in the sense of
Kolmogorov 
(\ie that $\nabla h$ be a local diffeomorphism, which is a weaker
condition than strict convexity)
is sufficient to apply the KAM theorem, in its analytic
version if $\al=1$, or in its $C^\infty$ version if $\al>1$. 
For each $r_* \in B_{R_0}$ such that $\nabla h(r_*)$ is Diophantine,
we obtain for the discrete dynamics~$\Psi$ an invariant quasi-periodic
torus $\simeq\T^n$ located close to $\T^n\times\{r_*\}$
as soon as $\NormD{\Psi-\Phi^h}_{\al,L,R_0}$ is small enough.
If $\al =1$, then such a torus is known to be analytically embedded
in~$\A^n_{R_0}$.
If $\al>1$, then the embedding is known to be $C^\infty$ and 
\emph{one can prove that the embedding is in fact Gevrey-$\al$} 
by applying Popov's KAM theorem for Gevrey near-integrable
Hamiltonians \cite{Popov} to the interpolating Hamiltonian flow constructed
in Theorem~\ref{thmsusp} of Section~\ref{secGevNekhMaps}.
\end{rem}


\subsection{Wandering sets of near-integrable
  systems---Theorems~\ref{th:upperbounds} and~\ref{th:lowerbounds}}


\paragaD
The other results of this paper deal with wandering sets for
near-integrable systems.

\begin{Def}
%
%
Given a diffeomorphism $\Psi$ of a manifold~$M$, we say that 
$W\subset M$ is {\it wandering}  if
\[
\Psi^k(W)\cap W=\emptyset \qquad \text{for all $k\in\Z\setm\{0\}$}
\]
or, equivalently, if $\Psi^k(W)\cap\Psi^\ell(W)=\emptyset$ for all
$k,\ell \in\Z$ with $k\neq\ell$. 
\end{Def}

Notice that if $W' \subset W$ and $W$ is wandering, then~$W'$ is
wandering too.
Beware that, when~$W$ is reduced to a single point~$x$,
saying that the set $W=\{x\}$ is wandering is a less stringent
condition than saying that the point~$x$ is wandering in the usual
sense (which amounts to the existence of a neighborhood~$V$ of~$x$
such that $\Psi^k(V)\cap V=\emptyset$ for $k\in\Z\setm\{0\}$).


\begin{rem}     \label{remPoinRec}
If $\Psi$ preserves a finite measure, then obviously any measurable wandering
set must have zero measure (this is the key argument in the Poincar\'e
recurrence theorem).
%
%
\end{rem}


\paragaD 
We denote the canonical Lebesgue measure on~$\A^n$ by~$\mu$.
Recall that a \emph{domain} of $\A^n$ is a connected open subset of~$\A^n$.

Before going further, we notice that, 
given $h$ of class $C^2$ on an open set~$\Om$ of $\R^n$,
\emph{any measurable wandering set~$W$ of the integrable
  diffeomorphism $\Phi^h \col \T^n \times \Om \righttoleftarrow$ has zero
Lebesgue measure}.
Indeed, formula~\eqref{eqPhih} shows that each
torus $\jT(r)=\T^n\times\{r\}$ is invariant,
with the restriction of $\Phi^h$
%
preserving the Haar measure~$\mu_r$ of $\jT(r)$, which is finite.
Thus Remark~\ref{remPoinRec} implies that the wandering set
$W\cap\jT(r)$ has zero $\mu_r$-measure for each $r\in\Om$
and, by Fubini,
\[
\mu(W)=\int_{\Om}\mu_r(W\cap \jT(r) )\,dr=0.
\]
In particular,  the only wandering domain for $\Phi^h$ is the
empty set.  


\paragaD 
Another preliminary remark concerns the case $n=1$: 
\emph{any measurable wandering set~$W$ of a near-integrable
system of~$\A$ has zero Lebesgue measure}.
More precisely, if $0<R<R_0<\infty$ and 
$h\in G^{\al,L}([-R_0,R_0])$ is Kolmogorov non-degenerate
(\ie its second derivative does not vanish),
then for any exact symplectic diffeomorphism~$\Psi$ of~$\A$
with a restriction to $\A_{R_0} = \T\times[-R_0,R_0]$ such that
$\NormD{\Psi-\Phi^h}_{\al,L,R_0}$ is small enough,
any measurable wandering set contained in~$\A_R$ has zero Lebesgue measure.

Indeed, the KAM theorem yields two invariant circles,
one contained in $\T \times ]R,R_0[$
and the other one in $\T \times ]-R_0,-R[$,
which bound a finite measure invariant region; any measurable
wandering set contained in that region must have zero measure
according to Remark~\ref{remPoinRec}.
(This is the same argument which forbids Arnold diffusion in two
degrees of freedom.)


\paragaD 
We thus assume $n\geq2$ from now on.
The first examples of near-integrable systems possessing
wandering sets of positive Lebesgue measure, namely wandering domains,
were constructed in \cite{ms}.\footnote{%
Notice that it is the exactness of near-integrable systems which makes the existence of such examples
not obvious.
If exactness is relaxed, then one trivially gets arbitrarily close to
integrable symplectic maps with wandering domains by considering
$\Psi_\eps \col (\th,r) \in \A \mapsto (\th+r,r+\eps) \in \A$, with small
$\eps>0$, and $W_\eps \defeq \T \times ]0,\eps[$.
}
Although the construction was quite explicit, no estimate was given for the
``size'' of these wandering domains.

\emph{In this paper, we show that, for a perturbation~$\Psi$ of an
integrable diffeomorphism~$\Phi^h$ with $\eps \defeq
\NormD{\Psi-\Phi^h}_{\al,L,R}$ small, the wandering sets have an 
exponentially small size.
We shall provide general upper bounds and examples with explicit lower bounds.}


\paragaD 
We shall use two natural but essentially different notions of ``size'': the
Lebesgue measure and the Gromov capacity. Recall that the Gromov
capacity (or width, or depth) $\CG(W)$ of a subset~$W$ of a symplectic manifold is the supremum
of the numbers $\pi r^2$, where $r\geq 0$ is such that the Euclidean ball
$B^{2n}(r)$ of radius~$r$ in $\R^{2n}$ can be symplectically embedded in~$W$.
As a consequence, for measurable subsets~$W$ of~$\A^n$, 
%
\begin{equation}\label{eq:meascap}
\CG(W)\leq \pi\Big(\frac{\mu(W)}{{\rm Vol\,}(B^{2n}(1))}\Big)^{1/n}.
\end{equation}
The capacity of a domain in the $2$-dimensional annulus~$\A$ equals
its Lebesgue measure (\ie its area in this case), 
but they are in general distinct for higher dimensional domains. 
As an extreme case, given a disc~$D$ in $\A$, the capacity of $W
\defeq D\times \A^{n-1}\subset \A^n$ is the area of~$D$, while the Lebesgue measure
of~$W$ is infinite. We refer to \cite{McS} for a more complete exposition of the
notion of Gromov capacity.

We are interested in estimates of the size of wandering subsets from above
and from below. In view of inequality~\eqref{eq:meascap}, we may content
ourselves with using the Lebesgue measure for upper estimates and the
Gromov capacity for the lower ones.


\paragaD 
Our upper bound result consists in general exponentially small estimates, with
explicit exponents stemming from Theorem~\ref{th:NekhFevMaps}:
\label{secthmupperbdsB}
\begin{thm}[Upper bounds for wandering sets]	\label{th:upperbounds}
Let $n\geq2$ be integer. Let $\al\ge1$ and $L,R_0>0$ be real.
Let $h\in G^{\al,L}(\ov B_{R_0})$ have positive
definite Hessian matrix on $\ov B_{R_0}$.
%
%
Then for $0 < R < R_0$ there exist $\eps_*,c_*>0$ such that,
for each exact symplectic diffeomorphism~$\Psi$ of~$\A^n$ whose
restriction to $\A^n_{R_0}$ satisfies
\[
\Psi_{|\A^n_{R_0}} \in G^{\al,L}(\A^n_{R_0},\A^n), \quad
\eps \defeq \NormD{\Psi_{|\A^n_{R_0}} - \Phi^h}_{\al,L,R_0} < \eps_*,
\] 
any measurable wandering set~$W$ of~$\Psi$ contained in $\A^n_{R}$ 
has Lebesgue measure
\begin{equation}	\label{inequpper}
\mu(W)\leq \exp \Big( - c_* \Big(\frac{1}{\eps}\Big)^{\frac{1}{2n\al}} \Big).
\end{equation}
\end{thm}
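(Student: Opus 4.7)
My plan is to derive the bound by coupling the exponential stability given by Theorem~\ref{th:NekhFevMaps}(i) with the fact that exact symplectic diffeomorphisms preserve the Lebesgue measure, through a pigeonhole-style counting argument in the spirit of Poincar\'e recurrence (see Remark~\ref{remPoinRec}).

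First, I would fix an auxiliary radius $\rho \defeq \demi(R_0-R) > 0$ and apply Theorem~\ref{th:NekhFevMaps}(i) with this particular $\rho$, which yields constants $\eps'_\rho>0$ and $c'_\rho>0$ depending on $h,\al,L,R_0,R$. For any exact symplectic $\Psi$ whose restriction to $\A^n_{R_0}$ is in $G^{\al,L}(\A^n_{R_0},\A^n)$ with $\eps \defeq \NormD{\Psi_{|\A^n_{R_0}} - \Phi^h}_{\al,L,R_0} \leq \eps'_\rho$, every point $(\th\zz 0, r\zz 0) \in \A^n_R$ has well-defined iterates $(\th\zz k, r\zz k) = \Psi^k(\th\zz 0, r\zz 0)$ satisfying $\normD{r\zz k - r\zz 0} \leq \rho$ for all $\absD{k} \leq N_\eps$, where
\begin{equation*}
N_\eps \defeq \left\lfloor \exp\!\Big( c'_\rho \Big(\frac{1}{\eps}\Big)^{\frac{1}{2n\al}}\Big) \right\rfloor.
\end{equation*}
In particular $r\zz k \in \ov B_{R+\rho} \subset \ov B_{R_0}$, so $\Psi^k(W) \subset \A^n_{R+\rho}$ for $0 \leq k \leq N_\eps - 1$.

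Next, since $W$ is wandering, the $N_\eps$ sets $\Psi^0(W), \Psi^1(W), \ldots, \Psi^{N_\eps -1}(W)$ are pairwise disjoint, and since $\Psi$ is exact symplectic it preserves the Liouville volume and hence the Lebesgue measure $\mu$. Therefore
\begin{equation*}
N_\eps \cdot \mu(W) = \sum_{k=0}^{N_\eps - 1} \mu(\Psi^k(W)) \leq \mu(\A^n_{R+\rho}) < \infty,
\end{equation*}
and dividing by $N_\eps$ yields $\mu(W) \leq \mu(\A^n_{R+\rho})/N_\eps$, which already has the desired exponential decay up to a multiplicative constant $C = \mu(\A^n_{R+\rho})$ depending only on $n, R_0$.

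Finally, I would absorb this constant into the exponent by shrinking the threshold: choosing $\eps_* \leq \eps'_\rho$ small enough, one has $C \leq \exp\bigl(\demi c'_\rho (1/\eps)^{1/(2n\al)}\bigr)$ for all $\eps \leq \eps_*$, so inequality~\eqref{inequpper} holds with $c_* \defeq \demi c'_\rho$. There is no real obstacle here beyond bookkeeping: the entire dynamical content is packed into Theorem~\ref{th:NekhFevMaps}, which is where the hard work lies (the suspension of the discrete system to a time-periodic Gevrey Hamiltonian flow, and the invocation of Bounemoura--Marco's Nekhoroshev estimates). The only subtlety at this stage is the strict inclusion $R < R_0$, which ensures one has room to accommodate a positive confinement radius $\rho$ strictly inside $\ov B_{R_0}$; without this margin the $\Psi$-orbits of $W$ could not be kept in the domain of control.
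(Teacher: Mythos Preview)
Your proof is correct and follows essentially the same approach as the paper: apply the exponential confinement from Theorem~\ref{th:NekhFevMaps} to keep the iterates of~$W$ inside a bounded region, then use volume preservation and disjointness to bound $\mu(W)$ by the pigeonhole inequality, and finally absorb the leftover multiplicative constant by shrinking~$\eps_*$ and~$c_*$. The only cosmetic differences are that the paper invokes the preamble of Theorem~\ref{th:NekhFevMaps} (iterates stay in~$\A^n_{R_0}$) rather than part~(i) with a specific~$\rho$, and uses both forward and backward iterates $\absD{k}\le k_*$ to get $(2k_*+1)\mu(W)\le\mu(\A^n_{R_0})$, while you use only forward iterates; neither choice affects the argument.
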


\noindent
The proof is in Section~\ref{secPfThmUpperBds}.
It is a pretty direct consequence of Theorem~\ref{th:NekhFevMaps} and the preservation of the Lebesgue measure
by symplectic maps.
(It works for the case $n=1$ as well but, as already mentioned,
$\mu(W)=0$ in that case.)
Again, the reals~$\eps_*$ and~$c_*$ depend on~$h$ only through
$\normD{h}_{\al,L,R_0}$ and~$m$ such that~$h$ is $m$-convex in the
sense of~\eqref{eqdefmconv}.


\paragaD
Our lower bound result consists in constructing examples which possess
wandering domains whose Gromov capacity is estimated from below by an
exponentially small quantity with explicit exponents:
\label{secthmlowbdsC}
\begin{thm}[Lower bounds in examples of wandering domains]	\label{th:lowerbounds}
Let $n\ge2$ be integer. Let $\al>1$ and $L>0$ be real.
Let $h(r) \defeq \frac{1}{2}(r_1^2+\cdots+r_n^2)$. 
Then there exists a sequence $(\Phi_j)_{j\geq0}$ of exact symplectic
diffeomorphisms of~$\A^n$ such that
\begin{itemize}
\item
each~$\Phi_j$ has a wandering domain~$\jW_j$ contained in~$\A^n_3$,
\item
for $0<R<\infty$ the maps~$\Phi_j$ belong to $G^{\al,L}(\A^n_R,\A^n)$ 
and there exists $c>0$ such that
\begin{equation} 	\label{ineqlower}
\eps_j \defeq \NormD{\Phi_j - \Phi^h}_{\al,L,R}
\xrightarrow[j\to\infty]{}0
\quad \text{and} \quad
\CG(\jW_j)\ge \exp\Big(-c \Big(\frac{1}{\eps_j}\Big)^{\frac{1}{2(n-1)(\al-1)}}\Big)
\end{equation}
for all integers $j$.
\end{itemize}
\end{thm}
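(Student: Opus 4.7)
The overall strategy is the coupling scheme sketched in the introduction: I will produce $\Phi_j$ as an exact symplectic perturbation of $\Phi^h$ whose $q$-th iterate, on a product region $\A\times\jV$, equals the product of a rescaled standard map on $\A$ carrying a wandering disc with $G^q$, where $G$ is the near-integrable system on $\A^{n-1}$ provided by Theorem~\ref{th:perdomains}(ii). The construction depends on a large integer parameter $q = q_j$ to be tuned at the very end.

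Concretely, I would proceed in three steps. First, apply Theorem~\ref{th:perdomains}(ii) with $m = n-1$ to obtain, for each large $q$, an exact symplectic $G \in G^{\al,L}(\A^{n-1}_R,\A^{n-1})$, close to $\Phi^{\frac{1}{2}(r_2^2+\cdots+r_n^2)}$ and admitting a polydisc $\jV\subset\A^{n-1}_3$ of exact period $q$ under $G$, with disjoint iterates $G^i(\jV)$ for $0\le i<q$ and with an explicit lower bound for $\CG(\jV)$. Second, choose a $G^\al$ bump-type function $U$ on $\T$ so that the rescaled standard map
\[
\psi(\th_1,r_1) \defeq \big(\th_1+qr_1,\; r_1-\tfrac{1}{q}U'(\th_1+qr_1)\big)
\]
has a wandering disc $\jU\subset\A$ of area of order $1/q$; observe that $\psi$ factors as $\Phi^{\frac{1}{q}U(\th_1)}\circ(\Phi^{\frac{1}{2}r_1^2})^q$, exhibiting it naturally as a perturbation of the $q$-th iterate of the first-coordinate factor of $\Phi^h$. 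Third, invoke the coupling lemma of Section~\ref{SecPfWander}, with a $G^\al$ synchronising function $g$ on $\A^{n-1}$ localised on the finite $G$-orbit of $\jV$, to build $\Phi_j \in G^{\al,L}(\A^n_R,\A^n)$ whose $q$-th iterate agrees with $\psi\times G^q$ on $\A\times\jV$. The wandering property of $\jW_j \defeq \jU\times\jV$ follows by writing $k\in\Z^*$ as $k = aq+b$ with $0\le b<q$ and separating the cases $b\neq 0$ (where the second projection of $\Phi_j^k(\jW_j)$ lies in $G^b(\jV)$, disjoint from $\jV$) and $b=0$, $a\neq 0$ (where the first projection lies in $\psi^a(\jU)$, disjoint from $\jU$). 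Since $\jW_j$ is a polydisc, $\CG(\jW_j) = \min(\area(\jU),\CG(\jV))$.

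The main obstacle is the Gevrey bookkeeping needed to produce the announced exponent. The synchronising bump $g$ must vanish outside small tubes around the iterates $G^i(\jV)$, $0\le i<q$; in $G^{\al,L}$ with $\al>1$ such a $q$-peaked bump unavoidably costs a factor growing like $\exp(c\,q^{1/(\al-1)})$, which is the quantitative counterpart of the non-quasianalyticity of $G^\al$. This cost is partly offset by the $1/q$ factor in the potential of $\psi$ and by the small scale built into the coupling construction, but to reach the exponent $\frac{1}{2(n-1)(\al-1)}$ one has to tune $q = q_j\to\infty$ in precise balance between three competing quantities: the contribution of $U/q$ to $\eps_j$, the contribution of $g$ via the coupling to $\eps_j$, and the shape of the capacity lower bound $\CG(\jV)$ delivered by Theorem~\ref{th:perdomains}(ii) (which itself has a subexponential form in $q$ with a dimension-dependent exponent). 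Once the three exponents are matched, inverting the relation $\eps_j\leftrightarrow q_j$ and substituting into $\CG(\jW_j) = \min(\area(\jU),\CG(\jV))\gtrsim\min(1/q_j,\CG(\jV))$ yields the claimed bound~\eqref{ineqlower}. Carrying out this optimisation with constants independent of $j$, and verifying that the wandering domain is indeed contained in $\A^n_3$, is the delicate step.
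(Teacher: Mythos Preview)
Your overall architecture---coupling a rescaled standard map on $\A$ with the periodic polydisc from Theorem~\ref{th:perdomains} via the coupling lemma, then reading off the wandering property of the product---is exactly the paper's approach. But there is a genuine gap in your Gevrey bookkeeping that would prevent you from reaching the exponent $\frac{1}{2(n-1)(\al-1)}$.

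The error is your claim that the synchronising bump $g$ ``unavoidably costs a factor growing like $\exp(c\,q^{1/(\al-1)})$''. It does not. The whole point of the localisation conditions \eqref{eq:disjoint1}--\eqref{eq:disjoint2} (or \eqref{eq:disjointdeux} when $n=2$) built into Theorem~\ref{th:perdomains} is that the $q$-orbit of~$\jV$ is arranged so that, for each $1\le k\le q-1$, \emph{some} coordinate projection of $G^k(\jV)$ lies outside $\jB_{1/p_{j+\ell}}$. Hence a \emph{single} product bump $g_j=\eta_{p_{j+2}}\otimes\cdots\otimes\eta_{p_{j+n}}$ satisfies the synchronisation conditions, and its norm is controlled by the scales $1/p_{j+\ell}\sim N_j^{-1/(n-1)}$, giving $\normD{g_j}_{\al,L}\le\exp\big(\tilde c\,N_j^{1/((n-1)(\al-1))}\big)$---this is where the $(n-1)$ in the exponent comes from. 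Your estimate would be correct only for a one-dimensional orbit equidistributed at scale $1/q$; the prime-product structure of $N_j=p_{j+2}\cdots p_{j+n}$ is precisely what defeats that.

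This matters because the construction is genuinely two-parameter: $j$ (which fixes $N_j$ and hence $\eps_j\sim 1/N_j^2$) and $q$ (the period). The period $q_j$ is chosen \emph{exponentially large} in $N_j$, namely $q_j\sim N_j^2\normD{g_j}_{\al,L}$, so that the coupling term $\tfrac{1}{q_j}U\otimes g_j$ has norm $O(1/N_j^2)$. With your bump cost $\exp(c\,q^{1/(\al-1)})$ the coupling term would blow up doubly-exponentially and $\eps_j\to 0$ would fail. So the ``delicate step'' you flag is not merely delicate---as stated, your scheme cannot close. You need to exploit the product localisation of the periodic orbit to get the bump at scale $N_j^{-1/(n-1)}$ rather than $q^{-1}$; once that is in place, the optimisation is straightforward and yields \eqref{ineqlower}. (Minor point: for $n=2$ you must use part~(i) of Theorem~\ref{th:perdomains}, not part~(ii).)
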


\noindent
The proof is in Sections~\ref{sec:proofpseudopend}
and~\ref{SecPfWander};
see Section~\ref{sec:descriptstrctpfC} for a description of the structure of the proof.


Observe that, putting together \eqref{eq:meascap}
and~\eqref{inequpper}, we get
\[
\CG(\jW_j) \le K \, \mu(\jW_j)^{1/n} \le
\exp \Big( - c^* \Big(\frac{1}{\eps_j}\Big)^{\frac{1}{2n\al}} \Big),
\]
with appropriate $K,c^*>0$, for $j$ large enough,
which is compatible with~\eqref{ineqlower} because 
$\frac{1}{2n\al} \le \frac{1}{2(n-1)(\al-1)}$.
Notice also that our examples are constructed only in the non-quasianalytic case $\al>1$.
See Section~\ref{par:optimal} for more comments on the previous inequalities and possible
extensions to the analytic case.


Our method is related to the one developed in \cite{hms} for
estimating the maximal speed of Arnold diffusion orbits and in
\cite{ms} for constructing the first examples of near-integrable systems
with wandering domains.
A common feature of the examples in \cite{ms} and in
Theorem~\ref{th:lowerbounds} is that these wandering domains follow
complicated paths in the phase space, located in the complement of the
set of KAM tori.


\subsection{Specific form of our examples and elliptic islands---Theorem~\ref{th:perdomains}}
\label{sec:descriptstrctpfC}


We now indicate the structure of the proof of
Theorem~\ref{th:lowerbounds} to be found in Sections~\ref{sec:proofpseudopend}
and~\ref{SecPfWander}.


\paragaD
%
%
Given a function $h\in C^\infty(\R^n)$ and
real constants $\al\ge1$ and $L>0$, 
we set
\begin{align}	\label{eq:defpertset}
\Palm & \defeq
\Big\{ \Phi^{u_m}\circ\cdots\circ\Phi^{u_1}\circ\Phi^{h+u_0} \mid
u_0,u_1,\ldots, u_m \in G^{\al,L}(\A^n) \Big\}, \qquad m\ge1 \\[1ex]
\Pal & \defeq \bigcup_{m\ge1} \Palm
\end{align}
(observe that the Hamiltonian functions $h+u_0,u_1,\ldots,u_m$ generate
complete vector fields because their partial derivatives with respect to the angles are bounded;
the notation is well-defined, since giving the diffeomorphism~$\Phi^h$
allows one to compute the gradient $\nabla h$ mod $\Z^n$ and therefore
the diffeomorphism $\Phi^{h+u_0}$ for any smooth~$u_0$).
Any $\Psi \in \Pal$ is an exact symplectic map which can be viewed as
a perturbation of~$\Phi^h$, with a ``deviation'' defined as
\begin{equation}	\label{eq:defdist}
\de^{\al,L}(\Psi,\Phi^h) \defeq
\Inf\Big\{\sum_{k=0}^m\normD{u_k}_{\al,L,\infty}\mid 
m\ge1, \;
(u_0,u_1,\ldots, u_m)\in  \jU_m^{\al,L}(\Psi,\Phi^h)\Big\},
\end{equation}
where 
$\jU_m^{\al,L}(\Psi,\Phi^h) \defeq
\big\{(u_0,u_1,\ldots, u_m)\in\big(G^{\al,L}(\A^n)\big)^{m+1}\mid 
\Phi^{u_m}\circ\cdots\circ\Phi^{u_1}\circ\Phi^{h+u_0}=\Psi\big\}$.
One can check that the deviation vanishes if and only if $\Psi=\Phi^h$.


\paragaD
If $h\in G^{\al,L}(\ov B_{R_0})$, then the elements of $\Pal$ are
Gevrey maps and the deviation can be compared to the distances
$\NormD{\Psi-\Phi^h}_{\al,L_*,R}$. More precisely,
%
%
\begin{prop}	\label{propPsiPhih}
Let $n\ge1$ be integer. Let $\al\ge1$ and $L,R_0>0$ be real.
Let $h\in C^\infty(\R^n) \cap G^{\al,L}(\ov B_{R_0})$.
Then for $0<R<R_0$ there exist $\eps_*,L_*,C_*>0$, with $L_*<L$, such that
\begin{equation}	\label{ineqPsiPhih}
\Psi \in \Pal \ens\text{and}\ens
\de^{\al,L}(\Psi,\Phi^h) < \eps_* 
\ens\Rightarrow\ens
\NormD{\Psi-\Phi^h}_{\al,L_*,R} \le C_*\, \de^{\al,L}(\Psi,\Phi^h).
\end{equation}
\end{prop}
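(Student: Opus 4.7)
The plan is to exploit the decompositions defining $\de^{\al,L}(\Psi,\Phi^h)$: given any decomposition $\Psi = \Phi^{u_m}\circ\cdots\circ\Phi^{u_1}\circ\Phi^{h+u_0}$ with $\sum_k\normD{u_k}_{\al,L,\infty}$ close to the deviation, bound the Gevrey distance $\NormD{\Psi-\Phi^h}_{\al,L_*,R}$ by a fixed multiple of this sum, and then take the infimum. The two main analytic ingredients are the standard Gevrey flow estimate---that for $u\in G^{\al,L}(\A^n)$ of small enough norm, $\Phi^u$ maps $\A^n_R$ into $\A^n_{R'}$ (with $R<R'<R_0$) and satisfies $\NormD{\Phi^u-\Id}_{\al,L_1,R}\le C\normD{u}_{\al,L,\infty}$ and $\NormD{\Phi^{h+u_0}-\Phi^h}_{\al,L_1,R}\le C\normD{u_0}_{\al,L,\infty}$ for some $L_1<L$---and the Gevrey composition (substitution) estimate---that if $F\col\A^n_R\to\A^n_{R'}$ satisfies $\NormD{F-\Id}_{\al,L_1,R}\le K$, then $g\mapsto g\circ F$ is bounded $G^{\al,L_1}(\A^n_{R'})\to G^{\al,L_*}(\A^n_R)$ with operator norm depending only on $K$ and on the chosen Gevrey parameters. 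Both ingredients are quantitative versions of facts recalled in Appendix~\ref{secappremind}; the case of the flow estimate is the standard Cauchy-type bound for Hamiltonian vector fields in Gevrey classes.

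Having fixed intermediate parameters $R<R'<R_0$ and $L>L_1>L_*>0$ and a small threshold $\eps_*$, I would argue as follows. Given a decomposition with $\sum_{k=0}^m\normD{u_k}_{\al,L,\infty}\le 2\de^{\al,L}(\Psi,\Phi^h)<2\eps_*$, define $F_0\defeq \Phi^{h+u_0}$ and $F_j\defeq \Phi^{u_j}\circ F_{j-1}$ for $j=1,\ldots,m$, so that $F_m=\Psi$. An induction on $j$ using the flow estimate shows that each $F_j$ sends $\A^n_R$ into $\A^n_{R'}$ and satisfies a uniform Gevrey bound $\NormD{F_j-\Phi^h}_{\al,L_1,R}\le K$ with $K$ independent of $j$ and $m$, because the cumulative displacement is controlled by $C\sum_{k=0}^m\normD{u_k}_{\al,L,\infty}\le 2C\eps_*$, which is made small enough to keep trajectories inside the region where the composition estimate applies.

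Then I would telescope: writing
\begin{equation*}
\Psi-\Phi^h=(\Phi^{h+u_0}-\Phi^h)+\sum_{j=1}^m(\Phi^{u_j}-\Id)\circ F_{j-1},
\end{equation*}
apply the triangle inequality, bound $\NormD{\Phi^{h+u_0}-\Phi^h}_{\al,L_*,R}\le C\normD{u_0}_{\al,L,\infty}$ by the flow estimate, and bound each summand by combining the composition estimate (applied to $g=\Phi^{u_j}-\Id$ and $F=F_{j-1}$) with the flow estimate applied to $u_j$, obtaining $\NormD{(\Phi^{u_j}-\Id)\circ F_{j-1}}_{\al,L_*,R}\le C'\normD{u_j}_{\al,L,\infty}$ with $C'$ depending only on $K$. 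Summing yields $\NormD{\Psi-\Phi^h}_{\al,L_*,R}\le C_*\sum_{k=0}^m\normD{u_k}_{\al,L,\infty}$; taking the infimum over admissible decompositions gives~\eqref{ineqPsiPhih}.

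The main obstacle is that $m$ is unbounded, so a naive composition argument would give constants growing like $C^m$. The point is that the telescoping reduces the estimate to a sum of $m+1$ terms, each controlled by the composition estimate applied to $F_{j-1}$; the constant in that estimate is uniform in $j$ because the $F_{j-1}$ all lie in a fixed Gevrey ball around $\Phi^h$---a property that follows from the smallness of the total deviation $\eps_*$ and the additivity of the displacements along the telescoping, rather than from iterating a per-step composition estimate. The two successive losses $L\to L_1\to L_*$ are the only Gevrey-radius shrinkages incurred, independently of $m$.
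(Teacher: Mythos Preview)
Your approach is essentially the same as the paper's. The paper also telescopes, setting $\xi^{[j]}\defeq F_j-\Phi^h$ (in lifted form) and proving by a single induction that $\NormD{\xi^{[j]}}_{\al,L_*,R}\le C_*\sum_{k=0}^j\normD{u_k}_{\al,L,\infty}$, using precisely the Gevrey flow estimate (Lemma~\ref{lemGevnearintflow}) and the composition estimate (Proposition~\ref{propCompos}); the uniform-in-$m$ mechanism you describe is exactly what makes the induction close without accumulating constants.

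One small bookkeeping correction: your step~1 induction, bounding $F_j-\Phi^h$ at radius~$L_1$, does not close as stated, because your own composition estimate outputs at $L_*<L_1$, so at the next step you only control $F_{j+1}-\Phi^h$ at~$L_*$, not~$L_1$. The fix (which the paper adopts) is to run the whole induction at~$L_*$ from the start: the flow estimate produces $\Phi^{u_{j+1}}-\Id$ at an intermediate radius $\La\in(L_*,L)$, and Proposition~\ref{propCompos} composes an outer function controlled at~$\La$ with an inner map whose $\cN^*$ is controlled at~$L_*$, giving output at~$L_*$. Your steps~1 and~2 then collapse into a single induction, and the two radius losses $L\to\La\to L_*$ are exactly as you anticipated.
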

%
%
\noindent
The proof of Proposition~\ref{propPsiPhih} is in Appendix~\ref{subsecpfpropPsiPhih}
($\eps_*,L_*,C_*$ depend on~$h$ only through $\normD{h}_{\al,L,R_0}$).


As a consequence,
Theorems~\ref{th:NekhFevMaps} and~\ref{th:upperbounds} apply to the
maps of $\Pal$ with $\de^{\al,L}(\Psi,\Phi^h)$ small enough,
and the role of~$\eps$ in the statements can be played by
$\de^{\al,L}(\Psi,\Phi^h)$ instead of
$\NormD{\Psi-\Phi^h}_{\al,L,R_0}$.


\paragaD    \label{paragabetterC}
Theorem~\ref{th:lowerbounds} will follow from a more precise statement,
\textbf{Theorem~\ref{th:lowerbounds}'} stated in
Section~\ref{sec:statementthmCp}.
The unperturbed system $h \defeq \dem(r_1^2+\cdots+r_n^2)$ and the
constants $\al>1$, $L>0$ being fixed, 
Theorem~\ref{th:lowerbounds}' will yield very explicit maps
$\Phi_j\in\Pa{2}(\Phi^h)$ when $n=2$, $\Phi_j\in\Pa{3}(\Phi^h)$ when
$n\geq3$, 
with wandering domains $\jW_j \subset \A^n_3$,
and a real $c_*>0$ such that
\begin{equation}	\label{ineqbetterlower}
\eps'_j \defeq \de^{\al,L}(\Phi_j,\Phi^h)
\xrightarrow[j\to\infty]{}0,
\qquad
\CG(\jW_j)\geq \exp(-c_* \Big(\frac{1}{\eps'_j}\Big)^{\frac{1}{2(n-1)(\al-1)}}).
\end{equation}
By Proposition~\ref{propPsiPhih},  
\eqref{ineqbetterlower} implies the property~\eqref{ineqlower} for
every finite~$R$, hence Theorem~\ref{th:lowerbounds} is an immediate consequence
of Theorem~\ref{th:lowerbounds}'.

%

The domains~$\jW_j$ will be polydiscs, \ie product sets of the form 
$\jD\zz1_j \times \cdots \times \jD\zz n_j$ 
with discs\footnote{When we use the word ``disc'', unless otherwise
  specified, we mean any bounded and simply
  connected domain in~$\A$ or in~$\R^2$.}
$\jD\zz1_j, \ldots, \jD\zz n_j \subset \A$.
This product structure is an essential  feature in the use of the
``coupling lemma'' of Section~\ref{Sec:Couplem}, which is a basic ingredient of the proof of
Theorem~\ref{th:lowerbounds}'.

Note that the Gromov capacity of a polydisc is given by the formula
\beq   \label{eqGromovCapProduct}
\CG(\jD\zz 1 \times \cdots \times \jD\zz n) =
\min\big\{
\area(\jD\zz 1), \ldots, \area(\jD\zz n)
\big\}.
\eeq
(One inequality follows from the fact that, in dimension~$2$, Gromov
capacity and area coincide;
the reverse inequality is a consequence of Gromov's ``non-squeezing
theorem''---see \cite{McS}.)


\paragaD 
As another ingredient of the proof of
Theorem~\ref{th:lowerbounds}', we shall have to devise an additional result %
on the construction of examples with periodic domains,
%
%
which is interesting in itself and connected with other aspects of transport
phenomena in near-integrable Hamiltonian systems.

To ease the comparison with Section~\ref{SecPfWander}, we present this result
in~$\A^{n-1}$ (still with $n\ge2$), labelling the coordinates as
$(\th_2,r_2),\ldots,(\th_n,r_n)$, and set
\[
h(r) \defeq \Demi (r_2^2 + \cdots + r_n^2).
\]
For an integer $q\ge1$, we call \emph{$q$-periodic polydisc} of a
diffeomorphism~$\phi$ of~$\A^{n-1}$ a polydisc~$\jD$ of~$\A^{n-1}$
such that $\phi^q(\jD)=\jD$.
We introduce the notation
\beq   \label{eqdefApdBd}
\A^+_d \defeq \T\times [0,d] \subset \A, \quad
	\Bd \defeq \big\{\, \big( \angD{\th}, r \big) \mid \th \in
        [-d,d],\; r \in \R \,\big\} \subset \A
\quad \text{for any real $d>0$.}
\eeq


\label{secthmperdomD}
\begin{thm}[Periodic domains in $\A^{n-1}$]	\label{th:perdomains}
Let $\al>1$ and $L>0$ be real, and let $n\ge 2$ be integer.
Then there exist real numbers $c, C_1, C_2, C_3>0$, a non-negative
integer~$j_0$ and a sequence
$(\Psi_{j,q})$ of exact symplectic diffeomorphisms of $\A^{n-1}$
belonging to $\Pal$
defined for 
\beq
j,q\in\N, \quad 
j \ge j_0, \quad
q\ge C_1 N_j,
\eeq
with deviations
\beq\label{eq:devPsij}
\de^{\al,L}(\Psi_{j,q},\Phi^h) \le \frac{C_2}{N_j^2},
\eeq
where
\beq
N_j \defeq  p_{j+2} \cdots p_{j+n},
\eeq
$(p_j)_{j\ge0}$ denoting the prime number sequence,
so that:
\begin{enumerate}[(i)]
\item
If $n=2$,
each~$\Psi_{j,q}$ is in $\Paa1$ and has a $q$-periodic disc
$\jD := \jD_{j,q} \subset \A_3$ with all its iterates also contained
in~$\A_3$, such that
\begin{equation}
\label{eq:capaDdeux}
\CG(\jD) \ge C_3 \min \bigg\{%
\frac{N_j^2}{q^5}, \,
\exp\Big( -c N_j^{\frac{1}{\al-1}} \Big)
\bigg\}
\end{equation}
and
\begin{equation}
\label{eq:disjointdeux}
\jD \subset \A^+_{\frac{4}{N_j}} \cap  \jB_{\frac{1}{2p_{j+2}}}, \qquad
\Psi_{j,q}^k(\jD) \cap \jB_{\frac{1}{p_{j+2}}} = \varnothing 
\quad \text{for $1\le k \le q-1$}.
\end{equation}
\item
If $n\ge3$,
each~$\Psi_{j,q}$ is in $\Paa2$ and,
for~$q$ integer multiple of~$N_j$, 
$\Psi_{j,q}$ has a $q$-periodic polydisc $\jD := \jD_{j,q}$ whose iterates are polydiscs:
\[
 \Psi_{j,q}^k(\jD) = \jD\zz{2,k} \times \cdots \times \jD\zz{n,k},
\qquad k\in \Z,
\]
with $\Psi_{j,q}^k(\jD) \subset \A_3^{n-1}$ for all~$k$,
such that
\begin{equation}
\label{eq:capaD}
\CG(\jD) \ge C_3 \min \bigg\{%
\frac{1}{q^5} N_j^{4-\frac{2}{n-1}}, \,
\exp\Big( -c N_j^{\frac{1}{(n-1)(\al-1)}} \Big)
\bigg\},
\end{equation}
the projections of the polydisc~$\jD$ satisfy 
\begin{equation}
\label{eq:disjoint1}
\jD\zz{2,0} \subset \A^+_{\frac{4}{N_j}} \cap  \jB_{\frac{1}{2p_{j+2}}}, \quad
\jD\zz{3,0} \subset \jB_{\frac{1}{2p_{j+3}}}, \ldots, \quad
\jD\zz{n,0} \subset \jB_{\frac{1}{2p_{j+n}}}, 
\end{equation}
and, for $1\le k \le q-1$, those of the polydisc $\Psi_{j,q}^k(\jD)$ satisfy
\begin{equation}
\label{eq:disjoint2}
\text{$\exists \ell\in\{2,\ldots,n\}$ such that}\ens
\jD^{[\ell,k]} \cap \jB_{\frac{1}{p_{j+\ell}}} = \varnothing. 
\end{equation}
\end{enumerate}
\end{thm}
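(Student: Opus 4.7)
The proof splits into the two-dimensional statement~(i) and the higher-dimensional statement~(ii), in that order; my plan is to reduce~(i) to the parametrized Theorem~\ref{th:varpseudopend}, and to obtain~(ii) by coupling the map from~(i) with an elementary construction on the complementary factor via the coupling lemma of Section~\ref{Sec:Couplem}. For~(i), Theorem~\ref{th:varpseudopend} produces, for each $j\ge j_0$ and each admissible period $q\ge C_1 N_j$, an exact symplectic perturbation of a pseudo-pendulum $P(\theta,r)=\frac{1}{2}r^2+\frac{1}{N_j^2}V(\theta)$ (where $V$ is a Gevrey flat-top bump on~$\T$ whose separatrix is contained in~$\A^+_{4/N_j}$) having an elliptic $q$-periodic island $\jD_{j,q}$ close to that separatrix, with the localization property~\eqref{eq:disjointdeux} and the capacity bound~\eqref{eq:capaDdeux}; the resulting map has the form $\Phi^{u_1}\circ\Phi^{h+u_0}$ and therefore belongs to~$\Paa1$.

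For~(ii) I view $\A^{n-1}$ as $\A\times\A^{n-2}$, with coordinates $(\theta_2,r_2)$ on the first factor and $(\theta_3,\ldots,\theta_n,r_3,\ldots,r_n)$ on the second, and write $h=\frac{1}{2}r_2^2+h'(r_3,\ldots,r_n)$ with $h'=\frac{1}{2}(r_3^2+\cdots+r_n^2)$. On the first factor I take the map $\Psi^{(2)}_{j,q}\in\Paa1$ from~(i); on the second factor I build a small exact symplectic perturbation $G_j$ of $\Phi^{h'}$ having a polydisc $\jD\zz3\times\cdots\times\jD\zz n$ with each $\jD\zz\ell\subset\jB_{1/(2p_{j+\ell})}$ and rotation number on the $\ell$-th factor equal to $k_\ell/p_{j+\ell}$ for some integer $k_\ell$ coprime to $p_{j+\ell}$, so that $\jD\zz\ell$ is $p_{j+\ell}$-periodic and disjoint from $\jB_{1/p_{j+\ell}}$ at every intermediate iterate. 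Since the primes $p_{j+3},\ldots,p_{j+n}$ are pairwise distinct, the product disc is $(p_{j+3}\cdots p_{j+n})$-periodic, hence $q$-periodic for every multiple $q$ of $N_j=p_{j+2}\cdots p_{j+n}$. The coupling lemma then yields $\Psi_{j,q}\in\Paa2$ whose $q$-th iterate coincides on the polydisc $\jD_{j,q}^{(2)}\times\jD\zz3\times\cdots\times\jD\zz n$ with the product map $(\Psi^{(2)}_{j,q})^q\times G_j^q$, so this polydisc is $q$-periodic for $\Psi_{j,q}$ and its iterates have the required product form.

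The localization alternative~\eqref{eq:disjoint2} is then obtained by a residue argument: for any $k\in\{1,\ldots,q-1\}$ there exists $\ell\in\{2,\ldots,n\}$ with $k\not\equiv 0\pmod{p_{j+\ell}}$, and for that $\ell$ the $\ell$-th projected iterate is disjoint from $\jB_{1/p_{j+\ell}}$, by~\eqref{eq:disjointdeux} when $\ell=2$ and by construction when $\ell\ge 3$. The deviation bound~\eqref{eq:devPsij} is obtained by bookkeeping the Gevrey norms of the three generating Hamiltonians entering the coupling construction; the bump functions needed to ``synchronize'' the two factor dynamics have Gevrey norms of order $\exp(cN_j^{1/(\al-1)})$, but they are multiplied by prefactors of order $N_j^{-2}$ coming from the scale separation, so their contribution remains $O(N_j^{-2})$. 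The capacity bound~\eqref{eq:capaD} follows from the product formula~\eqref{eqGromovCapProduct}: the minimum of the factor areas is either the area of $\jD_{j,q}^{(2)}$, estimated by~\eqref{eq:capaDdeux}, or the area of some disc $\jD\zz\ell\subset\jB_{1/(2p_{j+\ell})}$ for $\ell\ge 3$, which can be chosen of area $\gtrsim p_{j+\ell}^{-2}\gtrsim N_j^{-2/(n-1)}$; comparing both possibilities with the $N_j^2/q^5$ factor coming from~\eqref{eq:capaDdeux} yields the stated bound.

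The truly difficult part of the construction is Theorem~\ref{th:varpseudopend} itself, which is the object of Section~\ref{sec:proofpseudopend}: bounding the area of the elliptic island from below requires computing a parametrized Birkhoff normal form up to an order of magnitude $N_j^{1/(\al-1)}$ with tightly controlled Gevrey norms, and then feeding the resulting estimates into Herman's quantitative translated-curve theorem so as to produce invariant curves enclosing the island. By comparison the coupling arguments sketched above are essentially routine once the coupling lemma and the two-dimensional construction are available.
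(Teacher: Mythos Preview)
Your overall architecture matches the paper's---reduce (i) to Theorem~\ref{th:varpseudopend} with a suitable choice of the tuning parameter, then couple a two-dimensional map with a simple product system on the remaining factors for (ii)---but there is a genuine gap in step (ii). You propose to place ``the map $\Psi^{(2)}_{j,q}$ from (i)'' on the first factor. In (i), however, $N_j=p_{j+2}$ and that map carries the potential $V/p_{j+2}^2$ inside $\Phi^{h+u_0}$, so its deviation from $\Phi^{\frac12 r_2^2}$ is of order $1/p_{j+2}^2$; for (ii) you must achieve deviation $O(1/N_j^2)$ with $N_j=p_{j+2}\cdots p_{j+n}$, and no choice of coupling term $f\otimes g$ can compensate for a term of size $\|V\|_{\al,L}/p_{j+2}^2$ already present in~$u_0$. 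If instead you invoke Theorem~\ref{th:varpseudopend} with the full $N=N_j$ so that the potential becomes $V/N_j^2$, the deviation is fine but the localization~\eqref{eqlocalizDNm} now only guarantees that iterates avoid $\jB_{1/N_j}$, not $\jB_{1/p_{j+2}}$, and your residue argument cannot save~\eqref{eq:disjoint2}. (That argument is in any case wrong as stated: for $q=2N_j$ and $k=N_j$, every $p_{j+\ell}$ divides~$k$.)

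The paper resolves this localization-versus-deviation tension by a rescaling (Lemma~\ref{lem:scalingq}): apply Theorem~\ref{th:varpseudopend} with parameter $M=p_{j+2}$ to obtain the correct $\jB_{1/p_{j+2}}$-localization, then conjugate by $\sigma:(\theta,r)\mapsto(\theta,qr)$ with $q=p_{j+3}\cdots p_{j+n}$. This simultaneously converts the potential $V/p_{j+2}^2$ into $V/N_j^2$ and rewrites the first-factor map as $\Phi^{f}\circ F^{q}$ with $F=\Phi^{\frac12 r^2+V/N_j^2}$, which is exactly the structure Corollary~\ref{cor:couplingper} needs; the first-factor disc becomes $p$-periodic for $\Phi^f\circ F^q$ with $p=Q/q$, and the corollary delivers the $Q$-periodic product with the right localization. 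Two smaller points: the auxiliary discs $\jD^{[\ell]}$ for $\ell\ge3$ cannot have area $\gtrsim p_{j+\ell}^{-2}$ while keeping the second-factor deviation $O(N_j^{-2})$, since the perturbation $W_{p_{j+\ell}}$ creating them has Gevrey norm $\exp\big(c\,p_{j+\ell}^{1/(\al-1)}\big)$---in the paper these areas are exponentially small and produce the second branch of the minimum in~\eqref{eq:capaD}; and the Birkhoff normal form in the proof of Theorem~\ref{th:varpseudopend} is computed to a \emph{fixed} finite order (at least~$8$), not to order $N_j^{1/(\al-1)}$.
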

\noindent
The proof of Theorem~\ref{th:perdomains} is spread over
Sections~\ref{sec:proofpseudopend} and~\ref{SecPfWander}.
More precisely, Case~(i), \ie the two-dimensional case, is proved in
Section~\ref{secPfThperi}, based on an auxiliary result;
this auxiliary result is also used in Section~\ref{ssec:perdomains},
together with the ``coupling lemma'' (Lemma~\ref{Lem:cl}), to prove
Case~(ii).

Theorem~\ref{th:perdomains} is used in Section~\ref{ssec:proofn}
(again with the help of the coupling lemma) to prove
Theorem~\ref{th:lowerbounds}',
with an appropriate choice of $q=q_j$ exponentially large with respect to~$N_j$.

\begin{rem}
Fix $j\ge j_0$ and~$q$ as in the statement of
Theorem~\ref{th:perdomains}. 
Because of condition~\eqref{eq:disjointdeux} or
conditions~\eqref{eq:disjoint1}--\eqref{eq:disjoint2}, the sets
$\Psi_{j,q}^k(\jD_{j,q})$, $k=0,1,\ldots,q-1$, are pairwise disjoint.
This implies that~$q$ is the minimal period of the periodic
polydisc~$\jD_{j,q}$.
This also implies an upper bound for the Lebesgue measure of this
polydisc:
\[
\mu(\jD_{j,q}) \le \frac{\mu(\A_3^{n-1})}{q}.
\]
Indeed, the $q$ pairwise disjoint sets $\Psi_{j,q}^k(\jD_{j,q})$ have the
same Lebesgue measure and are all contained in~$\A_3^{n-1}$.
It follows that the lower bound in~\eqref{eq:capaDdeux} or~\eqref{eq:capaD} has to depend
on~$q$, it cannot depend on~$j$ alone, because $q$ is allowed to be
arbitrarily large and~\eqref{eq:meascap} implies
\[
\CG(\jD_{j,q}) \le \frac{\pi}{q^{\frac{1}{n-1}}} 
\bigg(\frac{\mu(\A_3^{n-1})}{{\rm Vol\,}(B^{2(n-1)}(1))}\bigg)^{\frac{1}{n-1}}.
\]
\end{rem}

\paragaD 
The aforementioned auxiliary result on which the proof of
Theorem~\ref{th:perdomains}(i) is based is \textbf{Theorem~\ref{th:varpseudopend}};
this much more precise statement is the object of Section~\ref{sec:proofpseudopend},
it is the analytical core of our
method. 

The (quite lengthy) proof of Theorem~\ref{th:varpseudopend}
relies on the construction of a suitable perturbation of the time-one
map of a ``pseudo-pendulum'' on~$\A$, of the form
\beq   \label{eqdefpseudoPendP}
P(\th,r)=\dem r^2+\frac{1}{N_j^2} V(\th),
\eeq
where~$V$ is a (specially designed) potential function on $\T$. 
Both~$V$ and the perturbation can be made very explicit.
The effect of the perturbation is to create
elliptic islands around the periodic points located near the separatrix of the
pseudo-pendulum.  The main difficulty in estimating the size of these islands is that
one has to use Herman's quantitative version of the two dimensional KAM 
theorem~(\cite{herman2}), whose implementation requires
the computation of high order parametrized normal forms, the parameters
being the size of the perturbation and the period of the island.%
\footnote{We insist on being able to take the period of the elliptic island arbitrarily
  large.
If this requirement were dropped, a much simpler construction
would be available---see the auxiliary Proposition~\ref{prop:perellipse}.}

Another peculiarity of our systems is that the potential $V$ has {\em degenerate} 
maxima, which create degenerate stationary points for the Hamiltonian
vector field generated by~\eqref{eqdefpseudoPendP}. This is crucial in order to find elliptic islands 
with ``exponentially small'' area: a nondegerate situation would yield a double
exponential in the estimates.


\subsection{Further comments}


\paragaD  
Observe that in Theorem~\ref{th:upperbounds}, we impose a priori that the
wandering set~$W$ be contained in a fixed compact $\A^n_{R}$.

Suprisingly enough, as soon as $n\geq 3$, this is necessary to ensure that the measure
of~$W$ is finite.
Indeed, given $\al>1$ and $L>0$, for any $\eps,R_0>0$ we can exhibit
(by \cite{ms} or by Theorem~\ref{th:lowerbounds}) a near-integrable
system~$\Psi$ on~$\A^2$ with a non-empty wandering domain~$W$, such that
$\NormD{\Psi - \Phi^{\dem(r_1^2+r_2^2)}}_{\al,L,R_0}<\eps$.
Therefore, when $n\geq 3$, the direct product $\ha\Psi=\Psi\times
\Phi^{\dem(r_3^2+\cdots+r_n^2)}$ on $\A^n$ admits the wandering domain
$W\times\A^{n-2}$, which is of infinite measure, while
$\NormD{\ha\Psi - \Phi^{\dem(r_1^2+\cdots+r_n^2)}}<\eps$.
As a consequence, by taking subsets of $W\times\A^{n-2}$, \emph{one may obtain 
for the near-integrable system~$\ha\Psi$
wandering domains of arbitrary measure between~$0$ and~$\infty$
inclusive}.



\paragaD  
In any case, this leaves open the question of the existence of upper
bounds for the Gromov capacity of an arbitrary wandering set~$W$
(without the restriction $W \subset \A^n_R$):
is it always finite? is it exponentially small?

Notice that a wandering set has empty intersection with the set of KAM
tori, so a related question is the question of the finiteness of
$\CG(\T^n\times(\R^n\setminus\jK)\big)$, where $\jK$ is the set of all
vectors satisfying a fixed Diophantine condition. Due to the intricate
structure of this set, it could be worthwile to produce a simpler model
for this line of questions. For instance,  what can be said on the finiteness
of {\em any} symplectic
capacity of the open subset
\[
\T^n\times(\R^n\setminus\Z^n) \subset T^*\T^n?
\]
This question seems to be completely open.


\paragaD\label{par:optimal}
Another open question is that of the optimal exponents that one could obtain in
inequalities such as~\eqref{inequpper} and~\eqref{ineqlower}:
to sharpen Theorem~\ref{th:upperbounds} would mean to replace
the exponent $\frac{1}{2n\al}$ by a larger exponent~$a_{\text{up}}$
in~\eqref{inequpper}, 
and to sharpen Theorem~\ref{th:lowerbounds} would mean to replace
the exponent $\frac{1}{2(n-1)(\al-1)}$ by a smaller exponent~$a_{\text{low}}$
in~\eqref{ineqlower};
how large can one take the first exponent and how small can one take the second?
Of course, one would still have $a_{\text{up}} \le a_{\text{low}}$; if the
equality $a_{\text{up}} = a_{\text{low}}$ could be realised, the resulting
exponent should certainly be called ``optimal''.

The problem is clearly related to the possibility of contructing
examples in the analytic category $\al=1$, since the factor $\al-1$
(whose appearance is directly linked to our use of Gevrey bump functions)
creates a major discrepancy between our lower and upper bounds when
$\al\to 1$.  We believe that such constructions are possible, at the
cost of relaxing the constraint that our wandering subsets be {\em
  domains}.
%


\vfil

\pagebreak

\section{Stability theory for Gevrey near-integrable maps}
\label{secGevNekhMaps}


We develop in this section a perturbation theory for Gevrey
\emph{discrete} dynamical systems, based on the corresponding theory available
for Gevrey Hamiltonian \emph{flows}. 
To transfer the results from the latter to the former, we first prove
a Gevrey suspension theorem (Theorem~\ref{thmsusp}), according to
which any Gevrey near-integrable map can be viewed as the time-one
map of a Gevrey near-integrable Hamiltonian vector field.
%
%
This will allow us to prove in Section~\ref{secPfThmNekhMaps} the Nekhoroshev Theorem for Gevrey maps
(Theorem~\ref{th:NekhFevMaps}),
from which we will derive upper bounds for the measure of their wandering sets
(Theorem~\ref{th:upperbounds}) in Section~\ref{secPfThmUpperBds}.

\subsection{Embedding in a Hamiltonian flow---Theorem~\ref{thmsusp}}	\label{subsecsusp}


\begin{Def}
Given an exact symplectic map $\Psi \col \A^n_R \to \T^n\times\R^n$, we call
suspension of~$\Psi$ any $1$-periodic time-dependent Hamiltonian function
$H \col \Om\times\T \to \R$, where~$\Om$ is a neighbourhood of~$\A^n_R$,
for which the flow map between the times $t=0$ and $t=1$ is well-defined on
$\A^n_R$ and coincides with~$\Psi$.
\end{Def}


We adapt the definitions~\eqref{eq:defGalLR} and~\eqref{eq:defGalL} to
deal with $C^\infty$ functions depending on an extra variable $t\in\T$
or $t\in[0,1]$:
\begin{gather}
\label{eqdefGalLT}
G^{\al,L}(\T) \defeq \{ \eta\in C^\infty(\T) \mid \normD{\eta}_{\al,L} <\infty \},
\quad
\normD{\eta}_{\al,L} \defeq \sum_{\ell\in\N} \frac{L^{\ell\al}}{\ell !^\al} \normD{\pa^\ell\eta}_{C^0(\T)}
\\[1ex]
\begin{multlined}[t][.88\displaywidth]
G^{\al,L}(\A^n_R\times\T) \defeq \{ f\in C^\infty(\A^n_R\times\T) \mid
\normD{f}_{\al,L,R} <\infty \}, 
\\[-.5ex]
\normD{f}_{\al,L,R} \defeq \sum_{\ell\in\N^{2n+1}}
\frac{L^{\absD{\ell}\al}}{\ell !^\al} \normD{\pa^\ell f}_{C^0(\A^n_R)}
\end{multlined}
\end{gather}
and similarly for $G^{\al,L}([0,1])$ and  $G^{\al,L}(\A^n_R\times[0,1])$.


\begin{thm}[Suspension theorem]	\label{thmsusp}
Let $n$ be a positive integer. Let $\al\ge1$, $L_0,R,R_0,E>0$ be reals such that
$R<R_0$.
Then there exist $\eps_*,L_*,C_*>0$ such that,
for every $h\in G^{\al,L_0}(\ov B_{R_0})$ with
$\normD{h}_{\al,L_0,R_0} \le E$,
the restriction to~$\A^n_R$ of any exact symplectic map
$\Psi \in G^{\al,L_0}(\A^n_{R_0}, \A^n)$ such that
\[
\eps \defeq \NormD{\Psi-\Phi^h}_{\al,L_0,R_0} \leq \eps_*
\] 
admits a suspension 
$H = H(\th,r,t) \in G^{\al,L_*}(\A_R^n\times\T)$
for which
\begin{equation}	\label{ineqnormHh}
\normD{H-h}_{\al,L_*,R} \le C_* \eps.
\end{equation}
\end{thm}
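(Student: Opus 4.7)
The plan is to adapt Douady's generating-function method to the Gevrey category, building on the analytic construction of \cite{KP}. I look for $H$ in the form $H(\th, r, t) = h(r) + V(\th, r, t)$ with $V$ small: since $\Psi$ is close to $\Phi^h$, the composition $\Delta \defeq (\Phi^h)^{-1} \circ \Psi$ is close to the identity, and writing the sought flow as $\Phi^H_{[0,t]} = \Phi^h_{[0,t]} \circ U(t)$, a routine interaction-picture computation shows that $U$ is generated by an auxiliary Hamiltonian $\ti V_t$ related to $V$ by $V(\th, r, t) = \ti V_t\big((\Phi^h_{[0,t]})^{-1}(\th, r)\big) = \ti V_t(\th - t\na h(r), r)$. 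The problem thus reduces to constructing a small, Gevrey, $1$-periodic-in-$t$ Hamiltonian $\ti V_t$ whose time-one flow is exactly $\Delta$, and then setting $H(\th, r, t) := h(r) + \ti V_t(\th - t \na h(r), r)$.

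First, I would use the exactness of $\Psi$ (inherited by $\Delta$) together with the closeness of $\Delta$ to the identity to show that $\Delta$ admits a global generating function of the form $\mathcal T(x,y) = x\cdot y + \tau(x,y)$ with $\tau$ periodic in $x\in\T^n$ and Gevrey-small, $\normD{\tau}_{\al, L_1, R_1} \le C_1 \eps$ for some $L_1 < L_0$ and $R_1\in (R, R_0)$. This follows by applying a Gevrey implicit function theorem to the relations $\th_1 - x = \pa_y \tau(x, y)$ and $r_0 - y = \pa_x \tau(x, y)$ extracted from $(\th_1, r_1) = \Delta(\th_0, r_0)$ with $(x, y) = (\th_0, r_1)$, the exactness ensuring the global existence and periodicity of~$\tau$ from the local $1$-form identity. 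The quantitative Gevrey version of this inversion is of the kind recorded in \cite{hms} and in Appendix~\ref{secappremind}.

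Second, for $\al > 1$ I would choose a cutoff $\chi \in G^{\al, L}([0,1])$ with $\chi(0) = 0$, $\chi(1) = 1$, and $\chi^{(k)}(0) = \chi^{(k)}(1) = 0$ for every $k\ge 1$ (which exists because $G^\al$ contains bump functions for $\al > 1$), and define an interpolating family of exact symplectic diffeomorphisms $\Delta_t$ by the family of generating functions $\mathcal T_t(x,y) := x \cdot y + \chi(t) \tau(x, y)$, so that $\Delta_0 = \mathrm{id}$ and $\Delta_1 = \Delta$. The classical formula for the generating Hamiltonian then yields $\ti V_t(\th, r) = \chi'(t)\, \tau\big(x_0(\th, r, t), r\big)$, where $x_0(\th, r, t)$ is the unique solution near $x_0 = \th$ of $\th = x_0 + \chi(t)\, \pa_y \tau(x_0, r)$; a parametric Gevrey implicit function theorem gives $\normD{\ti V_t}_{\al, L_2, R_2} \le C_2 \eps$ uniformly in $t\in [0,1]$. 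The vanishing of $\chi'$ together with all its derivatives at $t = 0$ and $t = 1$ allows $\ti V_t$---and hence $H$---to be extended to a Gevrey-$\al$ $1$-periodic function of $t\in\T$. The analytic case $\al = 1$, for which no such bump $\chi$ exists, is covered by \cite{KP} via a different device.

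Finally, $H$ is a suspension of $\Psi$ by construction---$U(1) = \Delta_1 = \Delta$ yields $\Phi^H_{[0,1]} = \Phi^h \circ \Delta = \Psi$ on $\A^n_R$---and the bound \eqref{ineqnormHh} follows from the uniform-in-$t$ bound on $\ti V_t$ together with the preservation of Gevrey norms (up to a constant depending only on $\normD{h}_{\al, L_0, R_0}$) under the affine-in-$\th$ substitution $(\th, r)\mapsto(\th - t\na h(r), r)$. The main obstacle is the uniform-in-$t$ quantitative control of the Gevrey norm of the inverse substitution $x_0(\cdot, \cdot, t)$ and of the subsequent composition $\ti V_t \circ (\Phi^h_{[0,t]})^{-1}$: each inversion and composition forces a slight shrinkage of the Gevrey width~$L$, which is precisely why the statement must be given with $L_* < L_0$. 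These parametric Gevrey implicit-function and composition lemmas, extending those of \cite{hms} to the joint-parameter setting with~$t$ running over $[0,1]$, form the technical backbone of the proof.
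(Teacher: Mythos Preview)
Your proposal is correct and follows essentially the same route as the paper: the paper also writes $\Psi=\Phi^h\circ\jF_A$ with a small Gevrey generating function~$A$ (your~$\tau$), interpolates via $\Psi_t=\Phi^{th}\circ\jF_{\eta(t)A}$ with a Gevrey cutoff~$\eta$ (your~$\chi$), derives the explicit formula $H(\th,r,t)=h(r)+\eta'(t)\,A\big((\jF_{\eta(t)A}^{-1})^{[1]}(\th-t\na h(r),r),\,r\big)$ (exactly your $h(r)+\ti V_t(\th-t\na h(r),r)$ with $\ti V_t(\th,r)=\chi'(t)\tau(x_0(\th,r,t),r)$), and controls each inversion/composition via Gevrey implicit-function and composition lemmas with the expected shrinkage of~$L$. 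The paper likewise defers $\al=1$ to~\cite{KP}.
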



\begin{rem}
In view of Proposition~\ref{propPsiPhih}, 
Theorem~\ref{thmsusp} applies to the maps of
$\Pal$ with $\de \defeq \de^{\al,L}(\Psi,\Phi^h)$ small enough, 
and the role of~$\eps$ in the statement can be played by~$\de$ instead of
$\NormD{\Psi-\Phi^h}_{\al,L,R_0}$.

In fact, the resulting statement can be proved directly if one restricts oneself
to $\Psi \in \Palm$ with a fixed~$m$ (upon which the implied constants may depend)
and $\al>1$, by adapting the ideas of \cite[\S\,2.4.1]{hms} and \cite[\S\,5.2]{ms}.
Indeed, use the hypothesis $\al>1$ to find non-negative functions
$\ph_0,\ph_1,\ldots,\ph_m \in G^{\al,L}(\T)$ such that each~$\ph_j$
has total mass~$1$ and is supported on
$[\frac{j}{m+1},\frac{j+1}{m+1}]$ mod~$\Z$
(use \eg Lemma~A.3 of \cite{hms}),
and set $\ti\ph_0(t) \defeq \int_0^t\big( \ph_0(s)-1\big)\,\dd s$.
Then, for any $u_0,u_1,\ldots, u_m \in G^{\al,L}(\A^n)$, the map
$\Psi = \Phi^{u_m}\circ\cdots\circ\Phi^{u_1}\circ\Phi^{h+u_0}$
admits an explicit suspension given by
\[
H(\th,r,t) \defeq h(r) +
\ph_0(t) u_0\big( \th + \ti\ph_0(t)\na h(r), r \big) +
\sum_{j=1}^m \ph_j(t) u_j\big( \th + (1-t)\na h(r), r \big),
\]
and one can find $\la\in(0,1)$ and $C>0$ independent of $u_0,\ldots,u_m$ such that
\[
\normD{H-h}_{\al,\la L,R} \le C \big( \normD{u_0}_{\al,L} + \cdots +
\normD{u_m}_{\al,L} \big).
\]
\end{rem}


We now briefly indicate how to prove Theorem~\ref{thmsusp} in the
analytic case, \ie when $\al=1$; the case $\al>1$ is dealt with in Section~\ref{secsuspGev}.


\begin{proof}[Proof of Theorem~\ref{thmsusp} in the case $\al=1$]
This is due to Kuksin \cite{Kuksin} and Kuksin-P\"oschel \cite{KP}.
There is only a slight difference in the way norms are measured, but this is
immaterial:
for a real analytic function $\ph \col \A^n_R \to \R$, \cite{KP} defines
$\absD{\ph}_\rho$ as the sup-norm of the holomorphic extension of~$\ph$ to a
complex domain $V_\rho\T^n \times V_\rho \ov B_R$ defined as
in~\eqref{eqdefjV} but with $\abs{\,\cdot\,}_\infty$ replaced by
$\norm{\,\cdot\,}$, with
$\norm{\xi} \defeq \sqrt{ \abs{\xi_1}^2 + \cdots + \abs{\xi_n}^2 }$
for $\xi \in \R^n$ or~$\C^n$;
this is related to our Gevrey-$1$ norms by
\[
c \normD{\ph}_{1,L,R} \le \absD{\ph}_\rho \le \normD{\ph}_{1,\rho,R}
\]
for $0 < L < \rho/\sqrt{n}$, with $c\defeq (1-L\rho\ii\sqrt n)^{2n}$.
With this in mind, when $\al=1$, our Theorem~\ref{thmsusp} follows from
Theorem~4 of \cite{KP} by isoenergetic reduction, with the help of the Implicit
Function Theorem (the same way their Theorem~1 follows from their Theorem~3).
\end{proof}

\subsection{Proof of Theorem~\ref{thmsusp} in the Gevrey non-analytic case}
\label{secsuspGev}

For the Gevrey non-analytic case, the proof will consist in a Gevrey
quantitative adaptation of Douady's method \cite{Douady}.

In all this section we fix a positive integer~$n$ and a real $\al>1$.
When dealing with a map~$\Psi$ taking its values in $\T^n\times\R^n$
or $\R^n\times\R^n$, we shall often denote its components by
$\Psi_1,\ldots,\Psi_{2n}$ and use the notation
\beq \label{notacompPsiundeux}
\Psi\zz1 \defeq (\Psi_1,\ldots,\Psi_n), \quad
\Psi\zz2 \defeq (\Psi_{n+1},\ldots,\Psi_{2n}).
\eeq
Similarly, we shall make use of the partial gradient
operators~$\na\zz1$ and~$\na\zz2$ defined by~\eqref{eqdefnazz}.

\setcounter{subsubsection}{-1}
\subsubsection{Overview}

The construction is based on the classical formalism of
generating functions for exact symplectic $C^\infty$ maps, with mixed
set of variables:
we use the notation~$\jF_A$ whenever we have a $C^\infty$ function~$A$
defined on an open subset of~$\A^n$ such that the equation
\[
r = \th + \na\zz1 A(\th,r')
\]
implicitly defines $r' \in \R^n$ in terms of $\th\in\T^n$
and~$r\in\R^n$, so that we can set
\[
\jF_A(\th,r) \defeq (\th',r'), \qquad
\th' \defeq \th + \na\zz2 A(\th,r').
\]
When it is defined, the map~$\jF_A$ is automatically an exact
symplectic local diffeomorphism;
moreover, all exact symplectic $C^\infty$ maps close enough to
identity are of this form.
The reader is referred to Appendix~\ref{secGenfcns} for more
details.\footnote{
Up to sign, the function~$A$ corresponds to what is called
``generating function of type~$V$'' in \cite{McS} \S9.2.
}

Here is an overview of the construction of a suspension for a given
exact symplectic Gevery map~$\Psi$ close enough to~$\Phi^h$:
following \cite{Douady}, we write our map as
\beq   \label{eqPsiPhihjFA}
\Psi = \Phi^h \circ \jF_A,
\eeq
while we pick $\eta \in C^\infty([0,1])$ such that $\eta \equiv 0$ on a neighbourhood
of~$0$, $\eta \equiv 1$ on a neighbourhood of~$1$, and $0 \le \eta \le
1$ on $[0,1]$;
then the formula
\[
\Psi_t \defeq \Phi^{th} \circ \jF_{\eta(t)A}
\]
defines an isotopy between the identity and~$\Psi$, which can be shown to
be the flow map between time~$0$ and time~$t$ for a time-periodic Hamiltonian vector
field~$H$ which is close to~$h$.

We will repeat the arguments in detail to check that one can find a
small Gevrey function~$A$ such that~\eqref{eqPsiPhihjFA} holds and
that, provided we take a Gevrey function for~$\eta$ (which is possible
because $\al>1$), we can find a suspension~$H$ Gevrey close to~$h$.
The last point will follow from the very explicit formula that we
shall obtain for~$H$: 
with the notation~\eqref{notacompPsiundeux},
\[
H(\th,r,t) = h(r) + 
\eta'(t) A\big( (\jF_{\eta(t)A}\ii)\zz1(\th-t\na h(r),r), r \big)
\]
(formula~\eqref{eqexplicitsusp} below).

\subsubsection{First step: finding a generating function}

\bprop    \label{propfindAfromPsi}
Let $L_0,R,R_0,E>0$ be reals such that $R<R_0$.
Then there exist $\eps_*,L,C_*>0$ such that, 
for any $h\in G^{\al,L_0}(\ov B_{R_0})$ such that
$\normD{h}_{\al,L_0,R_0}\le E$
and any exact symplectic map $\Psi \in G^{\al,L_0}(\A^n_{R_0}, \A^n)$ such that
\beq    \label{eqepsPsiphihepsst}
\eps \defeq \NormD{\Psi-\Phi^h}_{\al,L_0,R_0} \leq \eps_*,
\eeq
there exist open subsets~$\Om$ and~$\Om'$ of~$\A^n_{R_0}$ which
contain~$\A^n_R$ and a function $A \in C^\infty(\Om')$ such
that 
\begin{itemize}
\item $\jF_A \col \Om \to \A^n_{R_0}$ is a well-defined exact
  symplectic map,
\item $\Psi_{|\Om} = \Phi^h \circ \jF_A$,
\item $A_{| \A_R^n} \in G^{\al,L}(\A_R^n)$ and
$\normD{A_{| \A_R^n}}_{\al,L,R} \le C_*\, \eps$.
\end{itemize}
\eprop


The proof of Proposition~\ref{propfindAfromPsi} relies on two
auxiliary results. 
The first one is a straightforward Gevrey adaptation in~$\A^n_R$ of
the Poincar\'e lemma, 
the second one is a technical inversion result that will be needed in the
second step too and whose proof is given in Appendix~\ref{apppfTechnicLem}.


\blm    \label{lemPoincGev}
Let $R,L>0$ and $\bet_1,\ldots,\bet_{2n} \in G^{\al,L}(\A^n_R)$.
We denote the variables in~$\A^n_R$ by 
$(\th,r) = (x_1,\ldots,x_{2n})$
and assume that 
\begin{itemize}
\item $\pa_{x_i}\bet_j = \pa_{x_j}\bet_i$ for $i,j=1,\ldots,2n$,
\item for each $r\in \ov B_R$ and $i = 1,\ldots,n$, the function 
$\bet_i(\,\cdot\,,r)$ has mean value zero on~$\T^n$.
\end{itemize}
Then there exists $A \in G^{\al,L}(\A^n_R)$ such that 
\[
\sum_{i=1}^{2n} \bet_i \,\dd x_i = \dd A
\quad\text{and }\quad
\normD{A}_{\al,L,R} \le C \big(
\normD{\bet_1}_{\al,L,R} + \cdots + \normD{\bet_{2n}}_{\al,L,R}
\big),
\]
where $C \defeq \max\big\{ \dem, R, L^\al \big\}$.
\elm


\blm     \label{sublemminvers}
Let $R,R_0,L_0>0$ be reals such that $R<R_0$, and let $\eta \in
G^{\al,L_0}([0,1])$ be a non-trivial function.
Then there exist $\eps_*,L>0$ such that, for any
$\psi = (\psi_1,\ldots,\psi_n) \in G^{\al,L_0}(\A^n_{R_0},\R^n)$ satisfying
\[
\eps \defeq \sum_{i=1}^n \normD{\psi_i}_{\al,L_0,R_0} \le \eps_*
\]
and for any $t\in[0,1]$, the map
\beq   \label{eqthemapetapsi}
(\th,r) \in \A^n_{R_0} \mapsto 
(\th,r') = \big( \th, r + \eta(t) \psi(\th,r) \big) \in \A^n
\eeq
induces a $C^\infty$ diffeomorphism 
from $\T^n\times B_{R_0}$ onto an open subset~$\Om_t$ of~$\A^n$ 
which contains~$\A^n_{R}$, 
with an inverse map of the form
\beq     \label{eqinversetapsi}
(\th,r') \in \Om_t \mapsto (\th,r) = \big( \th, r' + \chi(\th,r',t) \big)
\in \T^n\times B_{R_0},
\eeq
where $\chi = (\chi_1,\ldots,\chi_n)$ is $C^\infty$ and restricts to
$\chi_{| \A^n_R\times[0,1]} \in G^{\al,L}(\A^n_R\times[0,1],\R^n)$ with
\beq    \label{ineqnormchiiepseta}
\sum_{i=1}^n \normD{\chi_i}_{\al,L,R} \le \eps \normD{\eta}_{\al,L_0}.
\eeq
For~$\eps_*$ and~$L$, one can take the values indicated
in~\eqref{eqdefepsst} and~\eqref{eqdefeeLmun}.
\elm


\begin{proof}[Proof of Lemma~\ref{lemPoincGev}]
The function
\[
\ti A(x) \defeq \int_0^1 \sum_{i=1}^{2n} x_i \bet_i(t x) \,\dd t
\]
is well defined on $\R^n \times \ov B_R$. An easy computation yields 
$\pa_{x_i} \ti A = \bet_i$ for $i=1,\ldots,2n$.
In particular, for each $r\in\ov B_R$, the functions $\pa_{\th_1}\ti A(\,\cdot\,,r), \ldots,
\pa_{\th_n}\ti A(\,\cdot\,,r)$ are $\Z^n$-periodic and have mean value
zero,
whence it follows that~$\ti A(\,\cdot\,,r)$ is itself $\Z^n$-periodic.
Thus~$\ti A$ induces a function $A\in C^\infty(\A^n_R)$,
and the differential of~$A$ is $\bet_1\,\dd x_1 + \cdots +
\bet_{2n}\,\dd x_{2n}$.

Choosing $\big[\!-\dem,\dem\,\big)^n\times\ov B_R$ as a fundamental domain in
$\R^n\times\ov B_R$, we get
$\normD{A}_{C^0(\A^n_R)} \le \max\big\{ \dem, R \big\}
\big(
\normD{\bet_1}_{C^0(\A^n_R)} + \cdots + \normD{\bet_{2n}}_{C^0(\A^n_R)}
\big)$.
Any $\ell \in \N^{2n}$ such that $\absD{\ell} \ge1$ can be written
(usually in more than one way) as $\ell = m + \be_i$ with
$m\in\N^{2n}$ and $i\in\{1,\ldots,2n\}$,
moreover $\pa^\ell A = \pa^m\bet_i$ and $(m+\be_i)! \ge m!$,
hence 
\[
\sum_{\absD{\ell}\ge1} \frac{L^{\absD{\ell}\al}}{\ell!^\al}
\normD{\pa^\ell A}_{C^0(\A^n_R)} \le
\sum_{i=1}^{2n} \sum_{m\in\N^{2n}}
\frac{L^{(1+\absD{m})\al}}{(m+\be_i)!^\al} \normD{\pa^m\bet_i}_{C^0(\A^n_R)}
\le L^\al \sum_{i=1}^{2n} \normD{\bet_i}_{\al,L,R},
\]
which completes the proof.
\end{proof}


\begin{proof}[Proof of Lemma~\ref{sublemminvers}]
See Appendix~\ref{apppfTechnicLem}.
\end{proof}


\begin{proof}[Proof of Proposition~\ref{propfindAfromPsi}]
Given $L_0,L,R>0$ such that $R<R_0$, we set $R' \defeq
\frac{R+R_0}{2}$ and
\beq    \label{eqdefepsstRpL}
\eps_* \defeq \min\Big\{
\frac{R_0-R'}{2}, \frac{L_0^\al}{2^{\al+1} (2n+1)^{\al-1}}
\Big\}, \quad
L \defeq \frac{L_0}{(2^{\al+1}(2n+1)^{\al-1})^{1/\al}}.
\eeq
Let $h\in G^{\al,L_0}(\ov B_{R_0})$ and let $\Psi \in
G^{\al,L_0}(\A^n_{R_0}, \A^n)$ be exact symplectic and satisfy~\eqref{eqepsPsiphihepsst}.
Let us choose a lift $\xi\in C^\infty(\A^n_{R_0},\R^n\times\R^n)$ of $\Psi-\Phi^h$ so that
$\normD{\xi_1}_{\al,L_0,R_0} + \cdots + \normD{\xi_{2n}}_{\al,L_0,R_0}
= \eps$.
Since $\Phi^h(\th,r) = (\th+\angD{\na h(r)},r)$, we have
\[
\Psi\zz1(\th,r) = \th + \angD{ \na h(r) + \xi\zz1(\th,r) },
\quad \Psi\zz2(\th,r) = r + \xi\zz2(\th,r).
\]
We apply Lemma~\ref{sublemminvers} with $\eta\equiv1$ and $\psi = \xi\zz2$:
in view of~\eqref{eqdefepsst} and~\eqref{eqdefeeLmun}, our
choice~\eqref{eqdefepsstRpL} of~$\eps_*$ and~$L$ implies the existence
of an open subset~$\Om_1$ of~$\A^n$ containing~$\A^n_{R'}$ such that
\beq   \label{eqdefdiffepPsizzd}
(\th,r) \in \T^n\times B_{R_0} \mapsto 
(\th,r') = \big( \th, \Psi\zz2(\th,r) \big) \in \Om_1
\eeq
is a $C^\infty$ diffeomorphism, the inverse of which can be written
\[
\Phi \col (\th,r') \in \Om_1 \mapsto 
(\th,r) = \big( \th, r'+\chi(\th,r') \big) \in \T^n\times B_{R_0},
\]
with $\normD{\chi_1}_{\al,L,R'} + \cdots + \normD{\chi_n}_{\al,L,R'} \le \eps$.
We set $\Om' \defeq \Om_1 \cap (\T^n\times B_{R_0})$ and 
$\Om \defeq \Phi(\Om') \subset \T^n \times B_{R_0}$.
Notice that $\A^n_R \subset \A^n_{R'} \subset \Om'$
and $\A^n_R \subset \Om$
(because $\normD{\xi\zz2(\th,r)} \le R'-R$ for all $(\th,r) \in
\A^n_{R_0}$, thus $\Phi\ii(\A^n_R) \subset \A^n_{R'}$).


We now consider 
\[
F(\th,r) \defeq \Phi^{-h}\circ \Psi(\th,r) = \big(
\Psi\zz1(\th,r) - \angD{\na h\circ\Psi\zz2(\th,r)}, \Psi\zz2(\th,r) \big)
\]
for $(\th,r) \in \Om$ (which is possible since $\Psi\zz2(\Om) \subset B_{R_0}$).
This is an exact symplectic $C^\infty$ local diffeomorphism, which can
be written
\[
F(\th,r) = \big( \th + \angD{f(\th,r)}, \Psi\zz2(\th,r) \big), 
\qquad
f \defeq \xi\zz1 + \na h - \na h\circ\Psi\zz2,
\]
and the map~\eqref{eqdefdiffepPsizzd} induces a $C^\infty$
diffeomorphism from~$\Om$ onto~$\Om'$;
therefore, following the recepee of Lemma~\ref{lemfindAfromF}, we know
that the $1$-form
\[
\bet \defeq \sum_{i=1}^n \chi_i(\th,r')\,\dd\th_i +
\sum_{i=1}^n f_i\circ\Phi(\th,r')\,\dd r'_i
\]
is exact and $F = \jF_A$ on~$\Om$, where $A \in C^\infty(\Om')$ is any
primitive of~$\bet$.


We conclude by checking that we can apply Lemma~\ref{lemPoincGev} and
get a primitive $A \in G^{\al,L}(\A^n_R)$ whose norm we can bound.
On the one hand, we have $\chi_i \in G^{\al,L}(\A^n_R)$ for each~$i$ and
$\normD{\chi_1}_{\al,L,R} + \cdots + \normD{\chi_n}_{\al,L,R} \le \eps$.
On the other hand, since $\Psi\zz2\circ\Phi(\th,r') = r'$, we can
write
\begin{multline*}
f_i \circ \Phi(\th,r') = \xi_i\circ\Phi(\th,r') + g_i(\th,r'),\\
\qquad g_i(\th,r') \defeq \pa_i h\circ\Phi\zz2(\th,r')-\pa_i h(r')
= \sum_{j=1}^n \int_0^1 \pa_i\pa_j h(r' + s\chi(\th,r') \big)
\chi_j(\th,r')\,\dd s.
\end{multline*}
Let $L_1 \defeq L_0/2$.
We can apply Proposition~A.1 of \cite{hms} to the composition with 
$\Phi(\th,r') = \big(\th, r' + \chi(\th,r') \big)$
or, more generally, with $U_s(\th,r') \defeq \big( \th, r' +
s\chi(\th,r') \big)$ for $s \in [0,1]$, 
because
\[
\sum_{\ell\in\N^{2n},\,\ell\neq0} \frac{L^{\absD{\ell}\al}}{\ell!^\al}
\normD{\pa^\ell U_{s,i}}_{C^0(\A^n_R)} \le
\frac{L_1^\al}{(2n)^{\al-1}}, 
\qquad i = 1,\ldots,2n
\]
(indeed: this follows from $L^\al + \normD{\chi_i}_{\al,L,R} \le \frac{L_0^\al}{2^\al(2n)^{\al-1}}$), 
and we get
\[ \normD{\xi_i\circ\Phi}_{\al,L,R} \le \normD{\xi_i}_{\al,L_1,R_0} \] and
$\normD{\pa_i\pa_j h\circ U_s}_{\al,L,R} \le 
\normD{\pa_i\pa_j h}_{\al,L_1,R_0}$,
whence \[ \normD{g_i}_{\al,L,R} \le \sum_j 
\normD{\pa_i\pa_j h}_{\al,L_1,R_0} \normD{\chi_j}_{\al,L,R} \]
by the algebra norm property.
Thus Lemma~\ref{lemPoincGev} gives us $A\in G^{\al,L}(\A^n_R)$ with 
\[
\normD{A}_{\al,L,R} \le C \bigg( 
\sum_i \normD{\xi_i}_{\al,L_1,R_0} + \sum_{i,j} \normD{\pa_i\pa_j h}_{\al,L_1,R_0} \normD{\chi_j}_{\al,L,R}
\bigg)
\le C \bigg( 1 + \sum_{i,j} \normD{\pa_i\pa_j h}_{\al,L_1,R_0} \bigg) \eps,
\]
and, using~\eqref{ineqGevCauch}, we get the desired estimate with 
$C_* \defeq C \big(1+\frac{2^{3\al}}{L_0^{2\al}}\norm{h}_{\al,L_0,R_0} \big)$.
\end{proof}

\subsubsection{Second step: constructing a Hamiltonian isotopy}

\bprop    \label{propHamIsot}
Let $L_0,R,R_0>0$ be reals such that $R<R_0$.
Let $\eta\in G^{\al,L_0}([0,1])$.
Then there exist $\eps_*,L,C>0$ satisfying the following:
for any $A \in G^{\al,L_0}(\A^n_{R_0})$ such that
$\normD{A}_{\al,L_0,R_0} \le \eps_*$
and for any $t\in[0,1]$,
there exists an open subset~$\Om_t$ of~$\A^n$ containing~$\A^n_R$
such that
\[
\jF_{\eta(t)A} \col \Om_t \to \T^n\times B_{R_0}
\]
is a well-defined exact symplectic $C^\infty$ diffeomorphism, 
and for each $(\th,r) \in \A^n_R$,
\beq   \label{eqEDOXf}
\frac{\dd\,}{\dd t} \jF_{\eta(t)A}(\th,r) = X_f \big( \jF_{\eta(t)A}(\th,r), t \big),
\qquad t \in [0,1],
\eeq
where $X_f$ is the non-autonomous Hamiltonian vector field associated
with
\beq   \label{eqdefHsuspetatA}
f(\th,r,t) \defeq \eta'(t) A\big( (\jF_{\eta(t)A}\ii)\zz1(\th,r), r \big),
\qquad (\th,r,t)\in \T^n\times B_{R_0} \times [0,1],
\eeq
which is a $C^\infty$ Hamiltonian function whose restriction to $\A^n_R\times[0,1]$ is Gevrey-$(\al,L)$, with
\beq    \label{ineqnormHalLRA}
\normD{f}_{\al,L,R} \le 2^\al L_0^{-\al}\normD{\eta}_{\al,L_0} \normD{A}_{\al,L_0,R_0}.
\eeq
\eprop


\begin{proof}
Let $L_1 \defeq L_0/2$ and 
\[
\eps_* \defeq \frac{(L_0-L_1)^\al}{\normD{\eta}_{\al,L_0}} \min\Big\{
\sqrt{n}, \frac{R_0-R}{2}, \frac{L_1^\al}{2^{\al+1} (2n+1)^{\al-1}}
\Big\}, \qquad
L \defeq \frac{L_1}{(2^{\al+1}(2n+1)^{\al-1})^{1/\al}}.
\]
Let $A \in G^{\al,L_0}(\A^n_{R_0})$ such that
$\eps \defeq \normD{A}_{\al,L_0,R_0} \le \eps_*$.


By~\eqref{ineqGevCauch}, we have
$\sum_{i=1}^n \normD{\pa_i A}_{\al,L_1,R_0} \le
\frac{1}{(L_0-L_1)^\al} \eps_*$,
thus we can apply Lemma~\ref{sublemminvers} and we get for each
$t\in[0,1]$ an open subset~$\Om_t$ of~$\A^n$ containing~$\A^n_R$ such
that the map
\[
(\th,r') \in \T^n \times B_{R_0} \mapsto
(\th,r) = \big( \th, r' + \eta(t) \na\zz1 A(\th,r') \big) \in \Om_t
\]
is a diffeomorphism whose inverse is $C^\infty$ on $\A^n_R\times[0,1]$ in the variables $\th$,
$r$ and~$t$. 
%
%
%
%
%
%
%
By Lemma~\ref{lemfindSigfromA}, $\eta(t)A$ is thus a generating
function for~$\Om_t$, inducing an exact symplectic local
diffeomorphism from~$\Om_t$ to $\T^n\times B_{R_0}$:
given $(\th,r) \in \Om_t$ and $(\th',r') \in \T^n\times B_{R_0}$,
\beq   \label{eqequivFetatA}
(\th',r') = \jF_{\eta(t)A}(\th,r) 
\quad\Longleftrightarrow\quad
\left\{ \begin{aligned}
r &= r' + \eta(t)\na\zz1 A(\th,r') \\
\th' &= \th + \eta(t) \na\zz2 A(\th,r').
\end{aligned} \right.
\eeq
Moreover, $(\th,r,t) \in \A^n_R\times[0,1] \mapsto \jF_{\eta(t)A}(\th,r)$ is $C^\infty$.


In order to check that $\jF_{\eta(t)A}$ is in fact a diffeomorphism
from~$\Om_t$ onto $\T^n\times B_{R_0}$,
we consider the map
\beq    \label{eqdefdiffeononper}
(\th,r') \in B_{2\sqrt{n}} \times B_{R_0} \mapsto
(\th',r') = \big( \th + \eta(t) \na\zz2 A(\th,r'), r' \big) \in \R^n \times \R^n.
\eeq
By~\eqref{ineqGevCauch}, we have
$\sum_{i=1}^n \normD{\pa_{n+i} A}_{\al,L_1,R_0} \le
\frac{1}{(L_0-L_1)^\al} \eps$,
thus we can apply of Lemma~\ref{sublemminvers} (or rather a variant of
it in which the $\Z^n$-periodicity assumption is removed and the roles
of~$\th$ and~$r$ are exchanged): we get an open subset~$\ti\Om_t$ of
$\R^n\times B_{R_0}$ containing $B_{\sqrt{n}}\times B_{R_0}$ such that the
map~\eqref{eqdefdiffeononper} is a $C^\infty$ diffeomorphism from
$B_{2\sqrt{n}} \times B_{R_0}$ to~$\ti\Om_t$,
with an inverse of the form
\[
(\th',r') \in \ti\Om_t \mapsto (\th,r') = 
\big( \th'+\ti g(\th',r',t), r' \big) \in B_{2\sqrt{n}} \times B_{R_0},
\]
with Gevrey-$(\al,L)$ estimates on $B_{\sqrt{n}}\times B_{R_0} \times
[0,1]$ for the components of~$\ti g$.
Since~$\na\zz2 A$ is $\Z^n$-periodic in~$\th$ and $B_{\sqrt{n}}$
contains $\big[-\dem,\dem\big]^n$, the vector-valued function~$\ti g$ is
$\Z^n$-periodic in~$\th'$ and extends by periodicity to the whole of
$\R^n\times B_{R_0}$; we thus get a $C^\infty$ diffeomorphism
$(\th',r') \in \T^n \times B_{R_0} \mapsto (\th,r') = 
\big( \th'+ g(\th',r',t), r' \big) \in \T^n \times B_{R_0}$
with
\[
g_1,\ldots,g_n \in G^{\al,L}(\A^n_R\times[0,1]), \qquad
\sum_{i=1}^n \normD{g_i}_{\al,L,R} \le
\frac{\normD{\eta}_{\al,L_0}}{(L_0-L_1)^\al} \eps.
\]
In view of~\eqref{eqequivFetatA}, we conclude that~$\jF_{\eta(t)A}$ is
invertible, with inverse
\[
\jF_{\eta(t)A}\ii(\th',r') =
\big( \angD{\th'+ g(\th',r',t)}, 
r' + \eta(t)\na\zz1A(\th'+ g(\th',r',t), r') \big),
\qquad (\th',r') \in \T^n \times B_{R_0}.
\]
%

Let us now consider the $C^\infty$ function 
\[
f(\th,r,t) \defeq \eta'(t) A\big( \th+ g(\th,r,t), r \big),
\qquad (\th,r,t)\in \T^n\times B_{R_0} \times [0,1].
\]
By Proposition~A.1 of \cite{hms} (\cf also Appendix~\ref{secappremind}), it is Gevrey-$(\al,L)$ on
$\A^n_R\times[0,1]$ because
\[
L^\al + \sum_{\ell\in\N^{2n+1},\,\ell\neq0} \frac{L^{\absD{\ell}\al}}{\ell!^\al}
\normD{\pa^\ell g_i}_{C^0(\A^n_R\times[0,1])} \le
\frac{L_0^\al}{(2n)^{\al-1}}, 
\qquad i = 1,\ldots,n,
\]
and
$\normD{f}_{\al,L,R} \le \normD{\eta'}_{\al,L} \normD{A}_{\al,L_0,R_0}
\le \frac{1}{(L_0-L)^\al} \normD{\eta}_{\al,L_0} \eps$
(thanks to the algebra norm property and~\eqref{ineqGevCauch}), which
yields~\eqref{ineqnormHalLRA}.
%
%
It only remains to be shown that, for each $(\th,r) \in \A^n_R$, the
$C^\infty$ curve $t\in[0,1] \mapsto 
\big( \th(t),r(t) \big) \defeq \jF_{\eta(t)A}(\th,r)$
satisfies the system of ordinary differential equations
\beq   \label{eqEDOthtrt}
\th'(t) = \na\zz2 f\big( \th(t),r(t),t \big), \quad
r'(t) = -\na\zz1 f\big( \th(t),r(t),t \big).
\eeq


On the one hand, the relations
\[
r = r(t) + \eta(t) \na\zz1 A\big( \th,r(t) \big), \qquad
\th(t) = \th + \eta(t) \na\zz2 A\big(\th,r(t) \big)
\]
\vspace{-1ex}
entail
\begin{align}
\label{eqrptfrmrel}
r'(t) &= - \eta'(t) \Big( 1_n + \eta(t) \dd\zz2\na\zz1 A\big(\th,r(t)\big)
\Big)\ii \na\zz1 A\big(\th,r(t)\big), \\[1ex]
\label{eqthptfrmrel}
\th'(t) &= \eta'(t) \na\zz2 A\big(\th,r(t)\big) +
\eta(t) \dd\zz2\na\zz2 A\big(\th,r(t)\big) r'(t).
\end{align}
On the other hand, 
with the notation $\cG_t \defeq \jF_{\eta(t)A}\ii$,
the formula~\eqref{eqdefHsuspetatA} yields
\begin{align*}
\na\zz1 f(\th,r,t) &= \eta'(t) \ltrans{\Big( \dd\zz1\cG_t\zz1(\th,r) \Big)}
\na\zz1 A\big( \cG_t\zz1(\th,r), r \big) \\[1ex]
\na\zz2 f(\th,r,t) &= \eta'(t) \na\zz2 A\big( \cG_t\zz1(\th,r), r \big) +
\eta'(t) \ltrans{\Big( \dd\zz2\cG_t\zz1(\th,r) \Big)}
\na\zz1 A\big( \cG_t\zz1(\th,r), r \big)
\end{align*}
for any $(\th,r,t)$. We rewrite this at the point $\big( \th(t),r(t), t
\big)$, using the fact that the Jacobian matrix of~$\cG_t$ at $\big(
\th(t),r(t) \big)$ is the inverse Jacobian matrix of~$\jF_{\eta(t)A}$
at~$(\th,r)$, whose first~$n$ lines are given by~\eqref{eqinvJacFA},
thus
\begin{align*}
\dd\zz1\cG_t\zz1\big( \th(t), r(t) \big) &=
\Big( 1_n + \eta(t)\dd\zz1\na\zz2 A\big( \th,r(t) \big) \Big)\ii,
\\[1ex]
\dd\zz2\cG_t\zz1\big( \th(t), r(t) \big) &=
\dd\zz1\cG_t\zz1\big( \th(t), r(t) \big) \dd\zz2\na\zz2 A\big( \th,r(t) \big),
\end{align*}
%
%
and
\[ 
\na\zz1 f\big(\th(t),r(t),t\big) = \eta'(t) 
\Big( 1_n + \eta(t)\dd\zz2\na\zz1 A\big( \th,r(t) \big) \Big)\ii
\na\zz1 A\big( \th, r(t) \big) 
= - r'(t)
\]
by~\eqref{eqrptfrmrel}, and 
\[ 
\na\zz2 f\big(\th(t),r(t),t\big) =
\eta'(t) \na\zz2 A\big( \th, r(t) \big) + \eta(t)\dd\zz2\na\zz2 A\big( \th,r(t) \big)r'(t)
= \th'(t)
\] 
by~\eqref{eqthptfrmrel},
hence~\eqref{eqEDOthtrt} is proved.
\end{proof}


\begin{rem}
  The fact that $t \mapsto \jF_{\eta(t)A}$ is a Hamiltonian isotopy is
  standard result of basic symplectic topology, however the explicit
  formula~\eqref{eqdefHsuspetatA} for the non-autonomous Hamiltonian
  function~$f$ is new.
This explicit formula was needed to obtain the estimate~\eqref{ineqnormHalLRA}.
\end{rem}

\subsubsection{Completion of the proof of Theorem~\ref{thmsusp}}

We now prove Theorem~\ref{thmsusp}.
We thus give ourselves reals $L_0,R,R_0>0$ such that $R<R_0$
and a function $h\in G^{\al,L_0}(\ov B_{R_0})$.
We pick $R_1 \in (R,R_0)$ and $\eta \in G^{\al,L_0}([0,1])$ such that
$\eta \equiv 0$ on a neighbourhood of~$0$, $\eta \equiv 1$ on a
neighbourhood of~$1$, and $0 \le \eta \le 1$ on $[0,1]$
(\eg $\eta(t) = G(t)/G(1)$ with $G(t) = \int_0^t F(s)\,\dd s$, where
$F\in G^{\al,L_0}([0,1])$ satisfies $F\ge0$, $F(\dem)=1$,
$F_{|[0,\quart]\cup[\tquart,1]}\equiv0$; such a function~$F$ is
constructed in Lemma~A.3 of \cite{hms}---see also Lemma~3.3 of
\cite{ms} quoted in Appendix~\ref{secBumpGev}).

Applying Proposition~\ref{propfindAfromPsi} with $L_0,R_1,R_0$ and~$h$,
we get constants $\eps_1,L_1,C_1$ such that,
for any exact symplectic map $\Psi \in G^{\al,L_0}(\A^n_{R_0}, \A^n)$ with
\beq   \label{eqdefepsleepsun}
\eps \defeq \NormD{\Psi-\Phi^h}_{\al,L_0,R_0} \leq \eps_1,
\eeq
there exists $A \in G^{\al,L_1}(\A^n_{R_1})$ such that 
$\jF_A \col \A^n_{R_1} \to \A^n_{R_0}$ is a well-defined exact
symplectic map, 
\beq   \label{eqPsiPhihjFAnormA}
\Psi_{|\A^n_{R_1}} = \Phi^h \circ \jF_A, \qquad
\normD{A}_{\al,L_1,R_1} \le C_1\, \eps.
\eeq

Applying Proposition~\ref{propHamIsot} with $L_1,R,R_1$
and~$\eta$:
we get constants $\eps_2,L_2,C_2$ such that,
for any $A \in G^{\al,L_1}(\A^n_{R_1})$ with $\normD{A}_{\al,L_1,R_1} \le \eps_2$
and for any $t\in[0,1]$,
there exists an open subset~$\Om_t$ of~$\A^n$ containing~$\A^n_R$
such that
$\jF_{\eta(t)A} \col \Om_t \to \T^n\times B_{R_1}$
is a well-defined exact symplectic $C^\infty$ diffeomorphism, 
$t\mapsto \jF_{\eta(t)A}(\th,r)$ satisfies the ordinary differential
equation~\eqref{eqEDOXf} for each $(\th,r) \in \A^n_R$,
with $f \in C^\infty(\T^n\times B_{R_1}\times[0,1])$ such that
\beq   \label{eqdeffnormf}
f(\th,r,t) \defeq \eta'(t) A\big( (\jF_{\eta(t)A}\ii)\zz1(\th,r), r \big),
\qquad
\normD{f}_{\al,L_2,R} \le 2^\al L_1^{-\al}\normD{\eta}_{\al,L_1} \normD{A}_{\al,L_1,R_1}.
\eeq


Let us set
\[
\eps_* \defeq \min \Big\{ \eps_1, \frac{1}{C_1}\eps_2 \Big\}, \qquad
C_* \defeq \frac{2^\al C_1}{L_1^\al} \normD{\eta}_{\al,L_0}
\]
and choose $L_*>0$ small enough so that
\beq    \label{eqchoiceLst}
L_*^\al + \frac{L_*^\al (1+L_2^\al)}{(L_2-L_*)^\al(L_0-L_2)^\al} \normD{h}_{\al,L_0,R_0}
\le \frac{L_2^\al}{(2n+1)^\al}.
\eeq

Given an exact symplectic map $\Psi \in G^{\al,L_0}(\A^n_{R_0}, \A^n)$
such that~\eqref{eqdefepsleepsun} holds, we get from
Proposition~\ref{propfindAfromPsi} a function~$A$
satisfying~\eqref{eqPsiPhihjFAnormA}.
Since $C_1\,\eps \le \eps_2$, we can then apply
Proposition~\ref{propHamIsot} to the generating function~$A$ and get a
non-autonomous Hamiltonian function $f \in C^\infty(\T^n\times
B_{R_1}\times[0,1])$ as in~\eqref{eqdeffnormf},
for which the flow between time~$0$ and time~$t$ on~$\A^n_R$ coincides
with $\jF_{\eta(t)A}$
(because the differential equation~\eqref{eqEDOXf} is satisfied and
$\eta(0)=0$, $\jF_0 = \Id$).
Notice that $\normD{f}_{\al,L_2,R} \le C_*\,\eps$.

For $t\in[0,1]$, we define $\Psi_t \defeq \Phi^{th} \circ \jF_{\eta(t)A}$ on~$\A^n_R$:
this is an isotopy from~$\Id$ to~$\Psi$, and one checks easily that it
gives the flow between time~$0$ and time~$t$ on~$\A^n_R$ for the
Hamiltonian function
\[
H(\th,r,t) \defeq h(r) + f\circ\Phi(\th,r,t),
\qquad (\th,r,t) \in \T^n \times B_{R_1} \times [0,1],
\]
where $\Phi(\th,r,t) \defeq \big( \th - t\na h(r), r, t\big)$
(because $\Phi^{th}$ is symplectic and $\Phi(x,t) = (
\Phi^{-th}(x,t), t )$, hence
$\dd\Phi^{th}(x) X_f(x,t) = X_{f\circ\Phi}( \Phi^{th}(x), t )$).
Since $\eta'(t)\equiv0$ in neighbourhoods of~$0$ and~$1$, the formula
\beq   \label{eqexplicitsusp}
H(\th,r,t) = h(r) + 
\eta'(t) A\big( (\jF_{\eta(t)A}\ii)\zz1(\th-t\na h(r),r), r \big)
\eeq
shows that~$H$ can be extended by $\Z$-periodicity in~$t$, so that we
get
$H \in C^\infty(\T^n\times B_{R_1} \times \T)$,
which can be viewed as a suspension of~$\Psi_{|\A^n_R}$.

By Proposition~A.1 of \cite{hms} (\cf also
Appendix~\ref{secappremind}), we have
$H_{|\A^n_R\times\T} \in G^{\al,L_*}(\A^n_R\times\T)$ and
\[
\normD{H-h}_{\al,L_*,R} = \normD{f\circ\Phi}_{\al,L_*,R} 
\le \normD{f}_{\al,L_2,R} \le C_*\,\eps
\]
because the components of~$\Phi$ satisfy
\beq   \label{ineqcompPhi}
\sum_{\ell\in\N^{2n+1},\,\ell\neq0} \frac{L_*^{\absD{\ell}\al}}{\ell!^\al}
\normD{\pa^\ell \Phi_i}_{C^0(\A^n_R\times[0,1])} \le
\frac{L_2^\al}{(2n+1)^{\al-1}}, 
\qquad i = 1,\ldots,2n+1
\eeq
(indeed, the \lhs is $\le L_*^\al +
\frac{L_*^\al}{(L_2-L_*)^\al}\normD{t\pa_i h}_{\al,L_2,R}$ by a
$(2n+1)$-variable variant of Lemma~\ref{lemusefulNst}, which is 
$\le L_*^\al + \frac{L_*^\al (1+L_2^\al)}{(L_2-L_*)^\al(L_0-L_2)^\al} \normD{h}_{\al,L_0,R_0}$
by~\eqref{ineqGevCauch}, hence~\eqref{ineqcompPhi} follows from~\eqref{eqchoiceLst}).
This ends the proof of Theorem~\ref{thmsusp}.

\subsection{Proof of Theorem~\ref{th:NekhFevMaps} (Nekhoroshev Theorem
  for maps)}
\label{secPfThmNekhMaps}

We now prove Theorem~\ref{th:NekhFevMaps} of Section~\ref{secthmNekhMapsA}.
To this end, we first recall the exponential stability theorem for
near-integrable quasi-convex Hamiltonian \emph{flows} in its most recent formulation.
Theorem~\ref{thmsusp} will then allow us to transfer this result to
near-integrable \emph{maps}.


\begin{thmnonb}[Bounemoura-Marco \cite{BM}]
Let $N\geq2$ be an integer. Let $\al\ge1$ and $L,R,R_0,m,E>0$ be reals such
that $R<R_0$.
Then there exist positive reals $\eps_*,c_*$,
and, for each positive $\rho<R_0-R$, positive reals $\eps'_\rho \le \eps_*$ and
$c'_\rho \le c_*$,
and, for each positive $\sig < \frac{1}{N}$, positive reals $\eps''_\sig \le \eps_*$ and
$c''_\sig \le c_*$,
satisfying the following:
\medskip

For each $h\in G^{\al,L}(\ov B_{R_0})$ such that
$\normD{h}_{\al,L,R_0} \le E$,
$\na h(r) \neq0$ for every $r\in\ov B_{R_0}$
and
\beq    \label{eqcondGCm}
\ltrans{v} \,\dd\na h(r) v \ge m \normD{v} 
\quad \text{for all $v \in \R^N$ orthogonal to~$\na h(r)$,}
\eeq
and for each $H \in G^{\al,L}(\A^N_{R_0})$
such that $\eps \defeq \normD{H-h}_{\al,L,R_0} \le \eps_*$,
every initial condition $(\th\zz0,r\zz0)$ in $\A^N_R$ gives rise to a solution
$t \mapsto \big(\th(t), r(t) \big)$ of~$X_H$ which is defined at least for
$\absD{t} \le 
\exp \big( c_* \big(\frac{1}{\eps}\big)^{\frac{1}{2(N-1)\al}} \big)$,
and
\begin{enumerate}[(i)]
\item
$\dst \eps \le \eps'_\rho \ens\text{and}\ens
\absD{t} \le \exp \Big( c'_\rho \Big(\frac{1}{\eps}\Big)^{\frac{1}{2(N-1)\al}} \Big)
\ens \Rightarrow \quad
\normD{r(t) - r\zz0} \leq \rho$,
\item
$\dst \eps \le \eps''_\sig \ens\text{and}\ens
\absD{t} \le \exp \Big( c''_\sig \Big(\frac{1}{\eps}\Big)^{\frac{1-\sig}{2(N-1)\al}} \Big)
\ens \Rightarrow \quad
\normD{r(t) - r\zz0} \leq \begin{cases}
\tfrac{1}{c''_\sig} \eps^{\frac{\sig}{2}} & \text{if $\al=1$,} \\[1.5ex]
\tfrac{1}{c''_\sig} \eps^{\frac{\sig}{5(N-1)^2}} & \text{if $\al>1$,} 
\end{cases}$
\item
$\dst \eps \le \eps_* \ens\text{and}\ens
\absD{t} \le \exp \Big( c_* \Big(\frac{1}{\eps}\Big)^{\frac{1}{2N\al}} \Big)
\quad\ens\; \Rightarrow \quad
\normD{r(t) - r\zz0} \leq \tfrac{1}{c_*} \eps^{\frac{1}{2N}}$.
\end{enumerate}
\end{thmnonb}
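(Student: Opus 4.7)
My strategy follows the Lochak--Neishtadt--P\"oschel scheme for near-integrable quasi-convex Hamiltonian flows, generalised to the Gevrey category via the averaging machinery of \cite{hms}, together with the refinement of \cite{BM} that trades part of the confinement for a better time exponent. The proof combines an \emph{analytic} component (Gevrey resonant normal forms with exponentially small remainders) and a \emph{geometric} component (confinement of the action drift at resonances using quasi-convexity on isoenergetic sections).

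\textbf{Step 1: Gevrey resonant normal forms.} Fix a resonant module $\cM \subset \Z^N$ of rank $m \in \{0,\ldots,N-1\}$ and an open set $U \subset \A^N_{R_0}$ on which $\omega(r) \defeq \na h(r)$ is non-resonant modulo $\cM$ at a suitable scale, i.e.\ $\absD{\angD{\omega(r),k}} \ge \eta$ for all $k \in \Z^N \setminus \cM$ with $\absD{k} \le K$. Iteratively averaging $H$ over the fast angles yields, after $r$ canonical steps, a symplectic conjugate of the form $h + g + R$ on a slightly shrunk polydisk, where $g$ is $\cM$-resonant (depends on the angles only through $\cM$) and the remainder satisfies, in Gevrey norm,
\[
\normD{R}_{\al,L',R'} \le \eps\,(C r^\al \eps)^r.
\]
Optimising $r \sim (c/\eps)^{1/(2(N-1)\al)}$ (respectively $r \sim (c/\eps)^{1/(2N\al)}$ for the estimate underlying part~(iii)) gives an exponentially small remainder. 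This is a Gevrey version of Nekhoroshev's analytic lemma; the scale $K$, the loss of analytic radius $L-L'$, and the loss of action radius $R-R'$ must be delicately balanced to produce the claimed exponents.

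\textbf{Step 2: Confinement at resonances.} On the normal form, $X_{h+g}$ preserves the projection of the action $r$ onto the orthogonal complement of~$\cM$; the drift of this projection along an orbit of $X_H$ is therefore driven only by $\na\zz1 R$ and is bounded by $t\cdot\normD{R}$ on a time interval of length~$t$. After isoenergetic reduction to an $(N-1)$-dimensional Poincar\'e section, the quasi-convexity assumption~\eqref{eqcondGCm} yields, through an energy-conservation argument, a bound of order $\sqrt{\eps}$ on the action drift \emph{parallel} to $\cM$ as well, as long as the orbit stays in the normal form domain. Iterating, the orbit remains in that domain for a time $\sim \eps / \normD{R}$, which is the exponentially long stability time. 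The factor $N-1$ in the denominator of the exponent of~(i)--(ii) comes precisely from this isoenergetic reduction, whereas the factor $N$ in~(iii) arises from a uniform $\sqrt{\eps}$-confinement argument that does not exploit the reduction.

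\textbf{Step 3: Covering and optimisation.} Following Lochak, the ball $\ov B_R$ is covered by $\sqrt{\eps}$-neighbourhoods of periodic frequencies $k/T$, $k\in\Z^N$ with $\absD{k} \le K$, $T$ ranging over an appropriate interval, so that every initial condition lies in some resonance neighbourhood of multiplicity $m \in \{0,\ldots,N-1\}$. Part~(iii) is the classical Lochak--Neishtadt--P\"oschel case, obtained by using the same $\eps^{1/(2N)}$-confinement across all strata, which produces the exponent $1/(2N\al)$. For Part~(i) one applies the Bounemoura--Marco trick at the worst stratum $m=N-1$: allow the orbit to wander by the fixed $\rho$ in the action direction, which enlarges the normal form polydisk in~$r$ by a factor of order~$1$ (instead of $\sqrt\eps$), saves one power of $\eps$ in the averaging estimate, and yields the improved exponent $1/(2(N-1)\al)$. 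Part~(ii) is the interpolation obtained by taking a confinement $\eps^{\sig/2}$ (resp.\ $\eps^{\sig/(5(N-1)^2)}$ in the Gevrey case), with $\sig \in (0,1/N)$ parameterising the trade-off.

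The main obstacle, and the heart of the Bounemoura--Marco refinement, is Step~2 at the top multiplicity $m=N-1$: the resonant dynamics is one-dimensional and quasi-convexity alone does not trap the fast component. The delicate book-keeping consists in choosing the analytic radii in Step~1 so that the product (remainder size)$\times$(stability time) stays below the prescribed confinement $\rho$ (or $\eps^{\sig/2}$, etc.), then verifying that the lower-multiplicity strata automatically inherit the improved exponent without losing their better confinement properties. Patching the local estimates into a global one via the Lochak covering, and keeping track of the Gevrey exponent $\al$ in every estimate of Step~1, completes the argument.
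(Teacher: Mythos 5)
You should first be aware that the paper does \emph{not} prove this statement: it is imported as a black box from the reference \cite{BM} (with a small correction to the time exponent in case~(ii) for $\al>1$, as explained in Remark~\ref{remmorecomm}), and the authors only restate it in a form convenient for their suspension argument. So there is no internal proof to compare your proposal against; what can be assessed is whether your sketch would reconstruct the cited result.

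As a reconstruction it identifies the right circle of ideas but it is an outline, not a proof, and it conflates two distinct schemes. Your Step~1 describes the Nekhoroshev--P\"oschel geography of resonant normal forms attached to modules $\cM$ of every rank $m\in\{0,\ldots,N-1\}$, while your Step~3 invokes Lochak's covering of action space by $\sqrt{\eps}$-neighbourhoods of periodic frequencies. The proofs in \cite{hms} and \cite{BM} use only the second scheme: every frequency is approximated by a periodic one via Dirichlet's simultaneous approximation theorem, the averaging is performed along the corresponding periodic flow (one fast angle, no stratification by multiplicity), and confinement comes from energy conservation combined with quasi-convexity around the periodic torus. Mixing the two frameworks leaves it unclear which normal form is actually being iterated and where the exponent $\frac{1}{2(N-1)\al}$ versus $\frac{1}{2N\al}$ is generated; in Lochak's method the $2N$ of case~(iii) arises from balancing the Dirichlet approximation scale against the $\sqrt\eps$ confinement radius, and the Bounemoura--Marco gain of one unit in the denominator comes from relaxing that radius to a constant (or to $\eps^{\sig/2}$), exactly as you say in Step~3 --- but this is asserted, not derived. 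None of the quantitative content that the theorem actually claims (the admissible choices of averaging order, the loss of Gevrey width at each step, and in particular the specific confinement exponent $\frac{\sig}{5(N-1)^2}$ in the Gevrey case of~(ii)) can be recovered from the sketch; these require the full bookkeeping of \cite{BM}, which occupies an entire article. As it stands, your text is a correct road map to the literature rather than a proof of the statement.
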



\begin{rem}   \label{remmorecomm}
This result is given in \cite{BM} in a slightly different presentation and
we took the opportunity of correcting a slight mistake in the time exponent in
the case $\al>1$ of~(ii): in \cite{BM}, it should be
$\frac{1}{2(N-1)\al}-\frac{\de}{\al}$
with a parameter $\de \in \big(0,\frac{1}{2N(N-1)}\big)$
(and not $\frac{1}{2(N-1)\al}-\de$ as is written there),
and we introduced $\sig = 2(N-1)\de$.

It is a refined version of the Nekhoroshev theorem for analytic or
Gevrey Hamiltonians in the case of an $m$-quasi-convex integrable part, \ie
in the case of a function~$h(r)$ satisfying the condition~\eqref{eqcondGCm}.
The article \cite{BM} is the last of a series of attempts to
obtain the largest possible exponents~$a$ in the stability time
$\exp\Big( \text{const} \, \Big(\frac{1}{\eps}\Big)^{a} \Big)$
and~$b$ in the corresponding confinement radius $\text{const} \,
\eps^b$,
after the original work of Nekhoroshev in 1977 for analytic steep
Hamiltonians, 
the refinement by Lochak-Neishtadt and P\"oschel in 1992--94
for analytic quasi-convex Hamiltonians (which gave the exponents
$a=b=\frac{1}{2N}$ as in~(iii) in the case $\al=1$), 
and the first Gevrey stability theorem by Marco-Sauzin in 2002 still
in the quasi-convex case (which gave the exponents $a=\frac{1}{2N\al}$
and $b=\frac{1}{2N}$ as in~(iii) in the case $\al\ge1$).

Bounemoura-Marco's article \cite{BM} focuses on the stability time
(rather than the confinement radius, which is anyway a less important
issue), for the Gevrey case ($\al\ge1$); their discovery is that one
can obtain a time exponent~$a$ arbitrarily close to
$\frac{1}{2(N-1)\al}$ at the price of a smaller exponent~$b$, or even
equal to that value at the price of accepting a weaker notion of
confinement: there is an arbitrarily small confinement radius~$\rho$ but it
does not tend to~$0$ with~$\eps$.
This weaker confinement property is all we need when studying
wandering domains (see Section~\ref{secPfThmUpperBds}).
\end{rem}


\begin{rem}[Stabilization by resonances]   \label{remmreson}
The phenomenon of stabilization by resonances for quasi-convex
Hamiltonians was first proved by Lochak-Neishtadt and P\"oschel in the
analytic case;
Marco-Sauzin's article \cite{hms} contains a generalization to the
Gevrey case $\al\ge1$ obtained by adapting Lochak's periodic method.
The result can be formulated as follows:

For any submodule~$\cM$ of~$\Z^n$ of rank $\mult(\cM) \in
\{1,\ldots,N-1\}$, consider the resonant surface
\[
S_\cM \defeq \big\{\, r \in \ov B_{R_0} \mid \sum_{i=1}^n k_i \pa_i h(r) = 0
\;\text{for all $k\in\cM$} \,\big\},
\]
which is a $\mult(\cM)$-codimensional submanifold of~$\ov B_{R_0} \subset \R^N$.
Then there is an improvement of the stability property whenever the
initial condition lies at a distance $O(\eps^{1/2})$ of~$S_\cM$:
for any real $\sig>0$, there exist $\ti\eps, \ti c>0$ (which depend
on $\al,L,R,R_0,m,E,\sig,\cM$) such that,
for any $m$-quasi-convex $h\in G^{\al,L}(\ov B_{R_0})$ such that
$\normD{h}_{\al,L,R_0}\le E$,
for any $H\in G^{\al,L}(\A^N_{R_0})$ such that $\eps \defeq \normD{H-h}_{\al,L,R_0} \le \ti\eps$,
for any initial condition $(\th\zz0,r\zz0)$ in $\A^N_R$ such that
\[
\dist(r\zz0, S_\cM) \le \sig\, \eps^{1/2},
\]
the solution $\big(\th(t), r(t) \big)$ of~$X_H$ satisfies
\[
\absD{t} \le \exp \Big( \ti c \Big(\frac{1}{\eps}\Big)^{a} \Big)
\quad\ens\; \Rightarrow \quad
\normD{r(t) - r\zz0} \leq \tfrac{1}{\ti c} \eps^{b}
\]
with $\dst a \defeq \frac{1}{2\big(N-\mult(\cM)\big)\al}$
and $\dst b \defeq \frac{1}{2(N-\mult(\cM))}$.
\end{rem}


\begin{proof}[Proof of Theorem~\ref{th:NekhFevMaps}]
  Let us give ourselves $n\ge1$ integer and $\al\ge1, L,R,R_0,m,E>0$ reals
  such that $R<R_0$.
Let $R_1 \defeq \frac{R+R_0}{2}$.


Applying Theorem~\ref{thmsusp} with $n,\al,L,R_1,R_0,E$, we get
positive reals $\eps_1,L_1,C_1$ such that, 
for every $h\in G^{\al,L}(\ov B_{R_0})$ such that $\normD{h}_{\al,L,R_0} \le E$
and every exact symplectic map
$\Psi \in G^{\al,L}(\A^n_{R_0}, \A^n)$ such that
$\eps \defeq \NormD{\Psi-\Phi^h}_{\al,L,R_0} \leq \eps_1$,
there is a suspension $H\in G^{\al,L_1}(\A_{R_1}^n\times\T)$
such that $\normD{H-h}_{\al,L_1,R_1} \le C_1\, \eps$.
Without loss of generality, we can assume $L_1\le L$ and $C_1\ge1$.


Let $N\defeq n+1$, $E_1 \defeq R_1+L_1+E$, $m_1 \defeq \frac{m}{1+(L-L_1)^{-2\al}E^2}$.
Applying Bounemoura-Marco's theorem with $N,\al,L_1,R,R_1,m_1,E_1$,
we get positive reals $\ti\eps_*,\ti c_*$,
and, for each positive $\rho<R_1-R$, positive reals $\ti\eps'_\rho \le \ti\eps_*$ and
$\ti c'_\rho \le \ti c_*$,
and, for each positive $\sig < \frac{1}{n+1}$, positive reals $\ti\eps''_\sig \le \ti\eps_*$ and
$\ti c''_\sig \le \ti c_*$,
such that,
for any $m_1$-quasi-convex~$\ti h$ and any~$\ti H$ in
$G^{\al,L_1}(\A^{n+1}_{R_1})$ such that
$\normD{\ti h}_{\al,L_1,R_1} \le E_1$ and 
$\ti\eps\defeq\normD{\ti H-\ti h}_{\al,L_1,R_1} \le \ti\eps_*$,
every initial condition $\big( \ti\th\zz0, \ti r\zz0 \big)$ in
$\A_{R}^{n+1}$ gives rise to a solution of~$X_{\ti H}$ defined at
least for
$\absD{t} \le \exp \big(\ti c_* \big(\frac{1}{\ti\eps}\big)^{\frac{1}{2n\al}} \big)$,
which satisfies the properties (i), (ii) and~(iii) of Bounemoura-Marco's theorem.

We now check the statement of Theorem~\ref{th:NekhFevMaps} for an
$m$-convex function $h \in G^{\al,L}(\ov B_{R_0})$ such that $\normD{h}_{\al,L,R_0} \le E$
and an exact symplectic $\Psi \in G^{\al,L}(\A^n_{R_0},\A^n)$ such
that
\[
\eps\defeq\NormD{\Psi-\Phi^h}_{\al,L,R_0} \le
\eps_* \defeq \min\{\eps_1,\ti\eps_*/C_1\}.
\]
Let $H\in G^{\al,L_1}(\A_{R_1}^n\times\T)$ denote the suspension
of~$\Psi$ obtained from Theorem~\ref{thmsusp}.
We introduce the $(n+1)$-degree of freedom autonomous Hamiltonian
functions
\[
\ti h(r,r_{n+1}) \defeq r_{n+1}+h(r), \qquad
\ti H(\th,r,\th_{n+1},r_{n+1}) \defeq r_{n+1} + H(\th,r,\th_{n+1})
\]
for $(\th,r,\th_{n+1},r_{n+1}) \in \A^n_{R_1}\times \T\times \R \simeq
\T^{n+1}\times\ov B_{R_1}\times \R$, which contains $\A_{R_1}^{n+1}$.
One easily checks that $\normD{\ti h}_{\al,L_1,R_1} \le E_1$ and~$\ti h$ is $m_1$-quasi-convex.
Since $\normD{\ti H-\ti h}_{\al,L_1,R_1} = \normD{H-h}_{\al,L_1,R_1}
\le C_1\, \eps \le \ti\eps_*$,
Bounemoura-Marco's theorem ensures stability properties for all the solutions
of~$X_{\ti H}$ starting in~$\A^{n+1}_R$.
The conclusion stems from the fact that the solutions of the
autonomous vector field~$X_{\ti H}$ are related to the solutions of the
non-autonomous vector field~$X_H$, which, in turn, interpolate the
discrete dynamics induced by~$\Psi$; in particular, if the initial
condition is of the form 
$\big( \th\zz0, r\zz0, \th_{n+1}\zz0, r_{n+1}\zz0 \big) = (\th,r,0,0)$,
then the value of the solution at any integer time~$k$ such that 
$\absD{k} \le \exp \big(\ti c_* \big(\frac{1}{C_1\,\eps}\big)^{\frac{1}{2n\al}} \big)$
satisfies
\[
\big( \th(k), r(k) \big) = \Psi^k(\th,r), \ens
\th_{n+1}(k) = k, \ens
r_{n+1}(k) = H(\th,r,0) - H\big( \Psi^k(\th,r), k\big),
\]
hence the properties~(i), (ii) and~(iii) in Bounemoura-Marco's theorem imply
the desired properties for the discrete orbits of~$\Psi$ starting
in~$\A^n_R$, with 
$c_* \defeq \ti c_* \big(\frac{1}{C_1} \big)^{\frac{1}{2n\al}}$,
$\eps'_\rho \defeq \min\{\eps_1,\ti\eps'_\rho/C_1\}$,
$c'_\rho \defeq \ti c_* \big(\frac{1}{C_1} \big)^{\frac{1}{2n\al}}$,
$\eps''_\sig \defeq \min\{\eps_1,\ti\eps''_\sig/C_1\}$,
$c''_\sig \defeq \ti c_* \big(\frac{1}{C_1} \big)^{\frac{1-\sig}{2n\al}}$.
\end{proof}


\subsection{Proof of Theorem~\ref{th:upperbounds} (upper bounds for
  wandering sets)}	\label{secPfThmUpperBds}

We now prove Theorem~\ref{th:upperbounds} of Section~\ref{secthmupperbdsB}.
Let us give ourselves $n\ge1$ integer and $\al\ge1$, $L,R,R_0,m,E>0$ such
that $R<R_0$.
We take~$\eps_*$ and~$c_*$ as in Theorem~\ref{th:NekhFevMaps}.

Given an arbitrary $m$-convex function $h\in G^{\al,L}(\ov B_{R_0})$
such that $\normD{h}_{\al,L,R_0}\le E$, and a map~$\Psi$ as in the
statement of Theorem~\ref{th:upperbounds}, with a measurable wandering
set $W \subset \A^n_R$, we can apply Theorem~\ref{th:NekhFevMaps} to
$\Psi_{|\A^n_{R_0}}$.
This shows that for a point $(\th,r) \in \A^n_R$, all the iterates
$\Psi^k(\th,r)$ with $\absD{k} \le k_* \defeq 
\exp \big( c_* \big(\frac{1}{\eps}\big)^{\frac{1}{2n\al}} \big)$
stay in~$\A^n_{R_0}$.
In particular, all the sets $\Psi^k(W)$ with $\absD{k}\le k_*$ are
contained in~$\A^n_{R_0}$.
But these sets are pairwise disjoint and they all have the same
Lebesgue measure, therefore
$(2k_*+1) \mu(W) \le \mu(\A^n_{R_0})$,
which yields the desired estimate (diminishing the value of~$\eps_*$
and~$c_*$ if necessary).


\vfil

\pagebreak

\section{A quantitative KAM result -- proof of Part~(i) of
  Theorem~\ref{th:perdomains}} 
\label{sec:proofpseudopend}


As announced in Section~\ref{sec:descriptstrctpfC}, this section
contains the proof of the two-dimensional case of
Theorem~\ref{th:perdomains} stated there.
This proof is based on an auxiliary result, Theorem~\ref{th:varpseudopend},
which will also be instrumental in the obtention of the full proof of
Theorem~\ref{th:perdomains} in Section~\ref{ssec:perdomains}.


\subsection{Elliptic islands in~$\A$ with a tuning parameter --
  Theorem~\ref{th:varpseudopend}}
\label{sec:proofpseudopend-def}


We will take the liberty of identifying a $1$-periodic function
on~$\R$ with a function on~$\T$.
Here is the auxiliary result which has been alluded to:

%
\label{secthmperdompseudopF}
\begin{thm}\label{th:varpseudopend} 
Let $\al>1$ and $L>0$ be real numbers.
Suppose, on the one hand, that $V \in C^\infty(\R)$ is a $1$-periodic
function and that $L_0,\th^\star,\rho_0$ are positive reals such that
$L_0<\Demi-\th^\star$ and
\\[-1ex]
\begin{tabular}{p{9cm}c}
\begin{enumerate}[(i)]
\item 
$\quad -L_0 \le \th \le L_0 \quad\ens
\ens \Rightarrow \ens\ens
V(\theta)=-\frac{1}{2}\rho_0^2$
\item 
$\dem-\th^\star \le \th \le \dem+\th^\star
\ens \Rightarrow \ens\ens
V(\theta)=-(\theta-1/2)^4$
\item 
$\qquad \th-\dem \not\in \Z \qquad\ens
\ens \Rightarrow \ens\ens
V(\theta)<0$.
\end{enumerate}
&
\psset{xunit=1cm,yunit=0.8cm}
\begin{pspicture}(-0.8,-1.8)(3,-1.5)
\psline[linewidth=1pt]{->}(0,-3)(4,-3)
\psline[linewidth=1pt]{->}(2,-4.3)(2,-2.5)
\def\f{ x  4 exp  1   x   1 sub -4 mul  add  x 1 sub 2 exp 10 mul add x 1 sub 3 exp -20 mul add
 mul   -1 mul 3 sub}
\def\g{4 x sub  4 exp  1   4 x sub    1 sub -4 mul  add  4 x sub  1 sub 2 exp 10 mul add  4 x sub  
1 sub 3 exp -20 mul add mul   -1 mul   3 sub }
\psplot[plotstyle=curve,linewidth=1.5pt,plotpoints=30]{0}{1}{\f}
\psplot[plotstyle=curve,linewidth=1.5pt,plotpoints=30]{3}{4}{\g}
\psplot[plotstyle=curve,linewidth=1.5pt,plotpoints=30]{1}{3}{-4}
\psline[linewidth=1pt,linestyle=dashed](1,-4)(1,-3)
\psline[linewidth=1pt,linestyle=dashed](3,-4)(3,-3)
\rput[l](2,-2.2){$V(\theta)$}
\rput[l](0.7,-2.6){$-L_0$}
\rput[l](3,-2.6){$L_0$}
\rput[l](-0.2,-2.6){$-\frac{1}{2}$}
\rput[l](4,-2.6){$\frac{1}{2}$}
\rput[l](1.3,-4.5){$-\frac{\rho_0^2}{2}$}
\end{pspicture}
\end{tabular}
\\
We use the notation
\begin{equation}
\label{eq:P_V}
P_{V/N^2}(\th,r) \defeq \demi r^2 + \frac{1}{N^2} V(\th)
\quad \text{for $N\in\N^*$.}
\end{equation}
Suppose, on the other hand, that~$\de$ is a real number such that
$0<\delta<\rho_0/2$, and that $(W_N)_{N\in\N^*}$ is a sequence of
$1$-periodic functions of $C^\infty(\R)$ such that
\begin{enumerate}[(i)] \setcounter{enumi}{3}
\item
$-\frac{\de}{2N} \le \th \le \frac{\de}{2N} \,
\ens \Rightarrow \ens 
W_N(\theta) = \Demi \th^2$
\item
$\frac{\de}{N} \le \th \le 1 - \frac{\de}{N}
\ens \Rightarrow \ens 
W_N(\theta)=0$.
\end{enumerate}


Then there exist positive reals $C_1,C_2,C_3,C_4$ such that, 
for any integers $q, N \in \N^*$ such that $q\ge C_1 N$
and for any real $\mu\in\big( 0, C_2 {N^4}/{q^5} \big)$,
the exact symplectic map of~$\A$
\beq
\label{eqdefGNm}
\GNm \defeq \Phi^{\mu W_N} \circ \Phi^{P_{V/N^2}} 
\eeq
admits a $q$-periodic disc $\DNm \subset \A_3$, with all its iterates
also contained in $\A_3$, such that
\beq   \label{ineqareaDNm}
	C_3 \frac{\mu}{N^2} \le \area(\DNm) \le C_4 \frac{\mu}{N^2}
\eeq
and
\beq   \label{eqlocalizDNm}
\DNm \subset \BdeeN\cap \A^+_{4\delta/N}, \qquad
\GNm^k(\DNm) \cap \BdeN = \varnothing 
\quad \text{for $1\le k \le q-1$.}
\eeq
\end{thm}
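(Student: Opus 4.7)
The starting point is the integrable flow $X_{P_{V/N^2}}$: inside its separatrix, oscillating orbits have flow-period
\[
T(E) = \oint \frac{\dd\theta}{\sqrt{2\bigl(E - V(\theta)/N^2\bigr)}},
\]
parametrised by the energy $E = P_{V/N^2}(\theta,r) \in (-\rho_0^2/(2N^2), 0)$. The flat bottom of $V$ yields $T(E) \ge c_0 N$ for all such $E$, while the \emph{quartic} maximum at $\theta=1/2$ forces $T(E) \sim N/|E|^{1/4}$ as $E \to 0^-$ (a non-degenerate saddle would instead give only a logarithmic divergence, producing double-exponentially small islands, which is why the degenerate potential is essential). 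Fixing a positive integer $m$, I would thus find, for each $q \ge C_1 N$, a unique $E_q \in (-\rho_0^2/(2N^2), 0)$ solving $T(E_q) = q/m$; the corresponding level set is a $q$-periodic orbit $\gamma_q$ of $\Phi^{P_{V/N^2}}$ lying in $\A_3$ and crossing the kicked zone $\{|\theta| \le \delta/(2N)\}$ with $|r| \le \rho_0/N$.

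Next I would bring in the perturbation $\Phi^{\mu W_N}$. Because $W_N(\theta) = \tfrac{1}{2}\theta^2$ near $0$ is supported inside the flat zone of $V$, each crossing of the flat zone produces the kick $r \mapsto r - \mu\theta$, and the $q$-fold return map of $\GNm$ to a suitable transverse section is a near-identity perturbation of a twist map of size $\mu$. A Poincaré--Birkhoff-type argument (implemented via the implicit function theorem) then produces both elliptic and hyperbolic $q$-periodic orbits $\gamma_{q,\mu}^{\pm}$ of $\GNm$ close to $\gamma_q$; the elliptic character of $\gamma_{q,\mu}^{+}$ is verified by computing the trace of its linearised monodromy, which is of the form $2 - c\,\mu q/N^2 + O\bigl((\mu q/N^2)^2\bigr)$ with $c>0$, and hence lies strictly in $(-2,2)$ under the scaling $\mu < C_2 N^4/q^5$.

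The hardest and most technical step is the area estimate, and I would obtain it by constructing a \emph{parametrised} Birkhoff normal form for $\GNm^q$ at $\gamma_{q,\mu}^{+}$, to high order in the action and with explicit control of the dependence on both $\mu$ and $q$ (using the Gevrey norms of $V$ and $W_N$). The leading twist coefficient comes out of size $\sim \mu q / N^2$, and the scaling $\mu < C_2 N^4/q^5$ is exactly what is needed so that this leading twist dominates the first non-integrable remainder in the normal form: this delicate balance is the main obstacle of the proof, since the unperturbed pendulum is integrable and the twist must be extracted from the cumulative effect of $q$ kicks. Herman's quantitative version of the two-dimensional KAM theorem, applied in the Gevrey category, then yields a nested family of invariant circles of $\GNm^q$ encircling $\gamma_{q,\mu}^{+}$; the innermost one bounds the disc $\DNm$, and translating the radius estimate back through the normalising transformation gives $\area(\DNm) \asymp \mu/N^2$, as in~\eqref{ineqareaDNm}. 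Finally, the localization $\DNm \subset \BdeeN \cap \A^+_{4\delta/N}$ follows from placing $\gamma_{q,\mu}^{+}$ in the flat zone of $V$ with small positive $r$-coordinate and controlling the island's size, while the disjointness $\GNm^k(\DNm) \cap \BdeN = \varnothing$ for $1 \le k \le q-1$ is a direct consequence of the fact that any orbit leaving $\BdeN$ must complete a full pendulum half-revolution---taking $\sim q$ iterates by construction of $E_q$---before it can return.
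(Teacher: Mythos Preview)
Your outline has the right overall architecture---find a $q$-periodic point of the integrable flow near the separatrix, show ellipticity after the kick, compute a high-order Birkhoff normal form with explicit parameter dependence, and close with Herman's invariant-curve theorem---but there are two genuine gaps that would make the argument fail as written.

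\textbf{Wrong side of the separatrix, hence failed localisation.} You place the periodic orbit at energy $E\in(-\rho_0^2/(2N^2),0)$, i.e.\ an \emph{oscillating} orbit inside the eye. The paper instead works with \emph{rotating} orbits at positive energy $e_{q,N}>0$ just \emph{above} the upper separatrix, with $r_{q,N}\in(\rho_N,2\rho_N)$. This is not a cosmetic choice. For a rotating orbit $\theta(t)$ is monotone, so starting from $\theta=0$ the orbit leaves $\BdeN$ in one step and does not return until time~$q$; this is precisely what gives both the localisation $\GNm^k(\DNm)\cap\BdeN=\varnothing$ for $1\le k\le q-1$ and the clean factorisation $G^q|_{B_q}=\Phi^{\mu W_N}\circ\Phi^{qP_{V/N^2}}$ (the kick acts only once per return). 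An oscillating orbit of flow-period $q/m$ crosses $\theta=0$ (hence the support of~$W_N$) twice per oscillation, so it re-enters $\BdeN$ roughly at times $q/(2m),\,q/m,\ldots$; the disjointness condition~\eqref{eqlocalizDNm} then fails and, worse, the kick acts many times, destroying the explicit local form of $G^q$ on which the normal-form computation rests. A related point: since $W_N'(0)=0$, the rotating $q$-periodic point $a_{q,N}=(0,r_{q,N})$ is an \emph{exact} fixed point of $G^q$, so no Poincar\'e--Birkhoff or implicit-function argument is needed.

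\textbf{Wrong trace, hence unexplained threshold.} Your linearised monodromy trace $2-c\,\mu q/N^2$ is incorrect. The paper shows that on the $q$-adapted box $G^q=\Phi^{\mu W_N}\circ\Phi^{A_{q,N}}$ with $A_{q,N}$ a function of~$r$ alone, so the linearisation at $a_{q,N}$ is $\begin{pmatrix}1&\alpha_{q,N}\\ -\mu & 1-\mu\alpha_{q,N}\end{pmatrix}$ with $\alpha_{q,N}=A_{q,N}''(r_{q,N})=-r_{q,N}^2\,T_{V/N^2}'(e_{q,N})\asymp q^5/N^4$. The ellipticity condition $0<\mu\alpha_{q,N}<4$ is then exactly the origin of the threshold $\mu<C_2N^4/q^5$; your formula $\mu q/N^2<\text{const}$ would give only $\mu<CN^2/q$, which is far too weak and leaves the stated threshold unexplained. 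The large twist $\alpha_{q,N}\asymp q^5/N^4$ comes from $T'(e)\sim e^{-5/4}$ near the degenerate saddle (this is where the quartic maximum really enters), combined with $e_{q,N}\asymp N^2/q^4$ and $r_{q,N}\asymp 1/N$. Getting this scaling right is essential, since the entire normal-form hierarchy and the final KAM estimate are organised around powers of $\omega=q^4/N^3$ and $|\lambda-1|=\sqrt{\mu\alpha_{q,N}}$.
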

%

The proof of Theorem~\ref{th:varpseudopend} is given in
Sections~\ref{sec:overvpseudopend}--\ref{secconclupfThmF}.
Recall that the notations $\A^+_d$ and~$\Bd$ were introduced in~\eqref{eqdefApdBd}.

Observe that, when $V, W_N \in G^{\al,L}(\R)$,
we have $\GNm \in \Pa{1}(\Phi^{\Demi r^2})$; 
this map can be viewed as a perturbation of the ``pseudo-pendulum''
$\Phi^{P_{V/N^2}}$.
Here, we have two external parameters, $N$ and~$\mu$ (changing them
amounts to changing the discrete dynamical system we are dealing
with), and one internal parameter, $q$ (we may vary it, \eg taking it
larger and larger, while keeping the same system~$\GNm$).
We call~$\mu$ the ``tuning parameter'';
an appropriate choice of~$\mu$ will yield
Theorem~\ref{th:perdomains}(i) in Section~\ref{secPfThperi}
and Theorem~\ref{th:perdomains}(ii) in Section~\ref{ssec:perdomains}.


\subsection{Theorem~\ref{th:varpseudopend} implies Part~(i) of Theorem~\ref{th:perdomains}}
\label{secPfThperi}


Taking for granted Theorem~\ref{th:varpseudopend}, we now show how
Theorem~\ref{th:perdomains}(i) follows.

Let $\al>1$ and $L>0$.
With the help of ``bump functions'' as in Appendix~\ref{secBumpGev},
we can easily choose $V \in G^{\al,L}(\R)$ satisfying conditions
(i)--(iii) (for whatever choice of $L_0,\th^\star,\rho_0$).
We choose $\rho_0 >2$ and $\de=1$.
For the choice of the sequence $(W_N)_{N\in\N^*}$ we apply
Lemma~\ref{lembump}, which produces a real $c(\al,L)>0$ and a sequence of
$1$-periodic functions $(\eta_N)_{N\in\N^*}$ in $G^{\al,L}(\R)$ such that
\[
-\frac{1}{2N} \le \th \le \frac{1}{2N}
\ens \Rightarrow \ens 
\eta_N(\theta) = 1, \qquad
\frac{1}{N} \le \th \le 1 - \frac{1}{N}
\ens \Rightarrow \ens 
\eta_N(\theta)=0
\]
and
\beq
\norm{\eta_N}_{\al,L} \le \exp\Big(c(\al,L)\, N^{\frac{1}{\al-1}} \Big)
\quad \text{for all $N \in \N^*$.}
\eeq
We then set
$W_N(\th) \defeq \Demi \eta_N(\th) \big(\dist(\th,\Z)\big)^2$, so that
\beq
\norm{W_N}_{\al,L} \le C_0\, \exp\Big(c(\al,L)\, N^{\frac{1}{\al-1}} \Big)
\quad \text{for all $N \in \N^*$}
\eeq
with some constant $C_0>0$,
and we apply Theorem~\ref{th:varpseudopend}.


We get $C_1,C_2,C_3,C_4>0$ fulfilling the conclusions of Theorem~\ref{th:varpseudopend}.
Observe that formula~\eqref{eqdefGNm} defines
$\GNm \in \Pa{1}(\Phi^{\ppdemi r^2})$ with
\begin{equation}	\label{eqdeGNm}
\de^{\al,L}\Big(\GNm,\Phi^{\dem r^2}\Big) \le
\frac{1}{N^2}\norm{V}_{\al,L} + 
C_0 \,\mu\, \exp\Big( c(\al,L)\, N^{\frac{1}{\al-1}} \Big) 
\end{equation}
for any integer $N\ge1$ and real $\mu>0$.
Recall that $N_j \defeq p_{j+2}$ is given by the prime number
sequence. We set
\beq   \label{enavantlamujq}
\mu_{j,q} \defeq \min\bigg\{
\frac{C_2 N_j^4}{2q^5}, 
\frac{1}{N_j^2} \exp\Big( -c(\al,L) N_j^{\frac{1}{\al-1}} \Big)
\bigg\}, \quad
\Psi_{j,q} \defeq G_{N_j,\mu_{j,q}}, \quad
\jD_{j,q} \defeq \DNmj
\eeq
for all $j,q \in \N^*$ such that $q \ge C_1 N_j$
(notice that $\DNmj$ is well defined because 
$\mu_{j,q} \in \big( 0, C_2 {N_j^4}/{q^5} \big)$
for such values of~$j$ and~$q$).

Let us check that the conclusions of Theorem~\ref{th:perdomains}(i)
are fulfilled.
Since~\eqref{enavantlamujq} entails 
$\mu_{j,q} \exp\Big( c(\al,L)\, N^{\frac{1}{\al-1}} \Big) \le 1/N_j^2$,
we deduce from~\eqref{eqdeGNm} that
\[
\Psi_{j,q} \in \Pa{1}(\Phi^{\ppdemi r^2}), \qquad
\de^{\al,L}\big(\Psi_{j,q},\Phi^{\dem r^2}\big) \le 
\frac{\norm{V}_{\al,L} + C_0}{N_j^2}.
\]
According to Theorem~\ref{th:varpseudopend}, $\jD_{j,q}$ is a
$q$-periodic disc for $\Psi_{j,q}$, whose orbit is localized precisely
as desired, in particular~\eqref{eqlocalizDNm} amounts exactly to~\eqref{eq:disjointdeux}.
Now, by~\eqref{ineqareaDNm}, 
\begin{align*}
\CG(\jD_{j,q}) = \area(\jD_{j,q}) & \ge C_3 \frac{\mu_{j,q}}{N_j^2}
= \Demi C_2 C_3 \min\bigg\{
\frac{N_j^2}{q^5}, 
\frac{2}{C_2 N_j^4} \exp\Big( -c(\al,L) N_j^{\frac{1}{\al-1}} \Big)
\bigg\} \\[1ex]
& \ge C'_3 \min \bigg\{%
\frac{N_j^2}{q^5}, \,
\exp\Big( -c N_j^{\frac{1}{\al-1}} \Big)
\bigg\}
\end{align*}
with $C'_3 \defeq \Demi C_2 C_3$ and $c \defeq 2\, c(\al,L)$ for $j$ large enough.
This ends the proof of Theorem~\ref{th:perdomains}(i).


\subsection{Overview of the proof of Theorem~\ref{th:varpseudopend}}
\label{sec:overvpseudopend}


The rest of Section~\ref{sec:proofpseudopend} is devoted to the proof of
Theorem~\ref{th:varpseudopend}.

We thus give ourselves once for all $\al,L,V$ and $(W_N)_{N\in\N^*}$ as
in the statement.
Let us begin with a brief overview of the method.

The pseudo-pendulum $\Phi^{P_{V/N^2}}$ has a degenerate equilibrium point at
$(\dem,0)$, with an ``upper separatrix'' 
$\{\, (\th,r) \in \A \mid r>0 \ \text{and}\  P_{V/N^2}(\th,r) =
0\,\}$;
see the figure on p.~\pageref{figdegpend}.
It also has periodic points of arbitrarily high period located near this
upper separatrix,
and the effect of the perturbation $\Phi^{\mu W_N}$ in~$\GNm$
is to create elliptic islands around these periodic points.

We will estimate the size of these islands by means of Herman's quantitative version of the two-dimensional KAM 
theorem \cite{herman2} recalled in Section~\ref{sec:stateHermanthm}. 
To do this, we have to compute parametrized normal forms of high
order, the parameters being the size of the perturbation (measured
by~$1/N$ and~$\mu$)
and the period~$q$ of the island.

More precisely, for $q$ large enough, we will study the $q$-th iterate
of~$\GNm$ in a neighbourhood of a $q$-periodic point in
Section~\ref{sec:PreliminGq}
and, in Section~\ref{sec:normalisGq}, find normalizing coordinates in
which Herman's version of the invariant curve theorem can be applied to~$\GNm^q$.

\subsection{Preliminary study near a \texorpdfstring{$q$-periodic point~$a_q$ of~$\GNm$}
{q-periodic point}}

\label{sec:PreliminGq}

\subsubsection{Localization}   

\label{sec:Defqbox}

We begin with defining a suitable notion of {\em adapted box}, to be
used in this section as well as in Section~\ref{ssec:perdomains}.


\begin{Def}
\label{def:q-box}
Let  $q$ be an integer $\geq2$ and fix $d\in (0,1/2)$. Consider a Hamiltonian function $H:\A\to\R$.
A {\em $q$-adapted box for $H$ and ${\Bd}$} is a rectangle
$B=I\times [a,b]\subset \T\times \R$ contained in $\Bd$ such that
\begin{enumerate}
\item\label{Cond:disjonction-q-box}
 for $1\leqslant t \leqslant q-1$,  $\Phi^{tH}(B)\cap \overline{\Bd}=\varnothing$;
\item \label{Cond:injection-q-box} $\Phi^{qH}(B)\subset\overline{\Bdd}$.
\end{enumerate}
\end{Def}


This section is devoted to the construction of an explicit 
$q$-adapted box centered at a $q$-periodic point $a_{q,N}$, as defined below,  with respect 
to the system $P_{V/N^2}$ and $\BdeN$.~\vspace{5pt}

\noindent
In the same way as  for the classical pendulum,  the integral curve 
\[
\cS_e:=\{(\theta,r)\in\A\mid r>0,  
P_{V/N^2}(\theta,r)=e \}
\] 
is a closed curve if $e>0$, so the flow $t\mapsto \Phi^{tP_{V/N^2}}(\theta,r)$ is periodic 
on $\cS_e$, with a period $T_{V/N^2}(e)$ given by 
\begin{equation}\label{eq:T_V}
	T_{V/N^2}(e)=\int_0^1\frac{du}{\sqrt{2(e- V(u)/N^2)}}.
\end{equation}
\begin{Def}
Let $q>0$ be a real number. Let $a_{q,N}=(0,r_{q,N})$ denote the unique point that satisfies 
$r_{q,N}>0$ and $T_{V/N^2}(a_{q,N})=q$. We also set $e_{q,N}=P_{V/N^2}(a_{q,N})$ 
and $\rho_N={\rho_0}/{N}$.
\end{Def}
\psset{xunit=1cm,yunit=0.8cm}
\label{figdegpend}
\begin{pspicture}(-4,-1.5)(3,2.7)
\psframe*[linecolor=lightgray](-0.15,0)(0.15,2)
\psframe*[linecolor=lightgray](3.85,0)(4.15,2)
\psline[linewidth=1pt]{->}(-1,0)(5,0)
\psline[linewidth=1pt]{->}(2,-1.5)(2,2.7)
\def\f{ x  4 exp  1   x   1 sub -4 mul  add  x 1 sub 2 exp 10 mul add x 1 sub 3 exp -20 mul add
 mul sqrt}
\def\fa{ x  4 exp  1   x   1 sub -4 mul  add  x 1 sub 2 exp 10 mul add x 1 sub 3 exp -20 mul add
 mul 0.4 add sqrt}
\def\fb{ x  4 exp  1   x   1 sub -4 mul  add  x 1 sub 2 exp 10 mul add x 1 sub 3 exp -20 mul add
 mul 0.8 add sqrt}
\def\fc{ x  4 exp  1   x   1 sub -4 mul  add  x 1 sub 2 exp 10 mul add x 1 sub 3 exp -20 mul add
 mul 1.2 add sqrt}
\def\fd{ x  4 exp  1   x   1 sub -4 mul  add  x 1 sub 2 exp 10 mul add x 1 sub 3 exp -20 mul add
 mul 1.6 add sqrt}
\def\fe{ x  4 exp  1   x   1 sub -4 mul  add  x 1 sub 2 exp 10 mul add x 1 sub 3 exp -20 mul add
 mul 2 add sqrt}
\def\g{4 x sub  4 exp  1   4 x sub    1 sub -4 mul  add  4 x sub  1 sub 2 exp 10 mul add  4 x sub  
1 sub 3 exp -20 mul add mul sqrt}
\def\ga{4 x sub  4 exp  1   4 x sub    1 sub -4 mul  add  4 x sub  1 sub 2 exp 10 mul add  4 x sub  
1 sub 3 exp -20 mul add mul  0.4 add sqrt}
\def\gb{4 x sub  4 exp  1   4 x sub    1 sub -4 mul  add  4 x sub  1 sub 2 exp 10 mul add  4 x sub  
1 sub 3 exp -20 mul add mul  0.8 add sqrt}
\def\gc{4 x sub  4 exp  1   4 x sub    1 sub -4 mul  add  4 x sub  1 sub 2 exp 10 mul add  4 x sub  
1 sub 3 exp -20 mul add mul  1.2 add sqrt}
\def\gd{4 x sub  4 exp  1   4 x sub    1 sub -4 mul  add  4 x sub  1 sub 2 exp 10 mul add  4 x sub  
1 sub 3 exp -20 mul add mul  1.6 add sqrt}
\def\ge{4 x sub  4 exp  1   4 x sub    1 sub -4 mul  add  4 x sub  1 sub 2 exp 10 mul add  4 x sub  
1 sub 3 exp -20 mul add mul  2 add sqrt}
\def\fm{ x  4 exp  1   x   1 sub -4 mul  add  x 1 sub 2 exp 10 mul add x 1 sub 3 exp -20 mul add
 mul sqrt -1 mul}
\def\gm{4 x sub  4 exp  1   4 x sub    1 sub -4 mul  add  4 x sub  1 sub 2 exp 10 mul add  4 x sub  
1 sub 3 exp -20 mul add mul  sqrt  -1  mul }
\psplot[plotstyle=curve,linewidth=1.5pt,plotpoints=100]{-0.2}{1}{\f}
\psplot[plotstyle=curve,linewidth=0.5pt,plotpoints=100]{-0.2}{1}{\fa}
\psplot[plotstyle=curve,linewidth=0.5pt,plotpoints=100]{-0.2}{1}{\fb}
\psplot[plotstyle=curve,linewidth=0.5pt,plotpoints=100]{-0.2}{1}{\fc}
\psplot[plotstyle=curve,linewidth=0.5pt,plotpoints=100]{-0.2}{1}{\fd}
\psplot[plotstyle=curve,linewidth=0.5pt,plotpoints=100]{-0.2}{1}{\fe}
\psplot[plotstyle=curve,linewidth=1.5pt,plotpoints=30]{-0.2}{1}{\fm}
\psplot[plotstyle=curve,linewidth=1.5pt,plotpoints=30]{3}{4.2}{\g}
\psplot[plotstyle=curve,linewidth=0.5pt,plotpoints=100]{3}{4.2}{\ga}
\psplot[plotstyle=curve,linewidth=0.5pt,plotpoints=100]{3}{4.2}{\gb}
\psplot[plotstyle=curve,linewidth=0.5pt,plotpoints=100]{3}{4.2}{\gc}
\psplot[plotstyle=curve,linewidth=0.5pt,plotpoints=100]{3}{4.2}{\gd}
\psplot[plotstyle=curve,linewidth=0.5pt,plotpoints=100]{3}{4.2}{\ge}
\psplot[plotstyle=curve,linewidth=1.5pt,plotpoints=30]{3}{4.2}{\gm}
\psplot[plotstyle=curve,linewidth=1.5pt,plotpoints=30]{1}{3}{-1}
\psplot[plotstyle=curve,linewidth=1.5pt,plotpoints=30]{1}{3}{1}
\psplot[plotstyle=curve,linewidth=0.5pt,plotpoints=30]{1}{3}{1 0.4 add sqrt}
\psplot[plotstyle=curve,linewidth=0.5pt,plotpoints=30]{1}{3}{1 0.8 add sqrt}
\psplot[plotstyle=curve,linewidth=0.5pt,plotpoints=30]{1}{3}{1 1.2 add sqrt}
\psplot[plotstyle=curve,linewidth=0.5pt,plotpoints=30]{1}{3}{1 1.6 add sqrt}
\psplot[plotstyle=curve,linewidth=0.5pt,plotpoints=30]{1}{3}{3 sqrt}
\psline[linewidth=1pt,linestyle=dashed](1,0)(1,2)
\psline[linewidth=1pt,linestyle=dashed](3,0)(3,2)
\psline[linewidth=0.5pt,linestyle=dashed](3.85,0)(3.85,2)
\psline[linewidth=0.5pt,linestyle=dashed](4.15,0)(4.15,2)
\psline[linewidth=0.5pt,linestyle=dashed](-0.15,0)(-0.15,2)
\psline[linewidth=0.5pt,linestyle=dashed](0.15,0)(0.15,2)
\rput[l](3.7,2.3){$\scriptscriptstyle\abs{\theta-\frac{\scriptscriptstyle 1}{\scriptscriptstyle 2}}
\leqslant \theta^\star$}
\rput[l](-0.4,2.3){$\scriptscriptstyle\abs{\theta+\frac{\scriptscriptstyle 1}{\scriptscriptstyle 2}}
\leqslant \theta^\star$}
\rput[l](2.2,2.7){$r$}
\rput[l](2.15,2){$ a_{q,N}$}
\rput[l](2.15,0.75){$\scriptstyle\rho_N$}
\rput[l](1.91,1.73){$\bullet$}
\rput[l](4.8,0.3){$\theta$}
\rput[l](0.65,-0.3){$\scriptstyle-L_0$}
\rput[l](2.9,-0.3){$\scriptstyle L_0$}
\rput[l](-0.3,-0.4){$\scriptscriptstyle-\frac{\scriptscriptstyle 1}{\scriptscriptstyle 2}$}
\rput[l](3.9,-0.4){$\scriptscriptstyle\frac{\scriptscriptstyle 1}{\scriptscriptstyle 2}$}
\end{pspicture}

\noindent
Note that $\rho_N$ is the height of the separatrix $\cS_0$ 
above $[-L_0,L_0]\times\{0\}$.
%
We identify the time-$1$ flow $\varphi=\Phi^{P_{V/N^2}}$ and its lift  to $\R^2$ satisfying
$\varphi^q(0,r_{q,N})=(1,r_{q,N})$. 
We set 
\[
	B_q(\ell,\ell'):=[-\ell,\ell]\times [r_{q,N}-\ell',r_{q,N}+\ell'].
\]
If there is no source of confusion, we may   identify ${B}_q(\ell,\ell')\subset\R^2$ with 
its image in $\A$. We also denote $\theta$ the projection to the first coordinate, 
from $\A$ to $\T$, or from $\R^2$ to $\R$.
\begin{prop}\label{prop:quantitative-disjoint-boxes} 
Assume $N$ is a positive  integer and $q$ is real number.  
 If   $N\geqslant N_0$ and $q\geqslant q_0 N$ then we have
\[
	\rho_N\leqslant r_{q,N}\leqslant 2\rho_N, \quad C_1 \frac{N^2}{q^4}\leqslant e_{q,N}
	\leqslant  C_2 \frac{N^2}{q^4}\quad~\text{and}~\quad C_3 \frac{N^3}{q^4}\leqslant 
	r_{q,N}-\rho_N\leqslant  C_4 \frac{N^3}{q^4};
\]
if $q\geqslant 2$ and $0<\delta<\rho_0/2$, if we set $\ell=\delta/(4N)$ and
 $\ell'=C_5\frac{N^3}{q^5}\delta$ then $B_q(\ell,\ell')$ is a $q$-adapted box with respect to  
$P_{V/N^2}$ and $\BdeN$, where $N_0$, $q_0$, $C_1$, $C_2$, $C_3$, $C_4$ and $C_5$ 
are positive constants (which depend only on $V$).
\\
Furthermore,  if $q\geqslant q_0'N$, with $(q_0'-q_0)N_0\geqslant 1$,  then  we have 
\[
\rho_N< r_{q+1,N}<r <r_{q-1,N}\leqslant 2\rho_N,\quad\text{for all $(\theta,r)\in 
B_q(\ell,\ell')$.}
\]
\end{prop}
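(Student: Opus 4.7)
My plan is to reduce everything to a careful asymptotic analysis of the period function $T_{V/N^2}(e)$ defined in~\eqref{eq:T_V}, together with its derivative, as $e \to 0^+$. Once the behavior of $T_{V/N^2}$ is under control, the size estimates on $e_{q,N}$, $r_{q,N}$ and $r_{q,N}-\rho_N$ follow by algebra from $T_{V/N^2}(e_{q,N})=q$ and $r_{q,N}^2=2e_{q,N}+\rho_N^2$, while the adapted-box property and the monotonicity in $q$ both reduce to linearizing the return map along the central orbit and propagating initial perturbations using $T_{V/N^2}$ and $T'_{V/N^2}$.

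First, I split $T_{V/N^2}(e)=\int_0^1 du/\sqrt{2(e-V(u)/N^2)}$ into the slow piece near $\th=\dem$ and its complement. On $[\dem-\th^\star,\dem+\th^\star]$, where $V(u)=-(u-\dem)^4$, the substitution $u-\dem=(eN^2)^{1/4}v$ produces a contribution of the form $\frac{N^{1/2}}{\sqrt{2}\,e^{1/4}}\int \frac{dv}{\sqrt{1+v^4}}$, which converges to $c_V N^{1/2}e^{-1/4}$ with $c_V>0$ explicit as $e\to 0$. On the complement $|V|$ is bounded below by a positive constant and the contribution is $O(N)$. Hence $T_{V/N^2}(e)=c_V N^{1/2}e^{-1/4}(1+o(1))$ for small $e$, and imposing $q\geqslant q_0 N$ with $q_0$ large enough inverts to $C_1 N^2/q^4\leqslant e_{q,N}\leqslant C_2 N^2/q^4$. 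The bounds on $r_{q,N}$ then come from $r_{q,N}^2=2e_{q,N}+\rho_N^2$, and $r_{q,N}-\rho_N=2e_{q,N}/(r_{q,N}+\rho_N)\asymp N\cdot e_{q,N}\asymp N^3/q^4$. The same computation gives $T'_{V/N^2}(e)\asymp -N^{1/2}e^{-5/4}$, hence $|T'_{V/N^2}(e_{q,N})|\asymp q^5/N^2$.

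For the adapted-box property, an orbit starting from $(\th_0,r_0)\in B_q(\ell,\ell')$ lies on $\cS_e$ with $|e-e_{q,N}|\leqslant r_{q,N}\ell'+O(\ell'^2)$, along which the angle is strictly monotone and the speed $r$ is bounded below by $\rho_N/2$ on $[-L_0,L_0]$ for $N$ large. Since $\delta<\rho_0/2$, this forces an angle advance of at least $\rho_N/2>\delta/N$ in one unit of time near $\th=0$; monotonicity then gives $\th(t)\bmod\Z\in[\delta/N,1-\delta/N]$ for every integer $t\in\{1,\ldots,q-1\}$, establishing the disjointness condition. For the injection condition at $t=q$, I linearize the return map: solving $\tau(\th_0,\th_0+\Delta,e)=q$, where $\tau$ is the travel-time function along $\cS_e$, yields $\Delta-1=-r_{q,N}T'_{V/N^2}(e_{q,N})(e-e_{q,N})+O(\cdots)$, so the drift of the return angle from $0\bmod 1$ is bounded by $\ell+r_{q,N}^2|T'_{V/N^2}(e_{q,N})|\ell'\asymp \delta/(4N)+C_5\rho_0^2\delta/N$, which is at most $\delta/(2N)$ for $C_5$ small enough; this is precisely what forces the scale $\ell'=C_5 N^3\delta/q^5$.

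For the monotonicity $r_{q+1,N}<r<r_{q-1,N}$, I differentiate $T_{V/N^2}(e_{q,N})=q$: since $T'_{V/N^2}<0$, both $q\mapsto e_{q,N}$ and $q\mapsto r_{q,N}$ are strictly decreasing, and $dr_{q,N}/dq=1/(r_{q,N}T'_{V/N^2}(e_{q,N}))\asymp -N^3/q^5$. Hence the gaps $r_{q-1,N}-r_{q,N}$ and $r_{q,N}-r_{q+1,N}$ are both of order $N^3/q^5$, and the same smallness condition on $C_5$ makes $\ell'$ strictly smaller than half of each gap. The main technical obstacle will be the uniform tracking of all constants through the linearized return map near the degenerate saddle at $\th=\dem$: the singular factor $q^5/N^2$ coming from $|T'_{V/N^2}(e_{q,N})|$ is precisely what forces the drastic shrinking $\ell'\sim 1/q^5$, and one must check that second-order corrections to the linearization do not spoil this optimal scale.
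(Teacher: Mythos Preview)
Your approach is correct and rests on the same core ingredient as the paper's proof, namely the asymptotic $T_{V/N^2}(e)\sim c_V N^{1/2}e^{-1/4}$ together with $|T'_{V/N^2}(e_{q,N})|\asymp q^5/N^2$. The organisation, however, is genuinely different. The paper does not work directly with general~$N$: it first proves everything for $N=1$ (Lemma~\ref{lem:system-P_V} and Proposition~\ref{prop:size-box-N=1}), then transfers the results via the scaling identity $T_{V/N^2}(e)=N\,T_V(N^2e)$, which gives $e_{q,N}=e_{q/N,1}/N^2$ and $r_{q,N}=r_{q/N,1}/N$ (Lemma~\ref{lem:V-V/N^2}). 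For the adapted-box property, instead of your linearized return-map computation the paper isolates four explicit corner conditions (a)--(d) at $m_q=a_{q,N}-(\ell,\ell')$ and $M_q=a_{q,N}+(\ell,\ell')$ (Lemma~\ref{lem:qualitative-disjoint-boxes}) and checks them using the monotonicity of $\theta\circ\Phi^{tP_V}$ in~$r$; this is what makes the disjointness for $1\le t\le q-1$ rigorous without the hand-wave ``monotonicity then gives\dots''. Finally, for the last assertion the paper does not compute the gap $r_{q\pm1,N}-r_{q,N}$ directly as you do: it argues indirectly that if $a_{q\pm1,N}$ lay in $B_q(\ell,\ell')$ then $\varphi^q(a_{q\pm1,N})$ would simultaneously belong to $\overline{\cB_{\delta/(2N)}}$ (by the $q$-adapted box property) and lie outside~$\cB_{\delta/N}$ (by the $(q\pm1)$-adapted box property), a contradiction. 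Your direct gap estimate and the paper's indirect argument are equally valid; the scaling reduction to $N=1$ is the main structural difference, and it buys cleaner bookkeeping of the $N$-dependence.
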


For the convenience of the reader,  the proof of the proposition is split as follows. 
Lemma~\ref{lem:qualitative-disjoint-boxes}  provides a criterium for a rectangle to be 
$q$-adapted for the system $P_{V/N^2}$ and Lemma~\ref{lem:system-P_V}  extracts from 
the system $P_V$ all the properties we need.  This eventually  gives an explicite estimate of 
the size of a $q$-adapted box in Proposition~\ref{prop:size-box-N=1} when $N=1$. 

In order to generalise these resuts to the system when $N\geqslant N_0$ is arbitrary, we note 
that the systems $N^2P_{V/N^2}$ and $P_V$ are equal up to a  dilatation in the coordinate 
$r$. From this, Lemma~\ref{lem:V-V/N^2} gives explicit  dependances  between  the two 
systems  $P_V$  and $P_{V/N^2}$ for the main quantities we may compute. 

\begin{proof}
Assuming Proposition~\ref{prop:size-box-N=1}, Lemma~\ref{lem:system-P_V} and 
Lemma~\ref{lem:V-V/N^2}, we prove Proposition~\ref{prop:quantitative-disjoint-boxes}. 
\\[3pt]
$\bullet$~Since $e_{q,1}=(r_{q,1}^2-\rho_0^2)/2$, we have $\rho_0<r_{q,1}\leqslant 
2\rho_0$ if and only if $0<e_{q,1}\leqslant \frac{3}{2}\rho_0^2$. Let $q_0>0$ be the unique 
real number such that $e_{q_0,1}=3\rho_0^2/2$; 
Lemma~\ref{lem:system-P_V}.~\ref{Cond:period-asympt}) with $k=0$ shows that 
\[
	c_0/q\leqslant e_{q,1}^{1/4}\leqslant c_1/q,\quad \text{for $q\geqslant q_0$, }
\]
where $c_0$ and $c_1$ are positive constants (which depend only on $V$). Therefore, for 
$q\geqslant q_0$, we have $0<e_{q,1}\leqslant e_{q_0,1}$, so 
\[
	\rho_0<r_{q,1}\leqslant 2\rho_0\quad;\quad \frac{c_0^4}{q^4}\leqslant e_{q,1}\leqslant 
	\frac{c_1^4}{q^4}.
\]
Furthermore, we have $e_{q,1}=\frac{1}{2}(r_{q,1}+\rho_0)(r_{q,1}-\rho_0)$. 
Since  $\rho_0<\frac{1}{2}(r_{q,1}+\rho_0)\leqslant 2\rho_0$, it follows that
\[
	\frac{c_0^4}{2\rho_0}\frac{1}{q^4}\leqslant\frac{e_{q,1}}{2\rho_0}\leqslant r_{q,1}-
	\rho_0\leqslant \frac{e_{q,1}}{\rho_0}\leqslant \frac{c_1^4}{\rho_0}\frac{1}{q^4}.
\] 
Since Lemma~\ref{lem:V-V/N^2} shows that $r_{q,N}=r_{q/N,1}/N$, we obtain that 
\[
	\rho_N<r_{q,N}\leqslant 2\rho_N~\text{and}~
	C_3 \frac{N^3}{q^4}\leqslant r_{q,N}-\rho_N\leqslant  C_4 \frac{N^3}{q^4}, 
	~\quad\text{provided $q/N\geqslant q_0$,}
\]
with $C_3:=c_0^4/(2\rho_0)$ and $C_4:=c_1^4/\rho_0$.\\
In a similar way, since Lemma~\ref{lem:V-V/N^2} shows that 
$e_{q,N}=e_{q/N,1}/N^2$, we obtain that 
\[
	C_1\frac{N^2}{q^4}\leqslant e_{q,N}\leqslant C_2 \frac{N^2}{q^4},
	~\quad\text{provided $q/N\geqslant q_0$,}
\]
with $C_1:=c_0^4$ and $C_2:=c_1^4$.
\\[5pt]
$\bullet$~We now construct the  $q$-adapted box.  Proposition~\ref{prop:size-box-N=1} with 
 $V/N^2$ substituted to $V$ shows that the rectangle $B_q(\ell,\ell')$ is $q$-adapted with 
respect to $P_{V/N^2}$ and $\BdeN$, provided we have  $0<\delta/N<\delta_{0,N}$, 
$\ell=(\delta/N)/4$ and
 \[
\ell'=\frac{\delta}{N}\cdot\bigg(\frac{r_{q,N}-\rho_N}{r_{0,N}}\bigg)^{5/4}.
\]
Here $r_{0,N}$ and $\delta_{0,N}$ are the corresponding constants for $P_{V/N^2}$
given by~\eqref{subeq:r_0-d_0},  that is 
\begin{subequations}\label{subeq:r_0-d_0-N}
\begin{align}
r_{0,N}:=2 \cdot (40 C_{0,N}\rho_N^2)^{4/5},   \\[3pt]
\delta_{0,N}=\min\bigg(L_0;{\rho_N}/{2};\frac{r_{0,N}^{5/4}}{2\rho_N^{1/4}}\bigg)=
\min\bigg(L_0;{\rho_N}/{2};20\cdot 2^{5/4}\rho_N^{7/4} C_{0,N}\bigg),
\end{align}
\end{subequations}
where $C_{0,N}=C_0 N^{7/4}$  as it appears in~\ref{lem:V-V/N^2}~\ref{Cond:Delta-T-N}).
Since $\rho_N=\rho_0/N$, we observe that  $C_{0,N}\rho_N^{7/4}=C_0\rho_0^{7/4}$, so 
$\delta_{0,N}=\rho_0/(2N)$ if $N\geqslant N_0$
with $N_0$ large enough (depending only on $L_0$, $C_0$ and $\rho_0$), and we have
\[
r_{0,N}=\frac{C_6 }{N^{1/5}},\quad\text{with $C_6=2\cdot (40 C_0\rho_0^2)^{4/5}$.}
\]
 This implies that 
\[
	\ell'\geqslant \frac{\delta}{N}\bigg(C_3\frac{N^3}{q^4}\frac{N^{1/5}}{C_6}\bigg)^{5/4}
	=C_5\frac{N^3}{q^5}\delta,\quad\text{with $C_5=(C_3/C_6)^{5/4}$.}
\]
Since $C_5$ depends only on $V$, this proves that $B_q(\ell,\ell')$ is a suitable $q$-adapted box.
\\[3pt]
$\bullet$~Now we assume that $q-1\geqslant q_0N$ and we prove the estimates of the proposition
on $r_{q-1,N}$ and $r_{q+1,N}$. Notice that the assumption  $q\geqslant q_0N+(q_0'-q_0)N_0$
 implies that $q-1\geqslant q_0N$, hence $\rho_N< r_{q+1}<r_{q-1,N}\leqslant 2\rho_N$.

Furthermore, since there exist $(q-1)$-adapted and $(q+1)$-adapted boxes repectively  
centered at $a_{q-1,N}$ and $a_{q+1,N}$, the image  points $\varphi^{q}(a_{q+1,N})$ and 
$\varphi^q(a_{q-1,N})=\varphi(a_{q-1,N})$ do not belong to ${\BdeN}$, so $a_{q-1,N}$ 
and $a_{q+1,N}$ are not in $B_q(\ell,\ell')$. This implies that 
\[
	B_q(\ell,\ell')\subset \{(\theta,r)\mid r_{q+1}<r<r_{q-1,N} \}
\]
and the proof of the proposition is complete.
\end{proof}

\begin{lemma}\label{lem:qualitative-disjoint-boxes} 
Assume $q\geqslant 2$ and $0<\delta<\min (\rho_0,L_0)$. We set  $m_q={a}_{q,N}-(\ell,\ell')$ 
and $M_q={a}_{q,N}+(\ell,\ell')$. Then there exists $\ell>0$ and $\ell'>0$ satisfying the 
following properties. 
\begin{description}
\item[\qquad (a)]  $\ell < \delta/(2N)$ and $\rho_N+\ell'<r_{q,N}$;
\item[\qquad (b)] $\delta/N < \theta(\varphi(m_q))$; 
\item[\qquad (c)] $\theta(\varphi^{q-1}(M_q)) <1-\delta/N$;
\item[\qquad (d)] $\abs{\theta(\varphi^{q}(m_q))-1}<\delta/(2N)$ and 
	$\abs{ \theta(\varphi^{q}(M_q))-1} <\delta/(2N)$.\vspace{3pt}
\end{description}
Furthermore, if these conditions are fulfilled then  $B_q=B_q(\ell,\ell')$ is a $q$-adaped box. 
\end{lemma}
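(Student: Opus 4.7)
My plan is to prove the two assertions of the lemma in sequence: first the existence of a pair $(\ell,\ell')$ satisfying (a)--(d), by continuity combined with the explicit form of the flow in the flat region of~$V$; then the implication (a)--(d) $\Rightarrow$ $q$-adaptation, via a monotone bracketing of the orbit of an arbitrary $p \in B_q(\ell,\ell')$ between the orbits of the extremal corners $m_q$ and $M_q$.

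For the existence of $(\ell,\ell')$, condition~(a) is immediate by taking $\ell' < r_{q,N} - \rho_N$ (positive by the Proposition) and $\ell < \delta/(2N)$. The remaining three conditions follow from continuity of $\varphi^t$ at the centre $a_{q,N}$ once I exhibit the corresponding strict inequalities at $a_{q,N}$ itself. On the flat strip $|\theta| \le L_0$ the equations of motion reduce to $\dot r = 0$ and $\dot\theta = r$, so $\theta(\varphi(a_{q,N})) = r_{q,N} \ge \rho_N = \rho_0/N > \delta/N$ (using $\delta < \rho_0$). An analogous computation in the symmetric strip $|\theta - 1|\le L_0$, together with the identity $\varphi^q(a_{q,N}) = (1, r_{q,N})$ valid in the lift, shows that $\theta(\varphi^{q-1}(a_{q,N})) < 1 - \delta/N$ and $\theta(\varphi^q(a_{q,N})) = 1$ exactly. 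Continuity of $\varphi^t$ at $a_{q,N}$ for $t = 1,\, q-1,\, q$ then yields (b), (c) and (d) by shrinking $\ell, \ell' > 0$.

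For the implication, the core is the bracketing
\[
\theta\!\left(\varphi^t(m_q)\right) \;\le\; \theta\!\left(\varphi^t(p)\right) \;\le\; \theta\!\left(\varphi^t(M_q)\right),
\qquad p \in B_q(\ell,\ell'),\ t \in \N,
\]
which I obtain by composing two one-variable monotonicities. Fix $r_0 \in [r_{q,N}-\ell', r_{q,N}+\ell']$: since $V$ is constant on $[-L_0, L_0]$, every point $(\theta_0, r_0)$ with $\theta_0 \in [-\ell, \ell]$ satisfies $(\theta_0, r_0) = \varphi^{\theta_0/r_0}(0, r_0)$ in the flat strip, hence $\theta(\varphi^t(\theta_0, r_0)) = \theta(\varphi^{t + \theta_0/r_0}(0, r_0))$ is strictly increasing in $\theta_0$ because $\dot\theta = r > 0$ along the whole orbit (positivity of the energy is ensured by condition~(a)). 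Fix now $\theta_0$: raising $r_0$ strictly raises the energy~$e$, which strictly raises the speed $r(\theta) = \sqrt{2\big(e - V(\theta)/N^2\big)}$ at every $\theta$; equivalently, differentiation of~\eqref{eq:T_V} gives $T'_{V/N^2}(e) < 0$ for $e > 0$. Composing the two monotonicities yields the bracketing.

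From the bracketing, $q$-adaptation follows at once. For $1 \le t \le q-1$, the strict monotonicity of $\theta$ along each orbit combined with~(b) yields $\theta(\varphi^t(m_q)) \ge \theta(\varphi(m_q)) > \delta/N$, while~(c) gives $\theta(\varphi^t(M_q)) \le \theta(\varphi^{q-1}(M_q)) < 1 - \delta/N$; hence $\varphi^t(p) \notin \overline{\BdeN}$ for every $p \in B_q$. At $t = q$, condition~(d) and the bracketing place $\theta(\varphi^q(p))$ within $\delta/(2N)$ of $1$, i.e.\ $\varphi^q(p) \in \overline{\BdeeN}$ modulo~$\Z$. The only subtle point is the validity of the second monotonicity near the separatrix $e = 0$, where $T_{V/N^2}(e)$ diverges; but since by~(a) the energies encountered stay in an interval $[e(m_q),\, e(M_q)] \subset (0, +\infty)$, $T_{V/N^2}$ remains smooth and its derivative is strictly negative on this range.
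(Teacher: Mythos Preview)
Your argument is correct and follows essentially the same route as the paper: existence of $(\ell,\ell')$ by verifying (b)--(d) strictly at the centre $a_{q,N}$ and invoking continuity, then $q$-adaptedness by bracketing $\theta(\varphi^t(p))$ between the orbits of the two corners via monotonicity in~$r$ (which the paper isolates as Lemma~\ref{lem:system-P_V}\,\ref{Cond:torsion})) and in~$\theta_0$ (which you make explicit through the flat-strip time-shift, while the paper leaves this step tacit in its appeal to Lemma~\ref{lem:system-P_V}). One small imprecision: your equality $\theta(\varphi(a_{q,N})) = r_{q,N}$ presupposes that the orbit stays in the flat strip up to time~$1$, i.e.\ $r_{q,N}\le L_0$; when $r_{q,N}>L_0$ one has instead $\theta(\varphi(a_{q,N}))>L_0>\delta/N$, so the conclusion is unaffected (the paper distinguishes the two cases).
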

\begin{proof} 
We abreviate $a_q:=a_{q,N}$ and $r_q:=r_{q,N}$. Note that $r_q>\rho_N>\delta/N$ since 
we have chosen $0<\delta<\rho_0/2$. Assume first that $\ell=\ell'=0$, so $m_q=M_q= a_q$.
Condition~\ref{Cond:torsion}) in Lemma~\ref{lem:system-P_V} implies that  either 
$r_q \leqslant L_0$ and $\theta(\varphi({a}_q))=r_q$,  or $r_q>L_0$ and 
$\theta(\varphi(a_q))> L_0$. In both cases, we have $\theta(\varphi( a_q))>\delta/N$. In the 
same way,  either $r_q \leqslant L_0$ and $\theta(\varphi^{q-1}(a_q))=1-r_q$, or $r_q>L_0$ 
and $\theta(\varphi^{q-1}( a_q)) < 1-L_0$.  In both cases, we have $\theta(\varphi^{q-1}( a_q))
<1-\delta/N$. At last, we have $\theta(\varphi^{q}(a_q))=1$. Thus Conditions $(a)$, $(b)$, $(c)$ 
and $(d)$ are fulfilled for  $\ell=\ell'=0$. Since these conditions are open, this implies that this 
already holds for  $(\ell,\ell')$ close enough to zero. \\[5pt]
Now we check is that  these conditions imply that $B_q$ is $q$-adapated.  Let assume that 
$(\ell,\ell')$ satisfies $(a)$, $(b)$, $(c)$ and $(d)$.~\\[5pt]
-- We first observe that we have $B_q\subset \BdeN$ since $\ell < \delta/(2N)$.  Furthermore, 
	we may observe that the condition  $\rho_0/N+\ell'<r_q$ implies that $B_q$ is completely 
	above  $\cS_0$.~\\[3pt]
-- We now  prove~\ref{Cond:disjonction-q-box}) in Definition~\ref{def:q-box}. 
	Notice that $\theta(\varphi^t(m))$ is increasing with $t>0$ if $m\in {B}_q(\ell,\ell')$ since 
	$m$ is above the separatrix $\cS_0$. Therefore~(b) shows that $\theta(\varphi^t
	(m_q)) > \delta/N$ for $t\geqslant 1$. From this, 
	 Lemma~\ref{lem:system-P_V}.~\ref{Cond:torsion}) implies that 
	$\theta(\varphi^t(m)) > \delta/N$ for all $m\in {B}_q(\ell,\ell')$ and $t \geqslant 1$. 
	In the same way, (c) shows that $\theta(\varphi^{t}(M_q))< 1-\delta/N$ for $t\leqslant q-1$,  
	so  Lemma~\ref{lem:system-P_V}.~\ref{Cond:torsion})
	 implies that $\theta(\varphi^{t}(m))< 1-\delta/N$ for all 
	$m\in B_q(\ell,\ell')$ and $t\leqslant q-1$. This proves that $\delta/N <\theta(\varphi^{t}(m))
	<1-\delta/N$, for $1\leqslant t\leqslant q-1$ and $m\in B_q(\ell,\ell')$, which 
	implies~\ref{Cond:disjonction-q-box}).~\\[3pt]
-- At last, we prove~\ref{Cond:injection-q-box}) in Definition~\ref{def:q-box}. Notice 
	that~(d) shows that $\varphi^{q}(m_q)$ and $\varphi^{q}(M_q)$ belong to the band $(1,0)+
	{\cal B}_{\delta/(2N)}$. Therefore Condition~\ref{Cond:torsion}) in 
	Lemma~\ref{lem:system-P_V} implies that $1-\delta/(2N) <\theta(\varphi^{q}(m) )< 1+
	\delta/(2N)$ for all $m\in B_q(\ell,\ell')$, which implies~\ref{Cond:injection-q-box}).\\[5pt]
Thus Conditions~(a), (b), (c) and (d) imply the required properties for $B_q$ and the proof 
of the proposition is complete.
\end{proof}
\begin{lemma}\label{lem:system-P_V}
The system defined by $P_V$ satisfies the following properties.
\begin{enumerate}
\item\label{Cond:torsion}
 	Assume $t>0$, $\theta_0\in\R$, $r_0\geqslant 0$ and $P_V(\theta_0,r_0)=0$. Then 
	$\theta\circ\varphi^{tP_V}(\theta_0,r)$ is an  increasing function of  $r \in [r_0,+\infty)$. %
\item \label{Cond:period}
	$T_V$ is an increasing bijection from $(0,+\infty)$ onto itself;
\item \label{Cond:period-asympt}
	For  each integer $k\geqslant 0$, we have 
	\[
				T_V^{(k)}(e)\sim\frac{ 1}{e^{k+\frac{1}{4}}}
							\bigg(\begin{smallmatrix}-\frac{1}{2}\\ ~k\end{smallmatrix}\bigg)
				\int_0^{+\infty}\frac{dx}{(1+x^4)^{k+1/2}}~\text{as $e>0$ tends to zero.}
	\]
\item\label{Cond:Delta_P}
 	If we set $\Delta T(r_2,r_1)=T_V(P_V(0,r_1))-T_V(P_V(0,r_2))$ then we have 
\[
	0<\Delta T(r_2,r_1) \leqslant C_0\frac{r_1(r_2-r_1)}{\left(r_1-\rho_0\right)^{5/4}},
	~\text{provided  $ \rho_0<r_1<r_2 \leqslant 3 \rho_0$,}
\]
\end{enumerate}
where $C_0$ is a positive constant (which depends only on $V$).
\end{lemma}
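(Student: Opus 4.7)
The four assertions have a clear logical ordering. I would first prove (iii), the asymptotic expansion, since (ii) and (iv) follow from it, and then handle the twist property (i) as an independent comparison argument.

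\textbf{For (iii),} the idea is to localize $T_V(e) = \int_0^1 \frac{\dd u}{\sqrt{2(e - V(u))}}$ near the unique maximum of $V$ mod~$\Z$, which is at $u = 1/2$ with $V(1/2) = 0$. Hypothesis~(ii) of Theorem~\ref{th:varpseudopend} gives the explicit shape $V(u) = -(u - 1/2)^4$ on $[1/2 - \th^\star, 1/2 + \th^\star]$. I would split
\[
T_V(e) = \int_{|u - 1/2| \le \th^\star} \frac{\dd u}{\sqrt{2(e - V(u))}} + \int_{|u - 1/2| > \th^\star} \frac{\dd u}{\sqrt{2(e - V(u))}}
\]
and observe that by hypothesis~(iii) of Theorem~\ref{th:varpseudopend} the function $-V$ is bounded below by a positive constant outside a neighbourhood of $1/2$, so the second integral and all its $e$-derivatives remain bounded as $e \to 0^+$. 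On the first piece, the substitution $u - 1/2 = e^{1/4} x$ turns the integrand into $e^{-1/4} (2(1 + x^4))^{-1/2}\,\dd x$ over $|x| \le \th^\star e^{-1/4}$, whose leading behavior gives the $k = 0$ asymptotic. For the $k$-th derivative, I would differentiate $(e - V(u))^{-1/2}$ $k$ times in $e$, which generates exactly the combinatorial factor $\binom{-1/2}{k}$, and repeat the same rescaling argument.

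\textbf{For (ii),} direct differentiation under the integral sign gives $T_V'(e) = -\int_0^1 (2(e - V))^{-3/2}\,\dd u$, of constant sign on $(0, \infty)$, so $T_V$ is strictly monotone. The boundary behavior $T_V(e) \to +\infty$ as $e \to 0^+$ is the $k = 0$ case of (iii), while $T_V(e) \to 0$ as $e \to \infty$ is immediate from $V$ being bounded. \textbf{For (i),} the point $(\th_0, r_0)$ lies on $\overline{\cS_0}$, and for $r > r_0$ the orbit through $(\th_0, r)$ has positive energy $e = P_V(\th_0, r)$ and satisfies $\dot\th = \sqrt{2(e - V(\th))} > 0$ throughout. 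Comparing two such orbits with energies $e_1 < e_2$, their travel times from $\th_0$ to a common angle $\th^*$ are
\[
\tau_i(\th^*) = \int_{\th_0}^{\th^*} \frac{\dd\th}{\sqrt{2(e_i - V(\th))}},
\]
and $\tau_2(\th^*) < \tau_1(\th^*)$; inverting this relation at fixed time $t > 0$ shows that the higher-energy orbit is farther ahead in~$\th$, which is the claimed monotonicity.

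\textbf{For (iv),} set $e_i \defeq P_V(0, r_i) = \tfrac{1}{2}(r_i^2 - \rho_0^2)$, so $0 < e_1 < e_2$. The mean value theorem gives $\Delta T = |T_V'(\xi)| (e_2 - e_1)$ for some $\xi \in (e_1, e_2)$. Part (iii) at $k = 1$ provides $|T_V'(e)| \le C\, e^{-5/4}$ for small $e$; monotonicity of $|T_V'|$ on $(0, e_2)$ then yields $|T_V'(\xi)| \le C\, e_1^{-5/4}$. On the range $\rho_0 < r_1 < r_2 \le 3\rho_0$ we have $e_1 \ge \rho_0 (r_1 - \rho_0)$ and $e_2 - e_1 = \tfrac{1}{2}(r_1 + r_2)(r_2 - r_1) \lesssim r_1 (r_2 - r_1)$, and the stated bound follows. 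The main obstacle is making the asymptotic analysis in (iii) rigorous at all orders $k$: one must uniformly control the remainder after the rescaling $u - 1/2 = e^{1/4} x$ and verify that the contribution from the region $|u - 1/2| > \th^\star$ is genuinely dominated by the $e^{-(k+1/4)}$ leading singularity coming from the local $(u - 1/2)^4$ behaviour.
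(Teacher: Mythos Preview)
Your approach is correct and matches the paper's proof in all four parts: the same time--angle integral comparison for (i), the same differentiation under the integral plus boundary behaviour for (ii), the same splitting into near-$1/2$ and away-from-$1/2$ pieces followed by the rescaling $u-1/2=e^{1/4}x$ for (iii), and the same combination of the $k=1$ asymptotic with $e_1\ge\rho_0(r_1-\rho_0)$ for (iv). The only cosmetic difference is that in (iv) the paper differentiates $\tau(r)=\Delta T(r,r_1)$ in $r$ rather than invoking the mean value theorem in $e$, which amounts to the same estimate.
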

\begin{proof}
{\bfseries~\ref{Cond:torsion}).}~For every  $t>0$, we have 
\[
	t=\int_{ \theta_0}^{\theta\circ\varphi^t(\theta_0,r)}\frac{du}{\sqrt{2(e(r)-V(u))}}.
\]
	Since the  energy $e(r)=P_V(\theta_0,r)=\frac{1}{2}r^2+V(\theta_0)$ is increasing with 
	$r \geqslant 0$, it follows that ${\theta}\circ\varphi^t(\theta_0,r)$ is increasing with  
	$r\in [r_0,+\infty)$. \\[3pt]
{\bfseries \ref{Cond:period}).}~It  follows  from~\eqref{eq:T_V} that $T_V$ is a continuous 
	and decreasing function and converges to zero at infinity. We have $T_V(0)=+\infty$
	because $V(\theta)=-(\theta-1/2)^4$ for $\abs{\theta-1/2}\leqslant \theta^\star$.~\\[3pt]
{\bfseries \ref{Cond:period-asympt}).} ~We compute  
	\begin{align*}
	\displaystyle T^{(k)}_V(e)
		& \displaystyle 
			=\bigg(\begin{smallmatrix}-\frac{1}{2}\\ ~~k\end{smallmatrix}\bigg)
				\int_\T \frac{d\theta}{\sqrt{2}(e-V(\theta))^{\frac{1}{2}+k}}\\ %
		& 
			= \frac{1}{\sqrt{2}}
			\bigg(\begin{smallmatrix}-\frac{1}{2}\\ ~~k\end{smallmatrix}\bigg)
			\int_{-\frac{1}{2}+\theta^\star}^{\frac{1}{2}-\theta^\star}\frac{d\theta}{(e-V
			(\theta))^{k+\frac{1}{2}}}
			+
			\bigg(\begin{smallmatrix}-\frac{1}{2}\\ ~~k\end{smallmatrix}\bigg)
			\sqrt{2}\int_0^{\theta^\star} \frac{d\theta}{(e+\theta^4)^{k+\frac{1}{2}}}.
	\end{align*}
Since  $\int_{-\frac{1}{2}+\theta^\star}^{\frac{1}{2}-\theta^\star}
	 \frac{d\theta}{(e-V(\theta))^{k+\frac{1}{2}}}\leqslant \int_{-\frac{1}{2}
	+\theta^\star}^{\frac{1}{2}-\theta^\star}\frac{d\theta}{(-V(\theta))^{k+\frac{1}{2}}}$ 
	is bounded independantly of $e$, it follows that 
\[
	T^{(k)}_V(e)
	\sim \sqrt{2}\bigg(\begin{smallmatrix}-\frac{1}{2}\\ ~~k\end{smallmatrix}\bigg)
	\int_0^{\theta^\star} \frac{d\theta}{(e+\theta^4)^{k+\frac{1}{2}}}=
 	\sqrt{2}\bigg(\begin{smallmatrix}-\frac{1}{2}\\ ~~k\end{smallmatrix}\bigg)
	\frac{1}{e^{k+\frac{1}{4}}}
	\int_0^{\theta^\star/e^{1/4}} \frac{d\theta}{(1+\theta^4)^{k+\frac{1}{2}}}
\]
	as $e$ tends to zero, and this completes the  proof of~\ref{Cond:period-asympt}).~\\[3pt]
{\bfseries \ref{Cond:Delta_P}).} Notice  that $P_V(0,r)=(r^2-\rho_0^2)/2<4 \rho_0^2$ if 
	$\rho_0<r<3\rho_0$.  Furthermore, Condition~\ref{Cond:period-asympt}) shows that there 
	exists a constant $B$ such that  $0<-T_V'(e) \leqslant B e^{-5/4}$  if $0< e\leqslant 4 
	\rho_0^2$~; we set $\tau(r)=\Delta T(r,r_1)$,  for $r_1 \leqslant r \leqslant r_2$, so we have:
\[
	  0\leqslant \tau'(r) =-rT_V'(H(0,r)) \leqslant Br((r^2-\rho_0^2)/2)^{-5/4}\leqslant 
		Br_1((r_1^2-\rho_0^2)/2)^{-5/4},
\]
	hence the estimate as claimed, with $C_0=B\rho_0^{-5/4}$ since $(r_1^2-\rho_0^2)/2\geqslant \rho_0(r_1-\rho_0)$.
\end{proof}
\begin{prop}\label{prop:size-box-N=1}
There exist positive constants $\delta_0$ and $r_0$ (which depends only on $V$)
such that for each integer $q\geqslant 2$ satisfying  $\rho_0<r_{q,1} \leqslant 2\rho_0$ and 
for $0<\delta<\delta_0$, if we  set 
\[
	\ell=\delta/4\quad \text{and}\quad\ell'=\delta\cdot\bigg(\frac{r_{q,1}-\rho_0}{r_0}\bigg)^{5/4}
\] 
then $r_{q,1}-\ell'>\rho_0$ and  $B_q(\ell,\ell')$ is $q$-adapted
with respect to $\cal B_\delta$ and  the system $P_V$. 
\end{prop}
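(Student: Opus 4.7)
The plan is to apply Lemma~\ref{lem:qualitative-disjoint-boxes} with $N=1$ and verify its four conditions (a)--(d) for the stated choice $\ell=\delta/4$, $\ell'=\delta((r_{q,1}-\rho_0)/r_0)^{5/4}$, by taking $\delta_0$ small and $r_0$ large (depending only on $V$). Condition~(a) reduces to $\ell<\delta/2$ (immediate) and $r_{q,1}-\ell'>\rho_0$, the latter being equivalent to $\delta(r_{q,1}-\rho_0)^{1/4}<r_0^{5/4}$, which holds as soon as $\delta_0\le r_0^{5/4}(2\rho_0)^{-1/4}$ (using $r_{q,1}\le 2\rho_0$).

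The heart of the argument is condition~(d). Consider first $m_q=(-\ell,r_{q,1}-\ell')$: since $\ell<L_0$, hypothesis~(i) on $V$ gives $V(-\ell)=-\rho_0^2/2$, so the orbit through $m_q$ has energy $e_m=(r_{q,1}-\ell')^2/2-\rho_0^2/2<e_{q,1}$ and period $T_V(e_m)>q$. By $\Z$-periodicity of $V$, the lifted orbit returns to $r=r_{q,1}-\ell'$ at $\theta=1-\ell$ exactly at time $T_V(e_m)$, and this terminal segment lies inside the flat region $\theta\in[1-L_0,1+L_0]$ where $\dot\theta=r$ and $\dot r=0$. Rewinding by the small time $T_V(e_m)-q$ along this constant flow will yield
\[
\theta(\varphi^q(m_q))=1-\ell-(r_{q,1}-\ell')\bigl(T_V(e_m)-q\bigr).
\]
Lemma~\ref{lem:system-P_V}.(iv) applied with $r_1=r_{q,1}-\ell'$ and $r_2=r_{q,1}$, combined with $r_1-\rho_0\ge(r_{q,1}-\rho_0)/2$ (which is part of~(a)) and $r_1\le 2\rho_0$, yields
\[
(r_{q,1}-\ell')\bigl(T_V(e_m)-q\bigr)\le K(V,\rho_0)\,\delta/r_0^{5/4},
\]
which is $<\delta/4$ once $r_0$ is chosen large enough. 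Therefore $|\theta(\varphi^q(m_q))-1|\le\ell+\delta/4<\delta/2$. The symmetric case $M_q=(\ell,r_{q,1}+\ell')$, where $e_M>e_{q,1}$ and $T_V(e_M)<q$, will be handled by the same estimate applied to $\Delta T(r_{q,1}+\ell',r_{q,1})$.

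Conditions~(b) and~(c) will follow from elementary considerations in the flat region. For~(b): since $\theta=-\ell$ and $r>\rho_0/2$ at $m_q$, the local flow $\dot\theta=r$ advances $\theta$ past $\delta$ well within unit time, provided $\delta_0<\min(L_0,2\rho_0/5)$. For~(c): by~(d), $\varphi^q(M_q)$ lies inside the flat region around $\theta=1$ with $r=r_{q,1}+\ell'$; running backwards by unit time, one either leaves this flat region (forcing $\theta(\varphi^{q-1}(M_q))<1-L_0$) or remains in it (forcing $\theta(\varphi^{q-1}(M_q))<1+\delta/2-\rho_0$), and both cases give $\theta(\varphi^{q-1}(M_q))<1-\delta$ provided $\delta_0<\min(L_0,2\rho_0/3)$.

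The main technical delicacy is justifying the closed-form expression for $\theta(\varphi^q(m_q))$: this rests on the terminal orbit segment from time $q$ to $T_V(e_m)$ lying entirely in the flat region near $\theta=1$, which itself requires the smallness of $T_V(e_m)-q$---a potentially circular dependence to be resolved by first using a rough bound to confine the segment to $[1-L_0,1+L_0]$ and only then extracting the sharp linear-in-$\delta$ estimate. The exponent $5/4$ appearing in the definition of $\ell'$ is dictated precisely by the corresponding exponent in Lemma~\ref{lem:system-P_V}.(iv), calibrating the angle shift to be linear in $\delta$.
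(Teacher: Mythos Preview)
Your proof is correct and follows the same route as the paper: both verify conditions (a)--(d) of Lemma~\ref{lem:qualitative-disjoint-boxes} via the flat-region formula $\dot\theta=r$ combined with the period-difference bound of Lemma~\ref{lem:system-P_V}(iv), and the treatments of (b) and (c) are virtually identical. Two small remarks: the inequality $r_1-\rho_0\ge(r_{q,1}-\rho_0)/2$ that you invoke actually requires $\ell'\le(r_{q,1}-\rho_0)/2$, slightly stronger than your stated~(a) (just tighten $\delta_0$ to $r_0^{5/4}/(2\rho_0^{1/4})$); and the circularity you flag dissolves once you observe that the bound on $T_V(e_m)-q$ from Lemma~\ref{lem:system-P_V}(iv) is an \emph{a priori} estimate on periods requiring no flat-region hypothesis, so $(r_{q,1}-\ell')(T_V(e_m)-q)<\delta/4<L_0-\ell$ can be established first and the linear formula for $\theta(\varphi^q(m_q))$ read off afterwards---the paper makes this clean by isolating the bound $\ell+(r_q+\ell')\tau<\delta/2$ (with $\tau=\Delta T(r_q+\ell',r_q-\ell')$) as a standalone preliminary step.
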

\begin{proof} Here we abreviate $r_q:=r_{q,1}$ and $a_q:=a_{q,1}$
and we prove that $\ell$, $\ell'$, $m_q=a_q-(\ell,\ell')$ and $M_q=a_q+(\ell,\ell')$
satisfy Conditions~(a),(b),(c) and~(d) in Lemma~\ref{lem:qualitative-disjoint-boxes} with $N=1$, 
provided that $r_0$ and $\delta_0$ satisfy suitable conditions.
We recall that we assume that $\delta<\rho_0/2$, so we may already set the constraint 
\begin{equation}\label{Cond:step0-size-box}
	\delta_0\leqslant\rho_0/2.
\end{equation}
We decompose the proof of the lemma  in five steps~: first we prove that 
$B_q(\ell,\ell')$ is above the separatrix $r=\rho_0$.
Then we  estimate the  variation of 
the period inside $B_q(\ell,\ell')$ and, at last,  we check~$(b)$, $(c)$ and $(d)$.~\\[3pt]
{\bf Step 1~:} we prove that  $\ell'< (r_q-\rho_0)/2$. Notice that $0<r_q-\rho_0\leqslant \rho_0$, 
	so we have 
\begin{equation}
\label{Cond:step1-size-box}
	\frac{\ell'}{r_q-\rho_0}=\frac{\delta}{r_0^{5/4}}(r_q-\rho_0)^{1/4} 
	\leqslant\frac{\rho_0^{1/4}}{r_0^{5/4}}\delta
	<\frac{1}{2},\quad\text{provided that  $\delta_0\leqslant\frac{r_0^{5/4}}{2\rho_0^{1/4}}$.}
\end{equation}
	This implies that $r_q-\ell'>\frac{1}{2}(r_q+\rho_0)>\rho_0$. Since we have
	$\ell=\delta/4<\delta/2$, we obtain Condition~(a) of 
	Lemma~\ref{lem:qualitative-disjoint-boxes}. ~\\[3pt]
{\bf Step 2~:} we prove that $\ell+ (r_q+\ell')\tau<\delta/2$, where $\ell=\delta/4$ and 
 $\tau=\Delta T(r_q+\ell',r_q-\ell')$.  
	Step~1 shows  
	that $\ell'<\frac{1}{2}(r_q-\rho_0)\leqslant \frac{1}{2}\rho_0$.
	Set $r_q'=r_q-\ell'$; since $\rho_0<r_q-\ell'<r_q+\ell' \leqslant 3\rho_0$, 
	Lemma~\ref{lem:system-P_V}.~\ref{Cond:Delta_P}) shows that 
	$
	\tau\leqslant 2C_0{\ell'r_q'}{(r_q'-\rho_0)^{-5/4}}$.
 	 Since $r_q'-\rho_0=r_q-\rho_0-\ell'>\frac{r_q-\rho_0}{2}$
	and $r_q+\ell' <\frac{5}{2}\rho_0$, since $r_q' \leqslant r_q \leqslant 2\rho_0$ , it follows that 
	\[
	\tau (r_q+\ell') < 5\tau\rho_0/2\leqslant \frac{10~C_0\ell'\rho_0^2}{((r_q-\rho_0)/2)^{5/4}}
	\leqslant \frac{10~ C_0\ell' \rho_0^2}{((r_q-\rho_0)/2)^{5/4}}
	=\frac{10~C_0 \rho_0^2}{(r_0/2)^{5/4}}\delta\leqslant \frac{\delta}{4}, 
	\]
 which proves the claimed  estimate, provided 
\begin{equation}\label{Cond:step2-size-box}
	\frac{10C_0 \rho_0^2}{(r_0/2)^{5/4}}\leqslant \frac{1}{4}.
\end{equation}
~\\[3pt]
{\bf Step 3~:}  we prove~(d). Set $a_q^+=(\ell,r_q)$, $a_q^-=(-\ell,r_q)$
	and  notice that 
	$\theta(\varphi^q(a_q^+)=1+\ell$, $\theta(\varphi^q(a_q^-)=1-\ell$, provided that 
	$\ell\leqslant L_0$; we assume that 
\begin{equation}\label{Cond:step3-size-box}
	\delta_0\leqslant L_0.
\end{equation}
	 Since ${\theta}(M_q)=\theta(a_q^+)$,  $\theta(m_q)=\theta(a_q^-)$
	and since Step~2 implies that $\ell+\tau(r_q+\ell')\leqslant \delta/2\leqslant L_0$, 
	Lemma~\ref{lem:system-P_V}.~\ref{Cond:torsion}) implies that 
	\[
	\bigg\{
		\begin{array}{l}
			1+\ell \leqslant {\theta}(\varphi^q({M}_q) )\leqslant 1+\ell +\tau(r_q+\ell');\\
			1-\ell -\tau(r_q-\ell') \leqslant {\theta}(\varphi^q({m}_q) )\leqslant 1-\ell.
		\end{array}
	\]
	This implies  that 
	$1+\delta/4 \leqslant {\theta}(\varphi^q({M}_q) )\leqslant 1+\delta/2$
	and $1-\delta/2 \leqslant {\theta}(\varphi^q({m}_q) )\leqslant 1-\delta/4$,  
	which proves~(d).~\\[3pt] 
{\bf Step 4~:}  we prove~(c). Recall that Step~3 implies that 
	$1 \leqslant {\theta}(\varphi^q({M}_q) )\leqslant 1+\delta/2\leqslant 1+L_0$, 
	so either $\delta-(r_q+\ell')<-L_0$ and ${\theta}(\varphi^{q-1}({M}_q) )<1-L_0$,  
	or ${\theta}(\varphi^{q-1}({M}_q) )<1+\delta-(r_q+\ell')$.  Since $L_0>\delta$ and 
	$r_q+\ell'>\rho_0 \geqslant 2\delta$, we obtain in both cases that  
	${\theta}(\varphi^{q-1}({M}_q) )<1-\delta$, which completes the proof of~(c).~\\[3pt]
{\bf Step 5~:}  we prove~(b). Either $-\ell+(r_q-\ell')>L_0$, so 
	${\theta}(\varphi({m}_q) )>L_0\geqslant \delta$, or ${\theta}(\varphi({m}_q))>-\ell+(r_q-\ell')
	>-\ell+\rho_0>2\delta-\ell=7\delta/4>\delta$. In both cases, this proves~(b).~\\[5pt]
Thus we obtain Conditions~(a), (b), (c) and (d) in Lemma~\ref{lem:qualitative-disjoint-boxes}  provided  
Conditions~\eqref{Cond:step0-size-box}, \eqref{Cond:step1-size-box}, \eqref{Cond:step2-size-box}
and~\eqref{Cond:step3-size-box} are satisfied, hence the  proposition holds true if we set
\begin{subequations}\label{subeq:r_0-d_0}
\begin{align}
r_0:=2 \cdot (40 C_0\rho_0^2)^{4/5},   \\[3pt]
\delta_0=\min
\bigg(L_0;\frac{\rho_0}{2};\frac{r_0^{5/4}}{2\rho_0^{1/4}}\bigg)=
\min
\bigg(L_0;{\rho_0}/{2};20\cdot 2^{5/4}\rho_0^{7/4} C_0\bigg),  
\end{align}
\end{subequations}
where $C_0$ is the constant in Lemma~\ref{lem:system-P_V}.~\ref{Cond:Delta_P}).
Thus the constants $r_0$ and $\delta_0$ depend only on $V$ and 
 the proof of the  proposition is complete. 
\end{proof}
\begin{lemma}\label{lem:V-V/N^2}
Assume $N$ and  $q$ are  positive real numbers. Then the following holds true. 
\begin{enumerate}
\item \label{Cond:T_V-N}
 				For all $e>0$, we have $T_{V/N^2}(e)=NT_V(N^2e)$;%
\item   \label{Cond:P_V-N}
				for all $r\in\R$, we have  $N^2P_{V/N^2}(0,r)=P_V(0,Nr)$;%
\item  \label{Cond:e_q}
		$e_{q,N}=\frac{1}{N^2}e_{q/N,1}$ and  $r_{q,N}=\frac{1}{N}r_{q/N,1}$;%
\item\label{Cond:T^k-N}
for each  integer $k\geqslant 0$, we have  
	 $T^{(k)}_{V/N^2}(e_{q,N})=N^{2k+1}	 T^{(k)}_{V}(e_{q/N,1})
	\sim (-1)^k\beta_k \frac{q^{4k+1}}{N^{2k}}$ uniformly 
	as ${q}/{N}$ tends to infinity, where $\beta_k$ is a positive constant (which depends
	only on $k$);%
\item\label{Cond:Delta-T-N}
	with    $\Delta_NT(r_2,r_1)=T_{V/N^2}(P_{V/N^2}(0,r_1))
	-T_{V/N^2}(P_{V/N^2}(0,r_2))$, we have\\[3pt]
 	$0<\Delta T_N(r_2,r_1)\leqslant C_0{N}^{7/4}
	\frac{~r_1(r_2-r_1)}{\left(r_1-\rho_N\right)^{5/4}}$, provided 
	$\rho_N <r_1<r_2 \leqslant 3 \rho_N$,
\end{enumerate}
 where $C_0$ is a positive constant (which depends only on $V$).
\end{lemma}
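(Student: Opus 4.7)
The whole lemma amounts to a change of scale. The plan is to establish item (ii) directly from the definition of $P_V$, then deduce item (i) by a substitution in the period integral, and then read off items (iii)--(v) by combining (i), (ii) with the previously established Lemma~\ref{lem:system-P_V}.

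First, from the definition $P_{V/N^2}(\th,r)=\frac12 r^2+\frac{1}{N^2}V(\th)$, one has
\[
N^2 P_{V/N^2}(0,r)=\frac12(Nr)^2+V(0)=P_V(0,Nr),
\]
which proves (ii). For (i), starting from the formula~\eqref{eq:T_V}, the change of variable $u\mapsto u$ inside $T_{V/N^2}(e)=\int_0^1 du/\sqrt{2(e-V(u)/N^2)}$ and multiplying numerator and denominator under the square root by $N^2$ yields directly $T_{V/N^2}(e)=N\,T_V(N^2 e)$.

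Next, (iii) follows by inverting the relations in (i) and (ii): the condition $T_{V/N^2}(e_{q,N})=q$ becomes $T_V(N^2 e_{q,N})=q/N$, so by the bijectivity of $T_V$ established in Lemma~\ref{lem:system-P_V}(ii) we get $N^2 e_{q,N}=e_{q/N,1}$; applying (ii) to $P_{V/N^2}(0,r_{q,N})=e_{q,N}$ gives $P_V(0,Nr_{q,N})=e_{q/N,1}$ and hence $Nr_{q,N}=r_{q/N,1}$. For (iv), differentiating (i) $k$ times in $e$ produces $T_{V/N^2}^{(k)}(e)=N^{2k+1}T_V^{(k)}(N^2 e)$; evaluating at $e_{q,N}$ and using (iii) gives the first equality. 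The asymptotics then follow by combining Lemma~\ref{lem:system-P_V}(iii) with the equivalent $e_{q/N,1}\sim (I_0 N/q)^{4}$ coming from the $k=0$ case (where $I_0=\int_0^{+\infty}dx/\sqrt{1+x^4}$); one obtains
\[
T_{V/N^2}^{(k)}(e_{q,N})\sim (-1)^k \beta_k\,\frac{q^{4k+1}}{N^{2k}},
\qquad
\beta_k=\Bigl|\tbinom{-1/2}{k}\Bigr|\,\frac{I_k}{I_0^{\,4k+1}},\quad I_k=\int_0^{+\infty}\frac{dx}{(1+x^4)^{k+1/2}},
\]
and $\beta_k>0$ since the sign of $\binom{-1/2}{k}$ is exactly $(-1)^k$.

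Finally, for (v), combining (i) and (ii) one gets
\[
T_{V/N^2}\bigl(P_{V/N^2}(0,r)\bigr)=N\,T_V\bigl(P_V(0,Nr)\bigr),
\]
hence $\Delta_N T(r_2,r_1)=N\,\Delta T(Nr_2,Nr_1)$. The hypothesis $\rho_N<r_1<r_2\le 3\rho_N$ translates exactly to $\rho_0<Nr_1<Nr_2\le 3\rho_0$, so Lemma~\ref{lem:system-P_V}(iv) applies and gives
\[
\Delta_N T(r_2,r_1)\le N\cdot C_0\,\frac{Nr_1\cdot N(r_2-r_1)}{(Nr_1-\rho_0)^{5/4}}=C_0\,N^{3-5/4}\,\frac{r_1(r_2-r_1)}{(r_1-\rho_N)^{5/4}}=C_0\,N^{7/4}\,\frac{r_1(r_2-r_1)}{(r_1-\rho_N)^{5/4}},
\]
using $Nr_1-\rho_0=N(r_1-\rho_N)$. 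The only care needed throughout is with the exponent bookkeeping in the asymptotic computation of (iv) and with correctly tracking the powers of $N$ in (v); there is no real obstacle beyond that.
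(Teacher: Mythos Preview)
Your proof is correct and follows essentially the same route as the paper's: items (i)--(ii) come straight from the definitions, (iii) by inverting (i)--(ii) via the bijectivity of $T_V$, (iv) by differentiating (i) and plugging in Lemma~\ref{lem:system-P_V}(iii), and (v) by the scaling identity $\Delta_N T(r_2,r_1)=N\,\Delta T(Nr_2,Nr_1)$ together with Lemma~\ref{lem:system-P_V}(iv). Your explicit formula for $\beta_k$ matches the paper's (noting $\bigl|\binom{-1/2}{k}\bigr|=\binom{k-1/2}{k}$).
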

\begin{proof}
The first two conditions   follow directly from the formulas~\eqref{eq:P_V} 
and~\eqref{eq:T_V}. \\
\ref{Cond:e_q})~We use~\ref{Cond:T_V-N}) to compute 
\[
	T_V(e_{q/N,1})=q/N=T_{V/N^2}(e_{q,N})/N=T_V(N^2e_{q,N}),
\]
so  $e_{q/N,1}=N^2e_{q,N}$ for $T_V$ is one to one. 
In a similar way, we use~\ref{Cond:P_V-N}) to compute
\[
P_V(0,Nr_{q,N})=N^2P_{V/N^2}(0,r_{q,N})=N^2 e_{q,N}e_{q/N,1}=P_V(0,r_{q/N,1}),
\]
so $Nr_{q,N}=r_{q/N,1}$ since the function  $P_V(0,\cdot )$ is one to one, and this 
proves~\ref{Cond:e_q}).
\\[3pt]
\ref{Cond:T^k-N})
	 It follows from~\ref{Cond:T_V-N})   that 
\begin{align*}
	T^{(k)}_{V/N^2}(e_{q,N})=N\frac{d^k}{de^k}_{\mid_{e=e_{q,N}}}
		\!\!\!\!\big(T_V(N^2e)\big)=N^{2k+1}T_V^{(k)}(N^2 e_{q,N})\\  %
	=N^{2k+1}T^{(k)}_{V}(e_{q/N,1})
\end{align*}
and  this completes the proof of the equality in~\ref{Cond:T^k-N}). Furthermore, 
since $e_{q/N,1}$ tends to zero as $q/N$ tends to infinity, 
Statement~\ref{Cond:period-asympt})
implies that
\begin{align*}
		N^{2k+1}	T_V^{(k)}(e_{q/N,1})	\sim \frac{ 	N^{2k+1}}{(e_{q/N,1})^{k+\frac{1}{4}}}
							\bigg(\begin{smallmatrix}\!-\frac{1}{2}\\ \!~~k\end{smallmatrix}\bigg)
							\int_0^{+\infty}\frac{dx}{(1+x^4)^{k+1/2}}\\ %
~\text{and}~{q}/{N}=T_{V}(e_{q/N,1})\sim\frac{ 1}{e_{q/N,1}^{{1}/{4}}}
														\int_0^{+\infty}\frac{dx}{(1+x^4)^{1/2}}.
\end{align*}
This implies~\ref{Cond:T^k-N}) with 
\[
\displaystyle
\beta_k=\left(\begin{smallmatrix}k-\frac{1}{2}\\ \!~~k\end{smallmatrix}\right)
				\displaystyle\int_0^{+\infty}\!\!\!\!\!\!\!\!\frac{dx}{(1+x^4)^{k+1/2}}
		\bigg\slash \left(\displaystyle\int_0^{+\infty}\!\!\!\!\!\frac{dx}{(1+x^4)^{1/2}}
\right)^{4k+1}\!\!\!\!.
\]
\ref{Cond:Delta-T-N})  follows from 
Lemma~\ref{lem:system-P_V}.~\ref{Cond:Delta_P}).
 This holds true because
this asumptions on $r_1$, $r_2$  and~\ref {Cond:P_V-N})
imply that  $\rho_0< Nr_k \leqslant 3\rho_0$, $k=1,2$,  and 
 \begin{align*}
T_{V/N^2}(P_{V/N^2}(0,r_k))=NT_{V}\Big(N^2P_{V/N^2}(0,r_k)\Big)=
NT_V( P_V(0,Nr_k)),\\[3pt]
\text{so}\quad
0<\Delta_N(r_1,r_2)=N\Delta(Nr_1,Nr_2)\leqslant C_0\frac{r_1(r_2-r_1)N^{3-5/4}}{
\big(r_1-\rho_N\big)^{5/4}},
\end{align*}
which proves~\ref{Cond:Delta-T-N})
and completes the proof  of the lemma.
\end{proof}
%
%
%
%
%
\subsubsection{Local form \texorpdfstring{of~$G^q$}{}}
 Here and in the following unless mentioned otherwise, the numbers $\ell$, $\ell'$, 
$N_0$ and $q_1$ are   as in Proposition~\ref{prop:quantitative-disjoint-boxes},
assuming $q\geqslant q_0'N$ and $N\geqslant N_0$.
We abreviate $G=G_{N,\mu}=\Phi^{\mu W_N}\circ \Phi^{P_{V/N^2}}
$ (see Section~\ref{sec:proofpseudopend-def}) and  $B_{q}=B_q(\ell,\ell')$.
\begin{prop}\label{prop:local_form_FqNM}
In $B_{q}\subset \A$, the iterated map $G^q$ coincides with
\[
\FNqm \defeq \Phi^{\mu  W_N} \circ \Phi^{A_{q,N}},
\]
where $\displaystyle A_{q,N}(r) := \int_{e_{q,N}}^{P_{V/N^2}(0,r)}\big(q-T_{V/N^2}(h)\big)~dh
$ on $B_q$. 
\end{prop}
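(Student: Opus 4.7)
The plan has two stages. First, I would exploit the support property of $W_N$ together with the $q$-adapted box structure to reduce $G^q$ on $B_q$ to $\Phi^{\mu W_N} \circ \Phi^{qP_{V/N^2}}$. Since condition~(v) of Theorem~\ref{th:varpseudopend} forces $W_N \equiv 0$ on $[\delta/N, 1-\delta/N] \pmod \Z$, the Hamiltonian vector field $X_{\mu W_N}$ vanishes on $\A \setminus \overline{\cB_{\delta/N}}$, so $\Phi^{\mu W_N}$ is the identity there. By Proposition~\ref{prop:quantitative-disjoint-boxes} (property~\ref{Cond:disjonction-q-box} of Definition~\ref{def:q-box}), $\Phi^{tP_{V/N^2}}(B_q) \cap \overline{\cB_{\delta/N}} = \varnothing$ for every integer $1 \le t \le q-1$. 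An immediate induction on $k$ then gives $G^k(x) = \Phi^{kP_{V/N^2}}(x)$ for $x \in B_q$ and $1 \le k \le q-1$, and hence $G^q(x) = \Phi^{\mu W_N}(\Phi^{qP_{V/N^2}}(x))$.

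The second (and harder) stage is to show that $\Phi^{qP_{V/N^2}}$ coincides with $\Phi^{A_{q,N}}$ on $B_q$. Here I would use condition~(i) on $V$: since $\ell = \delta/(4N) < L_0$ for $N \ge N_0$, the box $B_q$ lies in the flat region where $V(\theta) = -\rho_0^2/2$, so $P_{V/N^2}(\theta,r) = \tfrac12 r^2 - \tfrac12\rho_N^2 = P_{V/N^2}(0,r)$ depends on $r$ alone. Given $(\theta_0,r_0) \in B_q$, set $e = P_{V/N^2}(0,r_0)$ and $T = T_{V/N^2}(e)$; parametrise by time a lift $\tilde\gamma_e \col \R \to \R^2$ of the periodic orbit through $(\theta_0,r_0)$, normalised so that in the flat region $\tilde\gamma_e(t) = (r_0 t, r_0)$. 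Since $\tilde\gamma_e(t+T) = \tilde\gamma_e(t) + (1,0)$, writing $\theta_0 = r_0 t_0$ one gets
\[
\tilde\gamma_e(t_0 + q) \;=\; (1,0) + \tilde\gamma_e\big(t_0 + (q-T)\big).
\]
The fine estimate $\rho_N < r_{q+1,N} < r_0 < r_{q-1,N} \le 2\rho_N$ from Proposition~\ref{prop:quantitative-disjoint-boxes} together with the monotonicity of $T_{V/N^2}$ gives $|q - T| \le 1$, so $|\theta_0 + (q-T)r_0| \le \delta/(4N) + 2\rho_0/N < L_0$ for $N$ large enough; the point $\tilde\gamma_e(t_0 + (q-T))$ therefore still lies in the flat region, where the motion is the simple translation $(r_0 t, r_0)$. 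Consequently
\[
\Phi^{qP_{V/N^2}}(\theta_0, r_0) \;=\; \big(\theta_0 + (q - T_{V/N^2}(e))\,r_0, \; r_0\big) \pmod 1 .
\]

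Finally I would match this with $\Phi^{A_{q,N}}(\theta_0,r_0)$. Because $A_{q,N}$ depends only on $r$, its time-one map is $(\theta,r) \mapsto (\theta + A_{q,N}'(r), r)$, and differentiating the defining integral gives $A_{q,N}'(r) = (q - T_{V/N^2}(P_{V/N^2}(0,r)))\,r$, which on $B_q$ equals $(q - T_{V/N^2}(e))\,r_0$. The two maps therefore coincide on $B_q$, and combining with the first stage yields $G^q = \Phi^{\mu W_N}\circ \Phi^{A_{q,N}} = F_{q,N,\mu}$ on $B_q$. The main obstacle is the universal-cover flow analysis of the second stage: one needs both the flat-potential hypothesis and the sharp control $|q - T_{V/N^2}(e)| \le 1$ to guarantee that the corrective time $(q-T)$ keeps the trajectory in the integrable flat strip, so that the integrable formula $\theta(t) = \theta_0 + r_0 t$ applies verbatim.
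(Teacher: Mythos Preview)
Your proof is correct and follows essentially the same route as the paper's: both identify $\varphi^q$ on $B_q$ with the integrable shift $(\theta,r)\mapsto(\theta+r(q-T_{V/N^2}(e)),r)$ via the flat-region time parametrisation, and then append $\Phi^{\mu W_N}$ using the disjointness property of the $q$-adapted box. The only cosmetic difference is that the paper introduces time--energy coordinates $(\tau,h)$ with $\tau=\theta/r$ in the flat strip and invokes property~\ref{Cond:injection-q-box} of Definition~\ref{def:q-box} directly (namely $\varphi^q(B_q)\subset\overline{\mathcal{B}_{\delta/(2N)}}\subset\mathcal{B}_{L_0}$) to place the image in the flat region, whereas you rederive this from the period bound $|q-T|\le 1$.
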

\begin{proof}
We abreviate $\varphi=\Phi^{P_{V/N^2}}$ and $V_N=V/N^2$. 
For $(\theta,r)\in \R^2$ satisfying  $P_{V_N}(\theta,r)>0$,  let $\Psi(\theta,r)=(\tau,h)$ denote the 
time-energy coordinates with  
\[
\tau=\int_0^\theta \frac{du}{\sqrt{2\big(P_{V_N}(\theta,r)-V_N(u)\big)}}\quad;\quad
h=P_{V_N}(\theta,r).
\]
Note that $\Psi\circ\varphi\circ\Psi^{-1}(\tau,h)=(\tau+1,h)$ and $\Psi(m+\theta,r)=
\Psi(\theta,r)+\big(m\, T_{V_N}(h),0\big)$, for all $m\in\Z$ and  $h=P_{V_N}(\theta,r)$.
Since  $V_N$  is constant on ${\B}_{L_0}$, we have 
\[
\tau =\int_0^\theta \frac{du}{\sqrt{2\big(P_{V_N}(\theta,r)-V_N(\theta)\big)}}
=\frac{\theta}{r},\quad \text{for $\abs{\theta}\leqslant L_0$.}
\]
 Since we have  
$\varphi^q(B_q)\subset {\B}_{\delta/2}\subset {\B}_{L_0}$
and  $\varphi^q$ preserves $P_{V_N}$, which is an increasing function depending 
only on $r$ in ${\B}_{L_0}$,  there exist a continuous function 	
$\theta_q~:B_q\to (-L_0,L_0)$ and a constant integer $m_0\in\Z$ such that $\varphi^q(\theta,r)=
	(m_0+\theta_q(\theta,r),r)$ on ${B}_q\subset\R^2$.  Furthermore, we have $m_0=1$
as it may be  checked
at $a_{q,N}$,  since we have $\varphi^q(a_{q,N})=(1,r_{q,N})$ and   $\theta_q(a_{q,N})=0$. 
Therefore, we have proved that $\varphi^q$ coincides on $B_q$ with 
\[
\varphi^q(\theta,r)=(1+\theta_q(\theta,r),r), ~\text{so}~\Psi\circ\varphi^q(\theta,r)=
\big(\theta_q(\theta,r)/r+T_{V_N}(h), h\big),~\text{with $h=P_{V_N}(\theta,r)$.} 
\]
Since we also have $\Psi\circ\varphi^q(\theta,r)=\Psi(\theta,r)+(q,0)=(\theta/r+q, h)$, 
this implies that 
\[
\theta_q(\theta,r)=\theta + r (q-T_{V_N}(h))=\theta-\partial_r A_{q,N}(r).
\]
Thus, on the one hand, 
$\varphi^q$ coincides on $B_q$ with  the time-$1$ flow $\Phi^{A_q}$ of the system
\[
\left\{
\begin{array}{l}
\theta'=-\partial_r A_{q,N}(r);\\
r'=0=\partial_\theta A_{q,N}(r).
\end{array}
\right.
\]
On the other hand, we have $\varphi^k(B_q)\cap {\BdeN}=\varnothing$, 
so $G^k=\varphi^k$ for $1\leqslant k \leqslant q-1$, and $G^q=\Phi^{\mu W_N}\circ \varphi^q$
on $B_q$.\\
From these two conclusions, the proposition follows. 
\end{proof}

\subsubsection{The Taylor expansion \texorpdfstring{of~$G^q$ at~$a_q$}{of the q iteration of G}}
%
This section carries out the first step in the proof of the existence of invariant 
curves in $B_q$ for  the map $G^q=\FNqm$.
The goal is to prove that we can find   complex coordinates  in which 
$a_{q,N}$ is mapped to zero and $F_{q,N,\mu}$ takes the form 
\begin{equation}
\label{eq:taylor-expansion}
\FNqm(z)=\lambda \Big(z+\sum_{k=2}^n P_k(z)+\varepsilon(z)\Big).
\end{equation}
This is achieved in Proposition~\ref{prop:complex-coord} 
and Corollary~\ref{Cor:Taylor-expansion} below.
Here  $n$ is an arbitrary large integer (but not depending on $q$, $N$ and $\mu$), 
$P_k$ is a  homogeneous polynomial of degree $k$ for $2\leqslant k \leqslant n$
and the error term $\varepsilon$   is  small enough  up to 
$n$ derivatives.
Note that the change of coordinates need not be symplectic in our setting. 
\\[10pt]
\noindent
{\bfseries Asymptotic behaviours of $\mathbf{A_{q,N}}$ on $\mathbf{B_q}$}.--
To achieve~\eqref{eq:taylor-expansion}  in a quantitative way, 
we must control the derivatives 
of the map $A_{q,N}$ 
near $a_{q,N}$. For that purpose, it is convenient to introduce the following notation.
%
\begin{Not}
\label{sec:notation-equivalents}
Here $E$ denotes any set of parameters;
for $f_1:E\to\R_+$ and $f_2~:E\to \R_+$,  we write $f_1=\jO_E(f_2)$, 
or $f_1(p)=\jO_E(f_2(p))$, 
or $f_1(p)=\jO(f_2(p))$ uniformly for $p\in E$,  if there 
	exists a constant $C>0$ (which does not depend on $p$) such that 
	\[
		\forall p\in E, \quad  f_1(p)\leqslant C f_2(p).
	\]
	We write $f_1\underset{E}{\asymp}f_2$, or $f_1(p)\underset{E}{\asymp}f_2(p)$,
 or $f_1(p)\asymp f_2(p)$ uniformly for $p\in E$, 
  if we have $f_1=\jO_E(f_2)$ and $f_2=\jO_E(f_1)$.\vspace{5pt}
\end{Not}
%
\noindent
We recall that for $q\geqslant q_0' N$, $B_q$ is contained in the annulus 
$\big\{ r_{q+1,N}\leqslant r \leqslant r_{q-1,N}\big\}\subset \A$.
\begin{prop}
\label{prop:asymptotic-T-Aq}
We have 
\begin{eqnarray}
\label{eq:T1}
\forall n\geqslant 0,  &(-1)^n T_{V/N^2}^{(n)}\big(P_{V/N^2}(0,r)\big)
																				\underset{~E_1}{\asymp}q^{4n+1}/N^{2n};\\
\label{eq:Aq1} 
\forall n\geqslant 1,  &  
	\abs{A_{q,N}^{(n)}(r)} =\jO_{E_1} \big( q^{4n-3}/N^{3n-2}\big);\\ %
\label{eq:Aq2} \forall n\geqslant 2, &  \quad 
	 (-1)^nA_{q,N}^{(n)}(r) \underset{~E_n}{\asymp}  q^{4n-3}/N^{3n-2}, %
\end{eqnarray}
where $E_n=\big\{(q,N,r)\mid q\geqslant q_n N,~N\geqslant N_0, ~r_{q+1,N}
\leqslant r \leqslant r_{q-1,N}\big\}$ and $q_n\geqslant q_0'$, for $n\geqslant 1$,  is a 
 positive constant which  depends only on $V$.
Furthermore, for  $n\geqslant 2$, we have 
\[
A_{q,N}^{(n)}(r_{q,N})\sim (-1)^n\beta_{n-1}\rho_0^n q^{4n-3}/N^{3n-2},
\quad\text{as $q/N$ tends to infinity,}
\]
where $\beta_{n-1}$ is a positive constant as defined 
in Lemma~\ref{lem:V-V/N^2}~\ref{Cond:T^k-N}).
\end{prop}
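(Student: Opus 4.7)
The plan is to derive the three estimates from the scaling relations of Lemma~\ref{lem:V-V/N^2}, the asymptotic expansion in Lemma~\ref{lem:system-P_V}~\ref{Cond:period-asympt}), and Proposition~\ref{prop:quantitative-disjoint-boxes} controlling the size and location of $B_q$.

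\medskip\noindent\textbf{Step 1: estimate \eqref{eq:T1}.}
For $(q,N,r)\in E_1$, we have $r\in[r_{q+1,N},r_{q-1,N}]$, hence
$u(r)\defeq P_{V/N^2}(0,r) = \tfrac{1}{2}(r^2-\rho_N^2)$ lies between $e_{q-1,N}$ and $e_{q+1,N}$. Since $(q\pm 1)^4\sim q^4$, Proposition~\ref{prop:quantitative-disjoint-boxes} gives $u(r)\asymp N^2/q^4$ uniformly on $E_1$. The scaling identity $T^{(n)}_{V/N^2}(e)=N^{2n+1}T_V^{(n)}(N^2e)$ from Lemma~\ref{lem:V-V/N^2}~\ref{Cond:T_V-N}), combined with Lemma~\ref{lem:system-P_V}~\ref{Cond:period-asympt}) (which implies, since $\binom{-1/2}{n}=(-1)^n\gamma_n$ with $\gamma_n>0$, that $(-1)^nT_V^{(n)}(\eta)\asymp \eta^{-n-1/4}$ uniformly for $\eta>0$ in a bounded range near~$0$), yields
\[
(-1)^n T_{V/N^2}^{(n)}(u(r)) \asymp N^{2n+1}\big(N^2 u(r)\big)^{-n-1/4}
\asymp N^{2n+1}\cdot\frac{q^{4n+1}}{N^{2n+1/2}\cdot N^{1/2}} = \frac{q^{4n+1}}{N^{2n}},
\]
as claimed.

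\medskip\noindent\textbf{Step 2: derivatives of $A_{q,N}$.}
From $A_{q,N}(r)=\int_{e_{q,N}}^{u(r)}(q-T_{V/N^2}(h))\,dh$ and $u'(r)=r$, $u''(r)=1$, $u^{(k)}=0$ for $k\ge3$, an immediate induction on $n\ge 1$ shows that
\[
A_{q,N}^{(n)}(r) = \sum_{(k,m)\in I_n} c_{n,k,m}\, \tau^{(k)}(u(r))\, r^{m},
\qquad
\tau(h)\defeq q-T_{V/N^2}(h),
\]
where $I_n\subset\{0,\ldots,n-1\}\times\{0,1,\ldots,n\}$ is a finite index set, the $c_{n,k,m}$ are integers, and the ``leading'' contribution corresponds to $(k,m)=(n-1,n)$ with coefficient~$1$ (obtained by differentiating $\tau(u(r))\,r$ exactly $n-1$ times always picking $\tau'(u)\cdot u'=\tau'(u)\,r$). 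For every $(k,m)\in I_n$ with $k<n-1$ one checks inductively the bookkeeping inequality $2k+m\le 2(n-1)+n-1$, which together with $r\asymp 1/N$ and Step~1 (extended to $k=0$ via $|\tau(u(r))|\le q$) gives
\[
\big|\tau^{(k)}(u(r))\,r^m\big|
= \jO\!\left(\frac{q^{4k+1}}{N^{2k+m}}\right)
= \jO\!\left(\frac{q^{4n-3}}{N^{3n-2}}\right)\qquad\text{uniformly on }E_1,
\]
since $q^{4(k-n+1)-4}N^{3n-2-2k-m}\le 1$ when $q\ge q_0'N$. Summing the (finitely many) terms establishes~\eqref{eq:Aq1}.

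\medskip\noindent\textbf{Step 3: lower bound \eqref{eq:Aq2} and the asymptotic at $r_{q,N}$.}
For $n\ge 2$, we single out the leading term $\tau^{(n-1)}(u(r))\,r^n$. By Step~1 and Proposition~\ref{prop:quantitative-disjoint-boxes},
\[
(-1)^{n}\tau^{(n-1)}(u(r))\,r^n
=(-1)^{n-1}T_{V/N^2}^{(n-1)}(u(r))\,r^n
\asymp \frac{q^{4n-3}}{N^{2n-2}}\cdot\frac{1}{N^{n}}
= \frac{q^{4n-3}}{N^{3n-2}},
\]
with an explicit constant ratio bounded away from zero. Step~2 shows that the remaining terms are $\jO(q^{4n-3}/N^{3n-2})$ with a constant that is independent of $q$ and $N$; the ratio of any such term to the leading one is a product of (bounded) factors times at least one factor of the form $(N/q)^4$ or $N$, which can be made arbitrarily small by choosing $q_n\ge q_0'$ large enough. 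For $(q,N,r)\in E_n$, the leading term thus dominates and~\eqref{eq:Aq2} follows.

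Finally, at $r=r_{q,N}$ one has $u(r_{q,N})=e_{q,N}$ and $\tau(e_{q,N})=0$; all indices with $k=0$ vanish. For $1\le k<n-1$ the same argument as above shows the corresponding term is negligible compared to the leading one as $q/N\to\infty$, whereas Lemma~\ref{lem:V-V/N^2}~\ref{Cond:T^k-N}) yields
\[
(-1)^n A_{q,N}^{(n)}(r_{q,N})
\sim (-1)^{n-1}T_{V/N^2}^{(n-1)}(e_{q,N})\cdot r_{q,N}^n\cdot (-1)
\sim \beta_{n-1}\frac{q^{4n-3}}{N^{2n-2}}\cdot\frac{\rho_0^n}{N^n}
= \beta_{n-1}\rho_0^n\frac{q^{4n-3}}{N^{3n-2}},
\]
using $r_{q,N}\sim\rho_N=\rho_0/N$ from Proposition~\ref{prop:quantitative-disjoint-boxes}.

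\medskip\noindent\textbf{Main obstacle.} The delicate point is Step~2/Step~3: the Fa\`a di Bruno-like expansion for $A_{q,N}^{(n)}$ produces many terms, and one must verify the combinatorial inequality controlling $2k+m$ to ensure that the term $\tau^{(n-1)}(u(r))r^n$ truly dominates; this is precisely what forces the threshold $q\ge q_n N$ with $q_n$ growing with $n$.
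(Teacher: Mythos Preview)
Your approach is essentially the same as the paper's: expand $A_{q,N}^{(n)}$ via a Fa\`a di Bruno-type computation using $u(r)=\tfrac12(r^2-\rho_N^2)$, identify the leading monomial $\tau^{(n-1)}(u(r))\,r^n$, and control the rest using the asymptotics of $T_V^{(k)}$. The paper carries this out by first treating $N=1$ with an explicit recursion for the coefficients, then rescaling via $A_{q,N}(r)=\tfrac1N A_{q/N,1}(Nr)$, whereas you work directly with general~$N$; both routes are fine.

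There is, however, a genuine gap in Step~2. Your ``bookkeeping inequality'' $2k+m\le 3n-3$ for the non-leading terms is true but is \emph{not} the inequality that yields the estimate. The ratio you need to bound is
\[
\frac{q^{4k+1}/N^{2k+m}}{q^{4n-3}/N^{3n-2}}
= q^{\,4(k-n+1)}\,N^{\,3n-2-2k-m}
\]
(not $q^{4(k-n+1)-4}$ as you wrote). With $q\ge q_0'N$ this is $\le (q_0')^{4(k-n+1)}N^{(2k-n+2)-m}$, which is bounded uniformly in~$N$ \emph{only if} $m\ge 2k-n+2$. Your inequality $2k+m\le 3n-3$ does not imply this. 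What actually holds (and what the paper's explicit recursion makes visible) is the \emph{equality} $m=2k-n+2$ for every monomial $\tau^{(k)}(u)\,r^m$ arising in $A_{q,N}^{(n)}$; this follows directly from the Fa\`a di Bruno structure since $u''=1$ and $u^{(j)}=0$ for $j\ge3$. Once you record this equality, the exponent of~$N$ above is~$0$ and the ratio is exactly $(q_0')^{-4(n-1-k)}\le 1$, which also gives cleanly the $(N/q)^{4}$-smallness needed in Step~3. In Step~3, the phrase ``at least one factor of the form $(N/q)^4$ or $N$'' should be corrected: $N$ is large, not small; the correct smallness factor is $(N/q)^{4(n-1-k)}$. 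Finally, the displayed arithmetic in Step~1 is garbled (the intermediate $N^{2n+1/2}\cdot N^{1/2}$ is not right), though the conclusion is correct.
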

\begin{proof}
$\bullet$~We set $q_1\geqslant \max(2;q_0')$ and we prove~\eqref{eq:T1}. \\[3pt] 
-- First we assume that $N=1$. A direct computation shows that $(-1)^n T_V^{(n)}(h)>0$ for $h>0$.
Furthermore, Lemma~\ref{lem:V-V/N^2}~\ref{Cond:T^k-N})
 implies  that 
\[
	T_V^{(n)}(e) \sim 
		(-1)^n{\beta_n} T_V(e)^{4n+1}, \quad\text{as $e$ tends to zero.}
\]
This shows that  there exist two positive constants 
$c_n$ and $d_n$, for each $n\geqslant 1$, such that 
\[
\forall e\in (0; e_0),~\quad  c_n ~T_V(e)^{4n+1}\leqslant 
 (-1)^n T_V^{(n)}(e)\leqslant d_n  ~ T_V(e)^{4n+1},\quad \text{with  $e_0=P_V(a_{q_1-1,1})$. }
\]
Moreover, if $r_{q-1,1}\leqslant r \leqslant r_{q+1,1}$ then we have 
$e_{q+1,1}\leqslant P_V(0,r)\leqslant e_{q-1,1}\leqslant e_0$. 
 Since we have $T_V(e_{q-1,1})=q-1\asymp q\asymp q+1=T_{V}(e_{q+1,1})=q+1$ uniformly for 
 $(q,1,r)\in E_1$, it follows that
\[
\forall n\geqslant 0, \quad 
T_V^{(n)}(P_V(0,r))\asymp q^{4n+1} ~\quad\text{uniformly for  $(q,1,r)\in E_1$.}
\]
-- For $N\geqslant N_0$ and $q\geqslant q_1 N$, we use Lemma~\ref{lem:V-V/N^2};
we observe that if we assume that $r_{q-1,N}\leqslant r \leqslant r_{q+1,N}$ 
then we have 
\[
	r_{q/N+1,1}\leqslant r_{(q+1)/N}=  Nr_{q+1,N}\leqslant Nr \leqslant  Nr_{q-1,N} = 
	r_{(q-1)/N,1}\leqslant r_{q/N-1,1}.
\]
Therefore, if  $e=P_{V/N^2}(0,r)=P_V(0,Nr)/N^2$ then we have $e_{q/N+1,1}\leqslant N^2e 
\leqslant  e_{q/N-1,1}$,  hence Lemma~\ref{lem:V-V/N^2}~\ref{Cond:T^k-N})  and the discussion 
above when  $N=1$ show that 
\[
T_{V/N^2}^{(n)}(e)=N^{2n+1}T_V^{(n)}(N^2e)\asymp N^{2n+1}T_V^{(n)}(N^2e_{q/N,1})
\asymp q^{4n+1}/N^{2n} 
\]
uniformly for $(q,N,r)\in E_1$. This proves~\eqref{eq:T1}. 
\\[8pt]
$\bullet$~We prove~\eqref{eq:Aq1}. First we assume that $N=1$.\\[3pt]
We set $T_0(r)=q-T_V(P_V(0,r))$ and $T_k(t)=-T_V^{(k)}(P_V(0,r))$, for $k\geqslant 1$;
we observe that $\abs{T_0}\leqslant 1$ and  the point above shows that $(-1)^{k+1}T_k(r)
\asymp q^{4n+1}$ uniformly for $(q,1,r)\in E_1$.
An immediate  induction over $p\geqslant 1$ shows that 
\begin{align}
\label{eq:A_q(r)-2p}
	A_{q,1}^{(2p-1)}(r)=\sum_{k=0}^{p-1} C_{k,2p-1} r^{2k+1} T_{p+k-1}(r)
	\quad ; \quad 
	A_{q,1}^{(2p)}(r)=\sum_{k=0}^{p} C_{k,2p} r^{2k} T_{p+k-1}(r),\\ %
\notag
\text{with}~\left|
	\begin{array}{ll}
	C_{k,2p+1}=C_{k,2p}+(2k+2)~C_{k+1,2p} & \text{if}~p \geqslant 1 
	~\text{and}~0 \leqslant k \leqslant p-1,\\ %
	C_{k,2p+2}=C_{k,2p+1}+(2k+1)~C_{k,2p+1} & \text{if}~p \geqslant 1 
	~\text{and}~1\leqslant k \leqslant p,\\ %
	C_{p-1,2p-1}=C_{p,2p}=1 & \text{if}~p \geqslant 1.
	\end{array}\right.
\end{align}
Furthermore, we have 
\begin{equation}
\label{eq:T_k-asymp}
r^{2k}(-1)^{p+k}T_{p+k-1}(r)\asymp r^{2k+1}(-1)^{p+k}T_{p+k-1}(r)\asymp q^{4p+4k-3}
\end{equation}
uniformly for $(q,1,r)\in E_1$, since we have $\rho_0\leqslant r \leqslant 2\rho_0$.
Since we have $q\geqslant q_1\geqslant 2$ on $E_1$, it follows that 
$q^{4p+4k-3}=\jO_{E_1}(q^{4p+4k_0-3})$ for $0\leqslant k\leqslant k_0$ and $p\geqslant 1$, 
hence 
\[
\abs{A_{q,1}^{(2p)}(r)}=\jO_{E_1}(q^{8p-3})~\text{and}~\abs{A_{q,1}^{(2p-1)}(r)}=\jO_{E_1}(q^{8p-7})
~\text{for $p\geqslant 1$,}
\]
which proves~\eqref{eq:Aq1} on $E_1\cap \{N=1\}$.
Since we have $(q,N,r)\in E_1$ if and only if $(q/N,1,r/N)\in E_1$, this
 extends immediatly to~\eqref{eq:Aq1} for any $N\geqslant N_0$ according 
to Lemma~\ref{lem:A_q-N} bellow.
\\[8pt]
$\bullet$~We prove~\eqref{eq:Aq2}. First we assume that $N=1$.\\[3pt]
It follows from~\eqref{eq:T_k-asymp} and~\eqref{eq:A_q(r)-2p} above 
that there exist positive constants $c_{k,\ell}$ and $d_{k,\ell}$ (depending only on $V$)
such that 
\begin{subequations}
\begin{align}
\sum_{k=0}^{p-1} c_{k,2p-1} (-1)^{p+k} q^{4(p+k)-3}
&\leqslant  A_{q,1}^{(2p-1)}(r)\leqslant  
\sum_{k=0}^{p-1} d_{k,2p-1} (-1)^{p+k} q^{4(p+k)-3}; 
 \\
\sum_{k=0}^{p} c_{k,2p} (-1)^{p+k}q^{4(p+k)-3}
&\leqslant A_{q,1}^{(2p)}(r)\leqslant  
\sum_{k=0}^{p} d_{k,2p} (-1)^{p+k}q^{4(p+k)-3}. 
\end{align}
\end{subequations}
This implies that 
\begin{align*}
\demi  d_{p-1,2p-1} q^{8p-7}
&\leqslant  -A_{q,1}^{(2p-1)}(r)\leqslant  2 c_{p-1,2p-1}  q^{8p-7}
,\quad\text{for $q\geqslant q_{2p-1}$};
 \\
\demi c_{p,2p} q^{8p-3}
&\leqslant A_{q,1}^{(2p)}(r)\leqslant  
2d_{p,2p} q^{8p-3}, \quad\text{for $q\geqslant q_{2p}$};
\end{align*}
 where $q_{2p}\geqslant q_{2p-1}\geqslant q_{1}$ are large enough
(depending only on $V$).
Therefore we have proved that $-A_{q,1}^{(2p-1)}(r)\asymp  q^{8p-7}$
and $A_{q,1}^{(2p)}(r)\asymp q^{8p-3}$ uniformly for $(q,1,r)$ in $E_{2p-1}$
or $E_{2p}$ respectively, which is~\eqref{eq:Aq2} on  $\{N=1\}$. 
This  extends immediatly to~\eqref{eq:Aq1} for any $N\geqslant N_0$ according 
to Lemma~\ref{lem:A_q-N} bellow.
\\[8pt]
$\bullet$~Since we have $C_{p-1,2p-1}=C_{p,2p}=1$ in~\eqref{eq:T_k-asymp},
we obtain with~\eqref{eq:A_q(r)-2p}   that 
$A_{q,1}^{2p}(r_{q,1})\sim r_{q,1}^{2p}T_{2p-1}(r_{q,1})$ and 
$A_{q,1}^{2p-1}(r_{q,1})\sim r_{q,1}^{2p-1}T_{2p-2}(r_{q,1})$ as $q$ tends to infinity.
But Lemma~\ref{lem:V-V/N^2}~\ref{Cond:T^k-N}) shows that 
\[
T_{n-1}(r_{q,1})=-T^{(n-1)}(P_V(0,r_{q,1}))\sim (-1)^n\beta_{n-1}q^{4n-3}.
\]
Since Proposition~\ref{prop:quantitative-disjoint-boxes} shows that 
$r_{q,1}\sim \rho_0$ as $q$ tends to infinity, we obtain that  $A_{q,1}^{(n)}(r_{q,1})
\sim  (-1)^n\rho_0^n\beta_{n-1}q^{4n-3} $ when $N=1$. The announced equivalent
for general $N\geqslant 1$ follows using Lemma~\ref{lem:A_q-N} bellow
and this completes the proof of the proposition.
\end{proof}

\begin{lemma}
\label{lem:A_q-N}
We have $A_{q,N}(r)=\frac{1}{N}A_{q/N,1}(Nr)$.
\end{lemma}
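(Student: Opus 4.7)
The plan is a direct change-of-variable computation, built entirely out of the three identities already recorded in Lemma~\ref{lem:V-V/N^2}: the scaling of the period $T_{V/N^2}(e)=N\,T_V(N^2 e)$ (item \ref{Cond:T_V-N}), the scaling of the Hamiltonian $N^2 P_{V/N^2}(0,r)=P_V(0,Nr)$ (item \ref{Cond:P_V-N}), and the corresponding scaling of the energy level $e_{q,N}=\tfrac{1}{N^2}e_{q/N,1}$ (item \ref{Cond:e_q}). No new estimates are needed, and there is no real obstacle — this is a bookkeeping lemma meant to be plugged into the proofs of \eqref{eq:Aq1} and \eqref{eq:Aq2} of Proposition~\ref{prop:asymptotic-T-Aq}, where it lets one reduce the statements with general $N\ge N_0$ to the already-treated case $N=1$.

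Concretely, I would start from the definition
\[
A_{q,N}(r)=\int_{e_{q,N}}^{P_{V/N^2}(0,r)}\bigl(q-T_{V/N^2}(h)\bigr)\,dh
\]
and substitute $T_{V/N^2}(h)=N\,T_V(N^2 h)$ using item~\ref{Cond:T_V-N}). Then I would perform the change of variable $u=N^2h$, so $dh=du/N^2$; by item~\ref{Cond:e_q}), the lower bound becomes $u=N^2 e_{q,N}=e_{q/N,1}$, and by item~\ref{Cond:P_V-N}), the upper bound becomes $u=N^2 P_{V/N^2}(0,r)=P_V(0,Nr)$. This yields
\[
A_{q,N}(r)=\frac{1}{N^2}\int_{e_{q/N,1}}^{P_V(0,Nr)}\bigl(q-N\,T_V(u)\bigr)\,du
=\frac{1}{N}\int_{e_{q/N,1}}^{P_V(0,Nr)}\bigl(q/N-T_V(u)\bigr)\,du,
\]
and the last integral is by definition $A_{q/N,1}(Nr)$, so that $A_{q,N}(r)=\tfrac{1}{N}A_{q/N,1}(Nr)$, as claimed.

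The only point requiring a brief comment is that the identity is meaningful as stated: here $q$ and $N$ are positive reals rather than necessarily integers, but this is exactly the framework in which $a_{q,N}$, $e_{q,N}$ and $T_{V/N^2}$ were introduced earlier in the section, so no extra justification is required. With this identity in hand, differentiating $n$ times in $r$ gives $A_{q,N}^{(n)}(r)=N^{n-1}A_{q/N,1}^{(n)}(Nr)$, which is the exact form used implicitly in the proof of Proposition~\ref{prop:asymptotic-T-Aq} to bootstrap the $N=1$ asymptotics to arbitrary $N\ge N_0$.
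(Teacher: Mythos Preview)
Your proof is correct and follows exactly the same approach as the paper: a direct change of variable $u=N^2h$ in the defining integral, combined with the three scaling identities of Lemma~\ref{lem:V-V/N^2}. The extra remarks you add (about $q,N$ being real and about the $n$th derivative) are accurate context but not needed for the lemma itself.
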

\begin{proof}
Using  Lemma~\ref{lem:V-V/N^2}, we compute 
\begin{multline*}
A_{q,N}(r)
	=\int_{e_{q,N}}^{P_{V/N^2}(0,r)} (q-T_{V/N^2}(h))~dh
	=\int_{e_{q,N})}^{P_{V/N^2}(0,r)} (q-NT_V(N^2h))~dh\\
	=\frac{1}{N}\int_{N^2e_{q,N}}^{N^2P_{V/N^2}(0,r)}\left(\frac{q}{N}-T_V(h)\right)~dh
 	=\frac{1}{N}\int_{e_{q/N,1}}^{P_V(0,Nr)} \left(\frac{q}{N}-T_V(h)\right)~dh\\
	=\frac{1}{N}A_{q/N,1}(Nr).
\end{multline*}
This proves the   formula of the lemma.\vspace{10pt}
\end{proof}
%
\noindent
{\bfseries Lineart part of $\mathbf{\FNqm}$}.--
We recall that  $B_{q}$ denotes a $q$-adapted box with respect to $P_{V/N^2}$
and $\BdeN$, as it appears in Propostion~\ref{prop:quantitative-disjoint-boxes}.
\begin{prop}\label{prop:Taylor-expansion-linear-part}
Set $\sigma_{q,N}(\begin{smallmatrix} \theta\\ R\end{smallmatrix})=(\theta,r_{q,N}+R)$.   
There exist  a constant $\alpha_{q,N}$ and a function $S_{q,N}(R)$  satisfying for all 
$\mu>0$ and $(\theta,R)\in \sigma_{q,N}^{-1}(B_{q})$
\[
\left|
	\begin{array}{l}
		\sigma_{q,N}^{-1}\circ F_{q,\mu,N}\circ \sigma_{q,N}
		\left(\begin{smallmatrix} \theta \\ R\end{smallmatrix}\right)
		=\left(\begin{smallmatrix} 1 & \alpha_{q,N}\\
									-\mu &1-\mu \alpha_{q,N}\end{smallmatrix}\right)
			\left(\begin{smallmatrix} \theta \\ R\end{smallmatrix}\right)
			+S_{q,N}(R) \left(\begin{smallmatrix} ~~1 \\ -\mu \end{smallmatrix}\right),\\
		S_{q,N}(0)=S_{q,N}'(0)=0.
	\end{array}
\right.
\]

and the following estimates hold true.
\[
\begin{array}{l}
\displaystyle
\alpha_{q,N} \underset{\,E_2}{\asymp}
\frac{q^5}{N^4}\ ; \quad \abs{S_{q,N}(R)}
\underset{\,E_0}{\asymp}R^2 \frac{q^5}{N^4}
\ ;\quad  
	\abs{S_{ q,N}'(R)}\underset{\,E_1}{\asymp} \abs{R\,}\frac{q^9}{N^7}\ ;\quad 
\abs{R\,}=\jO_{E_1}( q^3/N^5)\ ;\\ %
\displaystyle
	\forall n \geqslant 2, \quad  
	(-1)^{n+1}{S_{q,N}^{(n)}(R)} \underset{~E_n}{\asymp} {q^{4n+1}}/{N^{3n+1}}\,;
\end{array}
\]
with $E_n=\{(q,N,R)\mid q\geqslant q_1 N,~N\geqslant N_0, 
(0, r_{q,N}+R)\in B_q\}$ and  $q_n$ a   positive constants which 
depend only  on $V$, for $n\geqslant 1$.\\[3pt]
Furthermore, we have 
\[
\alpha_{q,N}\sim \beta_1\rho_0^2\frac{q^{5}}{N^{4}},~\quad  
{S_{q,N}^{(2)}(0)}\sim -\beta_2\rho_0^3\frac{q^{9}}{N^{7}}\quad~\text{and}~\quad 
{S_{q,N}^{(3)}(0)} \sim \beta_3\rho_0^4\frac{q^{13}}{N^{10}}
\]
 as $q/N$ tends to infinity.
\end{prop}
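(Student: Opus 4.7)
The plan is to compute $F_{q,N,\mu} = \Phi^{\mu W_N} \circ \Phi^{A_{q,N}}$ explicitly in the translated coordinates $(\theta, R) = \sigma_{q,N}^{-1}(\theta, r)$, so that $R = r - r_{q,N}$, and then read off the claimed matrix form as the degree-$2$ Taylor decomposition of the $r$-dependent shift $A_{q,N}'(r)$ about $r = r_{q,N}$.

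First, since $A_{q,N}$ depends only on $r$, the Hamiltonian vector field is $X_{A_{q,N}} = A_{q,N}'(r)\,\partial_\theta$, hence $\Phi^{A_{q,N}}(\theta, r) = (\theta + A_{q,N}'(r), r)$. The $q$-adapted property of $B_q$ (Definition~\ref{def:q-box}\,(\ref{Cond:injection-q-box})) yields $\Phi^{A_{q,N}}(B_q) = \varphi^q(B_q) \subset \overline{\jB_{\delta/(2N)}}$, on which $W_N(\theta) = \tfrac12\theta^2$ by hypothesis~(iv) of Theorem~\ref{th:varpseudopend}; thus $X_{\mu W_N} = -\mu\theta\,\partial_r$ there, and composition on $B_q$ gives
\[
F_{q,N,\mu}(\theta, r) = \bigl(\theta + A_{q,N}'(r),\; r - \mu(\theta + A_{q,N}'(r))\bigr).
\]
The key identity $A_{q,N}'(r_{q,N}) = 0$ follows from $A_{q,N}'(r) = r\bigl(q - T_{V/N^2}(P_{V/N^2}(0,r))\bigr)$ together with $T_{V/N^2}(e_{q,N}) = q$. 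Setting $\alpha_{q,N} := A_{q,N}''(r_{q,N})$ and $S_{q,N}(R) := A_{q,N}'(r_{q,N} + R) - \alpha_{q,N} R$ automatically enforces $S_{q,N}(0) = S_{q,N}'(0) = 0$, and substituting $r = r_{q,N} + R$ into the formula above produces the matrix decomposition claimed in the proposition.

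The asymptotic estimates all follow from Proposition~\ref{prop:asymptotic-T-Aq} together with the Taylor integral representation $S_{q,N}(R) = \int_0^R (R-s)\,A_{q,N}'''(r_{q,N} + s)\,ds$. Namely $\alpha_{q,N} = A_{q,N}''(r_{q,N})$ gives $\alpha_{q,N} \asymp q^5/N^4$ uniformly, with equivalent $\alpha_{q,N} \sim \beta_1\rho_0^2 q^5/N^4$, by the $n=2$ case of~\eqref{eq:Aq2} and the last formula of that proposition. For $n \ge 2$ one has $S_{q,N}^{(n)}(R) = A_{q,N}^{(n+1)}(r_{q,N}+R)$, which yields both the uniform $\asymp q^{4n+1}/N^{3n+1}$ and the equivalents $S_{q,N}^{(2)}(0) \sim -\beta_2\rho_0^3 q^9/N^7$ and $S_{q,N}^{(3)}(0) \sim \beta_3\rho_0^4 q^{13}/N^{10}$. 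The first-derivative bound $|S_{q,N}'(R)| \asymp |R|\,q^9/N^7$ comes from $S_{q,N}'(R) = \int_0^R A_{q,N}'''(r_{q,N}+s)\,ds$ and the uniform two-sided estimate on $A_{q,N}'''$. The bound on $|R|$ is inherited from $|R| \le \ell' \asymp N^3\delta/q^5$ in Proposition~\ref{prop:quantitative-disjoint-boxes}. The only mildly delicate point is verifying that $\Phi^{A_{q,N}}(B_q)$ lies in the strip where $W_N$ is quadratic, but this is exactly the injection clause of the $q$-adapted property; beyond that, the argument is essentially bookkeeping.
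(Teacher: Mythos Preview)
Your proof is correct and follows essentially the same route as the paper: compute $F_{q,N,\mu}$ explicitly using that $W_N$ is quadratic on $\Phi^{A_{q,N}}(B_q)$, set $\alpha_{q,N}=A_{q,N}''(r_{q,N})$ and $S_{q,N}(R)=A_{q,N}'(r_{q,N}+R)-\alpha_{q,N}R$, and then read off all estimates from Proposition~\ref{prop:asymptotic-T-Aq} and the bound $\ell'\asymp N^3/q^5$ from Proposition~\ref{prop:quantitative-disjoint-boxes}. The only cosmetic difference is that you use the integral form of the Taylor remainder where the paper uses the Lagrange form.
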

\begin{proof} $\bullet$~Proposition~\ref{prop:local_form_FqNM}
shows that $\Phi^{A_{q,N}}(B_q)=\varphi^q(B_q)\subset{\BdeeN}$, so
$\mu W_N(\theta)=\dem \mu\theta^2$ on  $\Phi^{A_{q,N}}(B_q)$. This implies that 
$\FNqm$ coincides on $B_q$ with 
\[
\FNqm(\theta,r)=\Big(\theta+A_{q,N}'(r),r-\mu\big(\theta+A_{q,N}'(r)\big)\Big).
\]
Setting
\begin{equation}\label{eq;S_qN}
\alpha_{q,N}:=A''_{q,N}(r_{q,N})~\text{and}~
S_{q,N}(R):=A'_{q,N}(r_{q,N}+R)-A'_{q,N}(r_{q,N})-\alpha_{q,N}R,
\end{equation}
the anounced formula for $\FNqm$ follows from a direct computation.
\\[10pt]
$\bullet$~We prove the estimates of the proposition. Since we have $\sigma_{q,N}(\theta,R)\in B_q$,
it follows that  $r_{q+1,N}\leqslant r_{q,N}+R \leqslant r_{q-1,N}$.
 Now we apply Proposition~\ref{prop:asymptotic-T-Aq}:
\begin{itemize}
\item[--] with $n=2$, Estimate~\eqref{eq:Aq2}
 shows that $\alpha_{q,N}=A''_{q,N}(r_{q,N})\asymp q^5/N^4$.
\item[--]  For $\ell\geqslant 0$, we have  
\[
\begin{array}{l}
\displaystyle
S_{q,N}(R)=\frac{R^2}{2}A_{q,N}''(r_{q,N}+\eta_0 R);~
S_{q,N}'(R)={R}~A_{q,N}'''(r_{q,N}+\eta_1 R);\\[5pt]
\displaystyle
S_{q,N}^{(\ell)}(R)=A^{(\ell+1)}_{q,N}(r_{q,N}+\eta_\ell R),~\text{for $\ell\geqslant 2$;}
\end{array}
\]
for some $0<\eta_\ell<1$ (depending on $R$),
hence Estimates~\eqref{eq:Aq1} shows that 
\[
	S_{q,N}(R)~\asymp R^2  \frac{q^5}{N^4};~
	\abs{S'_{q,N}(R)}\asymp \abs{R} \frac{q^9}{N^7};~
	(-1)^{\ell+1}S^{(\ell)}_{q,N}(R)\asymp \frac{q^{4\ell+1}}{N^{3\ell+1}},
	~\text{for $\ell\geqslant 2$.}
\]
\end{itemize}
Since we have $\abs{R}\leqslant \ell'$ and $\ell'\asymp N^3/q^5$ according to 
Proposition~\ref{prop:quantitative-disjoint-boxes}, the proof of  the announced estimates
is complete.
\\[10pt]
$\bullet$~
At  last, we observe that 
\[
\alpha_{q,N}=A_{q,N}''(r_{q,N}),\quad S_{q,N}''(0)=A_{q,N}'''(r_q)
\quad\text{and}\quad
S_{q,N}^{(3)}(0)=A_{q,N}^{(4)}(r_{q,N}).
\]
Therefore  these quantities as $q/N$ tends to infinity may be estimated   immediately from 
the last estimate of  Proposition~\ref{prop:asymptotic-T-Aq}, which completes the proof 
of the proposition.
\end{proof}

\begin{lemma}
We have $\alpha_{q,N}=N\alpha_{q/N,1}$ and $ S_{q,N}(R)=NS_{q/N,1}(NR)$.
\end{lemma}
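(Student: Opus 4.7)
The plan is a direct chain-rule computation using the two scaling identities already in hand: the rescaling of the generating function,
\[
A_{q,N}(r)=\tfrac{1}{N}A_{q/N,1}(Nr),
\]
from Lemma~\ref{lem:A_q-N}, and the action scaling $r_{q,N}=r_{q/N,1}/N$ from Lemma~\ref{lem:V-V/N^2}\ref{Cond:e_q}). No new analytic estimate is needed; everything is algebraic.

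First I would differentiate the Lemma~\ref{lem:A_q-N} identity twice to obtain
\[
A_{q,N}'(r)=A_{q/N,1}'(Nr),\qquad A_{q,N}''(r)=N\,A_{q/N,1}''(Nr).
\]
Evaluating the second identity at $r=r_{q,N}$ and noting that $Nr_{q,N}=r_{q/N,1}$ by Lemma~\ref{lem:V-V/N^2}\ref{Cond:e_q}) gives
\[
\alpha_{q,N}=A_{q,N}''(r_{q,N})=N\,A_{q/N,1}''(r_{q/N,1})=N\alpha_{q/N,1},
\]
which is the first identity.

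Next I would substitute the first-derivative relation into the definition~\eqref{eq;S_qN} of $S_{q,N}$. Using $N(r_{q,N}+R)=r_{q/N,1}+NR$ and the value of $\alpha_{q,N}$ just obtained, one gets
\[
S_{q,N}(R)=A_{q/N,1}'(r_{q/N,1}+NR)-A_{q/N,1}'(r_{q/N,1})-\alpha_{q/N,1}\,(NR),
\]
which by the definition~\eqref{eq;S_qN} applied with parameters $(q/N,1)$ and variable $NR$ is exactly the second claimed identity.

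There is no hard step here, as the whole argument is one application of the chain rule to an explicit scaling law. The only caveat is to verify that $(r,R)$ remains in the admissible range—i.e.\ that when $(0,r_{q,N}+R)\in B_q$ we also have $Nr\in[r_{q/N+1,1},r_{q/N-1,1}]$—which is immediate from $r_{q,N}=r_{q/N,1}/N$ and the box estimates in Proposition~\ref{prop:quantitative-disjoint-boxes}; this guarantees that both sides of the identities make sense simultaneously. Once the two equalities are established, they will be the natural tool used in the preceding proofs to lift the $N=1$ asymptotics for $\alpha_{q,1}$ and $S_{q,1}$ (and their derivatives) to general $N\ge N_0$.
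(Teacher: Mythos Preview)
Your approach is exactly the paper's: apply the chain rule to $A_{q,N}(r)=\tfrac{1}{N}A_{q/N,1}(Nr)$ together with $r_{q,N}=r_{q/N,1}/N$. The derivation of $\alpha_{q,N}=N\alpha_{q/N,1}$ is correct and matches the paper line for line.

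There is, however, a small slip in your second computation that you should notice. From $A_{q,N}'(r)=A_{q/N,1}'(Nr)$ (no prefactor) and $\alpha_{q,N}=N\alpha_{q/N,1}$, your displayed expression
\[
S_{q,N}(R)=A_{q/N,1}'(r_{q/N,1}+NR)-A_{q/N,1}'(r_{q/N,1})-\alpha_{q/N,1}\,(NR)
\]
is exactly $S_{q/N,1}(NR)$, \emph{not} $N\,S_{q/N,1}(NR)$ as stated in the lemma. The paper's own proof contains the same inconsistency (it writes $A_{q/N,1}''$ where $A_{q/N,1}'$ is meant and carries a spurious factor $N$ through). So your computation is in fact the correct one; the factor $N$ in the stated identity for $S_{q,N}$ is a typo. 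This does not affect anything downstream, since the lemma is not invoked elsewhere and the asymptotics for $S_{q,N}$ are obtained independently via Proposition~\ref{prop:asymptotic-T-Aq}.
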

\begin{proof}
We have $\alpha_{q,N}=A_{q,N}''(r_{q,N})$. Therefore Lemma~\ref{lem:A_q-N}
and Lemma~\ref{lem:V-V/N^2}~\ref{Cond:e_q}) imply that 
\[
\alpha_{q,N}=N A_{q/N,1}''(Nr_{q,N})=N A_{q/N,1}''(r_{q/N,1})=N\alpha_{q/N,1}.
\]
In a similar way, Lemma~\ref{lem:A_q-N} implies that 
\begin{align*}
S_{q,N}(R)&=A_{q,N}'(r_{q,N}+R)-A_{q,N}'(r_{q,N})-\alpha_{q,N}R\\
&=N A_{q/N,1}''(Nr_{q,N}+NR)-N A_{q/N,1}''(Nr_{q,N})-N\alpha_{q/N,1}R\\
&=N A_{q/N,1}''(r_{q/N,1}+NR)-N A_{q/N,1}''(r_{q/N,1})-N\alpha_{q/N,1}R\\
&=N S_{q/N,1}(NR).
\end{align*}
This proves the second identity of the lemma and the proof is complete.
\end{proof}

~\\[10pt]
\noindent
{\bfseries Diagonalization of the lineart part and Taylor expansion}
\begin{Not}\label{not:E_n}
For $n\in\N^*$ and $\beta>0$,  we set
\begin{subequations}
\begin{align}
\label{eq:mu-small}
	E_\beta &\defeq \big\{
 			(q,N,\mu)\mid 0<\mu \alpha_{q,N}<1~\text{and}~
			q\geqslant \beta N,~N\geqslant N_0
			\big\},
\\[1ex]
\label{eq:mu-small2}
	E_{\beta,n} &\defeq \Big\{(q,N,\mu)\in E_\beta\mid\mu\alpha_{q,N}<\frac{1}{(n+1)^2}\Big\}
\end{align}
\end{subequations}
(with the notation of Proposition~\ref{prop:Taylor-expansion-linear-part} for~$\alpha_{q,N}$).
Unless mentioned otherwise, we shall abreviate  $f_1\asymp f_2$ if there exists a positive 
constant $\beta$ (not depending on $q$, $N$, $\mu$) satisfying  
$f_1\underset{E_{\beta,n}}{\asymp} f_2$.
\end{Not}

\begin{Not}\label{not:lambda}
Let $\lambda\in\C$ satisfy  the following two conditions
\begin{equation}\label{eq:lambda}
	\lambda+\lambda^{-1}=2-\mu \alpha_{q,N},\qquad
	\lambda=\exp(i\gamma_0)~\text{with}~ -\frac{\pi}{3}<\gamma_0<0.
\end{equation}
so we have $\abs{\lambda^p-1}\asymp\abs{\lambda-1}$ uniformly on $E_{\beta,n}$ for 
$1\leqslant p \leqslant 2n+2$ (see Lemma~\ref{lem:non-resonance} below).
\end{Not}
\noindent
It  follows immediately from~\eqref{eq:lambda} that 
\[
	1-\lambda=2\sin^2(\gamma_0/2)-i\sin\gamma_0~\text{and }~\abs{\lambda-1}^2
		=\mu\alpha_{q,N}=(1-\cos\gamma_0)^2+\sin^2\gamma_0=2(1-\cos\gamma_0),
\]
hence  $\sin^2\gamma_0={\mu\alpha_{q,N}(1-\mu\alpha_{q,N}/4)}$
and~\eqref{eq:mu-small} implies that 
\[
\frac{\sqrt{3}}{2}\abs{\lambda-1}\leqslant
-\sin\gamma_0\leqslant \abs{\lambda-1},
\quad\text{ so 
$-\sin\gamma_0~{\asymp}~ \abs{\lambda-1}=\sqrt{\mu\alpha_{q,N}}$.}
\]
\begin{Not}
For all $z\in\C$ we set $\Psi(z)=\sigma_{q,N}\circ \psi(z)$ with $\sigma_{q,N}(\theta,R)
=(\theta, r_{q,N}+R)$ and  
\[
	\psi(z)=B \left(\begin{array}{c}z \\ \overline{z}\end{array}\right)\in\R^2, \quad
	~B=\left(\begin{smallmatrix}\frac{\alpha}{\lambda-1} 
								& \frac{\alpha}{\overline{\lambda}-1}\\ 1 & 1\end{smallmatrix}\right)
~\text{and $\alpha=\alpha_{q,N}$.}
\]
\end{Not}
\begin{prop} \label{prop:complex-coord}
Assume $n\geqslant  2$  and set $\omega={q^4}/{N^3}$. 
Then there exists a positive constant $\beta>0$ such that for each $(q,N,\mu)\in E_\beta$
there exist  
$\rho>0$ and $\kappa>0$,  $a_{\nu}\in\R$, for $2\leqslant\nu
\leqslant n$, a function  $g~:[-2\rho~;2\rho]\to\R$ satisfying the following properties.
\begin{enumerate}
\item \label{Cond:complex-coordinate-size-rho}
	  $\Dm(0;\rho)\subset \Psi^{-1}(B_{q})$ and 
																$\rho\asymp \frac{\abs{\lambda-1}}{q\omega}$;%
\item\label{Cond:complex-coordinate-g}
 	$\Psi^{-1}\circ F_{q,N,\mu}\circ  \Psi(z)=\lambda\biggl(z+i\abs{\lambda-1}
																		g(z+\overline{z})\biggr)$ on $\Dm(0;\rho)$;%
\item\label{Cond:complex-coordinate-area}
	$\Psi^\star(dr\wedge d\theta)=\frac{\kappa}{2i}dz\wedge d\bar z$ and $\kappa{\asymp}
						\frac{\abs{\lambda-1}}{\mu}\asymp\frac{\omega q/N}{\abs{\lambda-1}}$;%
\item\label{Cond:complex-coordinate-taylor}
	$g(x)=\sum\limits_{\nu=2}^n a_\nu x^\nu+\varepsilon(x)$, where  
	$\left\{\begin{array}[c]{ll}(-1)^{\nu} a_\nu{\asymp}\omega^{\nu-1}, 
	&\text{for $2 \leqslant\nu\leqslant n$},\\
	\abs{\varepsilon^{(k)}(x)}
	=\jO_E(\omega^{n}\abs{x}^{n+1-k}),&\text{for $0 \leqslant k\leqslant n$,}
	\end{array}\right.$
\end{enumerate}	
 with $E=\{(x,q,N,\mu)\mid 
\abs{x}\leqslant 2\rho, ~(q,N,\mu)\in E_\beta\}$
and $E_\beta$ as in~\eqref{eq:mu-small}.
\end{prop}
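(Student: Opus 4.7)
The plan is to perform an explicit, parameter-dependent change of coordinates that simultaneously diagonalizes the linear part given by Proposition~\ref{prop:Taylor-expansion-linear-part} and puts the quadratic-plus-higher remainder $S_{q,N}(R)$ into the announced form. Write $M := \begin{pmatrix}1 & \alpha \\ -\mu & 1-\mu\alpha\end{pmatrix}$ with $\alpha := \alpha_{q,N}$. Since $\det M = 1$ and $\mathrm{tr}\,M = 2-\mu\alpha$, the identity $\mu\alpha = -(\lambda-1)^2/\lambda$ forces $(\alpha/(\lambda-1),1)^T$ to be an eigenvector for $\lambda$, hence $B^{-1}MB = \mathrm{diag}(\lambda,\bar\lambda)$. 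Setting $(\theta,R)^T = \psi(z) = B(z,\bar z)^T$, the map becomes
\[
\psi^{-1}\!\bigl(M\psi(z) + S_{q,N}(R)(1,-\mu)^T\bigr) = \mathrm{diag}(\lambda,\bar\lambda)(z,\bar z)^T + S_{q,N}(R)\,B^{-1}(1,-\mu)^T.
\]

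First I would compute $B^{-1}(1,-\mu)^T$ using $\det B = -2i\sin\gamma_0/\mu$ and the key identity $\mu\alpha/(\lambda-1)=\bar\lambda-1$; this yields $B^{-1}(1,-\mu)^T = \frac{\mu}{-2i\sin\gamma_0}(\lambda,-\bar\lambda)^T$, so the two scalar equations obtained are complex conjugates of one another—confirming that the real slice $R = z+\bar z$ is preserved—and the first reads $z \mapsto \lambda(z + c\,S_{q,N}(z+\bar z))$ with $c = \mu/(-2i\sin\gamma_0)$. Using $|\sin\gamma_0| = |\lambda-1|\sqrt{1-\mu\alpha/4}$, one rewrites $c = i|\lambda-1|/(2\alpha\sqrt{1-\mu\alpha/4})$, which gives item~\ref{Cond:complex-coordinate-g} with
\[
g(x) = -\frac{S_{q,N}(x)}{2\alpha\sqrt{1-\mu\alpha/4}}.
\]
The area form identity~\ref{Cond:complex-coordinate-area} is then a direct wedge-product calculation: $\Psi^*(dr\wedge d\theta) = (2i\sin\gamma_0/\mu)\,dz\wedge d\bar z$, giving $\kappa = 4|\sin\gamma_0|/\mu \asymp |\lambda-1|/\mu \asymp \omega q/(N|\lambda-1|)$.

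For item~\ref{Cond:complex-coordinate-size-rho}, since $|\theta|\le 2\alpha|z|/|\lambda-1|$ and $|R|\le 2|z|$, imposing $|\theta|\le \ell = \delta/(4N)$ and $|R|\le \ell'$ with $\ell' \asymp N^3/q^5$ from Proposition~\ref{prop:quantitative-disjoint-boxes} leads to two constraints, $|z|\lesssim \delta|\lambda-1|/(\alpha N)$ and $|z|\lesssim \ell'/2$; since $|\lambda-1|<1$ under $E_{\beta,n}$, the first is binding and gives $\rho \asymp |\lambda-1|/(q\omega)$. For item~\ref{Cond:complex-coordinate-taylor}, I would Taylor-expand $S_{q,N}$ to order $n$, setting $a_\nu = -S_{q,N}^{(\nu)}(0)/(2\alpha\sqrt{1-\mu\alpha/4}\,\nu!)$; the sign and size asymptotics $(-1)^{\nu+1}S_{q,N}^{(\nu)}(0) \asymp q^{4\nu+1}/N^{3\nu+1}$ (extending the explicit $\nu=2,3$ asymptotics of Proposition~\ref{prop:Taylor-expansion-linear-part} and matching its $\asymp$ estimate at $R=0$) combined with $\alpha \asymp q^5/N^4$ yield $(-1)^\nu a_\nu \asymp \omega^{\nu-1}$. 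The integral form of the Taylor remainder, together with the bound $|S_{q,N}^{(n+1)}(R)|\asymp q^{4n+5}/N^{3n+4}$ on $E_n$, gives $|\varepsilon^{(k)}(x)| = \jO(\omega^n|x|^{n+1-k})$.

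The main obstacle is not conceptual but is bookkeeping: one must fix a single constant $\beta > 0$ (allowed to depend on $n$) so that (a) the Proposition~\ref{prop:Taylor-expansion-linear-part} estimates apply uniformly up to derivative order $n+1$, (b) the smallness condition $\mu\alpha < 1/(n+1)^2$ in $E_{\beta,n}$ ensures $\sqrt{1-\mu\alpha/4}\asymp 1$ and $|\lambda^p-1|\asymp |\lambda-1|$ for $1\le p \le 2n+2$ (Lemma~\ref{lem:non-resonance}), and (c) the disc $\Dm(0,2\rho)$ still fits in $\psi^{-1}(B_q)$ and avoids degeneracies of the change of variables. No dynamical argument beyond the previous sections is required, as the normalization is algebraic and all uniform bounds pull back directly from Proposition~\ref{prop:Taylor-expansion-linear-part}.
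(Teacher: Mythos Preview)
Your proposal is correct and follows essentially the same route as the paper: both diagonalize the linear part via the same matrix $B$, compute $\psi^{-1}(1,-\mu)^T$ (the paper does it by evaluating $\psi(i\lambda)$, you via $\det B$---the same calculation), obtain $g$ as a constant multiple of $S_{q,N}$, and then read off items~\ref{Cond:complex-coordinate-size-rho}--\ref{Cond:complex-coordinate-taylor} from the asymptotics of Proposition~\ref{prop:Taylor-expansion-linear-part}. The only discrepancy is cosmetic: the paper writes $g(x)=\mu S_{q,N}(x)/(2|\lambda-1|\sin\gamma_0)$, which is your $-S_{q,N}(x)/(2\alpha\sqrt{1-\mu\alpha/4})$, and your intermediate expression for $c$ carries a sign slip (since $\sin\gamma_0<0$ one gets $c=-i|\lambda-1|/(2\alpha\sqrt{1-\mu\alpha/4})$) that your final formula for $g$ already corrects.
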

\begin{proof}
we abreviate $F=F_{q,N,\mu}$, $S_{q,N}=S$ and $\alpha=\alpha_{q,N}$.~\\[5pt]
\ref{Cond:complex-coordinate-size-rho})~We recall that $\abs{\lambda-1}=\sqrt{\alpha\mu}$, so 
	$\abs{2\Re(\frac{\alpha z}{\lambda-1})}\leqslant 2\sqrt{\alpha/\mu}\abs{z}$. This shows 
	that 
	\[
		\forall (\ell,\ell')\in\R_+^2, \quad
		\abs{z}<\min\left(\frac{\ell}{2}\sqrt{\frac{\mu}{\alpha}},\frac{\ell'}{2} \right)
		\Rightarrow \psi(z)\in (-\ell,\ell)\times (-\ell',\ell').
	\]
	Proposition~\ref{prop:quantitative-disjoint-boxes}  and 
	Proposition~\ref{prop:Taylor-expansion-linear-part} show that  $\ell\asymp \frac{1}{N}$,  
	$\ell'\asymp\frac{N^3}{q^5}=\frac{1}{q\omega}$ and $\alpha\asymp\frac{q^5}{N^4}$, so 
	\[
		\frac{\ell}{2}\sqrt{\frac{\mu}{\alpha}}\asymp \sqrt{\alpha\mu}\frac{\ell}{\alpha}\asymp
		\abs{\lambda-1}\frac{N^3}{q^5}=\abs{\lambda-1}\frac{1}{q\omega}.
	\]
	Therefore, with  $\rho\asymp \min\Big(\frac{\abs{\lambda-1}}{q\omega};
	\frac{1}{q\omega}\Big)=\frac{\abs{\lambda-1}}{q\omega}$, we 
	obtain~\ref{Cond:complex-coordinate-size-rho}).~\\[5pt]
\ref{Cond:complex-coordinate-g})~Since 
	$\left(\begin{smallmatrix}\frac{\alpha}{\lambda-1} \\ 1
																										\end{smallmatrix}\right)$ 
	and $\left(\begin{smallmatrix}\frac{\alpha}{\overline{\lambda}-1} \\ 1
																										\end{smallmatrix}\right)$ 
	are two eigenvectors of the operator $B$ corresponding to the eigenvalues $\lambda$ and 
	$\overline{\lambda}$, we obtain by a direct computation that 
	\[
	\Psi^{-1}\circ F\circ \Psi(z)=\lambda z +S(z+\overline{z}) ~\psi^{-1}\!
	\left(\begin{smallmatrix}~~1 \\ -\mu\end{smallmatrix}\right).
	\]
	We notice that $\psi(i\lambda)=i(\lambda-\overline{\lambda})
	\left(\begin{smallmatrix}~~-\frac{\alpha}{\abs{\lambda-1}^2} \\ 1\end{smallmatrix}\right)
	=-2\sin\gamma_0\left(\begin{smallmatrix}-\frac{1}{\mu} \\~~1\end{smallmatrix}\right)
	=\frac{2}{\mu}\sin\gamma_0\left(\begin{smallmatrix}~~1 \\-\mu\end{smallmatrix}\right)$, hence 
\[
	\psi^{-1}\!
	\left(\begin{smallmatrix}~~1 \\ -\mu\end{smallmatrix}\right)=
	\frac{i\lambda\mu\,}{2\sin\gamma_0}.
\] 
	This implies~\ref{Cond:complex-coordinate-g}), with
	\begin{equation}
		g(z)=\frac{\mu}{2\abs{\lambda-1}\sin\gamma_0}S(z).\label{eq:g}
	\end{equation}
	\\[5pt]
\ref{Cond:complex-coordinate-area})~We have $\Psi^\star (dr\wedge d\theta)
	=\frac{~1}{2i}\mathrm{det}
	\big(\psi(1); \psi(i)\big) ~dz\wedge d\bar z $ and 
	\[
	\mathrm{det}(\psi(1); \psi(i))\asymp 
	\left|\begin{array}{cc}
	\Re\big(\frac{\alpha}{\lambda-1}\big) & \Re\big(\frac{i \alpha}{\lambda-1}\big) \\
	1 & 0
	\end{array}\right|
	=-\frac{\alpha\sin\gamma_0}{\abs{\lambda-1}^2}. 
	\] 
	Since $-\sin\gamma_0\asymp \abs{\lambda-1}=\sqrt{\alpha\mu}$ and $\alpha\asymp 
	q\omega/N$, this proves~\ref{Cond:complex-coordinate-area}).~\\[5pt]
\ref{Cond:complex-coordinate-taylor})~Using~\eqref{eq:g} and the Taylor expansion $S(x)=
	\sum\limits_{\nu=2}^n \frac{1}{\nu !} S^{(\nu)}(0) x^\nu +R(x)$,  we set 
	\begin{equation}\label{eq:a_nu}
	a_\nu= \frac{\mu S^{(\nu)}(0)}{2\nu !\abs{\lambda-1}\sin\gamma_0}~\text{and}~
	\varepsilon(x)=\frac{\mu R(x)}{2\abs{\lambda-1}\sin\gamma_0},~\text{so}
	~g(x)=\!\!\sum\limits_{\nu=2}^n a_\nu x^\nu+\varepsilon(x).
	\end{equation}
	Since $(-1)^{\nu-1}S^{(\nu)}(0){\asymp} {q^{4\nu+1}}/{N^{3\nu+1}}$, 
	$-\sin\gamma_0\asymp\abs{\lambda-1}$, $\alpha\asymp q^5/N^4$, 
	we have
	\[
	(-1)^{\nu}a_\nu\underset{E_{n}}{\asymp}  \frac{q^{4\nu+1}}{N^{3\nu+1}} 
	\frac{\mu}{\abs{\lambda-1}^2}=\frac{q^{4\nu+1}}{N^{3\nu+1}} \frac{1}{\alpha}
	\asymp\frac{q^{4\nu+1}}{N^{3\nu+1}} \frac{N^4}{q^5}=\frac{q^{4\nu-4}}{N^{3\nu-3}}
	=\omega^{\nu-1}.
	\]
	Thus all that remains is to prove the estimates on $\varepsilon(x)$. Notice that $\psi(\rho)\in 
	B_q(\ell,\ell')$, so $2\rho\leqslant \ell'$. Furthermore,  for $0 \leqslant j \leqslant n$, the 
	derivative $R^{(j)}(x)$ is the remainder of the Taylor expansion at zero of $S^{(j)}(x)$ up 
	to order $n-j$. Therefore the Taylor expansion theorem and 
	Proposition~\ref{prop:Taylor-expansion-linear-part} show that  for $x\in [-2\rho;2\rho]$ we 
	have
	\[
	\abs{R^{(n+1-j)}(x)}\leqslant\frac{\abs{x}^{n+1-j}}{(n+1-j)!}\max_{\abs{x}\leqslant 2\rho}
	\abs{S^{(n+1)}(y)}\asymp \abs{y}^{n+1-j}
	\frac{q^{4(n+1)+1}}{N^{3(n+1)+1}}.
	\]
	 Moreover the estimates 
	$\abs{\sin\gamma_0}\asymp \abs{\lambda-1}$ and $\alpha\asymp q^5/N^4$ imply that  
	\[
	\frac{\mu}{\abs{\lambda-1}~\abs{\sin\gamma_0}}\asymp 
	\frac{\mu}{\abs{\lambda-1}^2}=\frac{1}{\alpha}\asymp \frac{N^4}{q^5}, \text{hence}~
	\abs{\varepsilon^{(j)}(x)}=\jO_{E} (
		q^{4n}/N^{3n}\abs{x}^{n+1-j}),
	\]
	 and the proof  of~\ref{Cond:complex-coordinate-taylor})  is complete.
\end{proof}
\begin{cor}[Taylor expansion]\label{Cor:Taylor-expansion}
Assume $n\geqslant  0$; we set $\omega={q^4}/{N^3}$. 
Then 
for each $(q,N,\mu)\in E_{\beta,n}$,
there exist  $\lambda\in\C$ (with $\lambda=\exp(i\gamma_0)$ and $-\pi/3<\gamma_0<0$),
$\rho>0$,  $a_{\nu}\in\R$, for $2\leqslant 2\nu
\leqslant 2n+2$ and a function  $\varepsilon~:[-2\rho~;2\rho]\to\C$ satisfying
\begin{align}
(\Psi^{-1}\circ \FNqm\circ \Psi)(z)
	&=\lambda \Big(z+i\abs{\lambda-1}\sum\limits_{\ell=2}^{2n+2}a_\ell(z+\bar z)^\ell\Big)
								+\varepsilon(z+\overline{z})
\\
& \notag \text{with  $\abs{\varepsilon^{(k)}(x)}
	=\jO_E(\omega^{2n+2}\abs{\lambda-1}\abs{x}^{2n+3-k})$, for $0 \leqslant k\leqslant 2 n+2$,}\\
& \text{and}~  E=\{(x,q,N,\mu)\mid 
\abs{x}\leqslant 2\rho, ~(q,N,\mu)\in E_\beta\}.\notag
\end{align}
Furthermore we have  the following properties.
\begin{enumerate}
\item $\abs{\lambda^p-1}\asymp\abs{\lambda-1}$ for $1\leqslant p\leqslant 2n+2$
and $\rho\omega\asymp\abs{\lambda-1}/q$;
\item $(-1)^{\ell}a_\ell\asymp \omega^{\ell-1}$, for $2\leqslant \ell \leqslant 2n+2$;
\item\label{Cond:a_2-a_3}
  $2 a_2^2+3 a_3\abs{\lambda-1} R(\lambda)\asymp \omega^2$
 uniformly on $E_{\beta,n}$,
with $\displaystyle R(\lambda)=i~\frac{1+\lambda}{1-\lambda}~\frac{2+\lambda+2\lambda^2}{1+\lambda+\lambda^2}$.
\end{enumerate}
\end{cor}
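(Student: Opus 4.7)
}
The plan is to deduce the statement from Proposition~\ref{prop:complex-coord} applied at order $2n+2$, together with the explicit asymptotics of $\alpha_{q,N}$ and $S_{q,N}^{(\ell)}(0)$ coming from Proposition~\ref{prop:Taylor-expansion-linear-part}.

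First, I would fix the constant $\beta$ in the statement as follows: choose $\beta$ large enough so that $(q,N,\mu)\in E_{\beta,n}$ already implies the smallness condition required by Proposition~\ref{prop:complex-coord} at order $2n+2$ (recall that the constraint $\mu\alpha_{q,N}<1/(n+1)^2$ defining $E_{\beta,n}$ is stronger for larger $n$, and $\alpha_{q,N}\asymp q^5/N^4$ forces $\mu$ small as soon as $q\ge \beta N$ with $\beta$ large). Applying Proposition~\ref{prop:complex-coord} with $n$ replaced by $2n+2$ then produces a radius $\rho$ with $\rho\asymp |\lambda-1|/(q\omega)$, coefficients $a_2,\ldots,a_{2n+2}\in\R$ with $(-1)^\ell a_\ell\asymp\omega^{\ell-1}$, and a function $g$ on $[-2\rho,2\rho]$ with $g(x)=\sum_{\ell=2}^{2n+2}a_\ell x^\ell+\tilde\varepsilon(x)$ and $|\tilde\varepsilon^{(k)}(x)|=\jO_E(\omega^{2n+2}|x|^{2n+3-k})$, together with the conjugacy formula $\Psi^{-1}\circ\FNqm\circ\Psi(z)=\lambda(z+i|\lambda-1|g(z+\bar z))$. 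Expanding this formula gives the announced Taylor expansion, where the new error term is $\varepsilon(z+\bar z)=i\lambda|\lambda-1|\tilde\varepsilon(z+\bar z)$, whose derivatives satisfy precisely the claimed bound $\jO_E(\omega^{2n+2}|\lambda-1|\,|x|^{2n+3-k})$. This simultaneously yields items (ii) and the second half of (i) ($\rho\omega\asymp|\lambda-1|/q$).

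Next, for the non-resonance estimate $|\lambda^p-1|\asymp|\lambda-1|$ with $1\le p\le 2n+2$, I would write $\lambda=e^{i\gamma_0}$ with $\gamma_0\in(-\pi/3,0)$, use $|\lambda^p-1|=2|\sin(p\gamma_0/2)|$, and exploit the condition $\mu\alpha_{q,N}<1/(n+1)^2$ defining $E_{\beta,n}$: since $|\lambda-1|^2=\mu\alpha_{q,N}$, we get $|\gamma_0|\asymp|\lambda-1|\le 1/(n+1)$, so $p|\gamma_0|/2$ is bounded by a constant strictly less than $\pi/2$ uniformly in $p\in\{1,\dots,2n+2\}$. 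On that range $|\sin(p\gamma_0/2)|/|\sin(\gamma_0/2)|$ is sandwiched between $1$ and $p\le 2n+2$, giving the equivalence.

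Finally, the delicate point is item (iii), the non-cancellation $2a_2^2+3a_3|\lambda-1|R(\lambda)\asymp\omega^2$. The upper bound is immediate from $a_\ell\asymp\omega^{\ell-1}$ and the fact that $|\lambda-1|R(\lambda)$ remains bounded (as $\lambda\to 1$, one has $1-\lambda\sim -i\gamma_0$, so $R(\lambda)\sim -10/(3\gamma_0)$ and $|\lambda-1|R(\lambda)\to 10/3$, say, up to an explicit universal constant). For the lower bound, one must check that the two $\omega^2$-terms do not cancel. Here I would use the precise asymptotics of the last part of Proposition~\ref{prop:Taylor-expansion-linear-part}: $\alpha_{q,N}\sim\beta_1\rho_0^2 q^5/N^4$, $S_{q,N}^{(2)}(0)\sim-\beta_2\rho_0^3 q^9/N^7$, $S_{q,N}^{(3)}(0)\sim\beta_3\rho_0^4 q^{13}/N^{10}$, combined with formula~\eqref{eq:a_nu}, to compute the leading coefficients of $a_2$ and $a_3$ in $\omega$, namely $a_2\sim \tfrac{\beta_2\rho_0}{4\beta_1}\,\omega$ and $a_3\sim -\tfrac{\beta_3\rho_0^2}{12\beta_1}\,\omega^2$. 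Plugging into $2a_2^2+3a_3|\lambda-1|R(\lambda)$ and using the limit value of $|\lambda-1|R(\lambda)$, one finds the leading coefficient in $\omega^2$ to be an explicit universal combination of $\beta_1,\beta_2,\beta_3,\rho_0$. The hard part of the proof will be to verify that this combination is non-zero: this is the standard twist-type non-degeneracy condition inherent to the normal form of $\FNqm$ near $a_{q,N}$, and it must be established by direct computation using the definitions of $\beta_k$ as integrals involving $(1+x^4)^{-(k+1/2)}$ from Lemma~\ref{lem:V-V/N^2}(iv). Once the leading coefficient is shown not to vanish, the required equivalence follows, which completes the proof.
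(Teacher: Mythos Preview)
Your plan is correct and matches the paper's proof: the expansion and items (i)--(ii) are read off from Proposition~\ref{prop:complex-coord} applied at order $2n+2$ together with Lemma~\ref{lem:non-resonance}, and for (iii) the paper likewise plugs the precise asymptotics of Proposition~\ref{prop:Taylor-expansion-linear-part} into formula~\eqref{eq:a_nu} and reduces the lower bound to an explicit numerical inequality on the~$\beta_k$. Two small caveats: first, for the uniform $\asymp$ on $E_{\beta,n}$ you cannot use merely the \emph{limit} value $|\lambda-1|R(\lambda)\to 10/3$, because $\mu\alpha_{q,N}\in(0,1/(n+1)^2)$ does not force $\gamma_0\to 0$; the paper instead bounds $\cos^2(\gamma_0/2)\ge 15/16$ and uses the resulting two-sided estimate $\tfrac{72}{11}\le \tfrac{2|\lambda-1|R(\lambda)}{\cos(\gamma_0/2)}\le \tfrac{20}{3}$, which turns the non-cancellation check into the concrete inequality $18\beta_2^2>11\beta_1\beta_3$. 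Second, note that the paper's proof actually bounds $2a_2^2|\lambda-1|R(\lambda)+3a_3$ (the combination that appears as $b_1/|\lambda-1|$ in Proposition~\ref{prop:Birkhoff-invariants-in-complex-coordinates}), so the factor $|\lambda-1|R(\lambda)$ in the printed statement of (iii) sits on the other term.
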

\begin{proof} We prove~\ref{Cond:a_2-a_3}), wich is the only condition which 
 does not follow directly from Proposition~\ref{prop:complex-coord}.
Let $(q,N,\mu)$ be in $E_{\beta,n}$, so $0<\alpha\mu<1/(n+1)^2$. Let 
$0<\alpha_0<\frac{\pi}{6}$ satisfy
\[
\sin(\alpha_0)=\frac{1}{2n+2}.
\]
Since $\alpha\mu=\abs{\lambda-1}^2$ and $-\sin(\gamma_0/2)=\frac{1}{2}\abs{\lambda-1}<\frac{1}{2n+2}$,
we obtain that $-\gamma_0/2<\alpha_0$, so 
\[
\cos(\alpha_0)\abs{\lambda-1}<\cos(\gamma_0/2)\abs{\lambda-1}=-\sin(\gamma_0)<\abs{\lambda-1}
\]
As $q/N$ tends to infinity, Proposition~\ref{prop:Taylor-expansion-linear-part}
and~\eqref{eq:a_nu} show that 
	\[
	\begin{array}{l}
	a_2^2\displaystyle =\frac{\mu^2 S''(0)^2}{16\sin^2\gamma_0\abs{\lambda-1}^2}=
	\frac{\mu^2 S''(0)^2}{16\cos^2(\frac{\gamma_0}{2})\abs{\lambda-1}^4}
	=\frac{ S''(0)^2}{16\alpha^2\cos^2(\frac{\gamma_0}{2})}\sim 
									\frac{ (\beta_2/\beta_1)^2}{16\cos^2(\frac{\gamma_0}{2})}\rho_0^2 \omega^2,
\vspace{5pt}\\ %
	-a_3\displaystyle=\frac{\mu S'''(0)}{-12 \sin\gamma_0~\abs{\lambda-1}}=
	\frac{\mu S'''(0)}{12 \cos(\frac{\gamma_0}{2})\abs{\lambda-1}^2}
	=\frac{ S'''(0)}{12 \alpha\cos(\frac{\gamma_0}{2})}
								\sim	\frac{ \beta_3/\beta_1}{12\cos(\frac{\gamma_0}{2})}\rho_0^2\omega^2,  %
	\end{array}
\]
 Now we compute
\[
	R(\lambda)=-\frac{\cos(\frac{\gamma_0}{2})}{\sin(\frac{\gamma_0}{2})} 
	\frac{1+4\cos\gamma_0}{1+2\cos\gamma_0},~\text{hence}
\abs{\lambda-1}R(\lambda)=2\cos(\frac{\gamma_0}{2})\frac{8\cos^2(\frac{\gamma_0}{2})-3}
	{4\cos^2(\frac{\gamma_0}{2})-1}.
\]	
Since we have $\cos^2(\frac{\gamma_0}{2})\geqslant 1-\big(\frac{1}{2n+2}\big)^2\geqslant
\frac{15}{16}$ for $n\geqslant 1$, it follows that 	for $q/N$ large enough we have
\begin{align*}
\displaystyle
\frac{72}{11}=4\cdot \frac{\scriptstyle 18}{\scriptstyle 11}
\leqslant
	\frac{2\abs{\lambda-1}R(\lambda)}{\cos(\frac{\gamma_0}{2})}=4~
	\frac{8\cos^2(\frac{\gamma_0}{2})-3}{4\cos^2(\frac{\gamma_0}{2})-1} 
	\leqslant\frac{20}{3},~\text{hence}\\
	\Big(\frac{\scriptstyle18}{\scriptstyle 11}\beta_2^2-\beta_1\beta_3\Big)
	\frac{\rho_0^2\omega^2}{8\beta_1^2}
\leqslant \frac{1}{2}
	\Big(\frac{\frac{\scriptstyle 72}{\scriptstyle 11}\beta_2^2}{ 16}-\frac{3\beta_1\beta_3}{12}\Big)
	\frac{\rho_0^2\omega^2}{\beta_1^2\cos(\frac{\gamma_0}{2})}
\leqslant {2}a_2^2\abs{\lambda-1}R(\lambda)+3a_3\leqslant \frac{\scriptstyle 5 a_2^2}{\scriptstyle 12}
	\asymp \omega^2.
\end{align*}
	This holds true and implies the lemma because we can 
	evaluate $18\beta_2^2-11\beta_1\beta_3>0$.
\end{proof}

\begin{lemma}\label{lem:non-resonance}
If $\abs{\lambda-1}<\frac{1}{n+1}$ then $\abs{\lambda-1}\leqslant \abs{\lambda^p-1}$
 for $1\leqslant p \leqslant 2n+2$.
\end{lemma}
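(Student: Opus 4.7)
The plan is to reduce everything to monotonicity of $\sin$ on $[0,\pi/2]$. Write $\lambda=e^{i\gamma_0}$ (with $\gamma_0\in\R$, which is available here since $\lambda$ lies on the unit circle by~\eqref{eq:lambda}). The elementary identity
\[
|\lambda^p-1|=2\bigl|\sin(p\gamma_0/2)\bigr|, \qquad p\in\Z,
\]
reduces the claim to proving $|\sin(\gamma_0/2)|\le|\sin(p\gamma_0/2)|$ for $1\le p\le 2n+2$.

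The key step is to use the hypothesis $|\lambda-1|<\frac{1}{n+1}$ to confine $p\gamma_0/2$ inside $[-\pi/2,\pi/2]$. Indeed, the hypothesis gives $\bigl|\sin(\gamma_0/2)\bigr|<\frac{1}{2(n+1)}$, which in particular forces $|\gamma_0/2|<\pi/6$. On this range one has the standard lower bound $|\sin x|\ge \frac{2}{\pi}|x|$, hence
\[
\frac{|\gamma_0|}{\pi}\;\le\;\bigl|\sin(\gamma_0/2)\bigr|\;<\;\frac{1}{2(n+1)},
\]
so $|\gamma_0|<\frac{\pi}{2(n+1)}$. Multiplying by $p/2$ and using $p\le 2n+2$ yields
\[
\bigl|p\gamma_0/2\bigr|\;<\;\frac{(2n+2)\pi}{4(n+1)}\;=\;\frac{\pi}{2}.
\]

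It remains to conclude by monotonicity. Since $p\ge 1$, we have $|p\gamma_0/2|\ge|\gamma_0/2|$, and both quantities lie in $[0,\pi/2]$ by the previous step. As $\sin$ is (strictly) increasing on $[0,\pi/2]$, we obtain $\sin\bigl(|p\gamma_0/2|\bigr)\ge\sin\bigl(|\gamma_0/2|\bigr)$, i.e.\ $|\sin(p\gamma_0/2)|\ge|\sin(\gamma_0/2)|$, which after multiplication by $2$ gives the desired inequality $|\lambda^p-1|\ge|\lambda-1|$.

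There is no substantive obstacle: the only point requiring slight care is the passage from the smallness of $|\sin(\gamma_0/2)|$ to the confinement of $p\gamma_0/2$ in $[-\pi/2,\pi/2]$, which is handled by the linear lower bound for $\sin$ on $[0,\pi/2]$. Note that the factor $\frac{1}{n+1}$ in the hypothesis is precisely what is needed to make this confinement work up to $p=2n+2$.
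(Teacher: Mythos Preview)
Your proof is correct and takes a somewhat different, slightly cleaner route than the paper. The paper factors $\lambda^p-1=(\lambda-1)\sum_{j=0}^{p-1}\lambda^j$ and bounds
\[
\frac{|\lambda^p-1|}{|\lambda-1|}\ \ge\ \Re\sum_{j=0}^{p-1}\lambda^j=\sum_{j=0}^{p-1}\cos(j\gamma_0)\ \ge\ \sum_{j=0}^{p-1}\cos\Bigl(\tfrac{j\pi}{2n+2}\Bigr)\ \ge\ 1,
\]
after first showing $|\gamma_0|<\pi/(2n+2)$ from $|\sin\gamma_0|\le|\lambda-1|<1/(n+1)$ (which also implicitly uses Jordan's inequality). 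Your argument replaces the geometric-sum step by the direct identity $|\lambda^p-1|=2|\sin(p\gamma_0/2)|$ and monotonicity of $\sin$ on $[0,\pi/2]$, which is more transparent and avoids having to check that the cosine sum is $\ge 1$. The one cosmetic point: your sentence ``which in particular forces $|\gamma_0/2|<\pi/6$'' relies on knowing $\gamma_0$ lies in the principal range (here $\gamma_0\in(-\pi/3,0)$ by Notation~\ref{not:lambda}); it is not a consequence of $|\sin(\gamma_0/2)|<\tfrac{1}{2(n+1)}$ alone. This is harmless in context, and in fact you only need $|\gamma_0/2|<\pi/2$ for Jordan's inequality, which already follows from taking the principal argument.
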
 
\begin{proof} Notice that $\abs{\sin\gamma_0}\leqslant \abs{\lambda-1}<\frac{1}{n+1}$, so 
$\gamma_0>\frac{-\pi}{2n+2}$. Therefore we have~\\[5pt]
$\displaystyle~\qquad\qquad \frac{\abs{\lambda^p-1}}{\abs{\lambda-1}} \geqslant
\Re\Big(\sum_{j=0}^{p-1}\lambda^j\Big)=\sum_{j=0}^{p-1}\cos (j\gamma_0)
\geqslant\sum_{j=0}^{p-1}\cos\bigg(\frac{j\pi}{2n+2}\bigg) \geqslant 1$.
\end{proof}
\subsection{Normalisations}    \label{sec:normalisGq}
\noindent
The goal of this section is to prove that we can find  nearly  symplectic  coordinates  in which 
$F_{q,N,\mu}$ takes the form 
\begin{equation}
\label{eq:Herman-normal-form}
F_{q,N,\mu}(z)=\lambda z\exp\bigl(2\pi i\abs{z}^2+\varepsilon(z)\bigr),
\end{equation}
where the error term $\varepsilon$ is  a real valued function and  is  small enough  up to 
enough derivatives.
For this purpose, our first step is to specify a suitable  change of coordinates
  in which  $F_{q,N,\mu}$ appears as 
  a Birkhoff's normal form up to some order, namely 
\begin{equation}
\label{eq:birkhoff}
F_{q,N,\mu}(z)=
\lambda z\big(1+\sum_{p=1}^{n}b_p \abs{z}^{2p}\big)+\widetilde\varepsilon(z).
\end{equation}
Note that the change of coordinates does not need to be symplectic in our setting. 
%
%
%
\subsubsection{Notations and statements}   \label{sec:Ok}
To achieve~\eqref{eq:birkhoff} and~\eqref{eq:Herman-normal-form} in a quantitative way, we must deal with smooth functions on 
$\Dm^\star(0,\tau)=\{z\in\C\mid 0 <\abs{z}\leqslant\tau\}$ 
(but not necessarily smooth at zero) and control their behaviour near zero. To this end we 
introduce the following notations. 

\begin{Not} 
In the following, we use the operators  $\bar\partial=\frac{1}{2}(\partial_s+i\partial_t)$
and  $\partial=\frac{1}{2}(\partial_s-i\partial_t)$, with $z=s+it$ and $(s,t)\in\R^2$.
Assume  $\tau>0$ and $k\in\N$.  A smooth function $f~:\Dm^\star(0,\tau)\to\C$ is said to be 
controlled up to the $k$ derivatives, by  $C\geqslant 0$ at  order $\ell\in\R$,  and we write 
$f\in \jO_k(\ell;C,\tau)$ or $f(z)=\jO_k(\ell;C,\rho)$  if 
\[
 \forall  z\in\Dm^\star(0,\tau), \quad 
\abs{\partial^\alpha\bar\partial^\beta f(z)} \leqslant C \abs{z}^{\ell-\alpha-\beta},
\quad\text{for all  $(\alpha,\beta)\in\N^2$ such that $\alpha+\beta\leqslant k$.}
\]
\end{Not}
\begin{Not}
 For $(k,m)\in\N^2$, $\rho>0$, two sets $E$ and $E'$ satisfying 
	$E\subset E'\times\C$,  two   function $f_1~:E \to\C$ and $f_2~:E'\to \R_+$, and a 
	function $\rho~:E'\to\R_+$,  we write $f_1(\cdot,z)= \jO_{k,E'}(m;f_2,\rho)$  if there 
	exists two  constants $C\geqslant 0$ and $c>0$ satisfying 
	\begin{align*}
	\forall x\in E', \quad E'\times\Dm(0;c\rho(x))\subset E,\\
	~\text{and}~f_1(x,z)=\jO_k(m;C f_2(x),c\rho(x)).
	\end{align*}
\end{Not}
\noindent
All the  properties of the spaces $\jO_k$  we need are listed in Appendix~\ref{app:O_k}.\\
At last, we need to introduce  analogous definitions in polar coordinates.
\begin{Not}
Assume $\rho>0$, $\ell\in \R$ and $k\in\Z$. We  recall that $\T=\R/\Z$.
\begin{itemize}
\item A smooth function $f~:(0;\rho]\times \T\to\C$ is said to be  
	controlled up to  $k$ derivatives, by  $C\geqslant 0$ at  order $\ell\in\R$, 
	and we write $f\in \jO_k^\T(\ell;C,\tau)$ 
	or $f(r,\theta)=\jO_k^\T(\ell;C,\rho)$ if 
	\[
  		\abs{\partial_r^\alpha\partial_\theta^\beta f(r,\theta)}  \leqslant C  r^{\ell-\alpha},
                                              ~\text{for $0<r\leqslant \rho$,  $\theta\in\T$ and 
																								$\alpha+\beta\leqslant k$.}
	\]
\item For two sets $E$ and $E'$ satisfying $E\subset E'\times\R_+\times\T$, two functions 
	$f_1~:E \to\C$ and $f_2~:E'\to \R_+$, and a function $\rho~:E'\to\R_+$, we write 
	$f_1(\cdot,r,\theta)= \jO_{k,E'}^\T(\ell;f_2,\rho)$  if there exists two  constants 
	$C\geqslant 0$ and $c>0$ satisfying 
	\begin{align*}
	\forall x\in E', \quad E'\times(0;c\rho(x)]\times \T\subset E
	\quad \text{and}\quad f_1(x,r,\theta)=\jO_k^\T(\ell;C f_2(x),c\rho(x)).
	\end{align*}
\end{itemize}
\end{Not}
\noindent
 Basically, we can can rephrase 
Proposition~\ref{prop:complex-coord}~\ref{Cond:complex-coordinate-taylor}) as follows
\begin{equation}
g(z+\overline{z})=\sum_{\nu=2}^n a_\nu (z+\overline{z})^\nu+\jO_{n,E_{\beta,n}}(n+1;\omega^n,\rho),
\end{equation}
where $E_{\beta,n}$ is defined by~\eqref{eq:mu-small2}. 
The constants $n$ and $\beta$ do not depend on $(q,N,\mu)$.  
Here we introduce   $E_{\beta,n}$ rather than  $E_{\beta}$  (see~\eqref{eq:mu-small2})
for  suitable  estimates on  the 
non resonant part of the conjugation of the transformation $F_{q,N,\mu}$ to its Birkhoff's 
normal form (see Proposition~\ref{prop:Birkhoff-invariants-in-complex-coordinates} below).
The constant $\beta>0$
is chosen so  $q/N$  is large enough for  appropriate estimates of $a_2$ and $a_3$
(see Corollary~\ref{Cor:Taylor-expansion}.\ref{Cond:a_2-a_3}) above).

\begin{Not}
From now on, unless mentioned otherwise, we shall abreviate $\jO_{k,n}:=
\jO_{k,E_{\beta,n}}$.\\
\noindent
In the following, $h=h(\cdot,q,N,\mu)$ denotes  any  family of symplectic maps  
from  $\Dm(0;\rho)$ into $\C$.  We assume that for  $2n+2\geqslant k\geqslant 1$ we have
\begin{equation}
h(z)=\lambda \Big(z+i\abs{\lambda-1}\sum_{\ell=2}^{2n+2}a_\ell(z+\bar z)^\ell\Big)
									+\jO_{k,n}(2n+3,\abs{\lambda-1}\omega^{2n+2},\rho),
\end{equation}
\noindent
where $\omega=q^4/N^3$,  $\rho>0$, $\lambda=\exp(i\gamma_0)$, $a_\nu\in\C$ satisfy all
the conditions in  Corollary~\ref{Cor:Taylor-expansion}.
\end{Not}
%
%
%
\subsubsection*{Birkhoff normal form}
The next proposition is a the quantitative version of~\eqref{eq:birkhoff}. It
recalls a classical result of normal form theory. We construct  polynomial  
 coordinates in which  the symplectic map $h$  is put in   its  Birkhoff normal form up to a reminder of 
arbitrarily high order. The proof follows Moser's strategy and is inductive in its nature: a sum 
of homogeneous polynomials is used to normalize the Taylor expansion of $h$ order 
by order. But  for our purpose, we need to achieve this keeping a  quantitative track of the 
operations involved. Therefore  we provide  below a complete proof of the statement. 

Before proceeding to the precise statement, we need to introduce a few more notations. We shall 
consider a  coordinates change of the form 
\begin{equation}\label{eq:birkhoff-polynomial}
	u=\Phi(z)=z+\sum_{\nu}\varphi_\nu z^\nu, 
	~\text{where  $z^{(\nu_1,\nu_2)}=z^{\nu_1}\overline{z}^{\nu_2}$}
\end{equation}
Here the index $\nu=(\nu_1,\nu_2)\in\N^2$ in the sum above runs over all the couples such 
that $2\leqslant \abs{\nu}\leqslant 2n+2$, with $\abs{\nu}=\nu_1+\nu_2$. For such polynomial, 
we denote by $[\Phi]_\nu$ the $\nu$-component $\varphi_\nu z^\nu=\varphi_{\nu_1,\nu_2}
z^{\nu_1}\overline{z}^{\mu_2}$. By extension, for any smooth function $F$, we denote by 
$[F]_\nu$ the $\nu$-component  of its Taylor expansion  at zero. For any integer $p$, it is also 
convenient to denote by $[F]_p$ its  $p$-homogeneous part. Thus, for $p\geqslant 2$, we have
\[
[\Phi]_p(z)=\sum\limits_{\abs{\nu}=p}\varphi_\nu z^\nu
~\text{and}~[f]_p(z)=i\abs{\lambda-1}a_p (z+\overline{z})^p.
\]
\begin{prop}[Birkhoff normal form] 
\label{prop:Birkhoff-invariants-in-complex-coordinates}
Assume $2n+2\geqslant k \geqslant 2$. Then for each $(q,N,\mu)\in E_{\beta,n}$
there exist
$\rho'>0$, $b_j\in\C$ for  $1\leqslant j \leqslant n$, and 
$\varphi_\nu\in\C$ for $2\leqslant \abs{\nu}\leqslant 2n+2$ satisfying the 
following conditions.
\begin{enumerate}
\item\label{cond:birkhoff-conj}
	 The polynomial $\Phi$ in~\eqref{eq:birkhoff-polynomial} defines a  diffeomorphism 
								from a neighbourhood of zero  onto a set that contains $\Dm(0,\rho')$ and 
\[
\Phi\circ h\circ\Phi^{-1}(u)=\lambda u\Big(1+
	i\sum\limits_{p=1}^{n}b_p \abs{u}^{2p}\Big)+\jO_{k,n}(2n+3;\abs{\lambda-1}
																											\omega^{2n+2},\rho);
\]%
\item \label{cond:birkhoff-J}   $J(\Phi)=\abs{\partial \Phi}^2-\abs{\bar\partial\Phi}^2$
												 contains no  $(j,j)$-component for  $1\leqslant j \leqslant n$;%
\item\label{cond:birkhoff-Im} $\Im(\varphi_\nu)=0$ if $\nu=({j+1,j})$ 
																									with $1\leqslant j \leqslant n$;%
\item  $\rho'{\asymp}\rho$;
\item \label{cond:birkhoff-bk} $b_1{\asymp}\abs{\lambda-1}
					\omega^2$ and $\abs{b_j}=\jO_{  E_{\beta,n}}(\abs{\lambda-1}\omega^{2j})$ for  
																										$2\leqslant j \leqslant n$;%
\item \label{cond:birkhoff-z}$\abs{\Phi^{-1}(z)}\asymp\abs{z}$ on $\Dm(0;\rho')$; %
\item\label{cond:birkhoff-phi} $\abs{\varphi_\nu}=\jO_{E_{n,\beta}}(\omega^{\abs{\nu}-1})$
																				for $2\leqslant \abs{\nu}\leqslant 2n+2$; %
\end{enumerate}
\end{prop}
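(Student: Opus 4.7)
The plan is to construct~$\Phi$ via the classical Moser-Birkhoff iterative scheme, building polynomial changes of coordinates $\Phi^{(p)}=\Id+[\Phi]_p$, with $[\Phi]_p(z)=\sum_{|\nu|=p}\varphi_\nu z^{\nu_1}\bar z^{\nu_2}$, for $p=2,\ldots,2n+2$, and identifying~$\Phi$ with the truncation at polynomial degree $2n+2$ of the composition $\Phi^{(2n+2)}\circ\cdots\circ\Phi^{(2)}$. At each step, assuming $h^{(p-1)}=\Phi^{(p-1)}\circ\cdots\circ\Phi^{(2)}\circ h\circ(\Phi^{(p-1)}\circ\cdots\circ\Phi^{(2)})^{-1}$ is already in normal form up to degree~$p-1$, a direct computation shows that the degree-$p$ homogeneous part of $\Phi^{(p)}\circ h^{(p-1)}\circ(\Phi^{(p)})^{-1}$ is obtained from that of~$h^{(p-1)}$ by applying the homological operator
\[
\varphi_\nu\longmapsto (\lambda^{\nu_1-\nu_2}-\lambda)\varphi_\nu.
\]
For every non-resonant monomial (those with $\nu_1-\nu_2\neq 1$) I will solve the resulting homological equation for~$\varphi_\nu$ so as to eliminate the term; the resonant monomials $\nu=(j+1,j)$, which occur only for odd $p=2j+1$ with $1\le j\le n$, cannot be removed, and their coefficients define the normal-form invariants~$b_j$. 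The remaining free parameter~$\varphi_{j+1,j}$ will be fixed by conditions~\ref{cond:birkhoff-J}) and~\ref{cond:birkhoff-Im}): imposing $\Im\varphi_{j+1,j}=0$ gives~\ref{cond:birkhoff-Im}) directly, while the leading contribution $2(j+1)\Re\varphi_{j+1,j}$ of~$\varphi_{j+1,j}$ to the $(j,j)$-component of~$J(\Phi)$, combined with the already-determined contributions of the $\varphi_\mu$'s of lower degree, uniquely fixes~$\Re\varphi_{j+1,j}$.

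The quantitative core of the scheme is Lemma~\ref{lem:non-resonance}: since $(q,N,\mu)\in E_{\beta,n}$ forces $|\lambda-1|<1/(n+1)$, the small divisors satisfy $|\lambda^{\nu_1-\nu_2-1}-1|\asymp|\lambda-1|$ for every $\nu$ with $|\nu|\le 2n+2$, whence the homological equation yields $|\varphi_\nu|\asymp|[h^{(p-1)}]_{p,\nu}|/|\lambda-1|$. Starting from the hypothesis that the Taylor coefficients of the initial~$h$ at order~$p$ are of size~$\jO(|\lambda-1|\omega^{p-1})$, an induction on~$p$---carried out with the $\jO_k$-calculus of Appendix~\ref{app:O_k} to propagate estimates through the successive compositions and inversions---will yield $|\varphi_\nu|=\jO(\omega^{|\nu|-1})$, which is~\ref{cond:birkhoff-phi}), together with $|b_j|=\jO(|\lambda-1|\omega^{2j})$ for all $1\le j\le n$. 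Because~$\Phi$ then differs from the identity by a term of size $\jO(\omega|z|^2)$ on $\Dm(0,\rho)$, it will be a diffeomorphism whose image contains a disc $\Dm(0,\rho')$ with $\rho'\asymp\rho$ and $|\Phi^{-1}(z)|\asymp|z|$, yielding the estimate on~$\rho'$ and~\ref{cond:birkhoff-z}). The remainder in~\ref{cond:birkhoff-conj}) inherits the original bound $\jO_{k,n}(2n+3;|\lambda-1|\omega^{2n+2},\rho)$ of~$h$, since each of the $2n+1$ conjugations only creates new contributions of order $\ge 2n+3$, controlled by the same $\jO_k$-calculus.

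The main obstacle will be the sharp lower bound $b_1\asymp|\lambda-1|\omega^2$ in~\ref{cond:birkhoff-bk}); the upper bounds $|b_j|=\jO(|\lambda-1|\omega^{2j})$ for $j\ge 2$ are immediate from the bookkeeping above, but the non-vanishing of~$b_1$ at the expected order requires an explicit computation. I will carry out the first two normalization steps by hand: at $p=2$ the coefficients $\varphi_{2,0},\varphi_{1,1},\varphi_{0,2}$ are read off from the Taylor part $i|\lambda-1|a_2(z+\bar z)^2$ of~$h$, with small divisors $\lambda^2-\lambda$, $1-\lambda$, $\bar\lambda^2-\lambda$; at $p=3$ the coefficient~$b_1$ then receives a direct contribution from $i|\lambda-1|a_3(z+\bar z)^3$ and an indirect contribution, quadratic in~$a_2$, coming from the substitution of $[\Phi]_2$ into the degree-$3$ Taylor part of the intermediate map~$h^{(1)}$. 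A careful tracking of these two pieces will produce an expression of the form $C(\lambda)\cdot\bigl(2a_2^2+3a_3|\lambda-1|R(\lambda)\bigr)$ with $|C(\lambda)|\asymp|\lambda-1|$ and with the rational function~$R(\lambda)$ of Corollary~\ref{Cor:Taylor-expansion}\ref{Cond:a_2-a_3}); the desired bound $b_1\asymp|\lambda-1|\omega^2$ then follows at once from that corollary, which was prepared for exactly this purpose.
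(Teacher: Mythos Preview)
Your proposal is correct and takes essentially the same approach as the paper: the same homological operator, the same use of Lemma~\ref{lem:non-resonance} for the small divisors, the same Jacobian/imaginary-part constraints to fix the resonant $\varphi_{j+1,j}$, the same inductive bookkeeping with the $\jO_k$-calculus, and the same explicit degree-$2$ and degree-$3$ computation of $b_1$ followed by the appeal to Corollary~\ref{Cor:Taylor-expansion}\ref{Cond:a_2-a_3}). The only cosmetic difference is that the paper solves directly for the coefficients of a single polynomial $\Phi$ from the conjugacy equation $\Phi\circ h = P\circ\Phi + \jO(|z|^{2n+3})$ taken degree by degree, rather than truncating a composition of single-degree changes.
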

%
%
%
%
\subsubsection*{Herman normal form}
This is the quantitative version of~\eqref{eq:Herman-normal-form}. We first state
this result in complex coordinates.
\begin{prop}\label{prop:conjugate-Herman-C}
Assume $2\leqslant k\leqslant 2n$.
Then there exist a diffeomorphism $\psi$ from $\Dm(0;\rho_1)$ into a set containing 
$\Dm(0;\rho_2)$ and a function $\varepsilon~:\Dm(0;\rho_2)\to \R$ with the following 
properties.
\begin{enumerate}
\item $\psi\circ h\circ \psi^{-1}(z)=\lambda z\exp\big(2\pi i\abs{z}^2+\varepsilon(z)\big)$;
\item $\varepsilon$ is a real valued function and $\varepsilon(z)
			=\jO_{k,n}(2n;\abs{\lambda-1}^{-n},\rho_2)$;
\item $\rho_1\asymp \rho$,  $\rho_2\asymp \rho \omega \sqrt{\abs{\lambda-1}}$ and 
			$\abs{\psi(z)}\asymp \abs{z}\omega\sqrt{\abs{\lambda-1}}$.
\end{enumerate}
\end{prop}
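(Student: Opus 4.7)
The plan is to construct $\psi$ as the composition of two elementary changes of coordinates applied to the Birkhoff normal form produced by Proposition~\ref{prop:Birkhoff-invariants-in-complex-coordinates}: first a linear rescaling normalising the leading twist $ib_1\abs{u}^2$ to $2\pi i\abs{z}^2$, then a purely radial diffeomorphism straightening the residual angular dependence into the exponential form $\lambda z\exp(2\pi i\abs{z}^2+\varepsilon(z))$ with $\varepsilon$ real. In polar coordinates both the Birkhoff model and the target Herman form depend only on the radius, which is what makes a radial second change of variable suffice.

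Step 1 (rescaling). After replacing $h$ by $h_1:=\Phi\circ h\circ\Phi^{-1}$, Proposition~\ref{prop:Birkhoff-invariants-in-complex-coordinates} gives
\[
h_1(u)=\lambda u\Big(1+i\sum_{p=1}^{n}b_p\abs{u}^{2p}\Big)+\jO_{k,n}(2n+3;\abs{\lambda-1}\omega^{2n+2},\rho'),
\]
where conditions (ii)--(iii) and the symplecticity of the normal form force $b_p\in\R$, and $b_1\asymp\abs{\lambda-1}\omega^2>0$. Setting $c:=\sqrt{2\pi/b_1}$, so $c\asymp(\abs{\lambda-1}^{1/2}\omega)^{-1}$, and conjugating by the dilation $T_c(z)=cz$ yields
\[
T_c^{-1}\circ h_1\circ T_c(z)=\lambda z\Big(1+2\pi i\abs{z}^2+i\sum_{p=2}^{n}\tilde b_p\abs{z}^{2p}\Big)+R(z),
\]
with $\tilde b_p=b_pc^{2p}$ satisfying $\abs{\tilde b_p}=\jO(\abs{\lambda-1}^{-(p-1)})$ for $p\ge 2$ and a remainder $R\in\jO_{k,n}(2n+3;\abs{\lambda-1}^{-n},\rho_1)$ obtained by the scaling rules of Appendix~\ref{app:O_k}, where $\rho_1:=\rho'/c\asymp\rho\abs{\lambda-1}^{1/2}\omega$. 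This already matches the announced orders of $\rho_1$ and $\rho_2$.

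Step 2 (radial straightening). In polar coordinates $z=re^{i\theta}$ the map of Step~1 reads $(r,\theta)\mapsto(rM(r^2),\theta+\gamma_0+A(r^2))$ up to $R$, with $X(s):=2\pi s+\sum_{p\ge 2}\tilde b_p s^p$, $A(s):=\arctan X(s)$ and $M(s):=\sqrt{1+X(s)^2}$. The function $A$ is an analytic diffeomorphism of a neighbourhood of $0$ with $A(s)=2\pi s+\jO(\abs{\lambda-1}^{-1}s^2)$, so one may define $f(r):=\sqrt{A(r^2)/(2\pi)}=r+\jO(\abs{\lambda-1}^{-1}r^3)$ and the radial diffeomorphism $\phi(z):=f(\abs{z})z/\abs{z}$. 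Setting $\psi:=\phi\circ T_c^{-1}\circ\Phi$, the conjugate $\psi\circ h\circ\psi^{-1}$ has angular advance exactly $\gamma_0+2\pi\abs{\zeta}^2$ by construction of $f$, and radial factor $\exp\varepsilon_0(\abs{\zeta})$ where $\varepsilon_0$ is read off from $M$ and $f$. Exponentiating and absorbing $R$ then produces the desired Herman form with $\varepsilon$ real-valued.

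The size of $\varepsilon$ is governed by the largest post-rescaling Birkhoff coefficient $\tilde b_n=\jO(\abs{\lambda-1}^{-(n-1)})$ in front of $\abs{\zeta}^{2n}$ together with the rescaled remainder $R=\jO(\abs{\lambda-1}^{-n}\abs{\zeta}^{2n+3})$; on the disc of radius $\rho_2\asymp\rho\omega\abs{\lambda-1}^{1/2}$ both contributions fit inside $\jO_{k,n}(2n;\abs{\lambda-1}^{-n},\rho_2)$. The main technical obstacle is the propagation of the $\jO_k$-derivative control through the two inversions, that of $T_c$ (amplifying norms by powers of $c\asymp(\abs{\lambda-1}^{1/2}\omega)^{-1}$) and that of the polynomial $A$ via the implicit function theorem: one must check, by repeated use of the composition and substitution lemmas of Appendix~\ref{app:O_k}, that no extra negative power of $\abs{\lambda-1}$ beyond $\abs{\lambda-1}^{-n}$ is generated, up to and including $k\le 2n$ derivatives. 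The mechanism is purely algebraic, but the bookkeeping of exponents of $\abs{\lambda-1}$ across $k$ derivatives is where the proof becomes technical.
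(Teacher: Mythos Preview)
Your strategy is the paper's: Birkhoff normal form (Proposition~\ref{prop:Birkhoff-invariants-in-complex-coordinates}), then a radial normalisation combined with a rescaling, as in Lemmas~\ref{Lem:exponential-birkhoff} and~\ref{lem:Herman : exp(i|z|^2)}. The paper performs the rescaling last rather than first, but that reordering is inessential.

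There is, however, a genuine gap in Step~2. Your claim that conditions (ii)--(iii) of Proposition~\ref{prop:Birkhoff-invariants-in-complex-coordinates} together with symplecticity force $b_p\in\R$ is false for $p\ge 2$. What those conditions give (via Moser's argument, carried out in the paper's Lemma~\ref{Lem:exponential-birkhoff}) is $J(\Phi)(z)=1+\jO(\abs{z}^{2n+2})$, hence $\abs{V(s)}^2=1+\jO(s^{n+1})$ where $V(s)=1+i\sum_p b_p s^p$. Expanding $\abs{V}^2=1-2\sum_p\Im(b_p)s^p+\abs{\sum_p b_p s^p}^2$, the $s$-coefficient forces $\Im b_1=0$, but the $s^2$-coefficient gives $\Im b_2=\tfrac12\abs{b_1}^2\neq 0$ in general. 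Your formula $A(s)=\arctan X(s)$ presupposes $X(s)$ real and therefore fails. The repair is to take the logarithm first: write $h_1(z)=\lambda z\exp\big(i\sum_\ell\gamma_\ell\abs{z}^{2\ell}+\jO_{k,n}(2n+2;\abs{\lambda-1}\omega^{2n+2},\rho)\big)$ and use $\abs{V}^2=1+\jO(s^{n+1})$ to prove $\gamma_\ell\in\R$. Once the exponent is real, your radial map $\phi$ becomes exactly the paper's $\varphi_0(z)=z\big(1+\sum_{\ell\ge2}\gamma_\ell\gamma_1^{-1}\abs{z}^{2\ell-2}+\ldots\big)^{1/2}$ and the rest of your argument goes through. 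One minor labelling slip: your $\rho_1:=\rho'/c$ is the image radius $\rho_2$; the domain radius $\rho_1$ must be $\asymp\rho$ since $\Phi$ and $\phi$ are both near-identity on the original scale.
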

\noindent
We also give an equivalent result in polar coordinates, in order
 to apply the invariant curve theorem.
\begin{prop}[Herman normal form]\label{prop:conjugate-Herman}
There exist  $\rho'>0$,  a diffeomorphism $\Psi$ from $(0;\rho')\times\T$ into $\Dm(0;\rho)$ 
and a  function $\bar\varepsilon~:(0;\rho')\times \T\to \R$ with the following properties. 
\begin{enumerate}
\item\label{Cond:herman-polar} 
	$\Psi^{-1}\circ h\circ \Psi(r;\theta)=\Big( r+\bar\varepsilon(\theta,r);
																	\frac{\gamma_0}{2\pi}+\theta+r\Big)$;%
\item\label{Cond:herman-reminder} 
	$\bar\varepsilon$ is a real valued function and $\bar\varepsilon(r,\theta)
									=\jO_{k,n}^\T(n+1;\abs{\lambda-1}^{-n},\rho')$;%
\item\label{Cond:herman-area}
	 $\rho'\asymp  \abs{\lambda-1}^3/q^2$,  $\abs{\Psi(r,\theta)}\asymp \frac{1}{\omega}
		\sqrt{\frac{r}{\abs{\lambda-1}}}$ and $\displaystyle \mathrm{area}
		\Big(	\Psi\big((0;r)\times \T\big)	\Big)\asymp \frac{r}{\omega^2\abs{\lambda-1}}$.%
\end{enumerate}
\end{prop}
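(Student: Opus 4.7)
The plan is to deduce Proposition~\ref{prop:conjugate-Herman} from the complex-coordinate version, Proposition~\ref{prop:conjugate-Herman-C}, essentially by change of variables to polar coordinates, chosen so that the modulus-squared becomes the radial variable. Recall that the map $\psi$ of Proposition~\ref{prop:conjugate-Herman-C} satisfies $\psi\circ h\circ\psi^{-1}(z)=\lambda z\exp\big(2\pi i\abs{z}^2+\varepsilon(z)\big)$ on $\Dm(0;\rho_2)$, with $\varepsilon$ real-valued. I would introduce the polar parametrization $\pi(r,\theta)\defeq\sqrt{r}\,\exp(2\pi i\theta)$ from $(0;\rho_2^2)\times\T$ onto $\Dm^\star(0;\rho_2)$, set $\rho'\defeq c\rho_2^2$ for a small constant $c>0$ (so that the image of $(0;\rho')\times\T$ stays inside the domain of $\psi^{-1}$), and define
\[
\Psi\defeq \psi^{-1}\circ\pi\col(0;\rho')\times\T\to\Dm(0;\rho).
\]

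The verification of property~(\ref{Cond:herman-polar}) is then a direct algebraic computation. If we set $z=\sqrt{r}\,e^{2\pi i\theta}$ and $z'=\lambda z\exp(2\pi i r+\varepsilon(z))$, then $\abs{z'}^2=r\,e^{2\varepsilon(z)}$ (because $\abs\lambda=1$ and $\varepsilon$ is real) and $\arg(z')=2\pi\theta+\gamma_0+2\pi r\pmod{2\pi}$; translating back via $\pi^{-1}$ yields $\Psi^{-1}\circ h\circ\Psi(r,\theta)=\bigl(r+\bar\varepsilon(r,\theta),\,\theta+r+\gamma_0/(2\pi)\bigr)$ with
\[
\bar\varepsilon(r,\theta)\defeq r\,\big(e^{2\varepsilon(\sqrt{r}e^{2\pi i\theta})}-1\big),
\]
which is real-valued as required by~(\ref{Cond:herman-reminder}).

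The estimates in item~(\ref{Cond:herman-reminder}) come from transferring the $\jO_{k,n}$ bound on~$\varepsilon$ to an $\jO_{k,n}^\T$ bound on~$\bar\varepsilon$. From $\varepsilon(z)=\jO_{k,n}(2n;\abs{\lambda-1}^{-n},\rho_2)$ and the chain rule applied to $z=\sqrt{r}\,e^{2\pi i\theta}$ (all the derivatives of~$\pi$ with respect to $r,\theta$ in the punctured disc behave like appropriate powers of $\sqrt{r}$), one gets $\varepsilon\circ\pi(r,\theta)=\jO_{k,n}^\T(n;\abs{\lambda-1}^{-n},\rho')$; the factor~$r$ and the fact that $e^{2x}-1=2x+O(x^2)$ with $x$ vanishing at the origin raise the order by one, producing the desired $\jO_{k,n}^\T(n+1;\abs{\lambda-1}^{-n},\rho')$. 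The basic properties of the spaces $\jO_k^\T$ collected in the appendix handle these manipulations once and for all.

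For item~(\ref{Cond:herman-area}), I would plug in the asymptotics of Proposition~\ref{prop:conjugate-Herman-C}(iii): $\rho'\asymp\rho_2^2\asymp\rho^2\omega^2\abs{\lambda-1}$, and using $\rho\asymp\abs{\lambda-1}/(q\omega)$ from Corollary~\ref{Cor:Taylor-expansion} yields $\rho'\asymp\abs{\lambda-1}^3/q^2$. Similarly, $\abs{\Psi(r,\theta)}=\abs{\psi^{-1}(\sqrt{r}\,e^{2\pi i\theta})}\asymp\sqrt{r}/(\omega\sqrt{\abs{\lambda-1}})$. For the area estimate, $\pi\big((0;r)\times\T\big)=\Dm^\star(0;\sqrt{r})$ has Lebesgue area $\pi r$; since $\psi^{-1}$ is a diffeomorphism whose Jacobian behaves like $1/(\omega^2\abs{\lambda-1})$ (this is the square of the linear scaling $1/(\omega\sqrt{\abs{\lambda-1}})$ of $\psi^{-1}$, which is justified because $\psi$ is built from the near-symplectic Birkhoff conjugation~$\Phi$ of Proposition~\ref{prop:Birkhoff-invariants-in-complex-coordinates} and an explicit scaling), one concludes $\mathrm{area}\big(\Psi((0;r)\times\T)\big)\asymp r/(\omega^2\abs{\lambda-1})$. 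The main technical point, and the only place where some care is really needed, is the bookkeeping in paragraph two: transferring the $\jO$-bounds through the singular change of variables $r\mapsto\sqrt{r}$ without losing control of any derivatives — but this is exactly what the formalism of the $\jO_k^\T$ spaces is designed to handle.
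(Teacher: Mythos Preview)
Your proposal is correct and follows essentially the same route as the paper: set $\Psi=\psi^{-1}\circ p$ with $p(r,\theta)=\sqrt{r}\,e^{2\pi i\theta}$ and $\rho'=\rho_2^2$, read off $\bar\varepsilon(r,\theta)=r\big(e^{2\varepsilon\circ p(r,\theta)}-1\big)$, and transfer the $\jO_{k,n}$ estimate to an $\jO_{k,n}^\T$ estimate via the $(\T\text{-composition})$ axiom.

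The one place where the paper argues differently is the area estimate in~(\ref{Cond:herman-area}). You invoke a Jacobian bound for~$\psi^{-1}$, which is correct but forces you to dig into the internal structure of~$\psi$ (near-symplectic $\Phi$ composed with $\varphi_0$ close to identity composed with the explicit dilation~$\varphi_1$). The paper instead uses only the information already packaged in Proposition~\ref{prop:conjugate-Herman-C}: since $\psi(0)=0$ and $\abs{\psi^{-1}(z)}\asymp\abs{z}/(\omega\sqrt{\abs{\lambda-1}})$, the Jordan curve $\psi^{-1}(\partial\Dm(0,\sqrt{r}))$ encloses the origin and is trapped between two circles of radii~$\asymp\sqrt{r}/(\omega\sqrt{\abs{\lambda-1}})$, so the enclosed area is~$\asymp r/(\omega^2\abs{\lambda-1})$. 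This sandwich argument is shorter and does not require revisiting the construction of~$\psi$. Similarly, for item~(\ref{Cond:herman-reminder}) the paper first forms $\abs{z}^2\big(e^{2\varepsilon(z)}-1\big)\in\jO_{k,n}(2n+2;\abs{\lambda-1}^{-n},\rho_2)$ in complex coordinates (via $(P\text{-composition})$ and two applications of $(Z\text{-product})$) and only then applies $(\T\text{-composition})$; your ordering (compose first, then multiply by~$r$) is equivalent.
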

%
%
%
\subsubsection{Proof of Proposition~\ref{prop:Birkhoff-invariants-in-complex-coordinates}}
\begin{proof} We construct a polynomial $\Phi$ of degree $2n+2$ such that $\Phi\circ h$ is 
of the form
\begin{equation}\label{eq:birkhoff-conjugation}
	\Phi\circ h=\lambda\Phi\big(1+i\sum_{\ell=1}^{n} b_\ell\abs{\Phi}^{2\ell}\big)
	+\jO(\abs{z}^{2n+3}).
\end{equation}
Taking the $\ell$-homogeneous part of this,  for $2\leqslant \ell \leqslant 2n+2$,  this is equivalent to
\begin{equation}\label{eq:birkhoff-conjugation-2}
	\lambda[\Phi]_\ell(z)-[\Phi]_\ell(\lambda z)=[h]_\ell+\sum_{j=2}^{\ell-1} \bigg[[\Phi]_j
	\circ h\bigg]_\ell-i\lambda \sum_{j=1}^{(\ell-1)/2} b_j\bigg[\Phi\abs{\Phi}^{2j}\bigg]_\ell.
\end{equation}
{\bf Computation of $\mathbf{[\Phi]_2}$.}  Since $h(z)=\lambda z+i\lambda \abs{\lambda-1}
a_2(z+\bar z)^2+\jO(\abs{z}^3)$, we have  
\[
	[h]_2=i\lambda a_2\abs{\lambda-1}(z+\bar z)^2,
\] 
so~\eqref{eq:birkhoff-conjugation-2} implies that 
\[
	\lambda[\Phi]_2(z)-[\Phi]_2(\lambda z)=i\lambda a_2\abs{\lambda-1}(z+\bar z)^2.
\]
From this, we obtain that 
\[
	(\lambda-\lambda^2)\varphi_{2,0}=i\lambda \abs{\lambda-1}a_2~;~
	(\lambda-1)\varphi_{1,1}=2i\lambda \abs{\lambda-1}a_2~;~
	(\lambda-\bar\lambda^2)\varphi_{0,2}=i\lambda \abs{\lambda-1}a_2,
\]
\[
~\text{hence}~\quad
\left\{\begin{array} {l}
	\varphi_{2,0}=ia_2\abs{1-\lambda}/(1-\lambda),\\ %
	\varphi_{1,1}=2ia_2\lambda \abs{1-\lambda}/(\lambda-1),\\ %
	\varphi_{0,2}=ia_2\lambda^3\abs{1-\lambda}/(\lambda^3-1).
\end{array}\right.
\]
This shows in particular that if $\abs{\nu}=2$ then $\abs{\varphi_\nu}
{\asymp}\abs{a_2}{\asymp} \omega$.~\\[5pt]
{\bf Computation of $\mathbf{b_1}$.}  Equation~\eqref{eq:birkhoff-conjugation-2} implies 
that 
\[
	\lambda[\Phi]_3(z)-[\Phi]_3(\lambda z)=[h]_3+\bigg[[\Phi]_2\circ h\bigg]_3
	-i\lambda  b_1\bigg[\Phi\abs{\Phi}^{2j}\bigg]_3.
\]
Therefore, we obtain that 
\[
	\lambda[\Phi]_3(z)-[\Phi]_3(\lambda z)=i\lambda\abs{1-\lambda}a_3 (z+\bar z)^3
	+\bigg[[\Phi]_2(\lambda z+i\lambda\abs{1-\lambda }a_2(z+\bar z)^2)\bigg]_3\!\!\!
	-i\lambda b_1 z\abs{z}^2.
\]
Taking the $(2,1)$~part of this, we obtain  that 
\begin{align*}
i\lambda b_1=3i\lambda\abs{1-\lambda}a_3 
	+\Big[[\Phi]_2(\lambda z+i\lambda\abs{1-\lambda }a_2(z+\bar z)^2)\Big]_{2,1}
\\
	\text{with}~\Big[[\Phi]_2(\lambda z+i\lambda\abs{1-\lambda }a_2(z+\bar z)^2)\Big]_{2,1}%
	\!\!\!\!\!= \varphi_{2,0}\lambda^2 2i\abs{1-\lambda}(2a_2)
																				+\varphi_{1,1}\abs{1-\lambda}(-ia_2)\\ %
	\qquad\qquad\qquad\qquad+\varphi_{0,2}\bar\lambda^2\abs{1-\lambda}(-2ia_2),\\
	=-2a_2^2\lambda \abs{1-\lambda}^2~\frac{1+\lambda}{1-\lambda}~\frac{2+\lambda
																				+2\lambda^2}{1+\lambda+\lambda^2}.%
\end{align*}
This with Corollary~\ref{Cor:Taylor-expansion}~\ref{Cond:a_2-a_3}) implies that 
\[
	\frac{b_1}{\abs{\lambda-1}}=3a_3+2a_2^2\abs{\lambda-1}~R(\lambda)
	\asymp \omega^2.
\]
{\bf Computation of $\mathbf{[\Phi]_3}$.}  With  $\nu=(p,q)\in\N^2$ satisfying 
$p+q=3$ and $\nu\neq (2,1)$, Equation~\eqref{eq:birkhoff-conjugation-2} implies that 
\begin{align*}
	\lambda[\Phi]_\nu-\lambda^{p-q}[\Phi]_\nu&=[h]_\nu+\bigg[[\Phi]_2\circ h\bigg]_\nu
	=i\lambda\abs{\lambda-1} a_3 \big(\begin{smallmatrix}3\\ \nu\end{smallmatrix}\big) 
																				z^\nu+\bigg[[\Phi]_2\circ h\bigg]_\nu\\ %
	&=i\lambda\abs{\lambda-1} a_3 \big(\begin{smallmatrix}3\\ \nu\end{smallmatrix}\big) 
	z^\nu\!\!+\varphi_{2,0}\lambda^2\big(2z i\abs{\lambda-1}a_2(z+\bar z)^2\big)_\nu\\ %
	&									\phantom{=i\lambda\abs{\lambda-1} a_3 \big(\begin{smallmatrix}
																						3\\ \nu\end{smallmatrix}\big) z^\nu}
	+2\varphi_{1,1} ~\Re \big(\bar z i\abs{\lambda-1}a_2(z+\bar z)^2\big)_\nu\\ %
	&						\phantom{=i\lambda\abs{\lambda-1} a_3 \big(\begin{smallmatrix}3\\ \nu
																									\end{smallmatrix}\big) z^\nu}
	+\varphi_{0,2} \bar\lambda^2\big(-2\bar z i\abs{\lambda-1}a_2(z+\bar z)^2\big)_\nu.
\end{align*}
Since $\abs{\lambda^{p-q}-\lambda}=\abs{\lambda^{p-q-1}-1}\geqslant \abs{\lambda-1}$
and $\abs{p-q-1}\leqslant 4$, Lemma~\ref{lem:non-resonance} implies that $\abs{\varphi_\nu}
=\jO_{ E_{\beta,n}}(a_2^2)=\jO_{  E_{\beta,n}}(\omega^2)$.~\\[2pt]
We now compute $\varphi_{2,1}$. We write  
\begin{align*}
	 \partial \Phi(z) & = 1+2\varphi_{2,0}z+\varphi_{1,1}\bar z+2\varphi_{2,1}\abs{z}^2
												+3\varphi_{3,0}z^2+\varphi_{1,2}\bar z^2+\jO(\abs{z}^3),\\ %
	\bar\partial \Phi(z) & = 2\varphi_{0,2}\bar z+\varphi_{1,1} z+2\varphi_{2,1}\abs{z}^2
													+3\varphi_{0,3}\bar z^2+\varphi_{2,1} z^2+\jO(\abs{z}^3).
\end{align*}
Therefore Constraint~\ref{cond:birkhoff-J}) of the proposition implies that 
\[
	0=(J(\Phi))_{1,1}(z)=\abs{z}^2
	\bigg(4\abs{\varphi_{2,0}}^2+\abs{\varphi_{1,1}}^2+4\Re(\varphi_{2,1})-
																4\abs{\varphi_{0,2}}^2-\abs{\varphi_{1,1}}^2\bigg).
\]
This with Constraint~\ref{cond:birkhoff-Im}) shows that $\varphi_{2,1}=\abs{\varphi_{0,2}}^2
-\abs{\varphi_{2,0}}^2$. In particular, it implies that $\abs{\varphi_{2,1}}
=\jO_{E_{\beta,n}}(\omega^2)$.~\\[5pt]
{\bf Estimates of $\mathbf{\Phi}$ and $\mathbf{b_k}$.} We compute $[\Phi]_p$ by induction 
over $p$. Let assume that $4\leqslant p \leqslant 2n+2$ and that we have computed 
$[\Phi]_{\leqslant p-1}$. We also assume that $\abs{\varphi_\mu}
=\jO_{ E_{\beta,n}}(\omega^{\abs{\mu}-1})$ if $\abs{\mu}\leqslant p-1$ and $\abs{b_k}
=\jO_{ E_{\beta,n}}(\abs{\lambda-1}\omega^{2k})$ if $2k\leqslant p-2$.   
Equation~\eqref{eq:birkhoff-conjugation-2} implies that 
\begin{align}
	\lambda[\Phi]_p(z)-[\Phi]_p(\lambda z)
	&=[h]_p+\sum_{\ell=2}^{p-1}\bigg[[\Phi]_\ell\bigg(\lambda z+i\lambda \abs{\lambda-1}
										\sum_{m=2}^{p-\ell+1} a_m (z+\bar z)^m\bigg)\bigg]_p\notag\\ %
	&\label{eq:Phi-lambda} \phantom{[h]_p+\sum_{\ell=2}^{p-1}\bigg[[\Phi]_\ell
												\bigg( \lambda z+\lambda i}-i\lambda \sum_{\ell=1}^{(p-1)/2} 
	b_\ell\bigg[\Phi\abs{\Phi}^{2\ell}\bigg]_p.
\end{align}
We estimate each term of the right hand side of this. Let $\nu\in\N^2$
satisfy $\abs{\nu}=p$.~\\[3pt]
--We have $[h]_\nu=i\lambda\abs{\lambda-1}a_{\abs{\nu}}z^\nu$ and $\abs{a_{\abs\nu}}
	~\abs{\lambda-1}=\jO_{ E_{\beta,n}}\big(\abs{\lambda-1}\omega^{\abs{\nu}-1}\big)$.~\\[3pt]
-- For $1\leqslant \ell< (p-1)/2$, we have
	\begin{equation}\label{eq:phi-2l+1}
	\big[\Phi \abs{\Phi}^{2\ell}\big]_\nu=z^\nu\bigg(\sum_{\underset{\bar{}}{\nu}} 
	\varphi_{\nu_0}\prod_{i=1}^\ell \varphi_{\nu_i} \bar \varphi_{\nu_{i+\ell}}\bigg),
	\end{equation}
	where the summation index $\underset{\bar{}}{\nu}$  above runs over all the tuples 
	$\underset{\bar{}}{\nu}=(\nu_0,\nu_1,\ldots,\nu_{2l})\in(\N^2)^{2\ell+1}$ satisfying 
	the condition 
	\[
	\nu=\nu_0+\sum_{j=1}^\ell (\nu_j+\bar\nu_{j+\ell}),\qquad
	~\text{with $\overline{(r,s)}=(s,r)$ for all $(r,s)\in\N^2$.} 
	\]
	Thus each term in the sum in~\eqref{eq:phi-2l+1} belongs to $\jO_{ E_{\beta,n}}
	\big(\omega^{n_N}\big)$, with 
	\[
			n_N=\abs{\nu_0}-1+\sum_{j=1}^\ell (2\abs{\nu_j}-2)=\abs{\nu}-(1+2\ell).
	\]
	Since $\abs{b_\ell}=O_{ E_{\beta,n}}\big(\abs{\lambda-1}\omega^{2\ell}\big)$, it follows 
	that  $b_\ell\big[\Phi\abs{\Phi}^{2\ell}\big]_\nu$ is of the form $c(\nu,\Phi,\ell)
	z^\nu$, with $\abs{c(\nu,\Phi,\ell)}=O_{ E_{\beta,n}}\big(\abs{\lambda-1}
	\omega^{\abs{\nu}-1}\big)$.~\\[5pt]
-- For $\mu\in\N^2$ satisfying $2\leqslant \abs{\mu}\leqslant p-1$, we estimate the term
	\begin{align*}
		\bigg[[\Phi]_\mu\big(\lambda( z+i\abs{\lambda-1}\sum_{m} a_m (z+\bar z)^m)\big)\bigg]_\nu
		\!\!\!\!&
		= \varphi_\mu\bigg[\!\big(\lambda (z+i\abs{\lambda-1}\sum_{\nu'} a_{\abs{\nu'}}
		\big(\begin{smallmatrix}\abs{\nu'}\\ \nu'\end{smallmatrix}\big) z^{\nu'})
		\big)^{\!\!\mu}\bigg]_\nu\!\!\!,
	\end{align*}
	where  $m $  in the sum in  the  left hand side runs over the intergers satisfying $2\leqslant 
	m\leqslant N_0=p-\abs{\mu}+1$ and $\nu'\in\N^2$ in the sum of the right hand side runs 
	over the couple satisfying $2\leqslant \abs{\nu'}\leqslant N_0$. A direct computation shows 
	that the right hand side of this equality is of the form 
	\[
 	\varphi_\mu\lambda^\mu z^\nu\sum_{(\mu_0,\underset{\bar{}}{\mu})}\prod_{\nu'\in\N}
	\bigg(i\abs{\lambda-1}a_{\abs{\nu'}}
	\big(\begin{smallmatrix}\abs{\nu'}\\ \nu'\end{smallmatrix}\big) \bigg)^{\mu_{\nu'}}\!\!\!,
	\]
	where $\N$ denote the set of the couples $\nu'\in\N^2$ satisfying $\abs{\nu'}\geqslant 2$ 
	and  the index in the sum above runs over all the tuples  $(\mu_0,\underset{\bar{}}{\mu})$, 
	with $\mu_0\in\N^2$ and $\underset{\bar{}}{\mu}=(\mu_{\nu'})\in (\N^2)^{\N}$ 
	satisfying 
	\[
		\mu_0+\sum_{\nu'\in\N} \mu_{\nu'}\cdot \nu'=\nu ~\text{and}~\mu_0
		+\sum_{\nu'\in\N} \mu_{\nu'}=\mu,
	\]
	with $(r,s)\cdot (r',s')=(rr'+ss',rs'+r's)$ for all $(r,s,r',s')\in\N^4$. Note that we have 
	$\mu_{\nu'}\neq 0$ for at leat one index $\nu'$ since otherwise we should have $\nu=\mu_0
	=\mu$, which is impossible since $\abs{\mu}<p=\abs{\nu}$.  On the other hand,  we have  
	$\mu_{\nu'}\neq 0$ for at most  $\abs{\mu}\leqslant p-1\leqslant 2n+1$ indices  $\nu'$.
	This implies that 
	\[
		\prod_{\nu'\in\N}\abs{\lambda-1}^{\mu_{\nu'}}=\jO_{ E_{\beta,n}}(\abs{\lambda-1}).
	\]
	Furthermore, we have $\abs{a_{\abs{\nu'}}}=\jO_{ E_{\beta,n}}(\omega^{\abs{\nu'}-1})$, so 
	\[
	\prod_{\nu'\in\N}\abs{a_{\abs{\nu'}}}^{\mu_{\nu'}}=\jO_{ E_{\beta,n}}
	\bigg(\prod_{\nu'\in\N}(\omega^{\abs{\nu'}-1})^{\abs{\mu_{\nu'}}}\bigg)
	=\jO_{ E_{\beta,n}}\big(\omega^{\abs{\nu}-\abs{\mu}}\big).
	\]
	Since we have  $\abs{\varphi_\mu}=\jO_{ E_{\beta,n}}(\omega^{\abs{\mu}-1})$, it follows 
	that 
	\[
	\bigg\vert\varphi_\mu\lambda^\mu \sum_{(\mu_0,\underset{\bar{}}{\mu})}
	\prod_{\nu'\in\N}\bigg(i\abs{\lambda-1}a_{\abs{\nu'}}
	\big(\begin{smallmatrix}\abs{\nu'}\\ \nu'\end{smallmatrix}\big) \bigg)^{\mu_{\nu'}}
	\bigg\vert=\jO_{ E_{\beta,n}}(\abs{\lambda-1} \omega^{\abs{\nu}-1}).
	\]
	Therefore, we obtain that $\abs{\lambda-\lambda^\nu}~\abs{\varphi_\nu}
	=\jO_{ E_{\beta,n}}(\abs{\lambda-1} \omega^{\abs{\nu}-1})$. With Lemma~\ref{lem:non-resonance}, 
	this  shows that  $\abs{\varphi_\nu}=\jO_{ E_{\beta,n}}(\omega^{\abs{\nu}-1})$ (if $\nu\neq 
	(\ell_0+1,\ell_0)$ when $p=2\ell_0+1$).~\\[3pt]
-- If $p=2\ell_0+1$ and $\nu_0=(\ell_0+1,\ell_0)$ then~\eqref{eq:Phi-lambda}
	shows that 
	\[
	i\lambda b_{\ell_0} z\abs{z}^{2\ell_0}=[h]_{\nu_0}+\sum_{\ell=2}^{p-1}
	\bigg[[\Phi]_\ell\circ h\bigg]_{\nu_0}-i\lambda \sum_{\ell=1}^{\ell_0-1} b_\ell
	\bigg[\Phi\abs{\Phi}^{2\ell}\bigg]_{\nu_0},
	\]
	hence $\abs{b_{\ell_0}}=\jO_{ E_{\beta,n}}(\abs{\lambda-1} \omega^{\abs{\nu_0}-1})
	=\jO_{ E_{\beta,n}}(\abs{\lambda-1} \omega^{2\ell_0})$. \\[3pt]
	Furthermore, a direct computation shows that 
	\[
	\abs{z}^{-2\ell_0}\big(J(\Phi)\big)_{(\ell_0,\ell_0)}=2(\ell_0+1)\Re(\varphi_{\nu_0})
	+\!\!\!\!\sum_{\begin{smallmatrix}
							r'+s=\ell_0+1\\ r+s'=\ell_0+1\\
							(r,s)\neq (1,0), ~(r',s')\neq (1,0)
					\end{smallmatrix}}\!\!\!\!  (rr'-ss')\big(\overline{\varphi_{r,s}}\varphi_{r',s'}\big).
	\]
	Since in the sum above we have $\abs{\overline{\varphi_{r,s}}\varphi_{r',s'}}=
\jO_{ E_{\beta,n}}(\omega^{r+s+r'+s'-2})=\jO_{ E_{\beta,n}}(\omega^{2\ell_0})$,  
	Constraints~\ref{cond:birkhoff-J}) and~\ref{cond:birkhoff-Im}) show that 
	$\abs{\varphi_{\nu_0}}=\jO_{ E_{\beta,n}}(\omega^{2\ell_0})$.

	Thus we have proved the announced estimates at the rank $p$, and so at any order,  and the 
	proof of the points~\ref{cond:birkhoff-J}),
	~\ref{cond:birkhoff-Im}), ~\ref{cond:birkhoff-bk}) and~\ref{cond:birkhoff-phi}) 
 	of the proposition is complete.~\\[5pt]
{\bf Estimate of $\mathbf{\Phi^{-1}}$.}~We shall apply the (inverse) axiom of 
Lemma~\ref{lem:O_k} (see Appendix~\ref{app:O_k}) to the polynomial $\Phi$ with 
$\varepsilon=\abs{\lambda-1}(\sqrt{3/2}-1)/q$, $C=0$ and $\tau\leqslant \rho$ satisfying 
\begin{equation}\label{eq:tau-phi-birkhoff}
		\norm{\partial \Phi-1}_\tau+\norm{\bar\partial\Phi}_\tau\leqslant \varepsilon.
\end{equation}
Note that $\varepsilon\leqslant \sqrt{3/2}-1$, so $2\varepsilon+\varepsilon^2\leqslant 1/2$.
Furthermore, we may choose $\tau\asymp \rho$. Indeed, we have 
\[
	\norm{\partial \Phi-1}_\tau+\norm{\bar\partial\Phi}_\tau =\!\!\!\!\!\!\!\!\sum_{2\leqslant 
	\abs{\nu}\leqslant 2n+2}\!\!\!\!\!\!\abs{\nu}~\abs{\varphi_\nu}\tau^{\abs{\nu}-1}
	=\sum_{\ell=1}^{2n+1}\jO_{ E_{\beta,n}}\big((\omega\tau)^{\ell}\big)
	=\jO_{ E_{\beta,n}}\big(\omega\tau\big) ~\text{if $\omega\tau\leqslant 1$.}
\]
 We recall that $\rho\asymp \frac{\abs{\lambda-1}}{q\omega}$, so $\omega\rho\asymp 
\frac{\abs{\lambda-1}}{q}$.  This implies that there exists $\tau>0$ 
verifying~\eqref{eq:tau-phi-birkhoff} with $\tau\leqslant\rho$ and $\tau\asymp\rho$.We set 
$\rho'=(1-\varepsilon)\tau$, so $\rho'\asymp \rho$. The (inverse) axiom of Lemma~\ref{lem:O_k} 
shows that $\Phi^{-1}$ exists from  $\Dm(0;\rho')$ into $\Dm(0;\rho)$ and that there exists  
$Q{(q,N,\mu)}(z)$, a polynomial in $z$ of degree $2n+2$ and valuation $2$, satisfying   
\begin{equation}\label{eq:Phi-inv}
	\left\{\begin{array}{l}
		\Phi^{-1}(z)=z+Q(z)+\jO_{k, E_{\beta,n}}(2n+3;\varepsilon/\rho^{2n+2},\rho),\\ %
		\norm{\partial Q}_\rho+\norm{\bar\partial Q}_\rho=\jO_{ E_{\beta,n}}(\varepsilon).
	\end{array}\right.
\end{equation}
At last,  ~\eqref{eq:tau-phi-birkhoff} implies that $(1-\varepsilon)\abs{z}\leqslant\abs{\Phi(z)}
\leqslant (1+\varepsilon)\abs{z}$ on $\Dm(0,\tau)$ and this  completes the proof 
of~\ref{cond:birkhoff-z}) in  the proposition.~\\[5pt]
{\bf Estimate of the reminder.} We have shown that there exists a polynomial $\Phi$ of degree 
$2n+2$ that verifies~\eqref{eq:birkhoff-conjugation}. We set 
\[
	P(z)=\lambda z\bigg(1+i\sum_{\ell=1}^{n} b_\ell \abs{z}^{2\ell}\bigg) ~\text{and}~
	V(z)=\lambda\bigg(z+i\abs{\lambda-1}\sum_{m=2}^{2n+2}a_m(z+\bar z)^m\bigg), 
\]
so $\Phi\circ h(z)=P\circ \Phi(z)+\jO(\abs{z}^{2n+3})$ and $h(z)=V(z)+\jO_{k, n}(2n+3;
\abs{\lambda-1}\omega^{2n+2},\rho)$.~\\[3pt]
-- We prove in two steps that $\Phi\circ h-P\circ\Phi$ belongs to $\jO_{k, n}(2n+3;\abs{\lambda-1}
	\omega^{2n+2},\rho)$.~\\[3pt]
{\bfseries $\mathbf{\circ}$~Step~1.} 
	We estimate $\Phi\circ h$. We write $h=V+\varepsilon_0$ and
\begin{equation}\label{eq:hadamard-Vf}
	\Phi\circ h=\Phi\circ V+\int_0^1 \bigg(\partial \Phi(V+t\varepsilon_0)\cdot \varepsilon_0
	+\bar\partial \Phi(V+t\varepsilon_0)\cdot\overline{\varepsilon_0}\bigg)~dt.
\end{equation}
The ($Z$-product)  and the (restriction) axioms of Lemma~\ref{lem:O_k} imply that  
\[
	V(z)=\jO_k(0;(k+1)^{2n+1}\norm{V}_\rho,\rho)~\text{and}~
	\varepsilon_0(z)=\jO_k(0;\abs{\lambda-1}\omega^{2n+2}\rho^{2n+3},\rho).
\]
Since we have $\abs{a_m}\rho^m=\jO_{ E_{\beta,n}}(\omega^{m-1}\rho^m)$ and 
$\omega\rho=\jO_{ E_{\beta,n}}(1)$,  it follows that $V$ and $\varepsilon_0$ both lie in  
$\jO_{k, n}(0;\rho,\rho)$. Therefore the (product) axiom shows that  
\begin{align*}
	\varphi_{\nu}(V+t\varepsilon_0)^{\nu'}
	=\jO_{k, E_{\beta,n}'}(0;\abs{\varphi_\nu}\rho^{\abs{\nu}-1},\rho)
&~\text{uniformely for   $(q,N,\mu,t)\in E_{\beta,n}'= E_{\beta,n}\times [0;1]$, }
\\
&\text{with $\nu=(\nu_1,\nu_2)$}\begin{array}[t]{rl}
\text{and}&\nu'=(\nu_1-1,\nu_2)\\
\text{or}&\nu'=(\nu_1,\nu_2-1).
\end{array}
\end{align*}
This implies that $\partial\Phi\circ (V+t\varepsilon_0)$ and $\bar\partial\Phi\circ 
(V+t\varepsilon_0)$ belong to $\jO_{k, E_{\beta,n}'}(0;1,\rho)$, since we have 
$\abs{\varphi_\nu}\rho^{\abs{\nu}-1}=\jO_{ E_{\beta,n}}\big((\omega\rho)^{\abs{\nu}-1}\big)
=\jO_{ E_{\beta,n}}(1)$.  

From this, since  $\varepsilon_0(z)=\jO_k(2n+3;\abs{\lambda-1}
\omega^{2n+2},\rho)$, ~\eqref{eq:hadamard-Vf} and the (product) axiom show that 
\[ 
	\Phi\circ h=\Phi\circ V+\jO_{k, n}(2n+3;\abs{\lambda-1}\omega^{2n+2},\rho).
\]
Furthermore,  the ($Z$-product) axiom  shows that 
\[
	[\Phi\circ V]_{\geqslant 2n+3}(z)=\!\!\!\!\!\!\sum_{2\leqslant \abs{\nu}\leqslant 2n+2}
	\!\!\!\!\!\!\abs{\varphi_\nu}\sum_{\underline{m}} \jO_{k}(\abs{\underline{m}};
	(k+1)^{\abs{\underline{m}}-1}\abs{\lambda-1}^{p_{\underline{m}}}  
	\abs{a_{\underline{m}}},\rho),
\]
where the indices in the sum  runs over all the tuples $\underline{m}=(m_\ell)\in
(\N^2)^{\abs{\nu}}$ satisfying $m_\ell=(1,0)$ if $\abs{m_\ell}=1$, 
$1 \leqslant \abs{m_\ell}\leqslant 2n+2$ for $1\leqslant \ell \leqslant \abs{\nu}$ and 
$\abs{\underline{m}}>2n+2$, with $\abs{m}=\sum\limits_{\ell \geqslant 0} \abs{m_\ell}$,  
and where
\[
	p_{\underline{m}}=\mathrm{card}\{\ell\mid  \abs{m_\ell}>1\}~;~
	\abs{a_{\underline{m}}}=\prod_{\ell, \abs{m_\ell}>1}\abs{a_{m_\ell}}
	\big(\begin{smallmatrix}\abs{m_\ell}\\ m_\ell\end{smallmatrix}\big).
\]
We have $p_{\underline{m}}\geqslant 1$ and $\abs{\underline{m}}
\leqslant (2n+2)^2$, so the (restriction) axiom shows that 
\[
	\jO_{k}(\abs{\underline{m}};(k+1)^{\abs{\underline{m}}-1}
	\abs{\lambda-1}^{p_{\underline{m}}}  \abs{a_{\underline{m}}},\rho) \subset 
 	\jO_{k, E_{\beta,n}}(2n+3;\abs{\lambda-1}  \abs{a_{\underline{m}}} 
	~\rho^{\abs{\underline{m}}-(2n+3)},\rho).
\]
A direct computation shows that $\abs{a_{\underline{m}}}
=\jO_{ E_{\beta,n}}(\omega^{\abs{\underline{m}}-\abs{\nu}})$, so 
\[
	\abs{\varphi_\nu}\abs{a_{\underline{m}}}
	=\jO_{ E_{\beta,n}}(\omega^{\abs{\underline{m}}-1}\rho^{\abs{\underline{m}-(2n+3)}})
	=\jO_{ E_{\beta,n}}((\omega\rho)^{\abs{\underline{m}}-(2n+3)}\omega^{2n+2})
	=\jO_{ E_{\beta,n}}(\omega^{2n+2}).
\]
From this, it follows that $[\Phi\circ V]_{\geqslant 2n+3}$ lies in 
$\jO_{k,n}(2n+3;\abs{\lambda-1} \omega^{2n+2},\rho)$. Thus we have proved that 
\[
	\Phi\circ h=[\Phi\circ V]_{\leqslant 2n+2}
						+\jO_{k, n}(2n+3;\abs{\lambda-1} \omega^{2n+2},\rho).
\]
{\bf $\mathbf{\circ }$~Step~2.} We estimate $P\circ \Phi$. The ($Z$-product) axiom shows 
that  
\[
	P\circ \Phi(z)=[P\circ \Phi]_{\leqslant 2n+2}(z)
			+\sum_{\ell=1}^n\abs{b_\ell}\sum_{\underline{\nu}}
			\jO_k\bigg(\abs{\nu};\abs{\varphi_{\underline{\nu}}}(k+1)^{\abs{\nu}-1},\rho\bigg),
\]
where the index in the sum runs avor all the $\underline{\nu}=(\nu_0,\ldots,\nu_{2\ell})
\in(\N^2)^{2\ell+1}$ such that $\abs{\nu}\geqslant 2n+3$, with $\abs{\underline{\nu}}
=\sum\limits_{j=0}^{2\ell}\abs{\nu_j}$, and where
\[
	\abs{\varphi_{\underline{\nu}}}=\prod_{j=0}^{2\ell}\abs{\varphi_{\nu_j}}
	\subset\jO_{ E_{\beta,n}}\bigg(\prod_{j=0}^{2\ell}\omega^{\abs{\nu_j}-1}\bigg)
	=\jO_{ E_{\beta,n}}\bigg(\omega^{\abs{\underline{\nu}}-(2\ell+1)}\bigg).
\]
Since we have $\abs{b_\ell}=\jO_{k}(\abs{\lambda-1}\omega^{2\ell})$, we obtain that 
$\abs{b_\ell}~\abs{\varphi_{\underline{\nu}}}=O_{k}(\abs{\lambda-1}
\omega^{\abs{\underline{\nu}}-1})$. This implies that
\begin{align*}
	\abs{b_\ell}\jO_k\Big(\abs{\nu};\abs{\varphi_{\underline{\nu}}}(k+1)^{\abs{\nu}-1}
	,\rho\Big) 
&\subset \jO_{k, n}\big(\abs{\nu};\abs{\lambda-1}\omega^{\abs{\underline{\nu}}-1},
																																	\rho\big)\\ %
&\subset \jO_{k, n}\big(2n+3;\abs{\lambda-1}\omega^{\abs{\underline{\nu}}-1}
																						\rho^{\abs{\nu}-(2n+3)},\rho\big)\\ %
&\subset\jO_{k, n}\big(2n+3;\abs{\lambda-1}\omega^{2n+2}(\omega\rho)^{\abs{\nu}
																												-(2n+3)},\rho\big)\\ %
&\subset \jO_{k, n}\big(2n+3;\abs{\lambda-1}\omega^{2n+2},\rho\big).
\end{align*}
Thus we have proved that 
\[
	P\circ \Phi(z)=[P\circ \Phi]_{ \leqslant 2n+2}(z)
						+\jO_{k, n}\big(2n+3;\abs{\lambda-1}\omega^{2n+2},\rho\big).
\]
Since $[P\circ \Phi]_{ \leqslant 2n+2}=[\Phi\circ h]_{ \leqslant 2n+2}$ by construction of 
$\Phi$, we obtain that 
\[
	\Phi\circ h(z)=P\circ \Phi(z)+\jO_{k, n}\big(2n+3;\abs{\lambda-1}\omega^{2n+2},
							\rho\big)
\]
and this completes the proof of the announced estimate of $\Phi\circ h-P\circ \Phi$.~\\[3pt]
-- Now we may compute the estimate of the reminder $\Phi\circ h\circ \Phi^{-1}-P$.
Equation~\eqref{eq:Phi-inv}, the ($Z$-product) and  (restriction)  axioms imply that 
\[
	\Phi^{-1}(z)=\jO_{k, n}(1;1,\rho).
\]
Thus the (product) axiom shows that 
\[
	(\Phi\circ h-P\circ\Phi)\circ \Phi^{-1}(z)=\jO_{k, n}\big(2n+3;\abs{\lambda-1}
	\omega^{2n+2},\rho\big),
\]
which proves~\ref{cond:birkhoff-conj})  in 
Proposition~\ref{prop:Birkhoff-invariants-in-complex-coordinates}
 and this ends  the proof of the proposition.
\end{proof}
%
%
%
\subsubsection{Proof of Proposition~\ref{prop:conjugate-Herman-C}}
Proposition~\ref{prop:conjugate-Herman-C}  follows immediately from the Birkhoff normal 
form of $h$ (Proposition~\ref{prop:Birkhoff-invariants-in-complex-coordinates}) with 
Lemma~\ref{Lem:exponential-birkhoff} and Lemma~\ref{lem:Herman : exp(i|z|^2)}.

Through the whole section, we  assume that $2n\geqslant k\geqslant 1$, $\abs{b_m}
=\jO_{ E_{\beta,n}}(\abs{\lambda-1}\omega^{2m})$ for $1\leqslant m\leqslant n$ and 
$\omega\rho\asymp \abs{\lambda-1}/q$.  We also recall that $\lambda^p\neq 1$ for 
$1\leqslant p \leqslant 2n+2$ and  $\Phi$ is a diffeomorphism satisfying 
$\abs{\Phi(z)}\asymp \abs{z}$ and 
\[
	\Phi\circ h\circ \Phi^{-1}(z)=\lambda zV(\abs{z}^2)
									+\jO_{k,n}(2n+3,\abs{\lambda-1}\omega^{2n+2}, \rho), 
~\text{with $V(s)=1+i\sum\limits_{m=1}^{n} b_{m} s^m$.}
\]
\begin{lemma}\label{Lem:exponential-birkhoff}
 If  $h$ is symplectic on a neighbourhood of zero and  the Taylor expansion of the Jacobian 
$J(\Phi)=\abs{\partial \Phi}^2-\abs{\bar\partial\Phi}^2$ does not contain any power of the 
form $\abs{z}^{2\ell}$,  for $ 1 \leqslant \ell \leqslant n$,
then we have $J(\Phi)(z)=1+\jO(\abs{z}^{2n+2})$ and 
\[
	\Phi\circ h \circ \Phi^{-1}(z)=\lambda z\exp\biggl(i~\sum\limits_{\ell=1}^n 
	\gamma_\ell \abs{z}^{2\ell}+\jO_{k,n}\big(2n+2;\abs{\lambda-1}
	\omega^{2n+2},\rho\big)\biggr),
\]
where $\gamma_\ell\in\R$ for each $(q,N,\mu)\in  E_{\beta,n}$ and $\abs{\gamma_\ell}
=\jO_{\tilde E_n}(\abs{\lambda-1}\omega^{2\ell})$ for $1\leqslant \ell \leqslant n$.\\
 In particular, we have $\gamma_1=b_1$.
\end{lemma}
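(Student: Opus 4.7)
The plan is to combine the symplecticity of $h$, through the multiplicative identity $J(h)=1$, with the diagonal-free hypothesis on $J(\Phi)$, in three steps: first derive the Jacobian estimate $J(\Phi)(z)=1+O(|z|^{2n+2})$; next establish that $|V(s)|^2=1+O(s^{n+1})$ so that $V$ is close to a unit-modulus factor; finally repackage $\Phi\circ h\circ\Phi^{-1}$ into exponential form with the announced remainder.

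For the Jacobian estimate I will start from the identity
\[
J(\Phi)(hw)=J(\Phi\circ h\circ\Phi^{-1})(\Phi w)\,J(\Phi)(w),
\]
which follows from $J(h)=1$. Setting $K:=J(\Phi)-1$, a polynomial in $(w,\bar w)$, I will decompose each homogeneous part $K_p$ into its diagonal component (monomials $w^a\bar w^a$, present only when $p=2a$) and its off-diagonal component ($w^a\bar w^b$ with $a\neq b$). Since $\Phi\circ h\circ\Phi^{-1}=\lambda u\,V(|u|^2)+\text{rem}$ with a remainder whose Jacobian Taylor expansion starts at order $\ge 2n+2$, and since $J(\lambda u V(|u|^2))=(sg(s))'|_{s=|u|^2}$ depends only on $|u|^2$, expanding the functional equation in homogeneous parts and proceeding by induction on $p$ from $2$ to $2n+1$ reduces, under the inductive assumption $K_j=0$ for $j<p$, to $K_p(\lambda w)-K_p(w) = $ explicit data involving only $b_1,\dots,b_{\lfloor p/2\rfloor}$ and the lower-order parts of $\Phi$. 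The non-resonance of Lemma~\ref{lem:non-resonance} makes $K_p\mapsto K_p(\lambda w)-K_p(w)$ invertible on off-diagonal monomials, while the diagonal absence hypothesis kills the diagonal part of $K_p$. The reality of the $b_p$, guaranteed by the normalization $\Im\varphi_{j+1,j}=0$ from Proposition~\ref{prop:Birkhoff-invariants-in-complex-coordinates}, ensures that $g(s)=|V(s)|^2=1+P(s)^2$ is a polynomial in the real series $P(s)=\sum b_p s^p$, which is what allows the diagonal contributions of $(sg)'-1$ to align with the inductive closure.

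Given $J(\Phi)=1+O(|z|^{2n+2})$, the functional equation immediately yields $J(\Phi\circ h\circ\Phi^{-1})(u)=1+O(|u|^{2n+2})$, hence $(sg(s))'=1+O(s^{n+1})$; integrating from $0$ gives $g(s)=|V(s)|^2=1+O(s^{n+1})$. Since $V(0)=1$, the principal branch of $\log V(s)$ is an analytic power series, which I decompose as $\log V(s)=A(s)+iB(s)$; the bound on $|V|^2$ translates to $A(s)=\tfrac{1}{2}\log g(s)=O(s^{n+1})$, while $B$ is real-valued on real $s$. I set $\Theta(s):=\sum_{\ell=1}^n\gamma_\ell s^\ell$ to be the degree-$n$ truncation of $B(s)$: the coefficients $\gamma_\ell$ are real by construction, the identity $\gamma_1=b_1$ comes from $B(s)=\arctan P(s)=P(s)+O(s^3)$, and the estimate $|\gamma_\ell|=\jO(|\lambda-1|\omega^{2\ell})$ follows by unfolding the polynomial expressions for $\gamma_\ell$ in terms of $b_1,\dots,b_\ell$ using the bounds from Proposition~\ref{prop:Birkhoff-invariants-in-complex-coordinates}.

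Finally, substituting into $\Phi\circ h\circ\Phi^{-1}(u)=\lambda u V(|u|^2)+r(u)$ with $r\in\jO_{k,n}(2n+3;|\lambda-1|\omega^{2n+2},\rho)$, I write
\[
\Phi\circ h\circ\Phi^{-1}(u)=\lambda u\,e^{i\Theta(|u|^2)}\bigl(1+\eta(u)\bigr),\qquad \eta(u)=\frac{V(|u|^2)}{e^{i\Theta(|u|^2)}}-1+\frac{r(u)\,e^{-i\Theta(|u|^2)}}{\lambda u}.
\]
Each piece of $\eta$ lies in $\jO_{k,n}(2n+2;|\lambda-1|\omega^{2n+2},\rho)$: dividing $r$ by $\lambda u$ reduces the order by one, while $V/e^{i\Theta}-1=e^{A+i(B-\Theta)}-1$ inherits the size from $A$ and $B-\Theta$, both of which are $O(s^{n+1})$ with the right constants. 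Taking $\log(1+\eta)$ via the composition and product axioms of Lemma~\ref{lem:O_k} absorbs $\eta$ into the exponent and produces the announced $\varepsilon(z)$. The main technical obstacle will be the inductive step in the Jacobian estimate: one must verify that the combined normalizations on $\Phi$ (vanishing of diagonal $J(\Phi)$-terms together with the reality of the resonant $\varphi_\nu$) produce exactly the algebraic relations needed to close the induction at each order, without over-constraining the Birkhoff system that already determines the coefficients $\varphi_\nu$ and $b_p$.
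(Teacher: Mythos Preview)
Your overall strategy matches the paper's Moser-style argument, but there is a genuine error and a related gap in the Jacobian step.

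The error: you assert that the $b_p$ are real, ``guaranteed by the normalization $\Im\varphi_{j+1,j}=0$''. This is false. In Proposition~\ref{prop:Birkhoff-invariants-in-complex-coordinates}, each $b_p$ is determined by the $(p{+}1,p)$-component of the conjugation equation, which involves only $a_2,\dots,a_{2p+1}$, $\lambda$, and the $\varphi_\nu$ with $|\nu|\le 2p$; the free resonant coefficient $\varphi_{p+1,p}$ is fixed \emph{afterwards} by conditions~(ii)--(iii) and has no effect on~$b_p$. The $b_p$ are a priori complex. You lean on this in two places. In the exponential step, your formula $B=\arctan P$ fails for complex~$P$; the correct route is that your own estimate $A(s)=\tfrac12\log|V|^2=O(s^{n+1})$ already kills the $s$-coefficient of~$A$, so matching $\log V(s)=ib_1 s+O(s^2)$ with $A(s)+iB(s)$ gives $\Im b_1=0$ and hence $\gamma_1=b_1$. (The paper does this in the reverse order: Step~3 first obtains complex $\gamma_\ell$ by Taylor matching, Step~4 then proves $\gamma_\ell\in\R$ from $|V|^2=1+O(s^{n+1})$.)

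The gap: in your induction on~$p$, the hypothesis $K_j=0$ for $j<p$ alone does \emph{not} make the right-hand side of $K_p(\lambda w)-K_p(w)=D_p$ purely diagonal. Writing $L(u):=J\bigl(\lambda u V(|u|^2)\bigr)-1=\sum_{m\ge1}c_m|u|^{2m}$, one has $D_p=[L(\Phi(w))]_p$, and the off-diagonal parts of $|\Phi(w)|^{2m}$ contribute to $D_p$ unless $c_m=0$ for $2m<p$ has already been established. The induction must therefore be \emph{intertwined}: at each even step $p=2m$ the equation reads $K_p(\lambda w)-K_p(w)=c_m|w|^{2m}$, whose diagonal part forces $c_m=0$ and whose off-diagonal part (via non-resonance plus the diagonal-free hypothesis on $J(\Phi)$) forces $K_p=0$; the odd steps are then vacuous. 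This is exactly the paper's Step~2 case analysis on the relative sizes of $p$ and $2q$, and it needs only that $L$ is a function of $|u|^2$---which is automatic from the form of the Birkhoff normal form (the paper's Step~1), with no reality of $b_p$ required.
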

\begin{proof}
we split the  proof of the lemma  into $4$~steps. Steps~1,~2 and~4 
follow Moser's arguments.~\\[5pt]
\textbf{Step~1:} We set $g(z)=\lambda zV(\abs{z}^2)$; we check that  $J(g)(z)$ is a 
polynomial in  $\abs{z}^2$.~\\
Indeed, we have $\partial g(z)=\lambda(V(\abs{z}^2)+\abs{z}^2V'(\abs{z}^2))$ and 
$\bar\partial g(z)=\lambda z^2 V'(\abs{z}^2)$, so 
\[
	J(g)(z)=\abs{\partial g(z)}^2-\abs{\bar\partial g(z)}^2=
	\abs{V}^2(\abs{z}^2)+2\abs{z}^2 \Re(\overline{V}V')(\abs{z}^2)
	=\frac{d}{dx}_{\mid_{x=\abs{z}^2}}\!\!\!\!\!\!x\abs{V(x)}^2.
\]
\textbf{Step~2:} We prove that $J(\Phi)(z)=1+\jO(\abs{z}^{2n+2})$. 
Since  $\Phi(z)=\jO_{k,n}(1;1, \rho)$, we have 
\[
	(\Phi\circ h)(z)=g(\Phi(z))+\jO_{k,n}(2n+3;\abs{\lambda-1}\omega^{2n+2},\rho).
\]
Since $h$ is symplectic, we have $J(\Phi\circ h)=J(\Phi)\circ h$ and from the form of 
$\Phi\circ h$ above, it follows that 
\begin{equation}\label{eq:J(phi)}
		J(\Phi)\circ f=(J(g)\circ \Phi)\cdot J(\Phi)+\jO(\abs{z}^{2n+2}),
\end{equation}
We write
\[
	J(\Phi)(z)=1+J_p(z)+\jO(\abs{z}^{p+1}) ~\text{and}~\frac{d}{dx}\bigl(x\abs{V(x)}^2\bigr)
	=1+\alpha x^q+\jO(x^{q+1}),
\]  
where $J_p$ is an homogeneous polynomial of degree $p \leqslant 2n+1$, $\alpha \in\R$ 
and $0<q \leqslant n+1$, so $J(g)(z)=1+\alpha \abs{z}^{2q}
										+\jO(\abs{z}^{2q+2})=J(g)(\Phi(z))+\jO(\abs{z}^{2q+1})$.~\\[5pt]
Since  $h(z)=\lambda z+\jO(\abs{z}^2)$, ~\eqref{eq:J(phi)} implies that 
\begin{equation}\label{eq:jet-J}
	1+J_p(\lambda z)=(1+\alpha \abs{z}^{2q}+\jO(\abs{z}^{2q+1}))\cdot(1+J_p(z))
	+\jO(\abs{z}^{p+1}).
\end{equation}
\begin{minipage}{\textwidth}
Now we use the hypothesis  that $J_p$ does not contain any power of $\abs{z}^2$:
\begin{itemize}
\item[--] If $2q>p$ then~\eqref{eq:jet-J} implies that $J_p(z)=J_p(\lambda z)$, so $J_p=0$;%
\item[--]If $2q=p$ then~\eqref{eq:jet-J} implies that  $J_p(\lambda z)\!-J_p( z)
																			=\alpha \abs{z}^{2q}$, so $J_p=\alpha=0$;%
\item[--] If $2q<p$ then~\eqref{eq:jet-J} implies that $\alpha \abs{z}^{2q}=
																						\jO(\abs{z}^{2q+1}))$, so $\alpha=0$.
\end{itemize}
\end{minipage}
~\\[3pt]
In any case we obtain that  $J_p=0$ if $p \leqslant 2n+1$, so $J(\Phi)(z)=1+\jO(\abs{z}^{2n+2})$.
Furthermore, we should note that this also implies that $J(g)(x)=1+\jO(\abs{x}^{n+1})$.~\\[5pt]
\textbf{Step~3:} we prove the  existence of the complex coefficients $\gamma_\ell$, for $1\leqslant \ell
\leqslant n$.
We have $z^{-1}=\jO_k(-1; k!,\rho)$. Therefore the (product) axiom shows that
\[
	\Phi\circ h\circ \Phi^{-1}(z)=\lambda z\big(V(\abs{z}^2)+\varepsilon_0(z)\big), ~\text{with 
				$\varepsilon_0(z)=\jO_{k,n}(2n+2;\abs{\lambda-1}\omega^{2n+2},\rho)$.}
\]  
Set $L(z)=\log(1+z)$,  where $\log$  denotes the principal value of the logarithm; we estimate 
$L(-1+V(\abs{z}^2)+\varepsilon_0(z))$. 
Since $\abs{b_m}=\jO_{ E_{\beta,n}}(\abs{\lambda-1}\omega^{2m}) \subset \jO_{ E_{\beta,n}}(1)$ 
and $\omega\rho=\jO_{ E_{\beta,n}}(\abs{\lambda-1}/q)$, we have 
\[
	\abs{-1+V(\abs{z^2})+\epsilon_0(z)}=\jO_{ E_{\beta,n}}(\abs{\lambda-1}(\omega\rho)^2)
															  =\jO_{ E_{\beta,n}}(\abs{\lambda-1}^3/q^2).
\]
Therefore, up to changing $\rho$ to $\rho'$, with $\rho'\leqslant \rho$ and $\rho'\asymp \rho$
small enough, we may assume that $\abs{V(\abs{z^2})+\epsilon_0(z)-1}\leqslant 1/2$ on 
$\Dm(0;\rho)$. Thus $\log(V(\abs{z^2})+\epsilon_0(z))$ is well defined. Moreover, we have 
$\abs{\partial^{2n+2}L(z)}\leqslant (2n+2)!2^{2n+2}$ on $\Dm(0,1/2)$. Therefore the 
(Taylor expansion) axiom shows that 
\[
	L(z)=\sum_{\ell=0}^{2n}\frac{(-1)^\ell}{\ell+1}z^{\ell+1}+L_1(z),
								~\text{with $L_1(z)=\jO_{k,n}(2n+2;1,1/2)$.}
\]
Since  $\abs{b_m}=\jO_{ E_{\beta,n}}(\abs{\lambda-1}\omega^{2m})$ and $\omega\rho
=\jO_{ E_{\beta,n}}(1)$, we have 
\[
V(\abs{z}^2)+\varepsilon_0(z)=1+\jO_{k,n}(1;\abs{\lambda-1}\omega,\rho).
\]
Since $2n+2\geqslant k$, we may apply the (composition) axiom, which  shows that 
\[
L_1(V(\abs{z}^2)+\varepsilon_0(z)-1)=\jO_{k,n}(2n+2;\abs{\lambda-1}^{2n+2}
\omega^{2n+2},\rho).
\]
Now we write $L_0=L-L_1$ and $P(z)=V(\abs{z}^2)-1$; we estimate $L_0(P(z)+
\varepsilon_0(z))$. Since $P(z)=\jO_{k,n}(0;\abs{\lambda-1},\rho)$ and 
$\varepsilon_0(z)=\jO_{k,n}(2n+2;\abs{\lambda-1}\omega^{2n+2},\rho)$, the 
(product) axiom shows that 
\[
	(P(z)+\epsilon_0(z))^\ell=P^\ell(z)+\jO_{k,n}(2n+2;\abs{\lambda-1}^\ell
	\omega^{2n+2},\rho).
\]
Furthermore, we have 
\[
	(P(z))^\ell=(i)^\ell\sum_{j=\ell}^{n\ell}\abs{z}^{2j}\sum_{\underline{m}\in N_j} 
	b_{\underline{m}},
\]
where the summation index $\underline{m}=(m_1,\ldots,m_\ell)\in N_j\subset \N^\ell$ 
above runs over all the tuples satisfying $\sum_{q=1}^\ell m_q=j$ and where 
$b_{\underline{m}}=\prod_{q=1}^\ell b_{m_q}$. Therefore  we have $\abs{b_{\underline{m}}}
=\jO_{ E_{\beta,n}}(\abs{\lambda-1}^{\ell}\omega^{2j})$ for $\underline{m}\in N_j$, so the 
(restriction) axiom shows that there exist $\gamma_{j,\ell}\in\C$ for each $(q,N,\mu)\in  E_{\beta,n}$
 satisfying 
$\abs{\gamma_{j,\ell}}=\jO_{\tilde E_n}(\abs{\lambda-1}^\ell\omega^{2j})$ and  
\[
	(P(z))^\ell=i\sum_{j=\ell}^{n} \gamma_{j,\ell} \abs{z}^{2j}+\jO_{k,\tilde E_n}(2n+2;
	\abs{\lambda-1}^{\ell}\omega^{2n+2},\rho).
\]
Thus, setting $\gamma_j=\sum\limits_{\ell=1}^{j} \frac{(-1)^{\ell-1}}{\ell}\gamma_{j,\ell}$,
it follows  from the estimates above that
\[
\left\vert\begin{array}{l}
				\abs{\gamma_j}=\jO_{\tilde E_n}(\abs{\lambda-1}\omega^{2j});\\[5pt]%
				L_0(P(z)+\varepsilon_0(z))=i\sum\limits_{j=1}^{n}\gamma_j \abs{z}^{2j}
				+\jO_{k,n}(2n+2;\abs{\lambda-1}\omega^{2n+2},\rho).
\end{array}\right.
\]
In particular, we have $\lambda z(1+ib_1\abs{z}^2)+\jO(\abs{z}^5)=\lambda z
\exp(i\gamma_1\abs{z}^2+\jO(\abs{z}^4))$, so $ib_1=i\gamma_1$.~\\[5pt]
{\bfseries Step~4:} 
all that remains is to check that $\gamma_\ell\in\R$. We have $J(g)(x)
=\frac{d}{dx}(x\abs{V(x)}^2)=1+\jO(x^{n+1})$, so  $\abs{V(x)}^2=1+\jO(x^{n+1})$ and 
\[
	\bigg\vert   \exp\bigl(\sum_{j=1}^n i\gamma_\ell \abs{z}^{2\ell}\bigr)    \bigg\vert^2
	=V(\abs{z}^2)+\jO(\abs{z}^{2n+2})=1+\jO(\abs{z}^{2n+2}).
\]
This last estimate holds true if and only if $\Re(i\gamma_\ell)=0$ for $1\leqslant\ell
\leqslant n$, which means that $\gamma_\ell\in\R$.
This ends the proof of the existence of the coefficients  $\gamma_j$ with the announced 
properties and the proof of the lemma is complete. 
\end{proof}

\begin{lemma}\label{lem:Herman : exp(i|z|^2)} 
There exist $\rho'>0$  and $\rho''>0$,  with $\rho'\leqslant \rho$, a function $\varepsilon~:
\Dm(0;\rho'')\to\R$ and a diffeomorphism $\varphi$ from $\Dm(0;\rho')$
into an open set that contains $\Dm(0;\rho'')$ satisfying
\begin{enumerate}
\item  $\rho'\asymp \rho\asymp \rho''/\sqrt{{b_1}}$ and  $\abs{\varphi(z)}\asymp 
									\abs{z}\sqrt{{b_1}}$	uniformely on $E_{\beta,n}\times \Dm(0;\rho')$;%
\item $(\varphi\circ \Phi\circ h\circ \Phi^{-1}\circ \varphi^{-1})(z)
									=\lambda z\exp(-2\pi i\abs{z}^2+\varepsilon(z))$ on $\Dm(0;\rho'')$;%
\item $\varepsilon(z)=\jO_{k,n}\big(2n,\abs{\lambda-1}^{-n},\rho\sqrt{{b_1}}\big)$.
\end{enumerate}
\end{lemma}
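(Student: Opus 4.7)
The plan is to reduce the intermediate normal form produced by Lemma~\ref{Lem:exponential-birkhoff}, namely
\[
g(z) := \Phi\circ h\circ\Phi^{-1}(z) = \lambda z\exp\Bigl(i\sum_{\ell=1}^n\gamma_\ell|z|^{2\ell}+R(z)\Bigr),
\]
with $\gamma_1 = b_1$, $|\gamma_\ell|\asymp|\lambda-1|\omega^{2\ell}$, and $R\in\jO_{k,n}(2n+2;|\lambda-1|\omega^{2n+2},\rho)$, to the Herman form by conjugating with a radial diffeomorphism $\varphi(z)=zF(|z|^2)$. The function $F$ is determined implicitly by requiring that $-2\pi s F(s)^2$ coincides with the polynomial $\sum_{\ell=1}^n\gamma_\ell s^\ell$, i.e.
\[
F(s)^2 = -\frac{1}{2\pi}\bigl(\gamma_1+\gamma_2 s+\cdots+\gamma_n s^{n-1}\bigr).
\]
Under the sign convention making $\sqrt{b_1}$ meaningful in the statement, this gives $F(0)=\sqrt{|b_1|/(2\pi)}\asymp\omega\sqrt{|\lambda-1|}$; the ratios $\gamma_\ell/\gamma_1\asymp\omega^{2\ell-2}$ remain uniformly bounded on $[0,\rho^2]$ since $\omega^2\rho^2\asymp(|\lambda-1|/q)^2\ll 1$, so $F$ extends smoothly and stays bounded away from $0$ on $[0,\rho_1^2]$ for some $\rho_1\asymp\rho$. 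Consequently $\varphi$ is a smooth radial diffeomorphism from $\Dm(0;\rho_1)$ onto its image, which contains $\Dm(0;\rho'')$ with $\rho''\asymp F(0)\rho_1\asymp\rho\omega\sqrt{|\lambda-1|}$, and $|\varphi(z)|\asymp|z|\omega\sqrt{|\lambda-1|}$ throughout.

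By construction, the conjugated map takes the form $\varphi\circ g\circ\varphi^{-1}(w) = \lambda w\exp\bigl(-2\pi i|w|^2+\varepsilon(w)\bigr)$ on $\Dm(0;\rho'')$, with $\varepsilon$ inherited from the pullback of $R$ under $\varphi^{-1}$. For $w$ with $|w|\le\rho''$, the preimage $z=\varphi^{-1}(w)$ satisfies $|z|\asymp|w|/(\omega\sqrt{|\lambda-1|})$, so the bound $|R(z)|\lesssim|\lambda-1|\omega^{2n+2}|z|^{2n+2}$ translates into
\[
|\varepsilon(w)|\lesssim|\lambda-1|\omega^{2n+2}\cdot\frac{|w|^{2n+2}}{(\omega\sqrt{|\lambda-1|})^{2n+2}} = |\lambda-1|^{-n}|w|^{2n+2}.
\]
This places $\varepsilon$ in $\jO_{k,n}(2n+2;|\lambda-1|^{-n},\rho'')\subset\jO_{k,n}(2n;|\lambda-1|^{-n},\rho'')$, matching the announced estimate. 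Derivative estimates up to order $k$ follow from the chain rule once the derivatives of $\varphi$ and $\varphi^{-1}$ are controlled using the boundedness of $F$ and $1/F$ on their domains, combined with the transformation rules for the spaces $\jO_k$ collected in Appendix~\ref{app:O_k}.

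The main obstacle will be ensuring that $\varepsilon$ is real-valued, as the pullback of a generically complex-valued $R$ is a priori complex. This must be extracted from symplecticity: the Jacobian identity $J(g)\equiv 1$, which is the starting point of Step~4 in the proof of Lemma~\ref{Lem:exponential-birkhoff}, both forces the coefficients $\gamma_\ell$ to be real and constrains $\Re R(z)$ to vanish at the same order $2n+2$ as $R$ itself. Since the radial map $\varphi$ is $F(0)^2$-conformally area-preserving to leading order, conjugation by $\varphi$ preserves this real-vanishing property; any residual imaginary contribution of intermediate order that may arise can be absorbed by an adjustment of the defining relation for $F$ (adding harmless higher-order corrections), leaving a purely real $\varepsilon$ of the required order. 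This bookkeeping, closely paralleling the symplecticity argument already used in Lemma~\ref{Lem:exponential-birkhoff}, is the technical heart of the proof.
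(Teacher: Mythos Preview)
Your overall strategy matches the paper's: conjugate the exponential Birkhoff form by a map of the shape $\varphi(z)=z(\cdots)^{1/2}$ to absorb the higher Birkhoff invariants $\gamma_2,\ldots,\gamma_n$ into $|\varphi(z)|^2$, then rescale by $\sqrt{\gamma_1/2\pi}$. The estimates you give for $\rho'$, $\rho''$ and $|\varphi(z)|$ are the right ones.

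The genuine gap is in how you obtain a \emph{real} $\varepsilon$. Your $\varphi(z)=zF(|z|^2)$ is radial, hence conjugation by it can only modify the exponent by functions of $|z|^2$. But the imaginary part of the remainder $R(z)=\varepsilon_0(z)$ is in general not a function of $|z|^2$ alone (there is no reason for the $\jO_{k,n}(2n+2;\cdot)$ remainder from the Birkhoff step to be rotationally symmetric). So no adjustment of $F$ as a function of the single real variable $s=|z|^2$ can cancel $\Im R$, and your proposal ``absorb the residual imaginary contribution by adjusting $F$'' does not go through as stated. The paper's device is to make the change of variables non-radial from the outset:
\[
\varphi_0(z)=z\Bigl(1+\sum_{\ell=2}^{n}\tfrac{\gamma_\ell}{\gamma_1}|z|^{2\ell-2}+\tfrac{1}{\gamma_1}\,\Im\bigl(\varepsilon_0(z)/|z|^2\bigr)\Bigr)^{1/2},
\]
so that the very definition of $\varphi_0$ eats $\Im\varepsilon_0$ and leaves the exactly real exponent $i\gamma_1|\varphi_0(z)|^2+\Re\varepsilon_0(z)$. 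All the subsequent error terms that the conjugation produces (several of them, not just the pullback of $R$: one must compare $P_1(f_0(z))$ with $P_1(\varphi_0^{-1}(z))$ since $|f_0(z)|=|\varphi_0^{-1}(z)|e^{\varepsilon_3(z)}$) are then checked to be real one by one. A secondary point: $\varepsilon$ is not merely the pullback of $R$ by $\varphi^{-1}$; the conjugation generates further terms of the same order because $|g(z)|\neq|z|$, and these must be tracked too.
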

\begin{proof} Lemma~\ref{Lem:exponential-birkhoff}
shows that 
\[
	\Phi\circ h\circ \Phi^{-1}(z)=\lambda z \exp\Big(i \sum_{\ell=1}^{n}\gamma_\ell 
																		\abs{z}^{2\ell}+\varepsilon_0(z)\Big),
~\text{with  $\varepsilon_0\in \jO_{k,\tilde E_n}(2n+2, 
																			\abs{\lambda-1}\omega^{2n+2},\rho)$.}
\]
With 
$\displaystyle\varphi_0(z)=z\Big(1+\sum_{\ell=2}^{n}\frac{\gamma_\ell}{\gamma_1}
\abs{z}^{2\ell-2}+\frac{1}{\gamma_1}\Im\big({\varepsilon_0(z)}/{\abs{z}^2}\big)\Big)^{1/2}$, 
this is equivalent to the equation
\begin{equation}
\label{eq:exp-Phi} \Phi\circ h\circ \Phi^{-1}(z)=\lambda z \exp
						\Big(i\gamma_1\abs{\varphi_0(z)}^2+\Re\big(\varepsilon_0(z)\big)\Big).
\end{equation}
{\bf We estimate $ \mathbf{\Phi\circ h\circ \Phi^{-1}\circ \varphi_0^{-1} (z)}$.} ~\\[3pt]
We set $\varepsilon_1(z)=(1/\gamma_1)~\Im(\abs{z}^{-2}\varepsilon_0(z))$. 
Since $z^{-1}=\jO_k(-1; k!,\rho)$, the (product) axiom shows that 
$\varepsilon_1(z)=\jO_{k,n}(2n;\omega^{2n},\rho)\subset 
\jO_k(0;(\rho\omega)^{2n},\rho)$. Since $\rho\omega=\jO_{E_{\beta,n}}(1)$ and 
$\abs{\gamma_\ell/\gamma_1}=\jO_{E_{\beta,n}}(\omega^{2\ell-2})~$, it follows that 
\[
	\sum_{\ell=2}^{n} \frac{\abs{\gamma_\ell}}{\abs{\gamma_1}}\abs{z}^{2\ell-2}
				+\big\vert \varepsilon_1(z)\big\vert =\jO_{ E_{\beta,n}}((\rho\omega)^2).
\]
This implies that there exists $\rho'\leqslant \rho$ such that $\rho'\asymp \rho$ and 
\[
\sum_{\ell=2}^{n} \frac{\abs{\gamma_\ell}}{\abs{\gamma_1}}\abs{z}^{2\ell-2}
	+\big\vert \varepsilon_1(z)\big\vert \leqslant \frac{1}{2}~\text{on $\Dm(0;\rho')$.}
\]
In particular,  $\varphi_0(z)$ is well defined for $z\in\Dm(0,\rho')$.
Furthermore, the (Taylor) axiom shows that $\sqrt{1+z}=\sum_{\ell=0}^{k} 
\big(\begin{smallmatrix}1/2\\ \ell\end{smallmatrix}\big)~z^\ell+\jO_{k,n}(k+1; 1,1/2)$, 
so,  by composition,  there exist coefficients $c_\ell\in\R$,  $1 \leqslant \ell \leqslant n$ and 
a real values reminder $\varepsilon_2$ such that 
\begin{equation}\label{eq:estimate-eps2}
	\Big(1+\sum_{\ell=2}^{n} \frac{\gamma_\ell\abs{z}^{2\ell-2}\!\!\!\!\!\!\!\!}{\gamma_1}
	+\varepsilon_1(z)\Big)^{\dem}%
							=1+\sum_{\ell=1}^{n-1} c_\ell \abs{z}^{2\ell}+\varepsilon_2(z),
	~\text{with}\left\{\begin{array}{l}
											\abs{c_\ell}=\jO_{ E_{\beta,n}}(\omega^{2\ell}),\\
											\varepsilon_2(z)=\jO_{k,n}(2n;\omega^{2n},\rho').
									\end{array}\right.
\end{equation}
Thus we have proved that 
\begin{equation}\label{eq:estimate-phi_0}
\varphi_0(z)=z+\sum_{\ell=1}^{n-1} c_\ell z\abs{z}^{2\ell}+\jO_{k,n}
																											(2n+1;\omega^{2n},\rho').
\end{equation}
We write $\varphi_0(z)=P_0(z)+z\varepsilon_2(z)$; up to shrinking $\rho'$ (keeping 
$\rho'\asymp \rho$), we may assume that there exist $c>0$ and $C>0$ such that 
$c\asymp 1$,  $z\varepsilon_2(z)=\jO_k(2n+1;C\omega^{2n},\rho')$ and 
\[
	\norm{\partial P_0-1}_{\rho'}+\norm{\bar\partial P_0}_{\rho'}+C(\rho')^{2n}\omega^{2n}
	\leqslant c\omega^2\rho^2\leqslant \frac{1}{8}.
\]
It follows with  the (inverse) axiom applied with $\varepsilon=c\omega^2\rho^2$ that  
$\varphi_0$ is a diffeomorphism from $\Dm(0,\rho')$ onto an open set that contains a disk of 
the form $\Dm(0,\rho'')$, such that  $\rho''\asymp \rho$ and there exists a polynomial $Q$ of 
degree $2n$ and valuation $2$ such that 
\[
	\varphi_0^{-1}(z)=z+Q(z)+\jO_{k,n}(2n+1;\varepsilon/(\rho'')^{2n},\rho''),
	~\text{with $\norm{Q}_{\rho''}=\jO_{ E_{\beta,n}}(\rho''\varepsilon)$.}
\]
$Q$ is a sum of monomials of the form $d_\nu z^\nu$, with $2 \leqslant\abs{\nu}\leqslant 2n$, 
and we have 
\begin{align*}
	\abs{d_\nu}\rho''^{\abs{\nu}}\leqslant \norm{Q}_{\rho''}
																		=\jO_{ E_{\beta,n}}(\rho''^3\omega^2), \\ %
\text{so $\abs{d_\nu}=\jO_{ E_{\beta,n}}(\rho''^{3-\abs{\nu}}\omega^2)$ and $d_\nu z^{\nu}
																		=\jO_{k,n}(2; \rho''\omega^2 ,\rho'')$.}
\end{align*}
In a similar way, since we have $\rho''\asymp \rho$ and $\varepsilon=\jO_{ E_{\beta,n}}
(\rho^2\omega^2)$, the (restriction) axiom shows that  $\jO_{k,n}(2n+1;
\varepsilon/(\rho'')^{2n},\rho'')\subset \jO_{k,n}(2;\omega^2 
\rho^2 \rho^{2n-1}/\rho^{2n},\rho'')=\jO_{k,n}(2;\omega^2\rho'',\rho'')$. Since 
$\rho\omega=\jO_{ E_{\beta,n}}(1)$,  we obtain that 
\begin{equation}\label{eq:estimate-phi_0-inv}
\varphi_0^{-1}(z)=z+\jO_{k,n}(2;\omega^2\rho'',\rho'')=z+\jO_{k,n}
(2;\omega,\rho'')=\jO_{k,n}(1;1,\rho'').
\end{equation}
This with~\eqref{eq:exp-Phi} and the (composition) axiom shows 
that 
\begin{align}\label{eq:f_0}
\Phi\circ h\circ \Phi^{-1}\circ \varphi_0^{-1} (z)=\lambda \varphi_0^{-1}(z)
													\exp\bigg(i\gamma_1\abs{z}^2+\varepsilon_3(z)\bigg),\\ %
\label{eq:estimate-eps3}~\text{with}~\varepsilon_3(z)=\Re(\varepsilon_0\circ 
	\varphi_0^{-1}(z))=\jO_{k,n}\big(2n+2;\abs{\lambda-1}\omega^{2n+2},\rho\big).%
\end{align}
We should also note that the (restriction) axiom shows that 
\[
i\gamma_1\abs{z}^2+\varepsilon_3(z)
=\jO_{k,n}(0;\omega^2\rho^2,\rho)\subset\jO_{k,n}(0;1,\rho).
\] Since 
$\exp(z)=\sum_{\ell=0}^{k} z^\ell/\ell!+\jO_{k,n}(k+1;1,1)$,  this implies that 
$\exp(i\gamma_1\abs{z}^2+\varepsilon_3(z))=1+\jO_{k,n}(0;\omega^2\rho^2,\rho)
\subset \jO_{k,n}(0;1,\rho)$. From this,~\eqref{eq:f_0} 
and~\eqref{eq:estimate-phi_0-inv}, one can deduce that 
\begin{equation}\label{eq:estimate-f_0}
			\Phi\circ h\circ \Phi^{-1}\circ \varphi_0^{-1} (z)=\jO_{k,n}(1;1,\rho).
\end{equation}
{\bfseries We  estimate $\mathbf{\varphi_0\circ \Phi\circ h\circ \Phi^{-1}\circ \varphi_0^{-1} (z)}$.}~\\[3pt]
We write $f_0(z)=\Phi\circ h\circ \Phi^{-1}\circ \varphi_0^{-1} (z)$ and 
$P_0(z)=zP_1(z)$, with $P_1(z)=1+\sum_{\ell=1}^n c_\ell\abs{z}^{2\ell} $ and $\varphi_0(z)
=z\big(P_1(z)+\varepsilon_2(z)\big)$. Since $P_1$ depends only on $\abs{z}$ and 
$\abs{f_0}(z)=\abs{\varphi_0^{-1}(z)}
\exp(\varepsilon_3(z))$,  we have
\[
		P_1(f_0(z))=P_1(\varphi_0^{-1}(z)e^{\varepsilon_3(z)}).
\]
Since $\abs{\lambda-1}\omega^{2n+2}\rho^{2n+2}=\jO_{ E_{\beta,n}}(1)$, 
~\eqref{eq:estimate-eps3} shows with the ($P$-composition) and (restriction)~axioms
 that 
$\exp(\varepsilon_3(z))=1+\jO_{k,n}(2n+2;\abs{\lambda-1}\omega^{2n+2},\rho)$. 
Therefore the (product)~axiom and ~\eqref{eq:estimate-phi_0-inv}  imply  that 
\[
	\varphi_0^{-1}(z)e^{\varepsilon_3(z)}=\varphi_0^{-1}(z)+\jO_{k,n}(2n+3;%
																						\abs{\lambda-1}\omega^{2n+2},\rho).
\]
From this and the estimates  $\abs{c_\ell}=\jO_{ E_{\beta,n}}(\omega^{2\ell})$, one may 
check that 
\begin{align}
\notag
P_1(\varphi_0^{-1}(z)e^{\varepsilon_3(z)})=P_1(\varphi_0^{-1}(z))+\varepsilon_5(z), \\ %
\label{eq:estimate-eps5}
\text{with $\varepsilon_5(z)=\jO_{k,n}(2n+4;\abs{\lambda-1}\omega^{2n+4},\rho))
																\subset \jO_{k,n}(2n;\omega^{2n},\rho)$.} %
\end{align}
 Since $P_1$ is real valued,  so is $\varepsilon_5$. Moreover, ~\eqref{eq:estimate-eps2},
the (composition) axiom and~\eqref{eq:estimate-phi_0-inv} imply  that
\begin{equation}\label{eq:eps2-pho-inv}
				\varepsilon_2\circ \varphi_0^{-1}(z)=\jO_{k,n}(2n;\omega^{2n},\rho');
\end{equation}
the estimate  $\abs{c_\ell}=\jO_{ E_{\beta,n}}(\omega^{2\ell})$  and~\eqref{eq:estimate-phi_0-inv}
imply  that 
\[
	c_\ell\abs{\varphi_0^{-1}(z)}^{2\ell}
		=\jO_{k,n}(2\ell;\omega^{2\ell},\rho)
		\subset\jO_{k,n}(0;(\omega\rho)^{2\ell},\rho).
\]
 Thus we obtain that  $P_1(\varphi_0^{-1}(z))+\varepsilon_2\circ\varphi_0^{-1}(z)=1
+\jO_{k,n}(0;(\rho\omega)^2,\rho)$.\\
 Since $\rho\omega=\jO_{ E_{\beta,n}}(1)$, we 
may assume  that $(P_1+\varepsilon_2)
\circ \varphi_0^{-1}(z)$ does not vanish, up to shrinking $\rho''$ (keeping $\rho''\asymp \rho$),
and
\begin{equation}\label{eq:inv-e7}
\big((P_1+\varepsilon_2)\circ \varphi_0^{-1}(z)\big)^{-1}=1+\jO_{k,n}
(0;(\rho\omega)^2,\rho'')\subset \jO_{k,_n}(0;1,\rho).  
\end{equation}
From this discussion, it follows that 
\begin{align*}
   \varphi_0\circ f_0(z)
& = f_0(z)\Big(P_1\big(\varphi_0^{-1}(z)e^{\varepsilon_3(z)}\big)+\varepsilon_4(z)\Big)\\ %
& =\lambda \varphi_0^{-1}(z)\exp\big(i\gamma_1\abs{z}^2+\varepsilon_3(z)\big)
				\Big(P_1\big(\varphi_0^{-1}(z)\big)+\varepsilon_5(z)+\varepsilon_4(z)\Big) \\ %
&=\lambda \varphi_0^{-1}(z) \exp\big(i\gamma_1\abs{z}^2+\varepsilon_3(z)\big)\Big(P_1
	\big(\varphi_0^{-1}(z)\big)+\varepsilon_2\circ\varphi_0^{-1}(z)+\varepsilon_6(z)\Big)\\ %
&=\lambda \varphi_0^{-1}(z)\big(P_1+\varepsilon_2\big)\big(\varphi_0^{-1}(z)\big)
				\exp\big(i\gamma_1\abs{z}^2+\varepsilon_3(z)\big)\big(1+\varepsilon_7(z)\big)\\ %
&=\lambda z\exp\big(i\gamma_1\abs{z}^2+\varepsilon_3(z)\big)\big(1+\varepsilon_7(z)\big)%
\end{align*}
with $\varepsilon_4=\varepsilon_2\circ f_0$, 
$\varepsilon_6=\varepsilon_5+\varepsilon_4-\varepsilon_2\circ\varphi_0^{-1}$ and 
$\varepsilon_7=\varepsilon_6\cdot \big((P_1+\varepsilon_2)\circ \varphi_0^{-1}\big)^{-1}$.
Since $P_1$, $\varepsilon_2$, $\varepsilon_3$ and $\varepsilon_5$ are real valued, so is 
$\varepsilon_7$. Furthermore, the (composition) axiom applied to~\eqref{eq:estimate-eps2}
and~\eqref{eq:estimate-f_0} shows that  $\varepsilon_4$ lies in $\jO_{k,n}
(2n;\omega^{2n},\rho)$, and so $\varepsilon_6$ according to~\eqref{eq:estimate-eps5} 
and~\eqref{eq:eps2-pho-inv}. With the (product) axiom, this estimate of $\varepsilon_6$
and~\eqref{eq:inv-e7}  imply that $\varepsilon_7(z)=\jO_{k,n}(2n;\omega^{2n},\rho)$.

Since $\log(1+z)=\sum_{\ell=0}^{k-1}(-1)^\ell/(\ell+1)z^{\ell+1}+\jO_{k,n}(k+1;1,1/2)$, 
the ($P$-composition) shows that we may set $\varepsilon_8(z)=\log(1+\varepsilon_7(z))$,
up to shrinking $\rho''$ (keeping $\rho''\asymp \rho$), so $\varepsilon_8$ is real valued and 
$\varepsilon_8(z)=\jO_{k,n}(2n;\omega^{2n},\rho)$ and 
\[
            \varphi_0\circ f_0(z)=\lambda z\exp\big(i\gamma_1\abs{z}^2+\varepsilon_9(z)\big),
\]
with $\varepsilon_9(z)=\varepsilon_3(z)+\varepsilon_8(z)=\jO_{k,n}
(2n;\omega^{2n},\rho)$ according to~\eqref{eq:estimate-eps3} and the (restriction) axiom; 
we note that $\varepsilon_9$ is real valued.~\\[3pt]
{\bfseries Definition of $\mathbf{\varphi}$.} We set $\varphi_1(z)=z\sqrt{{\gamma_1}/2\pi}$
on a disk of the form $\Dm(0;c\rho/\sqrt{{\gamma_1}})$ with $c>0$ small enough (keeping  
$c\asymp 1$) so $\varepsilon=\varepsilon_9\circ \varphi_1^{-1}$ is well defined; 
we set $\varphi=\varphi_1\circ \varphi_0$. Thus we have 
\[
	\varphi\circ\Phi\circ h \circ \Phi^{-1}\circ \varphi^{-1}(z)
		=\varphi_1\circ \varphi_0\circ f_0\circ \varphi_1^{-1}(z)
		=\lambda z \exp\big(-2\pi i\abs{z}^2+\varepsilon(z)\big).
\]
Since we have $z\sqrt{{2\pi }/{{\gamma_1}}}=\jO_{k,\tilde E_n}(1;\frac{1}{\sqrt{{\gamma_1}}},
\rho\sqrt{{\gamma_1}})=\jO_{k,\tilde E_n}(1;\frac{1}{\omega}\abs{\lambda-1}^{-\frac{1}{2}}),
\rho\sqrt{\gamma_1})$, the (composition) axiom shows that 
\[
	\varepsilon(z)=\varepsilon_9(z\sqrt{{2\pi}/{ \gamma_1}})=\jO_{k,\tilde E_n}(2n;
																	1/{\abs{\lambda-1}}^{n},\rho\sqrt\gamma_1)
\]
and this completes the proof of the lemma.
\end{proof}

%
%
%
\subsubsection{Proof of Proposition~\ref{prop:conjugate-Herman}}
We now focus on the tranformation of $h$
 in polar coordinates, as stated in Proposition~\ref{prop:conjugate-Herman}.
\begin{proof}
\ref{Cond:herman-polar})~We consider the diffeomorphism $\psi$ of 
	Proposition~\ref{prop:conjugate-Herman-C} and set $\Psi=\psi^{-1}\circ p$, with $p(r,\theta)
	=\sqrt{r} \exp(2\pi i\theta)$. Since $\psi^{-1}$ is a diffeomorphism from $\Dm(0;\rho_2)$ 
	onto its image, so is $\psi$ from $(0;\rho_2^2]\times \T$.  This proves the point~1 of the 
	corollary, with $\rho'=\rho_2^2$ and $\bar\varepsilon(r,\theta)=r\big(\exp(2\varepsilon\circ 
	p(r,\theta))-1\big)$.~\\[3pt]
\ref{Cond:herman-reminder})~The function $\bar\varepsilon$ is real valued by construction. We now estimate 
	$\bar\varepsilon$. Proposition~\ref{prop:conjugate-Herman-C} shows that $\rho_2\asymp \rho
	\omega\abs{\lambda-1}^{1/2}$,  so $\rho_2^{2n}\abs{\lambda-1}^{-n}\asymp 
	(\rho\omega)^{2n}=\jO_{E_{\beta,n}}(1)$. Therefore, up to shrinking $\rho_2$ (keeping 
	$\rho_2\asymp \rho\omega\abs{\lambda-1}^{1/2}$), we may assume that $2\varepsilon
	(\Dm(0;\rho_2))\subset\Dm(0;1)$. The ($P$-composition) axiom of Lemma~\ref{lem:O_k}, 
	applied to $\exp(z)-1=P(z)+\jO_{k,n}(2n;1,1)$ composed with 
	\[
		2\varepsilon(z)=\jO_{k,n}(2n;C_1/\rho_2^{2n},\rho_2),~\text{where $C_1
		=\abs{\lambda-1}^{-n}\rho_2^{2n}$ and $P(z)=\sum\limits_{\ell=1}^{2n-1} 
		z^\ell/\ell !$},
	\] 
	shows that $\exp(2\varepsilon(z))=1+\jO_{k,n}(2n;C_{01}/\rho_2^{2n},\rho_2)$,  
	with
	\[
	C_{01}=C_1^{2n}+\norm{P}_{{}_{C_1}}=\jO_{E_{\beta,n}}
	\bigg(\sum_{\ell=1}^{2n}C_1^{\ell}\bigg)=\jO_{E_{\beta,n}}(C_1), 
	~\text{since $C_1=\jO_{E_{\beta,n}}(1)$.}
	\]
	Thus we have proved that $\exp(2\varepsilon(z))-1=\jO_{k,n}(2n;%
	\abs{\lambda-1}^{-n},\rho_2)$, hence the ($Z$-product) axiom 
	of~\ref{lem:O_k} applied twice shows that 
\[
	\abs{z}^2\big(\exp(2\varepsilon(z))-1)=
	\jO_{k,n}(2n+2;\abs{\lambda-1}^{-n},\rho_2).
\] 
 Applying the ($\T$-composition) axiom of 
	Lemma~\ref{lem:O_kT} to this map composed with $p$, the  estimate of $\bar\varepsilon$ 
	of the lemma follows, with $\rho'=\rho_2^2$.~\\[3pt]
\ref{Cond:herman-area})~We already know that $\rho'=\rho_2^2\asymp \rho^2\omega^2\abs{\lambda-1}
	\asymp \abs{\lambda-1}^3/q^2$. Furhermore, we have $\abs{p(r,\theta)}=\sqrt{r}$ and 
	Proposition~\ref{prop:conjugate-Herman-C} shows that 
\[
	\abs{\psi^{-1}(z)}\asymp 
	\frac{\abs{z}}{\omega\sqrt{\abs{\lambda-1}}},
~\text{hence}~
	\abs{\Psi(r,\theta)}\asymp 
	\frac{\sqrt{r}}{\omega\sqrt{\abs{\lambda-1}}}.
\] 
	At last, we observe that $\psi(0)=0$. 
	Therefore the diffeomorphism $\psi^{-1}$ maps any Jordan curve with $0$ in the interior 
	onto a Jordan curve with $0$ in the interior. But  the estimate of $\abs{\psi^{-1}(z)}$ above 
	implies that the Jordan curve $\psi^{-1}(\partial\Dm(0,\sqrt{r}))$ lies between two circles 
	centered at zero with radii comparable to   $\frac{1}{\omega}\sqrt{r/\abs{\lambda-1}}$.
	Therefore we have 
	\[
	\mathrm{area}(\psi^{-1}(\Dm(0;\sqrt{r})))\asymp \bigg(\frac{\sqrt{r}}{\omega
	\sqrt{\abs{\lambda-1}}}\bigg)^2 =\frac{r}{\omega^2\abs{\lambda-1}}.
	\]
	This implies the last estimate of the proposition since $p\big((0;r]\times\T\big)
	=\Dm(0;\sqrt{r})\setminus\{0\}$ and the proof of Proposition~\ref{prop:conjugate-Herman}
 is complete.
\end{proof}


%

\subsection{The invariant  curve theorem}

The following statement is taken from \cite{herman1}, VII.11.3 and VII.11.11.A.1.

\label{sec:stateHermanthm}

\begin{thmnonb}[Herman \cite{herman1}]
Assume $\delta>0$ and set $\A_\delta=\T\times [-\delta,\delta]$. Let $\gamma\in\R$ and 
$\Gamma>0$ satisfy
\begin{equation}\label{eq:constant-type}
	0<\Gamma\leqslant \inf_{q\geqslant 1,\, p\in\Z}
        \big\{q\abs{q\gamma-p} \}.
\end{equation}
Then there exist two constants $c_1>0$ and $C_2\geqslant 0$ such that for any embedding  
$F:\A_\delta\to\A$  of the form
\[
F(\theta,r)=(\theta+\gamma+r,r+\varphi(\theta,r)),\quad\text{with}~\varphi\in
C^4(\A_\delta)
\]
satisfying the essential circle intersection property and 
\begin{equation}\label{eq:herma-cond-c1}
	\max_{1\le i+j\le 4}\norm{\partial_r^i\partial_\theta^j\varphi}_{C^0(\A_\delta)}
	\leqslant  c_1\Gamma^2,
\end{equation}
 there is an unique function  $\psi\in W^{3,2}(\T)$ and a
 diffeomorphism  $f\in C^1(\T)$
with rotation number $\gamma$ such that $F(\theta,\psi(\theta))=\big(f(\theta),\psi\big(f(\theta)
\big)\big)$ and we have 
\begin{equation}\label{eq:hermann-W32}
	\norm{\psi}_{W^{3,2}(\T)} \leqslant C_2\Gamma^{-1}\max_{1\le i+j\le 4}
	\norm{\partial_r^i\partial_\theta^j\varphi}_{C^0(\A_\delta)}.
\end{equation}
If $\delta/\Gamma\geqslant 0.123$ then $c_1=14$ and $C_2=0.097$ are suitable for  
any $\gamma$ satisfying~\eqref{eq:constant-type}.  
\end{thmnonb}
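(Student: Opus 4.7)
The plan is to prove this as a two-dimensional KAM theorem via a Newton-type iteration in scales of Sobolev spaces. Since $F$ is already in the prepared form of a perturbed twist map around $r=0$, I would start from the trivial approximation $\psi_0\equiv 0$ and iteratively construct improved approximate invariant graphs $r=\psi_n(\theta)$, each being a better solution of the invariance equation
\[
\psi(\theta+\gamma+\psi(\theta))=\psi(\theta)+\varphi(\theta,\psi(\theta)).
\]
Writing $\psi_{n+1}=\psi_n+u_n$ and linearizing in $u_n$ around $\psi_n$, the correction satisfies, modulo quadratically small remainders, a cohomological equation of the form
\[
u_n(\theta+\gamma)-u_n(\theta)=-R_n(\theta),
\]
where $R_n$ is the residual measuring how far $\psi_n$ is from being invariant under $F$.

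The small-divisor analysis would be carried out by Fourier series: the $k$-th Fourier coefficient of $u_n$ equals $\hat R_n(k)/(e^{2\pi i k\gamma}-1)$. The Diophantine hypothesis~\eqref{eq:constant-type}, which is of constant type (one-dimensional Roth condition), yields the lower bound $|e^{2\pi i k\gamma}-1|\gtrsim \Gamma/|k|$, and hence solving the cohomological equation costs exactly one derivative together with a factor $\Gamma^{-1}$. The essential circle intersection property is invoked here to eliminate the zero-frequency obstruction: it forces $R_n$ to have vanishing mean, so the cohomological equation is actually solvable on $\T$ and the iteration does not drift into a mere translation.

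Next I would set up the KAM iteration in a chain of shrinking Sobolev balls in $W^{s,2}(\T)$ for appropriately decreasing smoothness indices $s_n$, chosen so that the loss of one derivative per step is compensated by the quadratic smallness of the new residual $R_{n+1}=O(\|R_n\|^2)$. The smallness condition $\max_{1\le i+j\le 4}\|\partial_r^i\partial_\theta^j\varphi\|_{C^0}\le c_1\Gamma^2$ ensures that the iterative scheme stays in its domain of validity and converges. The presence of $\Gamma^2$ (rather than $\Gamma$) is exactly the combined loss from the cohomological step and the linearization error. Starting from regularity $C^4$ and losing strictly less than one derivative in a controlled way, one lands in $W^{3,2}(\T)$ in the limit, which accounts for both the weakness of the final regularity and the $\Gamma^{-1}$ factor in~\eqref{eq:hermann-W32}. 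The conjugation $f$ is then read off from the dynamics restricted to the graph.

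The hard part, and the reason for citing Herman's result rather than redoing the proof, is the explicit extraction of the constants $c_1=14$ and $C_2=0.097$ in the regime $\delta/\Gamma\ge 0.123$. This requires tracking every composition and product estimate throughout the iteration with sharp numerical constants, including careful use of Moser-type smoothing operators, interpolation inequalities in $W^{s,2}$, and a fine analysis of the geometry of the iteration (size of the annulus, verticality of the approximate graphs, boundedness of the Lipschitz constants). The qualitative convergence follows a standard KAM template, but obtaining such tight universal constants is the genuine technical achievement of~\cite{herman1} and would not fit in the present paper, hence my plan would be to quote the result exactly in the form stated.
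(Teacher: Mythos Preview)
Your conclusion is exactly right and matches the paper: this theorem is not proved in the paper at all, but is quoted verbatim from Herman's monograph \cite{herman1} (specifically VII.11.3 and VII.11.11.A.1), precisely because the explicit constants $c_1=14$, $C_2=0.097$ and the threshold $\delta/\Gamma\ge 0.123$ are the point and cannot realistically be re-derived here. So your final decision to cite the result as stated is what the authors do.

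Your informal sketch of the mechanism is broadly in the right spirit, but one point is oversimplified: the linearized cohomological equation is not over the pure rotation $\theta\mapsto\theta+\gamma$ but over the approximate circle diffeomorphism $f_n(\theta)=\theta+\gamma+\psi_n(\theta)$ induced on the current graph, so an additional conjugation (or change of variable to the angle in which $f_n$ is close to the rotation) is needed before the Fourier small-divisor estimate applies. This is exactly where Herman's theory of circle diffeomorphisms, rather than a naive KAM template, enters. Since the paper does not reproduce any of this, the discrepancy is moot for present purposes.
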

\noindent
This theorem requires a few comments.~\\[3pt]
1)~We recall that  $F~:\A_\delta\to\A$ satisfies {\em the essential circle interection 
	property} provided that each simple essential curve $\jC\subset \A_\delta$ (homotopy 
	equivalent to the circle $\{r=0\}$) satisfies $F(\jC)\cap\jC\not =\varnothing$. In our case, since 
	$F_{q,N,\mu}$ is symplectic and fixes $a_q$, this condition is automatically 
	fulfilled.~\\[3pt]
2) Here $W^{3,2}(\T)$ denotes the Sobolev space of all distributions $\psi\in\D'(\T)\cap 
	L^2(\T)$ with derivatives $\mathrm{D}^k\psi$ in $ L^2(\T)$ for $0 \leqslant k \leqslant 3$, 
  	endowed with the norm 
	\[
	\norm{\psi}_{W^{3,2}(\T)}=\left(\norm{\mathrm{D}^3\psi}_{L^2(\T)}^2 
	+\abs{\widehat{\psi}(0)}^2\right)^{1/2},~\text{with}~\widehat{\psi}(0)=\int_0^1\psi(t)~dt.
	\]
	In particular, it is standard to prove that  $W^{3,2}(\T)$ embeds in $C^{0}(\T)$ (see 
	Proposition IV.3.7 in~\cite{herman1} ) and 
	\[
	\norm{\psi-\widehat{\psi}(0)}_{C^0(\T)}\leqslant \frac{1}{12\sqrt{210}}
	\norm{\mathrm{D}^3\psi}_{L^2(\T)}.
	\]
	Since here $f(\theta)=\theta+\gamma+\psi(\theta)$ is a diffeomorphism of the circle with rotation 
	number $\gamma$, one can prove that $f-\gamma$ has a fixed point (see \cite{herman2}). This 
	implies that $\psi$ should vanished at some point $x_0\in\T$. Therefore we have 
	$\abs{\hat{\psi}(0)}=\abs{\psi(x_0)-\hat{\psi}(0)}\leqslant
	\norm{\psi-\hat{\psi}(0)}_{C^0(\T)}$, hence 
	\begin{equation}\label{eq:C0-estimate-W3-2}
		\norm{\psi}_{C^0(\T)}\leqslant 2\norm{\psi-\widehat{\psi}(0)}_{C^0(\T)}
		\leqslant \frac{1}{6\sqrt{210}}\norm{\mathrm{D}^3\psi}_{L^2(\T)}\leqslant
		\frac{1}{6\sqrt{210}}		\norm{\psi}_{W^{3,2}(\T)}.
	\end{equation}
3) One says that the number $\gamma\in\R$ is of constant type with Markoff constant at least  
	$\Gamma$ exactly when  it satisfies~\eqref{eq:constant-type}.  All we need to know 
	about it   is the following result (see IV.3.5 in~\cite{herman1}).

\begin{lemma}[Herman]\label{lem:constant-type}
There exists a constant $c>0$ such that for  all $0<\eta<1/2$, if $[a,b]\subset [0,1]$ satisfies 
$\abs{b-a}\geqslant \eta$ then $[a,b]$ contains infinitely many numbers of constant type with 
Markoff constant at least $c\eta$.
\end{lemma}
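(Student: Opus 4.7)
\medskip

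\noindent\textbf{Proof plan.}
The plan is to reduce the statement to a classical fact about continued fractions and then perform a Cantor-type construction inside $[a,b]$. Recall that if $\gamma \in (0,1)$ has continued fraction expansion $[0;a_1,a_2,\ldots]$ with convergents $p_n/q_n$, then
\[
q_n\abs{q_n\gamma-p_n} \;=\; \frac{1}{a_{n+1}+q_{n-1}/q_n+[0;a_{n+2},a_{n+3},\ldots]},
\]
so in particular, if every partial quotient satisfies $a_i \leq M$, then
\[
\inf_{q\geq 1,\,p\in\Z}q\abs{q\gamma-p} \;\geq\; \frac{1}{M+2}.
\]
Thus it suffices to show that there is an absolute constant $c_0>0$ such that, for every interval $[a,b]\subset[0,1]$ with $b-a\geq\eta$, there are infinitely many irrationals in $[a,b]$ whose partial quotients are all bounded by $M:=\lfloor c_0/\eta\rfloor$; then~\eqref{eq:constant-type} holds with $\Gamma \geq 1/(M+2)\geq c\eta$ for some $c>0$ depending only on $c_0$.

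The first step is to pick $M$ of order $1/\eta$. Using that two consecutive Farey fractions of denominator at most $M$ differ by at most $1/M$, one sees that for $c_0$ large enough the ``cylinders''
\[
C(c_1,\ldots,c_n) \;\defeq\; \{\gamma \in [0,1]\mid a_i(\gamma)=c_i \;\text{for}\; 1\leq i\leq n\}
\]
with $c_i\in\{1,\ldots,M\}$ cover a neighbourhood of any point of $(0,1)$ by intervals of length comparable to (but not larger than) a fixed fraction of $\eta$. More precisely, the length of $C(c_1,\ldots,c_n)$ is $1/(q_n(q_n+q_{n-1}))$ where $p_n/q_n$ is the corresponding convergent, and successive cylinders $C(c_1,\ldots,c_n,j)$, $j=1,\ldots,M$, nest inside $C(c_1,\ldots,c_n)$ and cover a fraction at least $1-O(1/M)$ of it. The bootstrap step consists in choosing the first few partial quotients $c_1,\ldots,c_{n_0}\in\{1,\ldots,M\}$ so that $C(c_1,\ldots,c_{n_0})\subset[a,b]$; this is possible, with $n_0$ absolutely bounded, because as soon as the cylinders at some level have length less than $\eta/3$ at least one of them lies in the middle third of $[a,b]$.

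Once such an $n_0$-cylinder is fixed inside $[a,b]$, I continue the construction: at each subsequent level I append any partial quotient in $\{1,\ldots,M\}$. This produces an uncountable Cantor-like set
\[
K \;\defeq\; \bigl\{\,[0;c_1,\ldots,c_{n_0},c_{n_0+1},c_{n_0+2},\ldots]\mid c_i\in\{1,\ldots,M\}\;\text{for all}\;i\geq n_0+1\,\bigr\}\subset[a,b],
\]
each element of which has all its partial quotients bounded by $M$. By the preliminary reduction, every $\gamma\in K$ satisfies~\eqref{eq:constant-type} with $\Gamma\geq 1/(M+2)\geq c\eta$, and since $K$ is infinite (in fact uncountable), the conclusion follows.

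The only genuinely technical point is the bootstrap: verifying that, with $M$ of order $1/\eta$, one can reach a level $n_0$ (with $n_0$ absolutely bounded) at which some admissible cylinder is entirely contained in $[a,b]$. This comes down to the elementary quantitative estimate on Farey gaps mentioned above, and fixes the value of the constant $c_0$ (and hence of $c$). Everything else is a routine Cantor construction, and the ``infinitely many'' part is automatic from the uncountability of $K$.
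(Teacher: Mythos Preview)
The paper does not give its own proof of this lemma: it is stated with the attribution ``[Herman]'' and the reference ``see IV.3.5 in~\cite{herman1}'' (Herman's Ast\'erisque monograph on invariant curves), and then used as a black box. So there is nothing in the paper to compare your argument against.

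Your approach---reducing to the classical fact that an irrational with all partial quotients $\leq M$ has Markoff constant $\geq 1/(M+2)$, and then running a Cantor construction inside an admissible cylinder contained in $[a,b]$---is the standard route and is essentially correct. Two small points on the bootstrap step, though:

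\begin{itemize}
\item Your claim that $n_0$ is \emph{absolutely} bounded is not right as stated. The longest admissible $n$-cylinder is $C(1,\ldots,1)$, of length $\asymp \phi^{-2n}$, so to force \emph{all} level-$n$ cylinders below $\eta/3$ you need $n_0 \gtrsim \log(1/\eta)$. Fortunately, the boundedness of $n_0$ is irrelevant to the conclusion: you only need that \emph{some} admissible cylinder lies in $[a,b]$, and once you have one you continue freely.
\item The sentence ``as soon as the cylinders at some level have length less than $\eta/3$, at least one of them lies in the middle third of $[a,b]$'' hides a step: you must also know that the \emph{complement} of the admissible $n$-cylinders does not swallow the whole middle third. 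The clean way to say this is: each connected component of that complement is contained in a gap of the limit set $\overline{E_M}$, and every gap of $\overline{E_M}$ has length $O(1/M) < \eta/3$ for $c_0$ large enough (this is exactly the Farey-gap estimate you allude to). Once that is said, your argument goes through.
\end{itemize}

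With these two clarifications your proof plan is complete and yields the lemma with an explicit $c$ depending only on the absolute constant~$c_0$.
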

Notice that  we may shrink  $c$  as we  need; in the following, we shall take 
$1/c \geqslant 0.123$ in order to apply Herman's theorem.

\subsection{Conclusion of the proof of Theorem~\ref{th:varpseudopend}}
\label{secconclupfThmF}
 Now we have all the ingredients to prove~Theorem~\ref{th:varpseudopend}, 
which follows immediately from the following proposition. We recall that $B_{q,N}$
is a $q$-adapted box for $\GNm$ and that $\GNm^q=\FNqm$ on $B_{q,N}$.
The set $E_{\beta,n}$ which appears in the statement is the one
defined in~\eqref{eq:mu-small2} on p.~\pageref{eq:mu-small2}.

\begin{prop}
There exists a real number $\eta_0>0$  and, 
for each $(q,N,\mu)\in E_{\beta,n}$, a disc 
$\Omega_{q,N,\mu}\subset B_{q,N}$ satisfying the following conditions.
\begin{enumerate}
\item  If  $\mu q^5/N^4<\eta_0$ then $F_{q,N,\mu}(\Omega_{q,N,\mu})=\Omega_{q,N,\mu}$;
\item $\mathrm{area}\big(\Omega_{q,N,\mu}\big)\asymp \frac{\mu}{N^2}$. 
\end{enumerate}
\end{prop}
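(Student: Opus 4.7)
The plan is to exploit the Herman normal form of Proposition~\ref{prop:conjugate-Herman}: in polar coordinates $(r,\theta)\in(0,\rho')\times\T$, the map $F_{q,N,\mu}$ appears as an exponentially small perturbation of the integrable twist $(r,\theta)\mapsto(r,\theta+\gamma_0/(2\pi)+r)$. We shall select a radius $r_0$ whose associated rotation number is diophantine of constant type, apply Herman's invariant curve theorem at that radius, transport the resulting curve back through the composite diffeomorphism to the original coordinates, and verify that the disc it bounds has area $\asymp \mu/N^2$.

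\textbf{Radius selection and verification of Herman's hypothesis.} Fix once and for all an integer $n\ge 7$ and take $k=4$ in Proposition~\ref{prop:conjugate-Herman}; set $r_\star:=\mu^2 q^8/N^8$. Using $\abs{\lambda-1}^2\asymp\mu\alpha_{q,N}\asymp\mu q^5/N^4$, $\omega=q^4/N^3$ and $\rho'\asymp\abs{\lambda-1}^3/q^2$, one checks $r_\star/\rho'\asymp\abs{\lambda-1}$, so provided $\mu q^5/N^4\le\eta_0$ with $\eta_0$ small enough the interval $[r_\star/2,3r_\star/2]$ lies inside $(0,\rho')$. Apply Lemma~\ref{lem:constant-type} to this interval of length $r_\star$ to pick $r_0$ such that $\gamma:=r_0+\gamma_0/(2\pi)\bmod 1$ is of constant type with Markoff constant $\Gamma\ge c\,r_\star$. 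After translating by $r'=r-r_0$, the normal form reads
\[
(\theta,r')\mapsto\bigl(\theta+\gamma+r',\ r'+\tilde\varepsilon(\theta,r')\bigr),\qquad\tilde\varepsilon(\theta,r'):=\bar\varepsilon(\theta,r_0+r'),
\]
on the band $\T\times[-r_\star/2,r_\star/2]$. The control $\bar\varepsilon\in\jO_{k,n}^\T(n+1;\abs{\lambda-1}^{-n},\rho')$ supplies $\normD{\partial_{r'}^i\partial_\theta^j\tilde\varepsilon}_{C^0}\le C\abs{\lambda-1}^{-n}r_\star^{n+1-i}$ for $i+j\le 4$, and since $\Gamma^2\asymp r_\star^2$, the smallness condition~\eqref{eq:herma-cond-c1} reduces to $r_\star^{\,n-5}\le\mathrm{const}\cdot\abs{\lambda-1}^n$. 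Substituting $r_\star\asymp\abs{\lambda-1}^4/q^2$ (which follows from $\abs{\lambda-1}^2\asymp\mu q^5/N^4$), a direct computation yields
\[
\abs{\lambda-1}^{-n}r_\star^{\,n-3}/r_\star^2\asymp(\mu q^5/N^4)^{(3n-20)/2}\,q^{-2(n-5)},
\]
which for $n\ge 7$ can be made $\le c_1$ as soon as $\mu q^5/N^4$ is small enough; this fixes $\eta_0$.

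\textbf{Construction of the disc and area computation.} Herman's theorem then produces $\psi\in W^{3,2}(\T)$ with $\normD{\psi}_{C^0}\le r_\star/2$ whose graph is an essential invariant curve of the twist perturbation. Transporting it back via the diffeomorphisms of Propositions~\ref{prop:conjugate-Herman}, \ref{prop:Birkhoff-invariants-in-complex-coordinates} and~\ref{prop:complex-coord}, one obtains an essential invariant Jordan curve of $F_{q,N,\mu}$ in $B_{q,N}$, bounding a topological disc $\Omega_{q,N,\mu}$ containing the fixed point $a_{q,N}$; since $F_{q,N,\mu}$ is area-preserving and leaves the bounding curve setwise invariant, $F_{q,N,\mu}(\Omega_{q,N,\mu})=\Omega_{q,N,\mu}$. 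To evaluate the area, note that the enclosed region in polar coordinates has area $\int_0^1(r_0+\psi(\theta))\,d\theta\asymp r_\star$; by Proposition~\ref{prop:conjugate-Herman}(iii) this corresponds to an area $\asymp r_\star/(\omega^2\abs{\lambda-1})$ in the complex $z$-plane; and by the Jacobian $\kappa\asymp\abs{\lambda-1}/\mu$ of Proposition~\ref{prop:complex-coord}(iii), the area in the original coordinates is
\[
\area(\Omega_{q,N,\mu})\asymp\kappa\cdot\frac{r_\star}{\omega^2\abs{\lambda-1}}=\frac{r_\star}{\mu\omega^2}=\frac{\mu^2 q^8/N^8}{\mu\cdot q^8/N^6}\asymp\frac{\mu}{N^2},
\]
as required. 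The main obstacle is the simultaneous balancing of three conflicting constraints: the search interval for $r_0$ must be long enough to provide a usable Markoff constant (which is bounded by the interval's length, forcing $\Gamma\lesssim r_\star$); the perturbation $\tilde\varepsilon$ must meet Herman's smallness on that band (requiring the normal form order $n$ large enough to absorb the singular factor $\abs{\lambda-1}^{-n}$); and the resulting disc must have area of the \emph{correct} order $\mu/N^2$---all three are achieved by the explicit choice $r_\star\asymp\mu^2 q^8/N^8$.
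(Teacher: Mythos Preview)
Your argument follows the paper's strategy exactly: pass to the Herman normal form (Proposition~\ref{prop:conjugate-Herman}), select a constant-type rotation number via Lemma~\ref{lem:constant-type}, apply Herman's invariant curve theorem, and carry the area back through the Jacobian~$\kappa$ of Proposition~\ref{prop:complex-coord}. The one substantive difference is the scale at which you work: the paper takes $n\ge 8$ and chooses $r_0\in[\rho'/3,2\rho'/3]$ with band half-width $\delta=\rho'/3$, so that both $r_0$ and the Markoff constant are $\asymp\rho'\asymp|\lambda-1|^3/q^2$; you instead take $n\ge 7$ and center at the smaller radius $r_\star\asymp|\lambda-1|^4/q^2\asymp|\lambda-1|\cdot\rho'$. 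Both choices pass Herman's smallness test once $\mu q^5/N^4$ is small. Your computation then yields $\area(\Omega)\asymp\kappa\cdot r_\star/(\omega^2|\lambda-1|)=r_\star/(\mu\omega^2)=\mu/N^2$ directly. With the paper's larger radius one arrives at $\area(\Omega)\asymp\kappa\cdot\rho'/(\omega^2|\lambda-1|)\asymp|\lambda-1|/(q\omega N)$, and the paper's final identification of this with $\mu\alpha_{q,N}/(q\omega N)=\mu/N^2$ uses $|\lambda-1|\asymp\mu\alpha_{q,N}$, whereas in fact $|\lambda-1|^2=\mu\alpha_{q,N}$. So your smaller radius is not merely cosmetic: it is what makes the two-sided bound $\asymp\mu/N^2$ come out cleanly.
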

\begin{proof} We set $n\geqslant 8$, $k=4$, $\omega=q^4/N^3$; we
  recall that the number~$\alpha_{q,N}$ introduced in
  Proposition~\ref{prop:Taylor-expansion-linear-part} (on p.~\pageref{prop:Taylor-expansion-linear-part}) satisfies
  $\alpha_{q,N}\asymp q^5/N^4$, and $\lambda=\exp(i\gamma_0)$ satisfies
  $\lambda+\lambda^{-1}=2-\mu\alpha_{q,N}$ and
  $\abs{\lambda-1}=\mu\alpha_{q,N}$.~\\[3pt]
$\bullet$~Since $2n+2\geqslant k$, Proposition~\ref{prop:complex-coord} applied to 
	$F_{q,N,\mu}$  and $2n+2$ shows that for each $(q,N,\mu)\in  E_{\beta,n}$ there exist
$\rho>0$  and a map $\Psi_0$ from a disk $\Dm(0;\rho)$ into $B_{q,N}$ such that 
	\[
		\Psi_0^{-1}\circ F_{q,N,\mu}\circ \Psi_0(z)=\lambda\bigg(
								\sum_{\nu=2}^{2n+2} a_\nu (z+\bar z)^{\nu}
								+\jO_{k,n}\big(2n+3,\omega^{2n+2},\rho\big)
								\bigg),
	\]
	with $(-1)^{\nu-1}a_\nu\asymp \omega^{\nu-1}$ and $\rho\asymp 
	\frac{\abs{\lambda-1}}{q\omega}$. Since $2n\geqslant k=4$, we may  apply 
	Proposition~\ref{prop:conjugate-Herman}: for each $(q,N,\mu)\in E_{\beta,n}$, there exist
	$\rho'>0$ and  a map $\Psi_1$ from $\T\times (0;\rho')$ into 
	$\Dm(0;\rho_{q,N,\mu})$  satisfying
	\[
		\Psi_1^{-1}\circ \Psi_0^{-1}\circ F_{q,N,\mu}\circ \Psi_0 \circ \Psi_1(\theta,r)
							=\bigg(\frac{\gamma_0}{2\pi}+\theta+r,r+\bar\varepsilon(r,\theta)\bigg),
	\]   
	where  $\bar\varepsilon(r,\theta)=\jO_k^\T(n+1;\abs{\lambda-1}^{-n},\rho')$ and 
	$\rho'\asymp \abs{\lambda-1}^3/q^2$. ~\\[3pt] 
$\bullet$~
	Since $\abs{\gamma_0}\asymp\abs{\lambda-1}\asymp \mu q^5/N^4$, we may choose $\eta_0>0$ small enough 
	so $\rho'+\frac{\abs{\gamma_0}}{2\pi}<1$. Lemma~\ref{lem:constant-type} shows that 
	there exists $r_0\in [\rho'/3;2\rho'/3]$ such that $\gamma=\frac{\gamma_0}{2\pi}+r_0$ is of 
	constant type with Markoff constant $\Gamma\geqslant  c\rho'/3$. We set 
	$\Psi_2(\theta,r')=(\theta,r_0+r')$ and $\delta=\rho'/3$, so $\Psi_2$ defines an embedding 
	from $\A_\delta$ into $\T\times (0;\rho')$ such that 
	\[
	F(\theta,r'):=\Psi_2^{-1}\circ \Psi_1^{-1}\circ \Psi_0^{-1}\circ F_{q,N,\mu}\circ \Psi_0\circ\Psi_1\circ
	\Psi_2(\theta,r')=\bigg(\gamma+\theta+r',r'+\tilde\varepsilon(\theta,r')\bigg),
	\]   
	with $\tilde\varepsilon(\theta,r')=\bar\varepsilon(r_0+r',\theta)$.~\\[3pt]
$\bullet$ We apply Herman's theorem of Section~\ref{sec:stateHermanthm}
 to $F$ on $\A_\delta$ with 
	$c_1=14$, $C_2=0.097$ and $\Gamma=c\rho'/3$. These constants $c_1$ and $C_2$ 
	are suitable because $\delta/\Gamma=\frac{1}{c}\geqslant 0.123$, so we have 	
	\[
	\max_{1\le i+j\le 4}\norm{\partial_r^i\partial_\theta^j\tilde\varepsilon}_{C^0(\A_\delta)}
	\leqslant \max_{1\le i+j\le 4}\norm{\partial_r^i\partial_\theta^j\bar 
	\varepsilon}_{C^0((0;\rho')\times\T)}=\jO_{ E_{\beta,n}}(\abs{\lambda-1}^{-n}\rho'^{n-3}).
	\]
	Since we have $\abs{\lambda-1}\asymp \mu q ^5/N^4$, $\abs{\lambda-1}^{-n}\rho'^{n-3}
	\asymp\abs{\lambda-1}^{2n-9}/q^{2n-6}$ 
	and $\Gamma^2\asymp\rho'^2\asymp \abs{\lambda-1}^6/q^4$ and since we assume 
	$n\geqslant  8$, we may choose $\eta_0>0$ small enough so~\eqref{eq:herma-cond-c1} is 
	satisfied for all $(q,N,\mu) \in  E_{\beta,n}$ verifying  $\mu q^5/N^4 \leqslant \eta_0$.  

	\psset{xunit=1cm,yunit=1cm}
\begin{pspicture}(-0.8,-1)(3,4)
\psframe*[linecolor=lightgray](0,0.8)(3,1.8)
\psline[linewidth=1pt]{}(0,0)(0,3)(3,3)(3,0)(0,0)
\psline[linewidth=0.5pt]{}(0,1)(3,1)
\psline[linewidth=0.5pt]{}(0,2)(3,2)
\psline[linewidth=0.8pt]{}(0,1.3)(3,1.3)
\psline[linewidth=0.8pt,linestyle=dashed]{}(0,0.8)(3,0.8)
\psline[linewidth=0.8pt,linestyle=dashed]{}(0,1.8)(3,1.8)

\def\f{ x  360 mul sin 8 div x  mul  3 x sub  mul   1.4 add }

\psplot[plotstyle=curve,linewidth=1.5pt,plotpoints=30]{0}{3}{\f}
\rput[l](-0.4,-0.3){$\theta=0$}
\rput[l](2.6,-0.3){$\theta=1$}
\rput[l](3.1,1.3){$r=r_0$}
\rput[l](3.1,0.8){$r=r_0-\tfrac{\delta}{2}\geqslant \tfrac{\delta}{2}=\tfrac{\rho'}{6}$}
\rput[l](3.1,1.8){$r=r_0+\tfrac{\delta}{2}\leqslant \tfrac{5\rho'}{6}$}
\rput[l](-1,1.1){$r=\delta$}
\rput[l](-1.2,2.1){$r=2\delta$}
\rput[l](3.1,3){$r=3\delta=\rho'$}

\end{pspicture}

	Herman's theorem shows that there exists a map $\psi~:\T\to\R$ 
	such that $\jC=\{(\theta,\psi(\theta))\}$ is globally invariant  by 
	$h(\theta,r)=\big(\gamma+\theta+r',r'+
	\tilde{\varepsilon}(\theta,r')\big)$ and~\eqref{eq:hermann-W32} holds true. This 
	with~\eqref{eq:C0-estimate-W3-2} implies that 
	\[
	\norm{\psi}_{C^0(\T)}=\jO_{ E_{\beta,n}}\bigg(\Gamma^{-1}\max_{1\le i+j\le 4}
	\norm{\partial_r^i\partial_\theta^j\tilde \varepsilon}_{C^0(\A_\delta)}\bigg)
	=\jO_{ E_{\beta,n}}(\abs{\lambda-1}^{-n}\rho'^{n-4}).
	\]
	Since $\abs{\lambda-1}^{-n}\rho'^{n-4}\asymp {\abs{\lambda-1}^{2n-12}}/{q^{2n-8}}$,
	$\delta\asymp\abs{\lambda-1}^3/q^2$ and  $n\geqslant 8$, we may 
	choose $\eta_0>0$ small enough so $\norm{\psi}_{C^0(\A_\delta)}\leqslant\delta/2$ for 
	all  $(q,N,\mu) \in E_{\beta,n}$ verifying $\mu q^5/N^4\leqslant \eta_0$.
	\vspace{3pt}
	
	Thus  we have proved that the Jordan curve $\jC=\{\Psi_1\circ \Psi_2(\theta,\psi(\theta))\}$ 
	is invariant by  $\Psi_0^{-1}\circ F_{q,N,\mu}\circ \Psi_0$. Since $\rho'/3 \leqslant r_0 
	\leqslant 2/3\rho'$ and $\norm{\psi}_{C^0(\T)}\leqslant \rho'/6$,  we have 
	$\rho'/6 \leqslant r_0+\psi(\theta)\leqslant 5\rho'/6$ on $\T$, so the estimate of $\Psi_1$ 
	in Proposition~\ref{prop:conjugate-Herman} shows that 
	\[
	\mathrm{area} \big(\Int(\jC)\big)\asymp\frac{\rho'}{\omega^2\abs{\lambda-1}}\asymp 
	\frac{\abs{\lambda-1}^2}{q^2\omega^2}.
	\]
	Therefore  $\Omega_{q,N,\mu}=\Psi_0(\Int(\jC))$ is invariant by $F_{q,N,\mu}$
	and  the 
	point~\ref{Cond:complex-coordinate-area}) in  Proposition~\ref{prop:complex-coord} 
	indicates that 
	\[
	\mathrm{area}(\Omega_{q,N,\mu})\asymp \kappa \frac{\abs{\lambda-1}^2}{q^2\omega^2}
	\asymp \frac{\abs{\lambda-1}}{q\omega N}\asymp\frac{\mu \alpha_{q,N}}{q\omega N}
	=\frac{\mu}{N^2};
	\]
 	this completes the proof of the proposition.
\end{proof}


\vfil

\pagebreak

\section{Coupling devices, multi-dimensional periodic domains,
  wandering domains}\label{SecPfWander}
%

At this point of the paper, it only remains to be proven 
Theorem~\ref{th:lowerbounds} stated in Section~\ref{secthmlowbdsC}
and Part~(ii) of Theorem~\ref{th:perdomains} stated in Section~\ref{secthmperdomD}.
Both proofs will make use of a ``coupling lemma'' which is the object
of Section~\ref{Sec:Couplem}.


\subsection{Coupling devices}\label{Sec:Couplem}


We quote here almost exactly Lemma~3.2 of \cite{ms},
which was itself a simple adaptation of a result already present in \cite{hms}.
Though very simple, this coupling lemma plays a crucial role in our constructions.


\begin{lemma} \label{Lem:cl}
Let $m,m'\ge1$ be integers.
Let $F \col \A^m \righttoleftarrow$
and $G \col \A^{m'} \righttoleftarrow$
be two diffeomorphisms,
and let $f \col \A^m\to\R$ and $g \col \A^{m'}\to\R$ be two Hamiltonian functions
which generate complete vector fields.

Suppose moreover that we are given $q\ge1$ integer and $\jV\subset\A^{m'}$
such that~$\jV$ is $q$-periodic for~$G$ (\ie $\jV=G^q(\jV)$)
and the ``synchronization conditions''
\beq\label{eq:sync1}
g(x')=1,\quad
\dd g(x')=0,\quad
g(G^s(x'))=0,\quad
\dd g(G^s(x'))=0,\quad 1\le s\le q-1,
\eeq
hold for all $x'\in \jV$.

Then $f\otimes g$ generates a complete Hamiltonian vector field
and the diffeomorphism
$\cF \defeq \Phi^{f\otimes g} \circ (F\times G) \col \A^{m+m'}
\righttoleftarrow$ 
satisfies
\beq \label{eq:iterates}
\cF^{\ell q+s}(x,x') = 
\Big( F^s\circ\big(\Phi^f\circ F^q\big)^\ell(x), \  G^{\ell q+s}(x') \Big),
\qquad x\in\A^m,\ x'\in \jV,
\eeq
for all integers $\ell,s\in\Z$ such that $0 \le s \le q-1$.
\end{lemma}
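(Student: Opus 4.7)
The plan is to compute the Hamiltonian vector field $X_H$ of $H \defeq f\otimes g$ explicitly, exploit the synchronization conditions~\eqref{eq:sync1} to show that $X_H$ degenerates in a controlled way along $\A^m \times \bigcup_{s=0}^{q-1}G^s(\jV)$, and then propagate this through iterates of $\cF$ by induction on $\ell q+s$.

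First I would decompose
\[
X_H(x,x') \;=\; g(x')\, X_f(x) \;+\; f(x)\, X_g(x'),
\]
viewing each summand as a tangent vector to the appropriate factor of $\A^m\times\A^{m'}$. The synchronization assumptions then yield two basic facts. At any point $(x,x')$ with $x'\in \jV$ we have $g(x')=1$ and $X_g(x')=0$ (since $\dd g(x')=0$), so $X_H(x,x')=X_f(x)\oplus 0$; the curve $t\mapsto(\Phi^{tf}(x),x')$ is therefore an integral curve of $X_H$ defined for all $t\in\R$, and uniqueness gives $\Phi^{tH}(x,x')=(\Phi^{tf}(x),x')$. At any point $(x,G^s(x'))$ with $x'\in \jV$ and $1\le s\le q-1$, both $g$ and $X_g$ vanish, so $X_H=0$ there, the trajectory is stationary, and $\Phi^{tH}(x,G^s(x'))=(x,G^s(x'))$ for all $t$. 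Completeness of $X_H$ on the whole of $\A^m\times\A^{m'}$ is a separate technical matter, handled by the standard observation that along any solution curve $H$ is conserved and the components stay locally bounded on account of the completeness of $X_f$ and $X_g$.

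Finally, I would prove~\eqref{eq:iterates} by induction on $k\defeq \ell q+s$, $\ell\in\Z$, $0\le s\le q-1$. The base case $k=0$ is trivial. Given the formula for $k$, applying $\cF=\Phi^H\circ(F\times G)$ transports the right-hand side to $\Phi^H\big(F^{s+1}\circ(\Phi^f\circ F^q)^\ell(x),\,G^{\ell q+s+1}(x')\big)$. By the $q$-periodicity of $\jV$ under $G$, the new second coordinate is $G^{s+1}(G^{\ell q}(x'))$ with $G^{\ell q}(x')\in\jV$. If $s<q-1$, then $s+1\in\{1,\dots,q-1\}$, so the pair lies in $\A^m\times G^{s+1}(\jV)$ and $\Phi^H$ fixes it, delivering the formula for $k+1=\ell q+(s+1)$. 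If $s=q-1$, the second coordinate is $G^{(\ell+1)q}(x')\in\jV$, and $\Phi^H$ inserts a factor $\Phi^f$ into the first coordinate, converting $\Phi^f\circ F^q\circ(\Phi^f\circ F^q)^\ell$ into $(\Phi^f\circ F^q)^{\ell+1}$ and yielding the formula for $k+1=(\ell+1)q+0$. Negative iterates are handled symmetrically by applying the argument to $\cF^{-1}=(F^{-1}\times G^{-1})\circ\Phi^{-H}$, for which the analogous simplified identities for $\Phi^{-H}$ on the same orbits follow from the same degeneration of $X_H$.

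The conceptual content is entirely concentrated in the decomposition of $X_H$ and the two-case degeneration driven by~\eqref{eq:sync1}; the main obstacle, if any, is the completeness claim for $X_{f\otimes g}$, while the rest of the proof is bookkeeping on the index $k=\ell q+s$ verifying that exactly one step per $q$-cycle (when the $G$-orbit returns to $\jV$) triggers the $\Phi^f$ action and the other $q-1$ steps are absorbed by the identity action of $\Phi^H$.
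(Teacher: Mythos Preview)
Your proof is correct and follows essentially the same route as the paper's. The paper packages your direct computation into the general identity $\Phi^{f\otimes g}(x,x')=\big(\Phi^{g(x')f}(x),\Phi^{f(x)g}(x')\big)$, valid for \emph{all} $(x,x')$ and proved via the separate conservation of $f$ and $g$ under the flow of $f\otimes g$ (which also disposes of completeness more cleanly than your sketch --- it is the conservation of $f$ and $g$ individually, not merely of $H=fg$, that makes the trajectory decouple into the complete flows of $g(x'_0)X_f$ and $f(x_0)X_g$); the synchronization conditions then reduce this identity to exactly your two cases, and the induction on $\ell q+s$ is the same.
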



We have denoted by~$f\otimes g$ the function $(x,x')\mapsto f(x)g(x')$,
and by~$F\times G$ the product diffeomorphism $(x,x')\mapsto \big(F(x),G(x')\big)$.

\begin{proof} 
See the proof of Lemma~3.2 in \cite[p.~1631]{ms}.
The point is that
\[
\Phi^{f\otimes g}(x,x')=\big(\Phi^{g(x')\,f}(x),\Phi^{f(x)\, g}(x')\big),\qquad x\in\A^m,\ x'\in\A^{m'},
\]
(as proved in \cite{hms}, using the invariance of both~$f$ and~$g$ by
the Hamiltonian vector field generated by $f\otimes g$), so the
synchronization conditions~\eqref{eq:sync1} easily imply~\eqref{eq:iterates}.
\end{proof}


Notice that, under the assumptions of Lemma~\ref{Lem:cl}, the union
\beq   \label{eqtijVdisjun}
\ti\jV \defeq \jV \sqcup G(\jV) \sqcup \ldots \sqcup G^{q-1}(\jV)
\eeq
is a disjoint union because, for any $s\in\{1,\ldots,q-1\}$, the synchronization conditions~\eqref{eq:sync1} say that
$\jV \subset g\ii(1)$ 
and $G^s(\jV) \subset g\ii(0)$.
Thus any $x' \in \ti\jV$ can be written $x' = G^s(x'_0)$ with uniquely
determined $s\in\{0,1,\ldots,q-1\}$ and $x'_0 \in \jV$;
then~\eqref{eq:iterates} shows that
$\cF^{s}\big( F^{-s}(x),x'_0 \big) = (x,x')$ and that
\[
\cF^k(x,x') =
\Big( F^{s_1}\circ\big(\Phi^f\circ F^q\big)^{\ell_1}\circ F^{-s}(x), \  G^{k}(x') \Big),
\qquad x\in\A^m,\ x'\in \ti\jV,
\]
with $k+s = \ell_1 q + s_1$.
In particular, the set $\A^m\times\ti\jV$ is invariant under~$\cF$ and the second projection
makes
$G\big\vert _{\ti\jV}$ a factor of $\cF\big\vert _{\A^m\times\ti\jV}$.
Note also that~\eqref{eq:iterates} yields
\beq    \label{eqqiterate}
\cF^q\big\vert _{\A^m\times\jV} = 
\big( \Phi^f\circ F^q \big) \times \big( G^q \big\vert _\jV \big).
\eeq
Assuming furthermore that there is a subset $\jU \subset \A^m$ which
is periodic or wandering for $\Phi^f\circ F^q$, we easily obtain that
$\jU \times \jV \subset \A^{m+m'}$ is periodic or wandering for~$\cF$.
This is essentially the content of the folowing two corollaries.


\begin{cor}  \label{cor:couplingper}
Let $\cF = \Phi^{f\otimes g} \circ (F\times G) \col
\A^{m+m'}\,\to\,\A^{m+m'}$
with $m$, $m'$, $F$, $G$, $f$, $g$, $q$
and $\jV \subset \A^{m'}$ as in Lemma~\ref{Lem:cl}
(in particular $\jV$ is $q$-periodic for~$G$ and the synchronization 
conditions~\eqref{eq:sync1} hold).

Assume now that the diffeomorphism $\Phi^f\circ F^q$ admits a
$p$-periodic subset $\jU\subset\A^{m}$, with a certain integer $p\ge1$.
Assume moreover 
%
%
that there exist sets $\jB \subset \jB_* \subset \A^m$
and $\jB' \subset \jB'_* \subset \A^{m'}$ such that
\begin{align}
\label{eq:nonov1}
%
%
\jU &\subset \jB,
& & \hspace{-.25em} (\Phi^f\circ F^{q})^k(\jU)\cap \jB_* =\varnothing
&  & \hspace{-1.9em} \text{for\ } 1\leq k \leq p-1,\\[1ex]
\label{eq:nonov2}
%
%
\jV &\subset \jB',
& & \hspace{-.25em} G^k(\jV)\cap \jB'_* =\varnothing
& & \hspace{-1.9em} \text{for\ } 1\leq k\leq q-1.\\
\intertext{Then the product set $\jU\times\jV \subset \A^{m+m'}$ is
$(pq)$-periodic for the diffeomorphism~$\cF$ and%
%
%
}
\label{eq:nonov3}
%
%
\jU \times \jV &\subset \jB \times \jB',
& & \hspace{-.25em} \cF^k(\jU\times\jV)\cap (\jB_*\times\jB'_*)=\varnothing
& & \hspace{-.7em} \text{for\ } 1\leq k\leq pq-1.
\end{align}
\end{cor}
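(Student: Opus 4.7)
The plan is to deduce the corollary directly from the explicit iteration formula~\eqref{eq:iterates} provided by Lemma~\ref{Lem:cl}, by performing euclidean division of the iteration index by~$q$ and invoking the hypotheses~\eqref{eq:nonov1} and~\eqref{eq:nonov2} in complementary cases.

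For the periodicity claim, I would simply apply~\eqref{eq:iterates} with $k=pq$, i.e.\ $\ell=p$ and $s=0$: for any $(x,x')\in\jU\times\jV$,
\[
\cF^{pq}(x,x') = \Big( (\Phi^f\circ F^q)^p(x),\, G^{pq}(x') \Big),
\]
which belongs to $\jU\times\jV$ because $(\Phi^f\circ F^q)^p(\jU)=\jU$ by the $p$-periodicity assumption on~$\jU$ and $G^{pq}(\jV)=\jV$ from iterating $G^q(\jV)=\jV$ exactly $p$ times. Hence $\cF^{pq}(\jU\times\jV) = \jU\times\jV$.

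For the non-overlap property~\eqref{eq:nonov3}, I would write any $k$ with $1\leq k\leq pq-1$ as $k=\ell q + s$ with $0\leq s\leq q-1$ and $0\leq \ell\leq p-1$ (and $(\ell,s)\neq (0,0)$), and then apply formula~\eqref{eq:iterates} to an arbitrary $(x,x')\in\jU\times\jV$:
\[
\cF^{k}(x,x') = \Big( F^s\circ(\Phi^f\circ F^q)^\ell(x),\, G^{\ell q+s}(x') \Big).
\]
I would split into two complementary cases. If $s=0$, then necessarily $1\leq \ell\leq p-1$, and the first component lies in $(\Phi^f\circ F^q)^\ell(\jU)$, which is disjoint from~$\jB_*$ by~\eqref{eq:nonov1}; therefore $\cF^k(x,x')\notin \jB_*\times\jB'_*$. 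If instead $1\leq s\leq q-1$, then, using the relation $G^q(\jV)=\jV$ iterated $\ell$ times to get $G^{\ell q}(x')\in\jV$, the second component $G^{\ell q+s}(x')=G^s\big(G^{\ell q}(x')\big)$ lies in $G^s(\jV)$, which is disjoint from~$\jB'_*$ by~\eqref{eq:nonov2}; again $\cF^k(x,x')\notin \jB_*\times\jB'_*$. Since $(x,x')\in\jU\times\jV$ was arbitrary, this yields $\cF^k(\jU\times\jV)\cap(\jB_*\times\jB'_*)=\varnothing$.

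There is essentially no analytical obstacle here: the entire content of the corollary is packaged into Lemma~\ref{Lem:cl}, and the only care required is bookkeeping in the division $k=\ell q+s$ to ensure the two cases exhaust all $k\in\{1,\ldots,pq-1\}$ and that the correct factor ($\jB_*$ or $\jB'_*$) is escaped in each case. The inclusion $\jU\times\jV\subset\jB\times\jB'$ is immediate from $\jU\subset\jB$ and $\jV\subset\jB'$.
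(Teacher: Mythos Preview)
Your proof is correct and follows essentially the same approach as the paper's: both arguments rest entirely on the iteration formula~\eqref{eq:iterates} from Lemma~\ref{Lem:cl}, combined with the periodicity of~$\jV$ under~$G$ and the non-overlap hypotheses. The only cosmetic difference is that the paper argues by contrapositive (assuming $\cF^k(\jU\times\jV)\cap(\jB_*\times\jB'_*)\neq\varnothing$ and deducing $k\in pq\Z$), whereas you split directly into the cases $s=0$ and $1\le s\le q-1$; both are equivalent bookkeeping.
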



\begin{proof} 
Let $\psi \defeq \Phi^f\circ F^q$.
By~\eqref{eqqiterate},
$\cF^{pq}(\jU\times \jV)=\Big(\psi^p(\jU),\  G^{p
  q}(\jV)\Big)=\jU\times \jV$
and this $(pq)$-periodic set is obviously contained in $\jB \times \jB'$.

Suppose that $k\in\Z$ and 
$\cF^k(\jU\times\jV)\cap (\jB_*\times\jB'_*) \neq \varnothing$.
We thus can find $(x,x') \in \jU\times\jV$ such that 
$z \defeq \cF^k(x,x') \in \jB_*\times\jB'_*$.
By~\eqref{eq:iterates}, the second projection of~$z$ is $G^k(x')$, 
in view of~\eqref{eq:nonov2} this implies that $k\in q\Z$,
say $k=\ell q$.
But, again by~\eqref{eq:iterates}, the first projection of~$z$ is thus
$\psi^\ell(x)$, 
and~\eqref{eq:nonov1} then implies $\ell\in p\Z$.
Therefore $k\in pq\Z$ and~\eqref{eq:nonov3} is proved.
\end{proof}


\begin{figure}	

\centering{{\begin{picture}(0,0)%
\includegraphics{0_dessin_intro.pstex}%
\end{picture}%
\setlength{\unitlength}{4144sp}%
\begingroup\makeatletter\ifx\SetFigFont\undefined%
\gdef\SetFigFont#1#2#3#4#5{%
  \reset@font\fontsize{#1}{#2pt}%
  \fontfamily{#3}\fontseries{#4}\fontshape{#5}%
  \selectfont}%
\fi\endgroup%
\begin{picture}(4900,1678)(496,-1677)
\put(1081,-871){\makebox(0,0)[lb]{\smash{\SetFigFont{10}{12.0}{\rmdefault}{\mddefault}{\updefault}$\Phi^ f$}}}
\put(3466,-1051){\makebox(0,0)[lb]{\smash{\SetFigFont{10}{12.0}{\rmdefault}{\mddefault}{\updefault}$G$}}}
\put(496,-134){\makebox(0,0)[lb]{\smash{\SetFigFont{10}{12.0}{\rmdefault}{\mddefault}{\updefault}$\A^{m}$}}}
\put(1567,-749){\makebox(0,0)[lb]{\smash{\SetFigFont{10}{12.0}{\rmdefault}{\mddefault}{\updefault}$F$}}}
\put(3106,-151){\makebox(0,0)[lb]{\smash{\SetFigFont{10}{12.0}{\rmdefault}{\mddefault}{\updefault}$\A^{m'}$}}}
\put(4106,-376){\makebox(0,0)[lb]{\smash{\SetFigFont{10}{12.0}{\rmdefault}{\mddefault}{\updefault}$\jV$}}}
\put(1426,-1261){\makebox(0,0)[lb]{\smash{\SetFigFont{10}{12.0}{\rmdefault}{\mddefault}{\updefault}$\jU$}}}
\put(3451,-533){\makebox(0,0)[lb]{\smash{\SetFigFont{10}{12.0}{\rmdefault}{\mddefault}{\updefault}$G(\jV)$}}}
\put(4741,-617){\makebox(0,0)[lb]{\smash{\SetFigFont{10}{12.0}{\rmdefault}{\mddefault}{\updefault}$G^ {q-1}(\jV)$}}}
\end{picture}
}}

\caption{Coupling of a wandering domain~$\jU$
  in~$\A$ and a periodic domain~$\jV$ in~$\A^{n-1}$ 
\label{figcl}}

\end{figure}


\begin{cor} \label{cor:couplingwand}
Let $\cF = \Phi^{f\otimes g} \circ (F\times G) \col
\A^{m+m'}\,\to\,\A^{m+m'}$
with $m$, $m'$, $F$, $G$, $f$, $g$, $q$
and $\jV \subset \A^{m'}$ as in Lemma~\ref{Lem:cl}
(in particular $\jV$ is $q$-periodic for~$G$ and the synchronization 
conditions~\eqref{eq:sync1} hold).

Assume now that the diffeomorphism $\Phi^f\circ F^q$ admits a
wandering subset $\jU\subset\A^m$.
Then the product set $\jU\times \jV \subset \A^{m+m'}$ is wandering for the
diffeomorphism~$\cF$.
\end{cor}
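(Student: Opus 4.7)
The plan is to mimic, in a simpler form, the argument used for Corollary~\ref{cor:couplingper}, replacing the periodicity of~$\jU$ by its wandering character. Specifically, writing $\psi \defeq \Phi^f \circ F^q$, I want to show that $\cF^k(\jU\times\jV)\cap(\jU\times\jV) = \varnothing$ for every $k\in\Z\setminus\{0\}$, by reducing such an intersection to a forbidden intersection of iterates of~$\jU$ under~$\psi$.

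First I would perform Euclidean division: write an arbitrary $k\in\Z$ as $k = \ell q + s$ with $\ell\in\Z$ and $s\in\{0,1,\ldots,q-1\}$, and assume for contradiction that there exists $(x,x')\in\jU\times\jV$ with $\cF^k(x,x')\in\jU\times\jV$. Applying formula~\eqref{eq:iterates} from Lemma~\ref{Lem:cl}, which is available since $x'\in\jV$, the second coordinate of $\cF^k(x,x')$ equals $G^{\ell q+s}(x') = G^s(G^{\ell q}(x'))$. Because $\jV$ is $q$-periodic for~$G$, iterating the identity $G^q(\jV) = \jV$ gives $G^{\ell q}(x')\in\jV$, so that this second coordinate lies in $G^s(\jV)\cap\jV$.

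Next I would invoke the disjointness recalled in the discussion of~\eqref{eqtijVdisjun}: the synchronization conditions~\eqref{eq:sync1} force $\jV\subset g^{-1}(1)$ and $G^s(\jV)\subset g^{-1}(0)$ for $1\le s\le q-1$, so $G^s(\jV)\cap\jV = \varnothing$ whenever $1\le s\le q-1$. This forces $s = 0$, hence $k = \ell q$. Substituting back into~\eqref{eq:iterates}, the first coordinate of $\cF^k(x,x')$ becomes $\psi^\ell(x)$, which by assumption belongs to~$\jU$. Thus $\psi^\ell(\jU)\cap\jU\neq\varnothing$, and since~$\jU$ is wandering for~$\psi$ this in turn forces $\ell = 0$, hence $k = 0$, contradicting our assumption $k\neq0$.

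There is no real obstacle here: the entire proof is a direct bookkeeping argument once formula~\eqref{eq:iterates} and the observation on $G^s(\jV)\cap\jV$ are in hand. The only mild point to keep track of is that in applying~\eqref{eq:iterates} we need $x'\in\jV$ (not merely in $\ti\jV$), which is guaranteed by our choice of initial point in $\jU\times\jV$; and we use the hypothesis that $\jU$ is wandering for the composition $\Phi^f\circ F^q$ rather than for $F$ itself, consistently with~\eqref{eqqiterate}.
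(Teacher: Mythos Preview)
Your proof is correct and follows essentially the same approach as the paper's: both use the second projection together with the disjointness of $\jV, G(\jV),\ldots,G^{q-1}(\jV)$ to force $k\in q\Z$, and then the first projection together with the wandering property of~$\jU$ under $\Phi^f\circ F^q$ to force $k=0$. The only cosmetic difference is that you phrase it as a contradiction via Euclidean division of~$k$ by~$q$, whereas the paper splits directly into the cases $k\notin q\Z$ and $k=\ell q$ with $\ell\neq0$.
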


See Figure~\ref{figcl}.


\begin{proof} 
Let $\jW \defeq \jU\times \jV$. We show that $\cF^{k}(\jW) \cap \jW =
\varnothing$ for arbitrary $k\in\Z\setminus\{0\}$. 

Suppose first that $k\notin q\Z$. 
Then $\jV \cap G^k(\jV) = \varnothing$ as already observed in~\eqref{eqtijVdisjun}.
Thus $x'\in \jV$ implies $G^{k}(x')\notin \jV$, whence 
\[
\cF^{k}(x,x')\notin \jW
\quad \text{for all $x\in\A^m$}
\]
by~\eqref{eq:iterates},
\ie $\cF^{k}(\A^m\times \jV) \cap (\A^m\times \jV) = \varnothing$.
In particular, $\cF^{k}(\jW) \cap \jW= \varnothing$ when $k\notin q\Z$. 

Suppose now that $k = \ell q$ with $\ell\in\Z\setminus\{0\}$.
We have $\cF^{k}(\jW)=\big( (\Phi^f\circ F^q)^\ell(\jU), \jV\big)$ by~\eqref{eqqiterate}
and~$\jU$ is wandering for $\Phi^f\circ F^q$,
hence $(\Phi^f\circ F^q)^\ell(\jU) \cap \jU= \varnothing$,
therefore $\cF^{k}(\jW) \cap \jW = \varnothing$ again.
\end{proof}




\subsection{Proof of Part~(ii) of Theorem~\ref{th:perdomains} (periodic domains
  in~$\A^{n-1}$)}\label{ssec:perdomains}


\subsubsection{Overview of the method}


For $n\ge3$,
we must construct an arbitrarily close to integrable system 
in $\Pa2\big(\Phi^{\Demi(r_2^2+\cdots+r_n^2)}\big)$
possessing a periodic polydisc of arbitrarily large period
in~$\A^{n-1}$; 
the Gromov capacity of this polydisc must be bounded from below as in~\eqref{eq:capaD}
and ``localization conditions'' of the
form~\eqref{eq:disjoint1}--\eqref{eq:disjoint2} must hold for its orbit.
The near-integrable system will be obtained by applying
Corollary~\ref{cor:couplingper} with $m=1$ and $m'=n-2$.
The period of the polydisc will be of the form $Q = pq$, with
\[
p \defeq \ell p_{j+2}, \quad
\text{$\ell \in \N$ arbitrarily large}, \qquad
q \defeq p_{j+3}\cdots p_{j+n}
\]
(recall that $(p_j)_{j\ge1}$ is the prime number sequence),
so that~$Q$ will be an integer multiple of $N_j \defeq 
p_{j+2} \, p_{j+3}\cdots p_{j+n}$,
and the deviation of the system from
$\Phi^{\Demi(r_2^2+\cdots+r_n^2)}$ will be $O(1/N_j^2)$.

To apply Corollary~\ref{cor:couplingper}, we must define a system~$F$,
a function~$f$ and a $p$-periodic domain~$\jU$ for $\Phi^f \circ F$ in the first factor,~$\A$, 
and a system~$G$, a function~$g$ and a $q$-periodic domain~$\jV$
for~$G$ in the second factor,~$\A^{n-2}$.

\emph{On the first factor}, we will make use of
Theorem~\ref{th:varpseudopend} 
(in a way very similar to the proof of Theorem~\ref{th:perdomains}(i)
in Section~\ref{secPfThperi})
to produce a system $\Psi \in \Pa1\big(\Phi^{\Demi r_2^2}\big)$
possessing a $p$-periodic disc~$\ti\jU$ in~$\A$, whose area admits a suitable
bound from below and whose orbit is suitably localized.
A simple rescaling of the action variable~$r_2$ by the factor~$q$ will
then yield a system of the form
\beq
\psi = \Phi^f \circ F^q
\quad \text{with
$f\in G^{\al,L}(\T)$ small, \ $F\in\Pa1\big(\Phi^{\Demi r_2^2}\big)$,}
\eeq
possessing a $p$-periodic disc~$\jU$.
The smallness of $\normD{f}_{\al,L}$ will be controlled by the choice of the ``tuning
parameter''~$\mu$ at the moment of using Theorem~\ref{th:varpseudopend}.

\emph{On the second factor}, we will use a near-integrable system of the form
\beq   \label{eqformedeG}
G = G\zz 3\times\cdots \times G\zz n 
\in \Pa1\big(\Phi^{\Demi (r_3^2+\cdots+r_n^2)}\big)
\eeq
where, for each~$\ka$,
$G\zz \ka \in \Pa1\big(\Phi^{\Demi r_\ka^2}\big)$ has a
$p_{j+\ka}$-periodic disc~$\jV\zz \ka$ with area
suitably bounded from below and orbit suitably localized.
Since $p_{j+3}, \ldots, p_{j+n}$ are pairwise coprime and their
product is~$q$, we shall have 
$\jV = \jV\zz 3 \times \cdots \times \jV\zz n$ $q$-periodic for~$G$.
Lemma~\ref{lembump} of the Appendix~\ref{secBumpGev} will then yield a
``bump function'' $g\in G^{\al,L}(\A^{n-2})$ satisfying the
synchronization conditions relative to~$\jV$ and~$G$.

According to Corollary~\ref{cor:couplingper}, the polydisc
$\jU\times\jV$ will thus be $(pq)$-periodic for $\Phi^{f\otimes g} \circ (F\times G)$,
which will be the desired near-integrable system.
Notice that $\normD{g}_{\al,L}$ will be exponentially large, so we
need to choose properly the tuning parameter~$\mu$ in the first step,
so as to compensate the largeness of $\normD{g}_{\al,L}$ by the
smallness of $\normD{f}_{\al,L}$ and ensure
\beq
\de^{\al,L}\big( \Phi^{f\otimes g} \circ (F\times G), 
\Phi^{\Demi(r_2^2+\cdots+r_n^2)} \big) = O(1/N_j^2).
\eeq


\subsubsection{A $p$-periodic polydisc for a near-integrable system of the form
  $\Phi^f\circ F^q$ in~$\A$}


Let $\al>1$ and $L>0$ be real.
%

We give ourselves reals $\rho_0 >2$, $L_0,\th^\star>0$ such that
$L_0<\Demi-\th^\star$, $\de\defeq1$
and, as in Section~\ref{secPfThperi},
by means of Lemma~\ref{lembump}
we pick $1$-periodic functions~$V$, $(W_M)_{M\in\N^*}$ in
$G^{\al,L}(\R)$ which satisfy the assumptions (i)--(v) of
Theorem~\ref{th:varpseudopend}.
In particular,
\beq   \label{eqdefWM}
W_M(\th) \defeq \Demi \eta_M(\th) \big(\dist(\th,\Z)\big)^2,
\quad
\norm{W_M}_{\al,L} \le C_0\, \exp\Big(c(\al,L)\, M^{\frac{1}{\al-1}} \Big)
\quad \text{for all $M \in \N^*$,}
\eeq
with some positive reals~$C_0$ and $c(\al,L)$.

We get $C_1,C_2,C_3,C_4>0$ fulfilling the conclusions of
Theorem~\ref{th:varpseudopend}:
setting
\[
P_{V/M^2}(\th,r) \defeq \demi r^2 + \frac{1}{M^2} V(\th), \qquad
\GMm \defeq  \Phi^{\mu W_M} \circ \Phi^{P_{V/M^2}} 
\]
for every integer $M\ge1$ and real $\mu>0$
(as in~\eqref{eq:P_V}--\eqref{eqdefGNm}),
Theorem~\ref{th:varpseudopend} says that~$\GMm$ has a $p$-periodic disc~$\DMp$
for each integer $p \ge C_1 M$ provided $\mu < C_2 {M^4}/{p^5}$,
with area
\beq    \label{ineqarea}
\area(\DMp) \ge C_3 \frac{\mu}{M^2}
\eeq
and orbit localized as in~\eqref{eqlocalizDNm}.


Let $n\ge3$, $j\ge1$ and $\ell \ge C_1$ be integers, and
\beq \label{eqdefqNj}
q \defeq p_{j+3}\cdots p_{j+n}, \qquad N_j \defeq p_{j+2} \, q,
\qquad p \defeq \ell p_{j+2},
\eeq
so that $Q \defeq pq = \ell N_j$ is an arbitrary multiple $\ge C_1 N_j$ of~$N_j$.
We define
\beq   \label{eqdefmujellmin}
\mu_{j,\ell} \defeq \min \bigg\{%
\frac{C_2}{2 \ell^5 p_{j+2}}, 
\frac{1}{ (2 p_{j+2})^n } \exp\Big( - (n-1) c(\al,L) (2 p_{j+2})^{\frac{1}{\al-1}} \Big)
\bigg\}.
\eeq
Notice that
$\mu_{j,\ell}  < C_2 {p_{j+2}^4}/{p^5}$.
We may thus consider the map 
\beq
\Gpj = \Phi^{\mu_{j,\ell} W_{p_{j+2}}} \circ \Phi^{\Demi r^2 + p_{j+2}^{-2}V}
\eeq
which has a well-defined $p$-periodic disc $\ti\jU_{j,\ell} \defeq \Dpj$.


\begin{lemma}\label{lem:scalingq}
Let
\[
\sig \col (\th,r) \in \A \mapsto (\th,qr) \in \A.
\]
Then, for any Hamiltonian function of the form $(\th,r)\in\A \mapsto
h(r)+v(\th)$ with $h(qr) = q^2 h(r)$, one has
\[
\sig\ii \circ \Phi^{h+v} \circ \sig = \Phi^{ q( h + q^{-2}v ) }.
\]
\end{lemma}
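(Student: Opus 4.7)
The plan is to prove the identity by transporting the Hamiltonian vector field through the diffeomorphism~$\sig$ and then integrating. My starting point is the computation of $\sig^*\Om$, where $\Om = dr \wedge d\th$ denotes the exact symplectic form of~$\A$: since $\sig$ leaves~$\th$ unchanged and multiplies~$r$ by~$q$, I get $\sig^*\Om = q\,\Om$. In particular, $\sig$ is a conformal symplectomorphism with conformal factor~$q$.

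Next I will pull back the Hamiltonian function by $\sig$. Using the homogeneity assumption $h(qr) = q^2 h(r)$, I find
\[
(h+v)\circ \sig(\th,r) = h(qr) + v(\th) = q^2 h(r) + v(\th).
\]
Now I invoke the general principle that if $\sig^*\Om = q\,\Om$, then for any smooth Hamiltonian $H$ the pullback vector field $\sig^* X_H$ is itself Hamiltonian, with Hamiltonian function $q^{-1}\,(H\circ\sig)$. Applied to $H = h+v$, this yields
\[
\sig^* X_{h+v} = X_{q^{-1}(q^2 h + v)} = X_{q(h + q^{-2} v)}.
\]
(Equivalently, one can verify this by a direct computation: writing $\tilde\th = \th$, $\tilde r = r/q$, and using $h'(qr) = q h'(r)$ obtained by differentiating the homogeneity relation, the equations of motion $\dot\th = h'(r)$, $\dot r = -v'(\th)$ transform into $\dot{\tilde\th} = q h'(\tilde r) = \partial_{\tilde r}[q(h + q^{-2}v)]$ and $\dot{\tilde r} = -q^{-1}v'(\tilde\th) = -\partial_{\tilde\th}[q(h + q^{-2}v)]$.)

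Finally, since both Hamiltonians generate complete vector fields (they are well defined on all of~$\A$ and independent of one of the two variables), I can integrate the conjugation relation between vector fields to a conjugation relation between their flows:
\[
\sig\ii \circ \Phi^{h+v}_t \circ \sig = \Phi^{q(h+q^{-2}v)}_t \qquad \text{for all $t\in\R$,}
\]
and specializing to $t=1$ gives the statement of the lemma. There is no real obstacle here; the only point requiring care is keeping track of the factor~$q$ coming from $\sig^*\Om = q\,\Om$, and using the homogeneity of~$h$ precisely to ensure that the resulting Hamiltonian $q(h+q^{-2}v)$ has the desired clean form.
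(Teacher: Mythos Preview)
Your proof is correct and follows essentially the same approach as the paper's: both verify the flow identity $\sig\ii \circ \Phi^{t(h+v)} \circ \sig = \Phi^{qt(h+q^{-2}v)}$ by differentiating in~$t$, which amounts exactly to your pullback computation $\sig^* X_{h+v} = X_{q(h+q^{-2}v)}$ via the conformal-symplectic relation $\sig^*\Om = q\,\Om$. One small imprecision: your parenthetical justification of completeness (``independent of one of the two variables'') does not literally apply to the sum $h+v$; the paper does not discuss completeness in the proof either, since in all applications $h(r)=\tfrac12 r^2$ and $v\in C^\infty(\T)$, for which completeness is immediate.
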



\begin{proof}
This is a simple scaling property of the Hamiltonian flow
already used in \cite{hms}. 
Since~$\sig$ is not symplectic but conformal-symplectic, one needs to
rescale the action variable~$r$ \emph{and} the time: the identity
$\sig\ii \circ \Phi^{t(h+v)} \circ \sig = \Phi^{ qt( h + q^{-2}v ) }$
is easily checked by differentiating both sides \wrt~$t$.
\end{proof}


Applying Lemma~\ref{lem:scalingq} with $h(r)=\Demi r^2$, we get
\beq    \label{eqdefFj}
\sig\ii \circ \Phi^{\Demi r^2 + p_{j+2}^{-2}V} \sig = F_j^q, \qquad
F_j \defeq \Phi^{\Demi r^2 + \frac{1}{N_j^2}V}
\eeq
and, with $h=0$,
\beq     \label{eqdeffjell}
\sig\ii \circ \Phi^{\mu_{j,\ell} W_{p_{j+2}}} \sig = \Phi^{f_{j,\ell}}, \qquad
f_{j,\ell} \defeq q\ii\mu_{j,\ell} W_{p_{j+2}}.
\eeq
Therefore, the map
\beq
\Phi^{f_{j,\ell}} \circ F_j^q = \sig\ii \circ \Gpj \circ \sig 
\eeq
has a $p$-periodic disc $\jU_{j,\ell} \defeq \sig\ii(\ti\jU_{j,\ell})$.
Inequality~\eqref{ineqarea} entails
\beq   \label{ineqareajUellj}
\area(\jU_{j,\ell}) \ge C_3 \frac{\mu_{j,\ell}}{q \, p_{j+2}^2}
%
%
\eeq
and, because of~\eqref{eqlocalizDNm},
\beq     \label{eqlocalizjUjell}
\jU_{j,\ell} \subset 
\Bdeep\cap \A^+_{4/N_j}, \qquad
(\Phi^{f_{j,\ell}} \circ F_j^q)^k(\jU_{j,\ell}) \cap \Bdep = \varnothing 
\quad \text{for $1\le k \le p-1$.}
\eeq


\subsubsection{A $q$-periodic polydisc for a near-integrable system~$G$ in~$\A^{n-2}$}


We now need a near-integrable system $G=G_j$ of the
form~\eqref{eqformedeG} possessing a
$q$-periodic polydisc in~$\A^{n-2}$.
We shall take each factor of the form described in
%
%
\begin{prop}   \label{prop:perellipse}
For any integer $p\ge2$ and positive real $\nu < 1/p$, the
exact-symplectic map of~$\A$
%
%
\beq \label{eq:defLa}
\La_{p,\nu}=\Phi^{\nu W_{p}}\circ\Phi^{\Demi r^2}
\eeq
(with the same sequence of functions $(W_M)_{M\in\N^*}$ as in~\eqref{eqdefWM})
has a $p$-periodic disc~$E_{p,\nu}$ such that
\beq\label{eq:areaellipse}
\area\big(E_{p,\nu}\big)=\frac{\pi}{128} \frac{\nu}{p}
\eeq
and
\beq\label{eq:localellipse}
E_{p,\nu} \subset \jB_{1/2p},\qquad 
\La_{p,\nu}^k(E_{p,\nu})\cap\jB_{1/p} = \varnothing
\quad \text{for $1\le k \le p-1$.}
\eeq
\end{prop}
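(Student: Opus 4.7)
My plan is to establish the existence of $E_{p,\nu}$ by constructing it as an invariant ellipse around a $p$-periodic orbit of $\La_{p,\nu}$, using the fact that $W_p(\theta)=\tfrac12\theta^2$ on $|\theta|\le 1/(2p)$ and $W_p\equiv 0$ on $[1/p,1-1/p]$.

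\emph{Step 1: locate a $p$-periodic orbit.} The point $(0,1/p)$ is $p$-periodic under $\La_{p,\nu}$. Indeed, writing $\La_{p,\nu}(\theta,r)=(\theta+r,\,r-\nu W_p'(\theta+r))$, the orbit visits the points $(k/p,1/p)$ for $k=0,1,\ldots,p-1$, since $\eta_p$ is supported in $[-1/p,1/p]\bmod\Z$ and vanishes there to all orders. Hence $\Phi^{\nu W_p}$ is the identity at each intermediate image, and at the final return to $(0,1/p)$ the relation $W_p'(0)=0$ closes the orbit.

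\emph{Step 2: linearize the $p$-th iterate.} In a small enough neighborhood of $(0,1/p)$ the intermediate images stay outside the support of $\eta_p$, so only the last application of $\Phi^{\nu W_p}$ is nontrivial. Writing $(x,y)=(\theta,r-1/p)$, one computes
\[
\La_{p,\nu}^p\bigl(x,\,1/p+y\bigr)\;=\;\bigl(x+py,\;1/p+y-\nu(x+py)\bigr),
\]
which is affine with linear part $M_p=\bigl(\begin{smallmatrix}1 & p\\ -\nu & 1-p\nu\end{smallmatrix}\bigr)$. Since $0<\nu<1/p$, we have $\det M_p=1$ and $\mathrm{tr}(M_p)=2-p\nu\in(1,2)$, so $M_p$ is elliptic and preserves the positive-definite quadratic form
\[
Q_p(x,y)\;=\;\nu\,x^2+p\nu\,xy+p\,y^2,\qquad \det Q_p=p\nu(1-p\nu/4).
\]

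\emph{Step 3: define $E_{p,\nu}$ and compute its area.} For $E_*>0$, the sublevel set $\{(θ,1/p+y):Q_p(θ,y)\le E_*\}$ is $M_p$-invariant, hence a $p$-periodic disc for $\La_{p,\nu}$. Its area equals $\pi E_*/\sqrt{p\nu(1-p\nu/4)}$, so the choice $E_*=\nu\sqrt{p\nu(1-p\nu/4)}/(128\,p)$ yields exactly the claimed value $\pi\nu/(128\,p)$.

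\emph{Step 4: verify the localization.} On $\{Q_p\le E_*\}$ one has $\max|θ|=\sqrt{E_*/(\nu(1-p\nu/4))}$ and $\max|y|=\sqrt{E_*/(p(1-p\nu/4))}$; for $\nu<1/p$ these are $O(1/p^{3/2})$ and $O(1/p^{2})$ respectively, comfortably inside $\jB_{1/(2p)}$. For the orbit disjointness, $\La_{p,\nu}^k(E_{p,\nu})$ is the affine image of the ellipse, translated so that its center sits at $(k/p,1/p)$, and for $2\le k\le p-2$ the angular coordinate stays well within the no-perturbation zone $[1/p,1-1/p]$.

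The main obstacle I expect is the strict disjointness at the boundary steps $k=1$ and $k=p-1$, where the centers $(1/p,1/p)$ and $(-1/p,1/p)$ lie on $\partial\jB_{1/p}$. Resolving this requires using the specific geometry of $Q_p$: the dominant axis of the ellipse, which encodes the constraint $x+y\approx0$ for small $y$, provides just enough angular clearance after one step of $\Phi^{\demi r^2}$, and the large numerical factor $1/128$ in the area formula is what builds in the necessary safety margin against this and the symmetric constraint at step $p-1$. A careful linear-algebraic estimate of the extremal values of $x+ky$ over $\{Q_p\le E_*\}$ should finish the proof.
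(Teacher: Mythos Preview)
Your approach matches the paper's: both locate the $p$-periodic point $O_p=(0,1/p)$, derive the same linear return map (you via the invariant quadratic form $Q_p$, the paper via an explicit conjugation $P$ to a rotation---these produce the same one-parameter family of invariant ellipses), and fix the size to hit the prescribed area. Your form $Q_p(x,y)=\nu x^2+p\nu xy+py^2$ is indeed $M_p$-invariant and your area normalization is correct.

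The obstacle you flag at $k=1$ and $k=p-1$ is genuine and is not a matter of margin that the factor $1/128$ can absorb. First, $\La_{p,\nu}(0,1/p)=(1/p,1/p)$ lies in the \emph{closed} band $\jB_{1/p}$, and since the unique fixed point of $M_p$ is the origin, every $M_p$-invariant ellipse is centered at $O_p$; hence no such ellipse can satisfy the disjointness in~\eqref{eq:localellipse} at $k=1$. Second, and more damaging to the construction itself, your Step~2 formula for $\La_{p,\nu}^p$ does not hold on any full neighborhood of $O_p$: points with $x+y<0$ have first image at angle $1/p+(x+y)<1/p$, inside the support of $W_p$, so the intermediate $\Phi^{\nu W_p}$ is not the identity there and the return map is not exactly affine on the whole ellipse. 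Your proposed ``linear-algebraic estimate of the extremal values of $x+ky$'' cannot close this, because on any origin-centered ellipse the linear form $x+y$ necessarily takes both signs. The paper's own proof shares the same gap: its displayed inequality shows only that the iterates of the box $B_p$ avoid $\jB_{1/(2p)}$, and then asserts they ``lie away from the support of~$W_p$'' (i.e.\ outside $\jB_{1/p}$); but for $t=1$ the lower bound is $-\tfrac{1}{8p}+\tfrac{1}{p}-\tfrac{1}{8p^2}=\tfrac{7p-1}{8p^2}<\tfrac{1}{p}$, so this stronger conclusion is not established.
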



\smallskip

Indeed, we shall define
\beq \label{eqdefGjLanujka}
G_j \defeq \La_{p_{j+3},\nu_j\zz 3} \times \cdots \times \La_{p_{j+n},\nu_j\zz n},
\qquad
\text{with}\ens\;
\nu_j\zz \ka \defeq \frac{1}{N_j^2 \normD{W_{p_{j+\ka}}}_{\al,L}}
\quad\text{for $\ka=3,\ldots,n$}
\eeq
and, since $p_{j+3}, \ldots, p_{j+n}$ are pairwise coprime and
their product is~$q$, 
\beq   \label{eqdefVj}
\jV_j \defeq E_{p_{j+3},\nu_j\zz 3} \times \cdots \times E_{p_{j+n},\nu_j\zz n}
\subset \jB_{1/2p_{j+3}} \times \cdots \times \jB_{1/2p_{j+n}} 
\eeq
will be a $q$-periodic polydisc for~$G_j$ whose iterates are polydiscs
satisfying
\beq    \label{eqlocalizorbVj}
G_j^k(\jV_j) \cap \big(
\jB_{1/p_{j+3}} \times \cdots \times \jB_{1/p_{j+n}} 
\big) = \varnothing
\quad \text{for $1\le k \le q-1$.}
\eeq

\smallskip


\begin{proof}[Proof of Proposition~\ref{prop:perellipse}]
The disc~$E_{p,\nu}$ will be a $p$-periodic filled ellipse centred at
$O_p \defeq \big( \angD{0},1/p \big) \in \A$.
Recall that
\[
\Phi^{\Demi r^2}\big( \angD{\th},r \big) = \big( \angD{\th+r},r \big), \qquad
\Phi^{\nu W_p}\big( \angD{\th},r \big) = \big( \angD{\th},r-\nu W_p'(\th) \big).
\]
Let us set
\[
B_p \defeq \textstyle
\big\{\, \big( \angD{x}, \frac{1}{p}+y \big) \,\big\vert\,
\abs{x} \le \frac{1}{8p}, \;
\abs{y} \le \frac{1}{8p^2}
\,\big\}
= O_p + \big[-\frac{1}{8p},\frac{1}{8p}\,\big] \times \big[-\frac{1}{8p^2},\frac{1}{8p^2}\,\big].
\]
We will sometimes omit the canonical projection $\angD{\,\cdot\,} \col
\R\to\T$ in our notations and consider $(x,y)$ as local coordinates near~$O_p$.

We first note that~$B_p$ is a $p$-adapted box for $h(r)=\dem r^2$ and
$\B_{\frac{1}{2p}}$ in the sense of Definition~\ref{def:q-box} of Section~\ref{sec:Defqbox}.
Indeed, for $O_p+(x,y)\in B_p$, we have 
$\Phi^{th}\big(O_p+(x,y)\big) = 
\big( \angD{x + t\big(\frac{1}{p}+y\big)}, \frac{1}{p} + y \big)$
and a straightforward computation shows that
\[ \textstyle
1\le t\le p-1 \ens\Rightarrow\ens
\frac{1}{2p} <
-\frac{1}{8p} + \frac{1}{p} -\frac{1}{8p^2}
\le x + t\big(\frac{1}{p}+y\big) \le
\frac{1}{8p} + (p-1)\big( \frac{1}{p} + \frac{1}{8p^2} \big)
< 1-\frac{1}{2p},
\]
hence 
\beq   \label{eqDefi}
1\le t\le p-1 \quad\Rightarrow\quad
\Phi^{\Demi t r^2}\big(O_p+(x,y)\big) \notin \ov{\jB_{\frac{1}{2p}}},
\eeq
while the first component of $\Phi^{ph}\big(O_p+(x,y)\big)$ is
$\angD{x+1+py} = \angD{x+py}$
and $\abs{x+py} \le \frac{1}{4p}$,
hence 
\beq   \label{eqDefii}
\Phi^{\Demi p r^2}\big(O_p+(x,y)\big) \in \ov{\jB_{\frac{1}{4p}}}.
\eeq


We now observe that the restrictions to~$B_p$ of $\La_{p,\nu}$ and its
iterates up to the $p$th
\[
A_{k,p,\nu} \defeq {\La^k_{p,\nu}}\big\vert _{B_p}, \qquad
k= 0,\ldots,p
\]
are affine in the coordinates $(x,y)$, and even linear for the $p$th iterate.
Indeed, one checks by induction on $k\in\{0,\ldots,p-1\}$
that $A_{k,p,\nu} = \Phi^{\Demi k r^2} \big\vert _{B_p}$:
this clearly holds for $k=0$ and, assuming it for
$0\leqslant k\leqslant p-2$, we have
\[
A_{k+1,p,\nu} = \Phi^{\nu W_p}\circ \Phi^{\Demi r^2} \circ 
\Phi^{\Demi k r^2} \big\vert _{B_p} 
= \Phi^{\Demi (k+1)r^2} \big\vert _{B_p} 
\]
because, by~\eqref{eqDefi},  
$\Phi^{\Demi (k+1)r^2}(B_p)$ lies away from the support of~$W_p$.
Now, \eqref{eqDefii} says that $\Phi^{\Demi p r^2}(B_p)$ is contained
in $\ov{\jB_{\frac{1}{4p}}}$
and this is a part of~$\A$ which we may identify with 
$\big[ -\frac{1}{4p},\frac{1}{4p} \big] \times \R \subset \R\times\R$,
in which $W_p\equiv \Demi \th^2$ in the coordinates $(\th,r)$, whence for $k=p$
\[
A_{p,p,\nu} = \Phi^{\nu W_p} \circ \Phi^{\Demi r^2} \circ A_{p-1,p,\nu}
= \Phi^{\nu W_p} \circ \Phi^{\Demi p r^2} \big\vert _{B_p}  = 
\Phi^{\Demi\nu\th^2} \circ \Phi^{\Demi p r^2} \big\vert _{B_p} .
\]
We thus end up with
\beq\label{eq:explambda}
A_{k,p,\nu}\big(O_p+(x,y)\big) = O_p+\left\vert\!\!
\begin{array}{ll}
\big( \angD{x+k(y+\frac{1}{p})}, y \big) &\text{if $0\leqslant k\leqslant p-1$,}\\[1ex]
\big( \angD{x+py}, y-\nu (x+py) \big) &\text{if $k=p$.}
\end{array} \right.
\eeq


Let us consider the linear transformation $A \col \R^2\to\R^2$ defined by
\[
A
\begin{pmatrix} x \\ y \end{pmatrix}
\defeq 
\begin{pmatrix} x+py \\ y-\nu (x+py) \end{pmatrix} =
\begin{pmatrix}
  \; 1   &    p       \\
-\nu & 1-\nu p
\end{pmatrix}
\begin{pmatrix} x \\ y \end{pmatrix}.
\] 
According to~\eqref{eq:explambda}, 
if~$\cE$ is a filled ellipse centred at the origin and invariant
by~$A$ and $O_p+\cE \subset B_p$,
then $O_p+\cE$ is a $p$-periodic disc for $\La_{p,\nu}$ which satisfies~\eqref{eq:localellipse}.
Elementary linear algebra shows that
\[
A = P
\begin{pmatrix*}[r]
 \cos\ga  &  \sin\ga  \\
-\sin\ga  &  \cos\ga
\end{pmatrix*}
P\ii,
\]
where 
$0 < \ga \defeq \arccos\big( 1 - \frac{\nu p}{2} \big) <
\frac{\pi}{3}$
(recall that $0<\nu p<1$) and
\[
P
\begin{pmatrix} X \\ Y \end{pmatrix}
\defeq \frac{1}{p \sin\ga}
\begin{pmatrix}
        p        &      0       \\
-1+\cos\ga & \sin\ga
\end{pmatrix}
\begin{pmatrix} X \\ Y \end{pmatrix}.
\] 
Hence, for each $r>0$, 
$
\cE(r) \defeq P \Big( \big\{
\left(\begin{smallmatrix} X \\ Y \end{smallmatrix}\right) \in \R^2 \mid
X^2 + Y^2 < r^2 \big\} \Big)
$
is a filled ellipse of area $\pi r^2$, centred at the origin and invariant by~$A$.
We choose
\[
E_{p,\nu} \defeq O_p + \cE(r_{p,\nu}),
\qquad
r_{p,\nu} \defeq \tfrac{1}{8}\big(\tfrac{\nu}{2p}\big)^{1/2}.
\]
Using $\sin\ga > (\nu p/2)^{1/2}$ and $\sin\ga > 1-\cos\ga$, the
property $O_p + \cE(r_{p,\nu}) \subset B_p$ is easily checked and the
desired conclusions are fulfilled, including~\eqref{eq:areaellipse}.
\end{proof}


\subsubsection{Applying Corollary~\ref{cor:couplingper}}


From now on, taking advantage of the Prime Number Theorem,
we assume that the parameter~$j$ is large enough so that
\beq  \label{ineqTNP}
p_{j+2} < p_{j+3} < \cdots < p_{j+n} \le 2 p_{j+2}
\eeq
(this is the interest of having taken successive prime numbers for our
$n-1$ pairwise coprime integers).
Recall that the other parameter is $\ell\ge C_1$, so that $Q=pq$ is an
arbitrary multiple $\ge C_1 N_j$ of~$N_j$.

On the one hand, in $\A^m=\A$, we have a $p$-periodic disc
$\jU_{j,\ell}$ for the map $\Phi^{f_{j,\ell}} \circ F_j^q$ defined
by~\eqref{eqdefFj}--\eqref{eqdeffjell}, satisfying the localization
condition~\eqref{eqlocalizjUjell}.
On the other hand, in $\A^{m'} = \A^{n-2}$, we have a $q$-periodic
polydisc~$\jV_j$ for the map~$G_j$ defined
by~\eqref{eqdefGjLanujka}, with localization conditions 
\eqref{eqdefVj}--\eqref{eqlocalizorbVj}.
We can thus apply Corollary~\ref{cor:couplingper} with
\begin{gather}   \label{eqdefgjsynchr}
g_j \defeq \eta_{p_{j+3}} \otimes \cdots \otimes \eta_{p_{j+n}}  \\[1.5ex]
\notag
\begin{aligned}
&\jB \defeq \Bdeep\cap \A^+_{4/N_j}, & \quad
&\jB' \defeq \jB_{1/2p_{j+3}} \times \cdots \times \jB_{1/2p_{j+n}},
\\[1ex]
&\jB_* \defeq \Bdep, & 
&\jB'_* \defeq \jB_{1/p_{j+3}} \times \cdots \times \jB_{1/p_{j+n}} 
\end{aligned}
\end{gather}
and get a map
\beq   \label{eqfinaldefPsi}
\Psi \defeq \Phi^{f_{j,\ell}\otimes g_j} \circ (F_{j,\ell} \times G_j)
=
\Phi^{f_{j,\ell}\otimes g_j} \circ
\Phi^{ \nu_j\zz 3 W_{p_{j+3}} + \cdots + \nu_j\zz n W_{p_{j+n}} }
\circ \Phi^{ \Demi(r_2^2+\cdots+r_n^2) + \frac{1}{N_j^2}V }
\eeq
possessing a $Q$-periodic polydisc
\[
\jD \defeq \jU_{j,\ell} \times \jV_j \subset \jB \times \jB',
\]
(recall that $Q = pq$), with
\[
\Psi^k(\jD) \cap (\jB_* \times \jB'_*) = \varnothing
\quad \text{for $1\le k \le Q-1$.}
\]
The last two properties coincide
with~\eqref{eq:disjoint1}--\eqref{eq:disjoint2}, since all iterates
of~$\jD$ are in fact polydiscs (in view of~\eqref{eq:iterates}
and~\eqref{eqdefGjLanujka}).


To end the proof of Theorem~\ref{th:perdomains}(ii),
we just need to check that
\begin{enumerate}[(i)]
\item 
  the map $\Psi$, which clearly belongs to
  $\Pa2\big(\Phi^{\Demi(r_2^2+\cdots+r_n^2)}\big)$, is indeed close to
  integrable, namely
\beq   \label{ineqestimdevfinal}
\de^{\al,L}\big( \Psi, \Phi^{ \Demi(r_2^2+\cdots+r_n^2) } \big)
\le \frac{C_0+n-2+\normD{V}_{\al,L}}{N_j^2},
\eeq
\\
\item 
  the Gromov capacity of~$\jD$ is not too small, namely
\beq    \label{ineqfinalCG}
\CG(\jD) \ge \ti C \min\bigg\{%
\frac{1}{Q^5} N_j^{4-\frac{2}{n-1}}, \,
N_j^{-2-\frac{2}{n-1}}
\exp\Big( - \ti c \, N_j^{\frac{1}{(n-1)(\al-1)}} \Big)
\bigg\},
\eeq
with $\ti C \defeq \min\big\{
\frac{C_2 C_3}{2},
\frac{\pi}{256 C_0},
2^{-n-1} C_3
\big\}$
and $\ti c \defeq 2^{\frac{1}{\al-1}} (n-1) c(\al,L)$.
\end{enumerate}
Indeed, this will yield~\eqref{eq:devPsij} and~\eqref{eq:capaD},
up to an obvious change of notation for the constants ``$C_2$''
and~``$C_3$'',
by taking for~$c$ a large enough function of $\ti c$, $\al$ and~$n$.


\begin{proof}[Proof of~(i)]
In view of~\eqref{eqdeffjell} and~\eqref{eqdefgjsynchr},
we can estimate the norm of $f_{j,\ell} \otimes g_j$ thanks
to~\eqref{eqdefWM} and~\eqref{ineqnormetap}:
\[
\normD{ f_{j,\ell} \otimes g_j }_{\al,L} \le q\ii \mu_{j,\ell} C_0 \,
\exp\Big( p_{j+2}^{\frac{1}{\al-1}} + \cdots + p_{j+n}^{\frac{1}{\al-1}} \Big)
\]
which, by~\eqref{eqdefmujellmin} and~\eqref{ineqTNP}, is 
$ \le \frac{C_0}{ (2 p_{j+2})^n q }
\le \frac{C_0}{N_j^2} $.
In view of~\eqref{eqfinaldefPsi} and the choice of $\nu_j\zz 3,
\ldots,\nu_j\zz n$ in~\eqref{eqdefGjLanujka},
this yields~\eqref{ineqestimdevfinal}.
\end{proof}


\begin{proof}[Proof of~(ii)]
Since $N_j = p_{j+2} \, q$ and $q = p_{j+3} \cdots p_{j+n}$,
\eqref{ineqTNP} yields $p_{j+2}^{n-1} < N_j < (2 p_{j+2})^{n-1}$, whence
\[
\Demi N_j^{ \frac{1}{n-1} } < p_{j+2} < N_j^{ \frac{1}{n-1} },
\qquad
N_j^{ \frac{n-2}{n-1} } < q < 2 \, N_j^{ \frac{n-2}{n-1} }.
\]
Now, by~\eqref{eqGromovCapProduct}, 
$\CG(\jD) = \min\Big\{
\area(\jU_{j,\ell}), \area(E_{p_{j+3},\nu_j\zz 3}), \ldots, \area(E_{p_{j+n},\nu_j\zz n})
\Big\}$.
In view of~\eqref{eq:areaellipse}, for $3 \le \ka \le n$,
\begin{align*}
\area(E_{p_{j+\ka},\nu_j\zz \ka}) = \frac{\pi}{128} \frac{\nu_j\zz \ka}{p_{j+\ka}} 
& \ge \frac{\pi}{256 C_0} \frac{1}{ N_j^2 \, p_{j+2}} 
      \exp\Big( -c(\al,L)\, p_{j+2}^{\frac{1}{\al-1}} \Big) \\[1ex]
& \ge \frac{\pi}{256 C_0} \frac{1}{ N_j^{ 2+\frac{1}{n-1} } }
      \exp\Big( -c(\al,L)\, N_j^{\frac{1}{(n-1)(\al-1)}} \Big),
\end{align*}
while~\eqref{ineqareajUellj} yields
$\area(\jU_{j,\ell}) \ge \min\{ A , B \}$ with
\[
A \defeq \frac{C_2 C_3}{2 \, \ell^5 \, p_{j+2}^3 \, q} 
= \frac{C_2 C_3 N_j^2 q^2}{2 Q^5} 
\ge \frac{C_2 C_3}{2} \frac{1}{Q^5} N_j^{4-\frac{2}{n-1}}
\]
and $\dst B \defeq \frac{C_3}{ 2^n p_{j+2}^{n+2} \, q } 
\exp\Big( - (n-1) c(\al,L) (2 p_{j+2})^{\frac{1}{\al-1}} \Big)$
larger than
\[
\frac{C_3}{ 2^{n+1} N_j^{2+\frac{2}{n-1}}} 
\exp\Big( - 2^{\frac{1}{\al-1}} (n-1) c(\al,L)\, N_j^{\frac{1}{(n-1)(\al-1)}} \Big),
\]
whence~\eqref{ineqfinalCG} follows.
\end{proof}


The proof of Theorem~\ref{th:perdomains}(ii) is now complete.


\subsection{Proof of Theorem \ref{th:lowerbounds} (lower bounds for
  wandering domains in $\A^n$)}   \label{secpfthmlower}


\subsubsection{Overview of the proof}


Here is the more precise statement which, as explained in
Section~\ref{paragabetterC}, implies Theorem~\ref{th:lowerbounds}.
\label{sec:statementthmCp}
\begin{thmCp}
Let $n\ge2$ be integer. Let $\al>1$ and $L>0$ be real, 
and let $h(r) \defeq \frac{1}{2}(r_1^2+\cdots+r_n^2)$. 
Then there exist a positive real~$c_*$ and 
a sequence $(\Phi_j)_{j\geq0}$ of exact symplectic diffeomorphisms
of~$\A^n$
which belong to $\P^{\al,L}_2(h)$ if $n=2$ 
and to $\P^{\al,L}_3(h)$ if $n\geq3$,
such that each $\Phi_j$ admits a wandering polydisc $\jW_j \subset \A_3^n$
and 
\beq   \label{ineqCGjWj}
\eps_j \defeq \de^{\al,L}(\Phi_j,\Phi^h)
\xrightarrow[j\to\infty]{}0
\quad \text{and} \quad
\CG(\jW_j)\ge \exp\Big(-c_* \Big(\frac{1}{\eps_j}\Big)^{\frac{1}{2(n-1)(\al-1)}}\Big).
\eeq
\end{thmCp}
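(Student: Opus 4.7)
The plan is to apply the coupling lemma (Corollary~\ref{cor:couplingwand}) with $m=1$, $m'=n-1$, coupling a wandering disc of a rescaled standard map on the first factor $\A$ with the $q_j$-periodic polydisc of Theorem~\ref{th:perdomains} on the second factor $\A^{n-1}$. On the first factor, I would fix once and for all a function $U\in G^{\al,L}(\T)$ such that the standard map $(\th,\tilde r)\mapsto(\th+\tilde r,\tilde r-U'(\th+\tilde r))$ admits a bounded wandering disc $\tilde\jU\subset\A$ of some positive area $A$, as produced by the construction of~\cite{ms}. Conjugation by the scaling $\sigma_q(\th,r)\defeq(\th,qr)$ then shows that, for every $q$ large enough, the rescaled standard map
\[
\psi_q\defeq\Phi^{q^{-1}U}\circ(\Phi^{\Demi r_1^2})^q\colon(\th_1,r_1)\mapsto\bigl(\th_1+qr_1,\,r_1-q^{-1}U'(\th_1+qr_1)\bigr)
\]
admits the wandering disc $\jU_q\defeq\sigma_q^{-1}(\tilde\jU)\subset\A_3$ of area $A/q$.

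On the second factor, for each $j\ge j_0$ and parameter $q_j\ge C_1 N_j$ to be chosen (a multiple of $N_j$ when $n\ge3$), I would invoke Theorem~\ref{th:perdomains} to obtain $G_j\defeq\Psi_{j,q_j}$ on $\A^{n-1}$ with $q_j$-periodic polydisc $\jV_j$ and $\de^{\al,L}(G_j,\Phi^{\Demi(r_2^2+\cdots+r_n^2)})\le C_2/N_j^2$. Using Lemma~\ref{lembump} of Appendix~\ref{secBumpGev}, the localization properties \eqref{eq:disjointdeux} or \eqref{eq:disjoint1}--\eqref{eq:disjoint2} let me build a synchronization function of product form $g_j=\chi_2\otimes\cdots\otimes\chi_n\in G^{\al,L}(\A^{n-1})$, each $\chi_\ell\in G^{\al,L}(\A)$ being a Gevrey bump equal to $1$ with flat plateau on $\jB_{1/(2p_{j+\ell})}$ and vanishing flatly outside $\jB_{1/p_{j+\ell}}$. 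This guarantees $g_j\equiv 1$ and $\dd g_j=0$ on $\jV_j$, and $g_j\equiv 0$, $\dd g_j=0$ on $G_j^k(\jV_j)$ for $1\le k\le q_j-1$; moreover Lemma~\ref{lembump} combined with $p_{j+\ell}\asymp N_j^{1/(n-1)}$ (see~\eqref{ineqTNP}) yields
\[
\|g_j\|_{\al,L}\le\exp\!\Bigl(c_1 N_j^{\tfrac{1}{(n-1)(\al-1)}}\Bigr).
\]

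Setting $F\defeq\Phi^{\Demi r_1^2}$ and $f_j\defeq q_j^{-1}U$, Lemma~\ref{Lem:cl} produces $\Phi_j\defeq\Phi^{f_j\otimes g_j}\circ(F\times G_j)$, and Corollary~\ref{cor:couplingwand} asserts that $\jW_j\defeq\jU_{q_j}\times\jV_j\subset\A_3^n$ is wandering for $\Phi_j$, with $\CG(\jW_j)=\min\{\area(\jU_{q_j}),\CG(\jV_j)\}$ by~\eqref{eqGromovCapProduct}. Commuting $\Phi^{\Demi r_1^2}$ with the Hamiltonian flows composing $G_j$ (which act trivially on $(\th_1,r_1)$) shows $F\times G_j\in\Pa{2}(\Phi^h)$ if $n\ge3$ (resp.\ $\Pa{1}(\Phi^h)$ if $n=2$), so $\Phi_j\in\Pa{3}(\Phi^h)$ (resp.\ $\Pa{2}(\Phi^h)$). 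The algebra-norm property then gives
\[
\eps_j\le\|f_j\otimes g_j\|_{\al,L}+\de^{\al,L}(F\times G_j,\Phi^h)\le\frac{\|U\|_{\al,L}\,\|g_j\|_{\al,L}}{q_j}+\frac{C_2}{N_j^2}.
\]
Choosing $q_j$ as the smallest admissible integer satisfying $q_j\ge N_j^2\|U\|_{\al,L}\|g_j\|_{\al,L}$ and $q_j\ge N_j^2\exp\!\bigl((c_3+1)N_j^{1/((n-1)(\al-1))}\bigr)$ (where $c_3$ is the exponent from Theorem~\ref{th:perdomains}) gives $\eps_j=O(N_j^{-2})$ and $\area(\jU_{q_j})\ge A/q_j\ge\exp(-c_2 N_j^{1/((n-1)(\al-1))})$; the same choice makes the polynomial-in-$q$ term in~\eqref{eq:capaDdeux}/\eqref{eq:capaD} exponentially smaller than the exponential-in-$N_j$ term, so Theorem~\ref{th:perdomains} yields $\CG(\jV_j)\ge\exp(-c_3 N_j^{1/((n-1)(\al-1))})$. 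Combined with $N_j\asymp\eps_j^{-1/2}$, this yields~\eqref{ineqCGjWj}.

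The substantive analytical difficulty is already absorbed by Theorem~\ref{th:perdomains} (whose proof occupies Sections~\ref{sec:proofpseudopend} and~\ref{ssec:perdomains}); the main remaining obstacle is the quantitative balance above. The exponentially large Gevrey norm of the synchronization bump $g_j$, unavoidable in the non-quasianalytic setting because of the geometry forced by the prime-number-theoretic choice of $N_j$, forces $q_j$ to be exponentially large in $N_j^{1/((n-1)(\al-1))}$, and so is the resulting area of $\jU_{q_j}$; matching this area with the exponentially small capacity lower bound on $\jV_j$ produces exactly the exponent $\frac{1}{2(n-1)(\al-1)}$ in~\eqref{ineqCGjWj}.
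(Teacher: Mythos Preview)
Your proposal is correct and follows essentially the same route as the paper: the paper's proof in Section~\ref{ssec:proofn} uses exactly this coupling of a rescaled standard map (Proposition~\ref{prop:standmap}) with the periodic polydisc of Theorem~\ref{th:perdomains}, with the synchronization function $g_j=\eta_{p_{j+2}}\otimes\cdots\otimes\eta_{p_{j+n}}$ built from the bump functions of Lemma~\ref{lembump}, and the choice $q_j=([N_j\|U\|_{\al,L}\|g_j\|_{\al,L}]+1)N_j$, which matches your $q_j\gtrsim N_j^2\|g_j\|_{\al,L}$. The only cosmetic differences are that the paper constructs~$U$ explicitly (with $U'(x)=-1+x$ near~$0$) rather than citing~\cite{ms}, and that its bump functions depend only on the angles~$\th_\ell$; your remark that the $N_j^2/q_j^5$ term becomes ``exponentially smaller'' is phrased backwards (it is this term that realises the minimum once $q_j$ is exponentially large), but since both terms in the min of~\eqref{eq:capaDdeux}/\eqref{eq:capaD} are then bounded below by a single exponential in~$N_j^{1/((n-1)(\al-1))}$, the conclusion is unaffected.
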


\medskip


The rest of Section~\ref{secpfthmlower} is devoted to the proof of Theorem~\ref{th:lowerbounds}'.

The idea is as follows.
Each near-integrable system~$\Phi_j$ and wandering polydisc~$\jW_j$ will be obtained by means of
Corollary~\ref{cor:couplingwand} in the form
\beq
\Phi_j \defeq \Phi^{f\otimes g} \circ (F\times G), \qquad
\jW_j \defeq \jU \times \jV,
\eeq
where 
\beq   \label{eqdefGPsijqj}
G \defeq \Psi_{j,q_j} \col \A^{n-1} \righttoleftarrow, \qquad
\jV \defeq \jD_{j,q_j} \subset \A^{n-1} 
\eeq
will be provided by Theorem~\ref{th:perdomains},
with a suitably chosen large integer~$q_j$, 
and the function~$g$ will be chosen so as to satisfy the
synchronization conditions~\eqref{eq:sync1} for the orbit of $\jD_{j,q_j}$,
while 
\beq   \label{eqdefFfjU}
F \defeq \Phi^{\Demi r_1^2} \col \A \righttoleftarrow, 
\quad f \defeq \frac{1}{q_j} U, \quad
\jU \defeq W_{q_j} \subset \A
\eeq
stem from
%
\begin{prop}\label{prop:standmap}
Let $0<\rho<1/2$ and let $U\in C^\infty(\T)$ be a function such that
\beq   \label{eqcondUlin}
U'\big( \angD{x} \big) = -1+x \qquad
\text{for $x \in [-\rho,\rho]$.}
\eeq
Then there exist a real $C_0>0$ such that, for each integer $q\ge1$, the diffeomorphism
$\Phi^{\pfrac{1}{q} U}\circ\big(\Phi^{\ppdemi r^2}\big)^q$
%
%
of~$\A$ admits a wandering disc $W_q \subset \A_3$ such that
\beq   \label{eq:estimarea}
\area(W_q) = \frac{C_0}{q}.
\eeq
\end{prop}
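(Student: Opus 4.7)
The plan is to pass to rescaled action coordinates where the map becomes $q$-independent. From the definitions,
$$T(\theta,r)\defeq\Phi^{\pinvq U}\circ\big(\Phi^{\ppdemi r^2}\big)^{q}(\theta,r)=\big(\theta+qr,\ r-\pinvq U'(\theta+qr)\big),$$
and conjugating by $\sigma\col(\theta,u)\mapsto(\theta,u/q)$ (which multiplies $2$-dimensional Lebesgue measure by $1/q$) replaces $T$ by the $q$-independent ``standard-like'' map $S(\theta,u)=\big(\theta+u,\ u-U'(\theta+u)\big)$. It thus suffices to exhibit for $S$ an explicit wandering rectangle $\widetilde W\subset\A$ of area of order $1$ and then take $W_q\defeq\sigma(\widetilde W)$.

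The key observation is that whenever $\theta+u$ lies within distance $\rho$ of an integer $N$, hypothesis~\eqref{eqcondUlin} gives $U'(\theta+u)=-1+(\theta+u-N)$, so setting $\theta'\defeq\theta+u-N\in[-\rho,\rho]$ and $u'\defeq u-U'(\theta+u)$ yields the purely affine ``flat-region'' relations $\theta'\equiv\theta+u\pmod{\Z}$ and $u'=u+1-\theta'$. Provided every iterate stays in this flat region, the $\theta$-dynamics obeys the linear recursion $\theta_{n+2}\equiv\theta_{n+1}-\theta_n\pmod{\Z}$, whose characteristic roots are $e^{\pm i\pi/3}$. Hence $(\theta_n)$ is periodic of period $6$, with the explicit pattern $\theta_2=\theta_1-\theta_0$, $\theta_3=-\theta_0$, $\theta_4=-\theta_1$, $\theta_5=\theta_0-\theta_1$, $\theta_6=\theta_0$. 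In particular the sum over one period vanishes, so the partial sums $\Sigma_n\defeq\theta_1+\cdots+\theta_n$ are uniformly bounded in $n$, and the relation $u_{n+1}=u_n+1-\theta_{n+1}$ gives the linear drift $u_n=u_0+n-\Sigma_n$.

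I would then take $\widetilde W\defeq\{(\theta,u)\in\A : \abs\theta<\rho/4,\ \abs{u-1}<\rho/4\}$ and $W_q\defeq\sigma(\widetilde W)\subset\A_3$. For $(\theta_0,u_0)\in\widetilde W$ one has $\abs{\theta_0}<\rho/4$ and, since $\theta_0+u_0$ is then $\rho/2$-close to the integer $N_0=1$, $\theta_1=\theta_0+u_0-1$ with $\abs{\theta_1-\theta_0}=\abs{u_0-1}<\rho/4$; plugging these into the period-$6$ formulas gives $\abs{\theta_n}<\rho/2$ for every $n$, which confirms a posteriori that the flat-region formulas apply throughout. An elementary direct calculation using $\abs{\theta_0},\abs{u_0-1}<\rho/4$ bounds the six relevant partial sums by $\abs{\Sigma_m}\leq 3\rho/4$ for $m=1,\dots,6$, hence $\abs{\Sigma_n}\leq 3\rho/4$ for every $n\geq 0$ by periodicity. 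Consequently, for $n\geq 1$ and $\rho<1/2$,
$$u_n\ \geq\ (1-\rho/4)+n-3\rho/4\ =\ n+1-\rho\ >\ 1+\rho/4\ \geq\ u_0',$$
for every $(\theta_0',u_0')\in\widetilde W$; thus $S^n(\widetilde W)$ lies in a strictly higher $u$-slab than $\widetilde W$ and is disjoint from it. By bijectivity of $S$ this gives $S^k(\widetilde W)\cap\widetilde W=\varnothing$ for all $k\in\Z^{*}$, so $W_q$ is wandering for $T$, and $\area(W_q)=q^{-1}\area(\widetilde W)=\rho^2/(4q)$, yielding \eqref{eq:estimarea} with $C_0=\rho^2/4$. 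The only point requiring a brief calculation is the uniform partial-sum bound, which reduces to the explicit evaluation of $\Sigma_1,\dots,\Sigma_6$; the rest is elementary once the flat-region linearisation and its period-$6$ structure have been identified.
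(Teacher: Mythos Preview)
Your proof is correct and follows essentially the same strategy as the paper: both conjugate by the action-rescaling $(\theta,r)\mapsto(\theta,qr)$ (or its inverse) to reduce to the $q$-independent standard map $S(\theta,u)=(\theta+u,\,u-U'(\theta+u))$, and both exploit the fact that in the flat region $S$ is affine with rotation eigenvalues $e^{\pm i\pi/3}$. The only real difference lies in the choice of wandering set for $S$ and how disjointness of iterates is verified. The paper passes to the quotient $S^*\colon\T^2\to\T^2$, observes that the origin is an elliptic fixed point, and takes $W$ to be the lift (containing $(\langle 0\rangle,0)$) of a small $S^*$-invariant filled ellipse; then $S^k(W)=W+(0,k)$ \emph{exactly}, by the $S^*$-invariance, and disjointness is immediate. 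You instead take an explicit rectangle near $(\langle 0\rangle,1)$ and check disjointness by direct computation: the period-$6$ angle sequence, the vanishing of the full-period sum, and the resulting linear drift in~$u$. The paper's route is more conceptual and one line shorter once the quotient picture is set up; yours is more self-contained and yields the explicit constant $C_0=\rho^2/4$ rather than the area of an unspecified small ellipse.
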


We give the proof of Proposition~\ref{prop:standmap} in
Section~\ref{Sec:Ffq}.
Then, in Section~\ref{ssec:proofn}, we indicate how to
choose~$q_j$ and check that $\Phi_j$ and~$\jW_j$ have all the desired properties.


\subsubsection{Standard maps with  wandering discs in~$\A$---Proof of Proposition~\ref{prop:standmap}}
\label{Sec:Ffq}



%
To prove Proposition~\ref{prop:standmap}, we first consider the so-called ``standard map'' 
\[
\jS \defeq \Phi^U\circ\Phi^{\ppdemi r^2} \col \A \righttoleftarrow
\]
\ie
\[
\jS(\th,r) = \big(\th+r,\,r-U'(\th+r)\big), \qquad (\th,r)\in \T\times \R.
\]
Since $\jS(\th,r+1)=\jS(\th,r)$, by passing to the quotient, $\jS$
induces a map $\jS^*  \col \T\times\T \righttoleftarrow$.
Our assumption on~$U$ entails that the origin
$\big( \angD{0},\angD{0} \big)$ of~$\T^2$ is a fixed point of~$\jS^*$,
in a neighbourhood of which~$\jS^*$ is linear:
\[
\jS^*\big( \angD{x}, \angD{y} \big) = \big( \angD{x+y}, \angD{-x} \big),\qquad 
x\in [ -\tfrac{\rho}{2}, \tfrac{\rho}{2}], \quad y\in [ -\tfrac{\rho}{2}, \tfrac{\rho}{2}].
%
%
\]
The eigenvalues being~$\ex^{\pm \ima \frac{\pi}{3}}$, the origin is an elliptic fixed point 
surrounded by invariant ellipses. 
Let~$W^*$ denote any invariant filled ellipse contained in the
projection onto~$\T^2$ of $[ -\tfrac{\rho}{2}, \tfrac{\rho}{2}] \times
[ -\tfrac{\rho}{2}, \tfrac{\rho}{2}]$,
and let $C_0 \defeq \area(W^*)$.

We define~$W$ to be the lift of~$W^*$ in~$\A$ which contains the
point $O \defeq \big( \angD{0}, 0 \big)$.
Since $U'(0)=-1$, one sees that $\jS^k(O) =  \big( \angD{0}, k \big)$
for all $k\in\Z$,
hence the orbit of~$W$ under~$\jS$ consists of pairwise disjoint
filled ellipses centred at the points $\jS^k(O)$:
\[
\jS^k(W) = \big( \angD{0}, k \big) + W, \qquad k\in\Z.
\]
In particular, $W$ is a wandering disc for~$\jS$.

We now obtain a wandering disc for
\[
\jS_q \defeq \Phi^{\pfrac{1}{q} U}\circ\big(\Phi^{\ppdemi r^2}\big)^q,
\]
for any integer $q\ge1$, by means of the scaling
\[
\sig \col (\th,r) \in \A \mapsto (\th,qr) \in \A.
\]
Indeed, Lemma~\ref{lem:scalingq} with $h(r) = \Demi r^2$ and $v=0$
yields $\sig\ii\circ\Phi^{\Demi r^2}\circ\sig = \Phi^{\Demi q r^2}$
and, with $h=0$ and $v=U$,
$\sig\ii\circ\Phi^{U}\circ\sig = \Phi^{q\ii U}$,
whence
\[
\jS_q = \sig\ii \circ \jS \circ \sig
\]
and $W_q \defeq \sig\ii(W)$ is a wandering disc for~$\jS_q$.
Clearly, $\area(W_q) = C_0/q$ and the proof of Proposition~\ref{prop:standmap} is complete.


\begin{rem}
The diffeomorphism $\jS$ is  ``dynamically far'' from  the integrable map $\Phi^{\ppdemi r^2}$.
Indeed, $\jS$ cannot possess any essential invariant curve $\jC$, otherwise the orbit of each point in the 
complement $\A\setm\jC$ would be contained in a single connected components of $\A\setm\jC$, 
and this is not the case for the orbit of $(0,0)$. As a consequence, $\jS_q$ has no essential invariant curve.
However, when  $q\to\infty$, $\jS_q$ is a small perturbation of the integrable map $\Phi^{\ppdemi qr^2}$.
This is not in contradiction with the KAM theorem: the torsion of $\Phi^{\ppdemi qr^2}$ tends to infinity 
when $q\to \infty$,  which makes the KAM  threshold  tend to $0$.
\end{rem}


\subsubsection{Proof of Theorem~\ref{th:lowerbounds}'}
\label{ssec:proofn}


Let $n\ge2$ be integer. Let $\al>1$ and $L>0$ be real.


On the one hand, 
Theorem~\ref{th:perdomains} yields reals $c,C_1,C_2,C_3>0$ and a
sequence $(\Psi_{j,q})$ in $\Paal(\Phi^{\Demi(r_2^2+\cdots+r_n^2)})$ such that
$\de^{\al,L}(\Psi_{j,q},\Phi^{\Demi(r_2^2+\cdots+r_n^2)}) \le
\frac{C_2}{N_j^2}$,
where $N_j=p_{j+2}\cdots p_{j+n}$ is arbitrarily large,
and each $\Psi_{j,q}$ for~$q$ arbitrary integer multiple of~$N_j$ not
smaller than $C_1 N_j$ has a $q$-periodic polydisc
$\jD_{j,q} \subset \A^{n-1}$ satisfying~\eqref{eq:capaDdeux} or~\eqref{eq:capaD}.
In view of the localization conditions~\eqref{eq:disjointdeux}
or~\eqref{eq:disjoint1}--\eqref{eq:disjoint2} satisfied by the orbit
of~$\jD_{j,q}$ independently of~$q$, we define
\[
g_j \defeq \eta_{p_{j+2}} \otimes \cdots \otimes \eta_{p_{j+n}}
\]
(making use of the ``bump functions'' of Lemma~\ref{lembump}),
so that~$g_j$ satisfies the synchronization conditions~\eqref{eq:sync1} for the
orbit of $\jD_{j,q}$ under~$\Psi_{j,q}$.


On the other hand,
choosing $\rho \defeq 1/6$ and $U \in G^{\al,L}(\T)$ satisfying~\eqref{eqcondUlin}
(\eg $U(\th) \defeq \eta_3(\th) (-x+\Demi x^2)$, where $x$ is the lift
of~$\th$ in $(-\Demi,\Demi]$),
we get from Proposition~\ref{prop:standmap} a wandering disc $W_q
\subset \A$ for $\Phi^{\pfrac{1}{q} U}\circ\big(\Phi^{\ppdemi
  r^2}\big)^q$
for each integer $q\ge1$.


We take $j$ large enough so that~\eqref{ineqTNP} holds (thanks to the
Prime Number Theorem),
and define
\beq
q_j \defeq M_j N_j, \qquad
M_j \defeq \big[N_j \normD{U}_{\al,L} \normD{g_j}_{\al,L}\big]+1,
\eeq
where $[\ ]$ denotes the integer part.
Applying Corollary~\ref{cor:couplingwand}  with the
data~\eqref{eqdefGPsijqj}--\eqref{eqdefFfjU}, we obtain a wandering
domain 
\[
\jW_j \defeq W_{q_j} \times \jD_{j,q_j}
\]
for the map
\[
\Phi_j \defeq \Phi^{\frac{1}{q_j} U \otimes g_j} \circ 
\big( \Phi^{\Demi r_1^2} \times \Psi_{j,q_j} \big).
\]
If $n=2$, then $\Psi_{j,q_j} \in
\Pa1(\Phi^{\Demi(r_2^2+\cdots+r_n^2)})$, hence
$\Phi_j \in \Pa2(\Phi^{\Demi(r_1^2+r_2^2+\cdots+r_n^2)})$.
If $n=3$, then $\Psi_{j,q_j} \in
\Pa2(\Phi^{\Demi(r_2^2+\cdots+r_n^2)})$, hence
$\Phi_j \in \Pa3(\Phi^{\Demi(r_1^2+r_2^2+\cdots+r_n^2)})$.
In all cases, 
\[
\eps_j \defeq 
\de^{\al,L}(\Phi_j, \Phi^{\ppdemi(r_1^2+r_2^2+\cdots+r_n^2)}) \le
\normD{\tfrac{1}{q_j} U\otimes g_j}_{\al,L} +
\de^{\al,L}\big(\Psi_{j,q_j},\Phi^{\ppdemi(r_2^2+\cdots+r_n^2)}\big)
\le \frac{1+C_2}{N_j^2}.
\]


We conclude by bounding from below the Gromov capacity of~$\jW_j$
which, according to~\eqref{eqGromovCapProduct}, is
\[
\CG(\jW_j) = \min \big\{ \area(W_{q_j}), \CG(\jD_{j,q_j}) \big\}.
\]
We have $q_j \le 2 \normD{U}_{\al,L} N_j^2 \normD{g_j}_{\al,L}$ and,
by~\eqref{ineqnormetap} and~\eqref{ineqTNP},
\[
\normD{g_j}_{\al,L} \le \exp\Big( c(\al,L) \big( p_{j+2}^{\frac{1}{\al-1}} + 
\cdots + p_{j+n}^{\frac{1}{\al-1}} \big) \Big)
\le \exp\Big( (n-1) c(\al,L) (2 p_{j+2})^{\frac{1}{\al-1}} \Big).
\]
Since $p_{j+2}^{n-1} < N_j$, we thus can find $C,\ti c>0$ independent
of~$j$ such that
\beq   \label{ineqqjCtic}
q_j \le C \, \exp\Big( \ti c \, N_j^{\frac{1}{(n-1)(\al-1)}} \Big).
\eeq
By~\eqref{eq:estimarea}, this yields
\[
\area(W_{q_j}) \ge \frac{C_0}{C} \exp\Big( - \ti c \, N_j^{\frac{1}{(n-1)(\al-1)}} \Big).
\]
On the other hand,
\[
\CG(\jD_{j,q_j}) \ge C_3 \min \bigg\{%
\frac{1}{q_j^5} N_j^{4-\frac{2}{n-1}}, \,
\exp\Big( -c N_j^{\frac{1}{(n-1)(\al-1)}} \Big)
\bigg\}
\]
and, again by~\eqref{ineqqjCtic}, one can find $C',\ti c'>0$ independent
of~$j$ such that
\[
\frac{1}{q_j^5} N_j^{4-\frac{2}{n-1}} \ge
C'\, \exp\Big( -\ti c' N_j^{\frac{1}{(n-1)(\al-1)}} \Big).
\]
We end up with
\[
\CG(\jW_j) \ge \min\big\{ \tfrac{C_0}{C}, C_3 C', C_3 \big\} \,
\exp\Big( -\max\{\ti c,\ti c',c\} N_j^{\frac{1}{(n-1)(\al-1)}} \Big)
\]
and thus can find $c_*>0$ independent
of~$j$ such that~\eqref{ineqCGjWj} holds.

This concludes the proof of Theorem~\ref{th:lowerbounds}'.


\vfil

\pagebreak

\appendix
\appendixpage
\addappheadtotoc

\section{\texorpdfstring{Algebraic operations in $\jO_k$}{Algebraic operations in
O}}
\label{app:O_k}
For $\ell\in\R$, we denote by $[\ell]\in\Z$ the integral part such that $[\ell]\leqslant \ell
<[\ell]+1$. For all $\nu=(\nu_1,\nu_2)\in\N^2$, we set $\abs{\nu}=\nu_1+\nu_2$, 
$\partial^\nu f=\partial^{\nu_1}\bar\partial^{\nu_2}f$, $z^\nu=z^{\nu_1}{\overline{z}^{\nu_2}}$ 
and $\nu!=\nu_1!\nu_2!$.~\\
  If $P$ is a polynomial of the form 
	$P(z)=\sum\limits_{\begin{smallmatrix}
\nu\in\N^2\\
\abs{\nu}\leqslant n-1
\end{smallmatrix}
} a_\nu z^\nu$
then we set $\norm{P}_\tau=\sum\limits_{\abs{\nu}\leqslant n-1}\abs{a_\nu}\tau^{\abs{\nu}}$.
\begin{lemma} \label{lem:O_k}
Assume $(n,m,\ell,\ell_1,\ell_2)\in\N^2\times\R^3$. Then the  spaces
$\jO_k$ of Section~\ref{sec:Ok} satisfy the 
following axioms.
\begin{description}
\item[(Restriction)] $\jO_{k}(\ell;C,\tau)\subset\jO_{k-1}(\ell;C,\tau)\cap\jO_{k}(\ell-1;C\tau,
	\tau)$;%
\item[(Derivative)] If $f\in \jO_{k}(\ell;C,\tau)$ then $\partial^\alpha\bar\partial^\beta f\in 
	\jO_{k-\alpha-\beta}(\ell-\alpha-\beta; C,\tau)$ for all $(\alpha,\beta)\in\N^2$ such that 
	$\alpha+\beta \leqslant k$;%
\item[(Primitive)] If $f\in\jO_0(\ell+1;C_0,\tau)$, $\partial f\in\jO_{k}(\ell;C_1,\tau)$  and 
	$\bar\partial f\in\jO_{k}(\ell;C_2,\tau)$ then $f\in  \jO_{k+1}(\ell+1;C,\tau)$, with 
	$C \leqslant \max(C_0,C_1,C_2)$;%
\item[(Product)] If $f \in\jO_{k}(\ell_1;C_1,\tau)$ and $g\in\jO_{k}(\ell_2;C_2,\tau)$ then we 
	have  $fg\in\jO_{k}(\ell_1+\ell_2;2^kC_1C_2,\tau)$;%
\item[($\mathbf{Z}$-Product)] If $f\in  \jO_{k}(\ell;C,\tau)$ then $zf(z)
	=\jO_{k}(\ell+1;C(k+1),\tau)$;%
\item[(Polynomial)] If $P$ is a polynomial of degree $m$ and if $0\leqslant n\leqslant m$ then 
	we have $P=[P]_{\leqslant n-1}+\jO_k(n;(k+1)^{m-1}\norm{P}_\tau/\tau^{n},\tau)$;%
\item[(P-product)] If $f(z)=P(z)+\jO_k(n+1,C/\tau^{n+1},\tau)$ and $g=Q(z)+\jO_k(n+1,
	C'/\tau^{n+1},\tau)$, where $P$ et $Q$ are two polynomials of degree $n$, then we have 
	\begin{align*}
		(fg)(z)=R(z)+\jO_k(n+1;C''/\tau^{n+1},\tau), 
	\end{align*}
	where $C''\leqslant (k+1)^{n}(\norm{P}_\tau C'+\norm{Q}_\tau C)+2^kCC'+(k+1)^{2n}
	\norm{P}_\tau\norm{Q}_\tau$ and  $R$ is a polynomial of degree $n$ satisfying 
	$\norm{R}_\tau\leqslant \norm{P}_\tau\norm{Q}_\tau$;%
\item[(Lipschitz)] If  $\partial f\in\jO_{k}(n;C_1,\tau)$ and $\bar\partial f\in\jO_{k}(n;C_2,\tau)$  
then $f$ is Lipschitz continuous near zero. Furthermore, $f-f(0)\in\jO_{k+1}(n+1;C,\tau)$, with 
$C=C_0+C_1$ if $n=0$ and $C=\max(C_1,C_2)$ if $n \geqslant 1$;%
\item[(Composition)] Assume  $n\geqslant k$. If $h\in\jO_k(n;C_0,\tau_0)$,  $f\in\jO_k(m;
	C_1,\tau_1)$ and $f(\Dm(0,\tau_1))\subset \Dm(0;\tau_0)$ then $h\circ f\in\jO_k(nm;
	\alpha_kC_0C_1^n ,\tau_1)$, with $\alpha_k =  2^{\frac{k(k+1)}{2}}$;%
\item[($P$-Composition)] Assume  $n+1\geqslant k$. If $h(z)=P(z)+\jO_k(n+1;C_0,\tau)$ and  
	$f(z)=Q(z)+\jO_k(n+1;C_1/{\rho^{n+1}},\rho)$, where $P$ and  $Q$ are polynomial of 
	degree $n$ with $Q(0)=0$ and $f(\Dm(0,\rho))\subset \Dm(0;\tau)$ then there exist a 
	polynomial $R$ of degree $n$ and a constant $C_{01}$ satisfying $\norm{R}_{\rho}
	\leqslant \norm{P}_{\norm{Q}_{\rho}}$ and 
	\begin{align*}
	& h\circ f(z)  =R(z)+ \jO_k(n+1; C_{01}/\rho^{n+1} , \rho),\\
	~\quad&\text{with}~~C_{01}\leqslant 2^{k(k+1)/2}C_0 \big(C_1+(k+1)^n
	\norm{Q}_{\rho}\big)^{n+1}+\norm{P}_{2^kC_1+(k+1)^n\norm{Q}_{\rho}}; 
\end{align*}
\item[(Taylor Expansion)] Assume $0<\eta<1$ and $n\geqslant k$. If $f$ is holomorphic on 
	$\Dm(0,\tau)$ and $\partial^nf\in\jO_{0}(0;C,\tau)$ then $\displaystyle f(z)=T_f^n(z)+
	\jO_k(n;C/(n-k)!,\tau)$, with $T_f^n(z)=\sum\limits_{\ell=0}^{n-1}\frac{1}{\ell!}
	\partial^\ell f(0)~z^\ell$;%
\item[(Inverse)] Assume $m>k\geqslant 1$. Then  there exists two constant $\beta_{m}\geqslant 
	0$ and $B_m\geqslant 0$ such that  if  $\Phi(z)=z+P(z)+\jO_k(m;C,\tau)$, where $P$ is a 
	polynomial of degree $m-1$ and valuation $2$ satisfying $\norm{\partial P}_\tau+
	\norm{\bar\partial P}_\tau+C\tau^{m-1} \leqslant \varepsilon$, with $2\varepsilon+
	\varepsilon^2 \leqslant 1/2$, then $\Phi$  is a diffeomorphism from $\Dm(0,\tau)$ onto a 
	set containing  $\Dm(0,\tau(1-\varepsilon))$ such that  $\Phi^{-1}$ is of the form 
	\[
	\Phi^{-1}(z)=z+Q(z)+\jO_k(m;B_{m} \varepsilon/\tau^{m-1}, \tau(1-\varepsilon)),
	\]
	where $Q$ is a polynomial of degree $m-1$ with valuation~$2$ and $\norm{Q}_\tau
	\leqslant \beta_m\varepsilon\tau$.
\end{description}
\end{lemma}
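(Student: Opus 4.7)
The axioms fall into four natural groups, each of which can be handled with a single technique; what varies between them is the bookkeeping on the constants. My plan is to dispose of the easy axioms first and save the compositional ones for last.

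The first group consists of the axioms that follow directly from the definition: \emph{Restriction} and \emph{Derivative} amount to shifting indices or observing that $|z|\le\tau$ gives a free factor of $\tau$ per unit of order; \emph{Primitive} and \emph{Lipschitz} follow by integrating $\partial f$ and $\bar\partial f$ along the radial segment $[0,z]$, using the identity $f(z)-f(0)=\int_0^1\bigl(z\,\partial f(tz)+\bar z\,\bar\partial f(tz)\bigr)\,\dd t$, which lifts the control on the derivatives to control on $f$ itself with one extra order in $|z|$. \emph{Taylor Expansion} is the same calculation carried out $n$ times, using the standard integral remainder $f(z)-T_f^n(z)=\frac{1}{(n-1)!}\int_0^1(1-t)^{n-1}\partial^n f(tz)z^n\,\dd t$ (and its conjugate analogues after differentiation in $z,\bar z$); the factor $1/(n-k)!$ emerges from repeated differentiation in $z$ of this integral.

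The second group contains \emph{Product}, \emph{Z-Product}, \emph{Polynomial} and \emph{P-product}, all of which reduce to the complex Leibniz rule
\[
\partial^\alpha\bar\partial^\beta(fg)=\sum_{\substack{\alpha_1+\alpha_2=\alpha\\ \beta_1+\beta_2=\beta}}\binom{\alpha}{\alpha_1}\binom{\beta}{\beta_1}\partial^{\alpha_1}\bar\partial^{\beta_1}f\cdot\partial^{\alpha_2}\bar\partial^{\beta_2}g.
\]
For \emph{Product}, the sum of the binomial weights is $2^{\alpha+\beta}\le 2^k$, which yields the constant $2^k C_1 C_2$. \emph{Z-Product} is the special case $g(z)=z$, whose only nonzero derivative is $\partial z=1$, so only $\alpha_2\in\{0,1\}$ contributes and one gets at most $k+1$ terms. \emph{Polynomial} is a direct estimate on monomials $z^\nu$, using $|z|\le\tau$ and bounding binomial factorials by $(k+1)^{m-1}$. \emph{P-product} is obtained by writing $f=P+r$, $g=Q+s$ with $r,s$ in the appropriate $\jO_k$ space, expanding $fg=PQ+Ps+Qr+rs$, and applying \emph{Product} term by term; the polynomial part of $PQ$ is truncated to degree $n$ to define $R$, and the discarded monomials of $PQ$ of degree between $n+1$ and $2n$ are absorbed in the remainder using \emph{Polynomial}.

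The third and hardest group is \emph{Composition}, \emph{P-Composition} and \emph{Inverse}. For \emph{Composition} I plan to proceed by induction on $k$: the identity $\partial(h\circ f)=(\partial h)\circ f\cdot\partial f+(\bar\partial h)\circ f\cdot\partial\bar f$ (and its conjugate) shows that any derivative of order $k$ of $h\circ f$ is a sum of products of $k'\le k$ derivatives of $h$ evaluated at $f$ times a product of at most $k$ derivatives of $f$; by the induction hypothesis applied to $\partial h\circ f$ and $\bar\partial h\circ f$ (both at level $k-1$), and one application of \emph{Product} to pair them with derivatives of $f$, one gets a constant that multiplies by~$2$ at each step---this is where $\alpha_k=2^{k(k+1)/2}$ comes from (the additional triangular exponent keeps track of the accumulated binomial factors from Faà di Bruno's formula). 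The order estimate $nm$ reflects that $\partial^\ell h(f(z))=\jO(|f(z)|^{n-\ell})=\jO(|z|^{m(n-\ell)})$ is multiplied by a factor $\jO(|z|^{m\ell-k})$ coming from the derivatives of $f$. \emph{P-Composition} is then obtained by writing $h=P+r_h$ and $f=Q+r_f$, computing $h\circ f=P\circ Q+(P\circ f-P\circ Q)+r_h\circ f$, truncating $P\circ Q$ at degree $n$ to define $R$, and using the mean-value theorem in $\C^2$ together with \emph{Composition} applied to $r_h\circ f$. Finally, \emph{Inverse} is proved by the contraction mapping principle: the hypothesis $2\varepsilon+\varepsilon^2\le 1/2$ makes $z\mapsto z-(\Phi(z)-z)$ a contraction on $\Dm(0,\tau)$ mapping into $\Dm(0,\tau(1-\varepsilon))$, which gives existence of $\Phi^{-1}$ with $|\Phi^{-1}(z)-z|\le 2\varepsilon|z|$; the polynomial part $Q$ of $\Phi^{-1}$ is then obtained by iterated inversion of the formal Taylor series up to degree $m-1$, and the remainder estimate follows by applying \emph{P-Composition} to the identity $\Phi\circ\Phi^{-1}=\mathrm{Id}$ and solving for the remainder term.

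The main obstacle will be the bookkeeping in \emph{Composition}: the exponent $k(k+1)/2$ in $\alpha_k$ is not obvious from the chain rule applied once, and one must carefully verify that the induction on $k$ yields exactly this doubling pattern rather than a worse one. Once \emph{Composition} is established cleanly, \emph{P-Composition} and \emph{Inverse} are essentially corollaries obtained by combining it with \emph{Polynomial} and the other second-group axioms.
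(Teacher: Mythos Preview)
Your plan for the first two groups and for \emph{Composition} and \emph{P-Composition} matches the paper's proof essentially line for line: the paper also disposes of \emph{Restriction}--\emph{$Z$-Product} ``directly from the definition or by easy inductions over $k$'', proves \emph{Polynomial} and \emph{P-product} from \emph{$Z$-Product} and \emph{Product}, and proves \emph{Composition} by the same induction on~$k$ via the chain rule combined with \emph{Product} (the recursion $\alpha_k\le 2^k\alpha_{k-1}$ is exactly how $2^{k(k+1)/2}$ arises---the factor $2^k$ is $2$ terms in the chain rule times $2^{k-1}$ from \emph{Product} at level $k-1$, not ``Fa\`a di Bruno binomials'').

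The gap is in \emph{Inverse}. Your proposal ``apply \emph{P-Composition} to $\Phi\circ\Phi^{-1}=\mathrm{Id}$ and solve for the remainder'' is circular: to apply \emph{P-Composition} (or \emph{Composition}) to anything composed with $\Phi^{-1}$, you already need $\Phi^{-1}$ to lie in an $\jO_k$ class of the form you are trying to establish. The paper breaks this circularity by a two-step bootstrap. First it differentiates $\Psi\circ\Phi=\mathrm{Id}$ (with $\Psi=\Phi^{-1}$) to obtain an explicit algebraic expression for $(\partial\Psi)\circ\Phi$ and $(\bar\partial\Psi)\circ\Phi$ in terms of $\partial\Phi_0$, $\bar\partial\Phi_0$ and $1/J(\Phi)$; this right-hand side is controlled in $\jO_{k-1}$ purely from the hypotheses on~$\Phi$, with no reference to~$\Psi$. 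Second, it runs an induction on the \emph{degree} $\ell$ of the polynomial approximation of~$\Psi$, from $\ell=2$ up to $m$: assuming $\Psi(z)=z+q_\ell(z)+\jO_{\min(\ell-1,k)}(\ell;\dots)$, it composes the bootstrapping identity with~$\Psi$ via \emph{P-Composition} (now legitimate, since only the already-established lower-order control on~$\Psi$ is needed) to obtain $\partial\Psi$ and $\bar\partial\Psi$ in $\jO_{\min(\ell,k)-1}(\ell;\dots)$ form, and then applies \emph{Lipschitz} to lift this to $\Psi(z)=z+q_{\ell+1}(z)+\jO_{\min(\ell,k)}(\ell+1;\dots)$. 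The constants $\beta_m$ and $B_m$ are the outputs of this finite induction. Without this degree-by-degree bootstrap, your argument does not close.
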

\begin{proof}~The proof of the axioms from~(restriction) to ($Z$-product) follows directly
from the definition  or by easy inductions over $k$.~\\[5pt]
$\bullet$~We prove the (Polynomial) axiom.  Let's write 
	\[
	P(z)=[P]_{\leqslant n-1}+\sum_{n\leqslant \abs{\nu}\leqslant m} a_\nu z^\nu.
	\]
	Since $z$ and $\overline{z}$ belong to $\jO_k(1;1,\tau)$, the ($Z$-product) axiom implies 
	by an easy induction that $z^\nu=\jO_k(\abs{\nu};(k+1)^{\abs{\nu}-1},\tau)$. Therefore, for 
	$\nu\in\N^2$ satisfying $m\geqslant \abs{\nu}\geqslant n$,  the (restriction) axiom shows 
	that 
	\[
	z^\nu=\jO_k\big(\abs{\nu};(k+1)^{\abs{\nu}-1},\tau\big)\subset\jO_k\big(n;(k+1)^{m-1}
	\tau^{\abs{\nu}-n},\tau\big).
	\]
	This implies that $[P]_{\geqslant n}=\jO_k(n;(k+1)^{m-1}C/\tau^n,\tau)$, with $C\leqslant 
	\sum\limits_{n\leqslant \abs{\nu}\leqslant m} \abs{a_\nu}\tau^{\abs{\nu}}$, so $C\leqslant
	\norm{P}_\tau$, and the proof of the axiom is complete.~\\[5pt]
$\bullet$~We prove the (Lipschitz) axiom. Assume that $\partial f\in\jO_{k}(n;C_1,\tau)$ and 
	$\bar\partial f\in\jO_{k}(n;C_2,\tau)$. This implies that 
	\[
	\abs{df(z)\cdot Z}=\abs{\partial f(z) Z+\bar\partial f(z)\overline{Z}}\leqslant(C_1+C_2)
	\abs{z}^n\abs{Z}.
	\]
	Therefore the derivative of $h$ is bounded by $(C_1+C_2)\tau^n$ on $\Dm(0,\tau)$, so $f$ 
	extends to a Lipschitz-continuous function on this disc. Furthermore, we obtain that 
	\[
	\abs{f(z)-f(0)}\leqslant \frac{C_1+C_2}{n+1}\abs{z}^{n+1}\leqslant C_0 \abs{z}^{n+1},
	\]
	with $C_0=C_1+C_2$ if $n=0$ and $C_0=\max(C_1,C_2)$ if $n\geqslant 1$. By the 
	(primitive) axiom, this completes the proof of the (Lipschitz) axiom.~\\[3pt]
$\bullet$~We prove the (composition) axiom by induction over $k$. The condition on 
	$f(\Dm(0;\tau_1))$ shows that $h\circ f$ is well defined  on $\Dm(0;\tau_1)$ and since 
	$n\geqslant 0$, we have $\abs{h\circ f(z)}\leqslant C_0(C_1\abs{z}^m)^n=C_0C_1^n 
	\abs{z}^{nm}$. Thus  we obtain that $h\circ f\in\jO_0(nm; \alpha_0 C_0C_1^n, \tau_1)$, 
	with $\alpha_0\leqslant 1$, and the axiom is proved  for $k=0$.~\\
	Assume that $k\geqslant 1$, so $\partial h(f)$ and $\bar\partial h(f)$ are in $\jO_{k-1}((n-1)m; 
	\alpha_{k-1}C_0C_1^{n-1},\tau_1)$.  nI that case  the (product) axiom shows that 
	\[
	\begin{cases}
		 \partial( h\circ f)=\partial h(f)\cdot \partial f +\bar\partial h(f)\cdot\overline{\bar\partial f}
										\in\jO_{k-1}(nm-1;  2^k \alpha_{k-1}C_0C_1^{n} , \tau_1),\\ %
		\bar\partial( h\circ f)=\partial h(f)\cdot\bar\partial f+\bar\partial h(f)\cdot\overline{\partial f}
				\in\jO_{k-1}(nm-1;  2^k\alpha_{k-1}C_0C_1^{n},\tau_1). 
	\end{cases}
\]
This implies that $h\circ f\in\jO_k(nm; \alpha_k C_0C_1^n, \tau_1)$, with $\alpha_k\leqslant 
\max(\alpha_0, 2^k\alpha_{k-1})$ and the (composition) axiom follows immediately.~\\[3pt]
%
$\bullet$~We prove the ($P$-composition) axiom. We write $h=P+\varepsilon_0$ and $f=Q
	+\varepsilon_1$, so $h\circ f=P\circ f+ \varepsilon_0\circ f$. Note that $1\in\jO_k(0;1,\rho)$, 
	so the $Z$-product axiom shows that $z^\nu=\jO_k(\abs{\nu};(k+1)^{\abs{\nu}},\rho)$ for 
	$\nu\in\N^2$.  Now we write 
	\begin{align*}
	Q(z)=\sum_{1\leqslant\abs{\nu}\leqslant n}b_\nu z^\nu&=\sum_{1\leqslant\abs{\nu}\leqslant n}
				\jO_k(\abs{\nu};(k+1	)^{\abs{\nu}}\abs{b_\nu},\rho)\\ %
	& 	=\sum_{1\leqslant \abs{\nu}\leqslant n}\jO_k(1;(k+1)^{\abs{\nu}}\abs{b_\nu}
																\rho^{\abs{\nu}-1},\rho)\\ %
	&		=\jO_k\bigg(1;(k+1)^n \frac{\norm{Q}_\rho}{\rho},\rho\bigg).
	\end{align*}
	Thus we obtain that $f(z)=\jO_k\big(1;\frac{(k+1)^n \norm{Q}_\rho+C_1}{\rho},\rho\big)$ 
	and the (composition) axiom implies that
	\begin{equation}\label{eq:P-comp-eps}
		\varepsilon_0\circ f(z)=\jO_k\bigg(n+1;2^{\frac{k(k+1)}{2}}C_0\bigg(\frac{(k+1)^n
																\norm{Q}_\rho+C_1}{\rho}\bigg)^{n+1},\rho\bigg).
	\end{equation}
	Now we estimate $P\circ f$.~\\
 	For  $1\leqslant j \leqslant \ell\leqslant n$ we have $Q^j=\jO_k(0;(k+1)^{nj}
	\norm{Q}_\rho^j,\rho)$, so 
\begin{align*}
f^\ell	& = (Q+\varepsilon_0)^\ell= Q^\ell+\sum_{j=0}^{\ell-1}
															\big(\begin{smallmatrix}\ell\\ j\end{smallmatrix}\big) 
															Q^j\varepsilon_0^{\ell-j}\\ %
			& =Q^\ell+\sum_{j=0}^{\ell-1} 
							\bigg(\begin{smallmatrix}	\ell\\ j	\end{smallmatrix}\bigg) 
							\jO_k\bigg((\ell-j)(n+1);\bigg((k+1)^n\norm{Q}_\rho\bigg)^j
							\bigg(\frac{2^kC_1}{\rho^{n+1}}\bigg)^{\ell-j},\rho\bigg)\\ %
&=Q^\ell+\jO_k\bigg(n+1;\sum_{j=0}^{\ell-1}
							  \bigg(\begin{smallmatrix}	\ell\\ j \end{smallmatrix}\bigg)
							  \big((k+1)^n\norm{Q}_\rho\big)^j\big({2^kC_1}\big)^{\ell-j}/\rho^{n+1},
							\rho\bigg). %
\end{align*}
	We can decompose $Q^\ell=[Q^\ell]_{\leqslant n}+[Q^\ell]_{>n}$ in its part of degree $n$ 
	and its part of valuation $n+1$. Since $[Q^\ell]_{>n}\in\jO_k(n+1;(k+1)^{\ell n}
	\norm{[Q^\ell]_{>n}}_\rho/\rho^{n+1})$ and $\norm{Q^\ell}_\rho\leqslant
	\norm{Q}_\rho^\ell$, it follows that 
	\[
		f^\ell=[Q^\ell]_{\leqslant n}+\jO_k\bigg(n+1;\big((k+1)^n\norm{Q}_\rho
																	+2^kC_1\big)^{\ell}/\rho^{n+1},\rho\bigg).
	\]
	Now we write $P(z)=\sum\limits_{\abs{\nu}\leqslant n}a_\nu z^\nu$ and we note that 
	$\norm{~[Q^\nu]_{\leqslant n}}_\rho\leqslant (\norm{Q}_\rho)^{\abs{\nu}}$, so 
	\begin{align*}
	P\circ f&=\sum_{\abs{\nu}\leqslant n}a_\nu [Q^\nu]_{\leqslant n}+\jO_k\bigg(n+1;
	\sum_{\abs{\nu}\leqslant n}\abs{a_\nu}\frac{\big((k+1)^n\norm{Q}_\rho
	+2^kC_1\big)^{\abs{\nu}}}{\rho^{n+1}},\rho\bigg)\\ %
&=R+\jO_k\bigg(n+1;\frac{1}{\rho^{n+1}}\norm{P}_{(k+1)^n\norm{Q}_\rho+2^kC_1},
																		\rho\bigg),\\ %
&~\text{with}~R=\sum_{\abs{\nu}\leqslant n}a_\nu  [Q^\nu]_{\leqslant n}
	~\text{and}~\norm{R}_\rho\leqslant \sum_{\abs{\nu}\leqslant n}\abs{a_\nu}
	(\norm{Q}_\rho)^{\abs{\nu}}=\norm{P}_{\norm{Q}_\rho}.
	\end{align*}
	This with~\eqref{eq:P-comp-eps} implies the ($P$-composition) axiom.~\\[3pt]
$\bullet$~The (Taylor expansion) axiom directly follows from the Taylor expansion theorem,   
	which  shows that 
	\[
	\left\vert
 				\partial^jf(z)-\sum_{\ell=0}^{n-1-j}\frac{1}{\ell!}\partial^\ell f(0)z^\ell
	\right\vert\leqslant \frac{\abs{z}^{n-j}}{(n-j)!}C.
	\]
~\\[3pt]
$\bullet$~ The proof of the (inverse) axiom proceeds in several steps. We first prove the 
	existence of the diffeomorphism,    then we estimate its derivatives.~\\[3pt]
{\bf Existence of $\mathbf{\Phi^{-1}}$.} Assume that  $\abs{w}\leqslant \tau(1-\varepsilon)$ 
	and set $\varphi_w(z)=w-\Phi(z)+z$.  This implies that  $\norm{d\varphi_w(z)}\leqslant
	\norm{\partial P}_\tau+\norm{\bar\partial P}_{\tau}+2C\tau^{m-1}\leqslant 2\varepsilon
	\leqslant 1/2$ and $\abs{\varphi_w(z)} \leqslant \abs{w}+C\tau^m\leqslant\abs{w}+ 
	\varepsilon\tau\leqslant \tau$ on $\Dm(0,\tau)$. Therefore $\varphi_w~:\Dm(0,\tau)\to
	\Dm(0,\tau)$ is $(2\varepsilon)$-Lipschitz. By Picard's theorem, it follows that the equation 
	$\varphi_w(z)=z$,  so $\Phi(z)=w$, has an unique solution $z\in\Dm(0,\tau)$ if $w\in
	\Dm(0,\tau(1-\varepsilon))$. Thus  $\Dm(0,\tau(1-\varepsilon))$ lies in  $\Phi(\Dm(0,\tau))$, 
	so $\Phi^{-1}$ is a diffeomorphism from $\Dm(0,\tau(1-\varepsilon))$ into 
	$\Dm(0,\tau)$.~\\[5pt]
	We now prove by induction over $k\geqslant 0$ the existence of a  constant $\beta_{k,m}$ 
	and of $Q$ such that such that $\Phi^{-1}(z)=z+Q(z)+\jO_k(m;\beta_{k,m}\varepsilon/
	\tau^{m-1}, \tau(1-\varepsilon))$. We set $\Phi_0(z)=\Phi(z)-z=w-z$, $\Psi(w)=\Phi^{-1}(w)
	=z$ and $\Psi_0(w)=\Psi(w)-w$.~\\[5pt]
{\bf The boot strapping equation.}
	We write the derivatives of $\Psi\circ \Phi(z)=z$ as 
	\[
	\begin{cases}
		1 = (\partial\Psi)\circ\Phi\cdot\partial\Phi+(\bar\partial \Psi)\circ\Phi\cdot\overline{\bar
																								    					 \partial\Phi},\\ %
		0 = (\partial\Psi)\circ\Phi\cdot\bar\partial\Phi+(\bar\partial\Psi)\circ\Phi\cdot
																										  \overline{\partial \Phi}. %
	\end{cases}
	\]
	Therefore we have 
 	\begin{equation}\label{eq:boot-strapping}
	\biggl(\begin{array}{c} \partial \Psi\\ \bar\partial \Psi\end{array}
	\biggr)\circ\Phi
	=
	\biggl(\begin{array}{cc}
					1+\partial\Phi_0 & \overline{\bar\partial\Phi_0}\\
					\bar\partial\Phi_0 & 1+\overline{\partial\Phi_0}
				\end{array}\biggr)^{-1}
	\biggl(\begin{array}{c}1\\ 0\end{array}\biggr)
	=
	\frac{1}{J(\Phi)}\biggl(\begin{array}{c}
												1+\overline{\partial\Phi_0}\\
															-\bar\partial\Phi_0
											\end{array}\biggr), 
	\end{equation}
	where $J(\Phi)=\abs{1+\partial\Phi_0}^2-\abs{\bar\partial\Phi_0}^2
	=1+2\Re(\partial\Phi_0) +\abs{\partial\Phi_0}^2-\abs{\bar\partial\Phi_0}^2$. 

	We now estimate the right hand side of Equation~\eqref{eq:boot-strapping}. First we 
	observe that $\partial \Phi_0$ and $\bar\partial \Phi_0$ are bounded on $\Dm(0;\tau)$ by 
	$\norm{\partial P}_\tau+C\tau^{m-1}$ and  $\norm{\bar\partial P}_\tau+C\tau^{m-1}$ 
	respectively, so they both lie in $\jO_{0}(0;\varepsilon,\tau)$.  It follows that 
	\[
				J(\Phi)=1+\jO_0(0;2\varepsilon+\varepsilon^2,\tau)\subset 1+\jO_{0}(0; 1/2, \tau).
	\]
	We have $\partial \Phi_0=\partial P+\jO_{k-1}(m-1;C,\tau)$ and $\bar\partial \Phi_0
	=\bar\partial P+\jO_{k-1}(m-1;C,\tau)$, so the ($P$-product) shows that 
	\begin{align*}
	\abs{\partial \Phi_0}^2=P_0+\jO_{k-1}\big(m-1; \frac{C_0}{\tau^{m-1}},\tau\big);
		-\abs{\bar\partial\Phi_0}^2=P_1+\jO_{k-1}\big(m-1;\frac{C_0'}{\tau^{m-1}},\tau\big),\\ %
	~\text{with $\norm{P_0}_\tau\leqslant\norm{\partial P}_\tau^2$, $\norm{P_1}_\tau\leqslant 
																							\norm{\bar\partial P}_\tau^2$ },\\ %
	C_0\leqslant 2 k^{m-1}C\tau^{m-1}\norm{\partial P}_\tau+2^{k-1}C^2\tau^{2m-2}
																					+k^{2m-2}\norm{\partial P}_\tau^2,\\ %
	\text{and}~C_0'\leqslant 2 k^{m-1}C\tau^{m-1}\norm{\bar\partial P}_\tau
										+2^{k-1}C^2\tau^{2m-2}+k^{2m-2}\norm{\bar\partial P}_\tau^2. %
	\end{align*}
	Therefore the polynomial  $J_0=2\Re(\partial P)+P_0-P_1$ has degree $m-2$ and satisfies 
	\begin{align*}
	J(\Phi)=1+J_0+\jO_{k-1}\big(m-1;C'/\tau^{m-1},\tau\big),\\ %
	~\text{with}~ \norm{J_0}_\tau\leqslant 2\norm{\partial P}_\tau+\norm{\partial P}_\tau^2+
					\norm{\bar\partial P}_\tau^2 \leqslant 2\norm{\partial P}_\tau+\varepsilon^2 \\ %
	\text{and}~C'\leqslant 2C\tau^{m-1}+C_0+C_0'\leqslant 2C\tau^{m-1}+C_0''. %
	\end{align*}
	Here we have 
	\begin{align*}
	C_0''&\leqslant C_0+C_0'\\
 	& \leqslant  2k^{m-1}C\tau^{m-1}\big(\norm{\partial P}_\tau+\norm{\bar\partial P}_\tau
		\big)+2^k C^2\tau^{2m-2}+k^{2m-2}\big(\norm{\partial P}_\tau^2
																		+\norm{\bar\partial P}_\tau^2\big)\\ %
	& \leqslant \bigg(k^{m-1}\big( \norm{\partial P}_\tau+\norm{\bar\partial P}_\tau\big)
																				+2^{k/2}C\tau^{m-1}\bigg)^2\\ %
	& \leqslant\max\big(2^k,k^{2m-2}\big)~\varepsilon^2. %
	\end{align*}
	Now we estimate $1/J(\Phi)$. The (Taylor expansion) axiom shows that 
	\[
	\frac{1}{1+z}=\sum_{\ell=0}^{m-2} (-1)^\ell z^\ell+\jO_{k-1}(m-1; 2^m(m-1)!/(m-k)!,1/2).
	\]
	Therefore the ($P$-composition) axiom applied to $(1+z)^{-1}-1$ and $J(\Phi)-1$ shows 
	that there exist a polynomial $J_1$ of degree $m-2$ and a constant $C''$ such that 
	\begin{align*}
	&\phantom{J_1}1/J(\Phi)=1+J_1+\jO_{k-1}\big(m-1;C''/\tau^{m-1},\tau\big),\text{with}~\\ %
	\norm{J_1}_\tau &\leqslant \sum_{\ell=1}^{m-2}\norm{J_0}_\tau^\ell\leqslant 2
											\norm{J_0}_\tau\leqslant 2(2\varepsilon+\epsilon^2)~\text{and}\\ %
	C''&\leqslant 2^{\frac{k(k-1)}{2}}2^{m}\frac{(m-1)!}{(m-k)!}\big(C'+k^{m-2}
			\norm{J_0}_\tau\big)^{m-1}\!\!+\sum_{\ell=1}^{m-2}\big(2^{k-1}C'+k^{m-2}
																										\norm{J_0}_\tau\big)^\ell \\ %
	& \leqslant   2^{\frac{k(k-1)}{2}}2^{m}(m-1)!k^{(m-2)(m-1)}\sum_{\ell=1}^{m-1}
																							\big(	C'+\norm{J_0}_\tau\big)^\ell.%
	\end{align*}
	Note that $C'+\norm{J_0}_\tau\leqslant 2C\tau^{m-1}+C_0''+2\norm{\partial P}_\tau+
	\varepsilon^2\leqslant 2\varepsilon+\max(2^k,1+k^{2m-2})~\varepsilon^2$. If $k\geqslant 
	2$ then $C'+\norm{J_0}_\tau\leqslant k^{2m-2}(\varepsilon+\epsilon^2)$. If $m>k=1$ 
	then  $C'+\norm{J_0}_\tau\leqslant 2(\varepsilon+\epsilon^2)$. It follows in both cases that 
	\begin{align*}
	C''&\leqslant 2^{\frac{k(k-1)}{2}}2^{m+1}(m-1)!m^{(3m-4)(m-1)}(\varepsilon+\epsilon^2)\\ %
	& \leqslant  2^{\frac{m^2+m+2}{2}}(m-1)!m^{(3m-4)(m-1)}(\varepsilon+\epsilon^2)\\ %
	& \leqslant D_m\varepsilon, \quad~\text{with $D_m=2^{\frac{(m+1)^2}{2}}(m-1)!
																												m^{(3m-4)(m-1)}$}.%
	\end{align*}
	If we set $Q_0=J_1+\partial P+[J_1\partial P]_{\leqslant m-2}$ and $Q_1=-\bar\partial 
	P-[J_1 \bar\partial P]_{\leqslant m-2}$ then the  estimates above,  the ($P$-product) axiom 
	and Equation~\eqref{eq:boot-strapping} imply that
	\begin{align}	\label{eq:boot-partial}
	(\partial\Psi)\circ\Phi & =1+Q_0(z)+\jO_{k-1}(m-1; D_m'\varepsilon/\tau^{m-1},\tau),\\ %
							\label{eq:boot-barpartial}
	(\bar\partial\Psi)\circ\Phi & =Q_1(z)+\jO_{k-1}(m-1; D_m''\varepsilon/\tau^{m-1},\tau),%
	\end{align}
	where $\norm{Q_0}_\tau\leqslant \norm{J_1}_\tau+\norm{\partial P}_\tau+\norm{J_1}_\tau
	~\norm{\partial P}_\tau\leqslant 2(2\varepsilon+\varepsilon^2)+\varepsilon+2\varepsilon
	(2\varepsilon+\varepsilon^2)\leqslant \frac{13}{2}\varepsilon$ and 
	$\norm{Q_1}_\tau\leqslant \norm{\bar\partial P}_\tau+ \norm{\bar\partial P}_\tau~
	\norm{J_1}_\tau\leqslant 2\norm{\bar\partial P}_\tau\leqslant 2\varepsilon$, as long as we 
	have
	\begin{align*}
	D_m'\varepsilon & \geqslant C\tau^{m-1}+C''+2^{k-1} C''C\tau^{m-1}\\ %
	& \qquad +k^{m-2}\big(\norm{\partial P}_\tau C''+C\tau^{m-1}\norm{J_1}_\tau\big)
														+k^{2(m-1)}\norm{\partial P}_\tau\norm{J_1}_\tau\\ %
	& \geqslant \varepsilon+D_m\varepsilon+2^{k-1}D_m\varepsilon^2+k^{m-2}D_m
												\varepsilon^2+k^{2(m-2)}2\epsilon(2\epsilon+\epsilon^2)\\ %
	& \geqslant \big((1+D_m+k^{2(m-2)})+D_m(2^{k-1}+k^{m-2})/4\big)\varepsilon,\\ %
	D_m''\varepsilon & \geqslant  C\tau^{m-1}+2^{k-1}C\tau^{m-1}C''\\ %
	& \quad+k^{m-2}(\norm{\bar\partial P}_\tau C''+\norm{J_1}_\tau C\tau^{m-1})
												+k^{2(m-2)}\norm{J_1}_\tau~\norm{\bar\partial P}_\tau\\ %
	& \geqslant  \varepsilon+2^{k-1}D_m\varepsilon^2+k^{m-2}D_m\varepsilon^2
																		+k^{2(m-2)}2\epsilon(2\epsilon+\epsilon^2).%
	\end{align*}
	Therefore we may take
	\[
	D_m'=D_m''+D_m=1+m^{2(m-2)}+D_m\big(1+(2^{m-1}+m^{m-2})/4\big).
	\]
~\\[5pt]
{\bf Computation of $\mathbf{\beta_{m}}$ and $B_m$.}
	We have 
	\[
	\abs{z-w}=\abs{\Phi_0(z)}\leqslant (\norm{\partial P}_\tau+\norm{\bar\partial P}_{\tau}+
	2C\tau^{m-1})~\abs{z}\leqslant 2\varepsilon\abs{z}, ~\text{so $(1-2\varepsilon)\abs{z} 
	\leqslant \abs{w} $.}
	\] 
	It follows that $\abs{\Psi(w)-w}=\abs{z-w}\leqslant 2\varepsilon\abs{z}\leqslant 2\varepsilon
	(1-2\varepsilon)^{-1} \abs{w}\leqslant 4\varepsilon\abs{w}$. Therefore we have $\abs{\Psi(w)}
	\leqslant (1+4\varepsilon)\abs{w}\leqslant 2\abs{w}$. Furthermore, 
	Equations~\eqref{eq:boot-partial} and~\eqref{eq:boot-barpartial} imply that 
	\[
	\left\{\begin{array}{rl}
	(\partial\Phi)\circ\Psi&=1+\jO_0\big(1;(\norm{Q_0}_\tau+D_m'\varepsilon)/\tau,\tau\big),\\ %
	(\bar\partial\Phi)\circ\Psi&=\jO_0\big(1;(\norm{Q_1}_\tau+D_m''\varepsilon)/\tau,\tau\big).%
	\end{array}\right.
	\]
	Thus we obtain that 
	\[
	\left\{\begin{array}{rl}
 		\partial\Phi &=1+\jO_0\big(1;(1+4\varepsilon)(\norm{Q_0}_\tau+D_m'\varepsilon)/\tau,
																										(1-\varepsilon)\tau\big),\\ %
		\bar\partial\Phi&=\jO_0\big(1;(1+4\varepsilon)(\norm{Q_1}_\tau+D_m''\varepsilon)/\tau,
																											(1-\varepsilon)\tau\big).%
	\end{array}\right.
	\]
	This with the (Lipschitz) axiom implies that
	\[
	\Phi^{-1}(z)=z+\jO_1(2;B_0^2\varepsilon, \tau(1- \varepsilon)), 
	~\text{with}~B_0^2=(1+4\varepsilon)(\norm{Q_0}_\tau/\varepsilon+D_m')\leqslant 13+2D_m'.
	\]
	Now we set $\beta_m^2=0$.  Given $2\leqslant \ell\leqslant m-1$, we assume that there 
	exist $B_m^\ell\geqslant 0$,  $\beta_m^\ell\geqslant 0$  and a polynomial of degree $\ell-1$
	and valuation $2$ satisfying $\norm{q_\ell}_\tau\leqslant \beta_m^\ell\tau\varepsilon$ and 
 	$\Phi^{-1}(z)=z+q_\ell(z)+\jO_{\min(\ell-1,k)}\big(\ell;B_m^\ell \varepsilon/\tau^{\ell-1};
	(1-\varepsilon)\tau\big)$. Note that Equations~\eqref{eq:boot-partial} 
	and~\eqref{eq:boot-barpartial}, the (polynomial) and (restriction)  axioms imply that 
	\[
	\left\{\begin{array}{rl}
		(\partial\Psi)\circ\Phi&=1+[Q_0]_{\leqslant \ell}+\jO_{\min(k,\ell)-1}
		\big(\ell;(\min(k,\ell)^{\ell-1}\norm{Q_0}_\tau+D_m'\varepsilon)/\tau^\ell,\tau\big),\\ %
		(\bar\partial\Psi)\circ\Phi&=[Q_1]_{\leqslant \ell}+\jO_{\min(k,\ell)-1}
			\big(\ell;(\min(k,\ell)^{\ell-1}\norm{Q_0}_\tau+D_m''\varepsilon)/\tau^\ell,\tau\big).%
	\end{array}\right.
	\]
	The ($P$-composition) axiom and  Equations~\eqref{eq:boot-partial} 
	and~\eqref{eq:boot-barpartial} imply that there exist  $C_{0\ell}\geqslant 0$ and 
	$C_{1\ell}\geqslant 0$, two polynomials $R_{0\ell}$ and $R_{1\ell}$ of degree $\ell-1$ 
	satisfying 
 	\[
	\left\{\begin{array}{rl}
	 \partial\Psi &=1+R_{0\ell}+\jO_{\min(\ell,k)-1}(\ell;C_{0\ell}/\tau^{\ell},(1-\varepsilon)\tau),\\ %
	\bar\partial\Psi&=R_{1\ell}+\jO_{\min(\ell,k)-1}(\ell;C_{1\ell}/\tau^{\ell},(1-\varepsilon)\tau),%
	\end{array}\right.
	\]
	with $\norm{R_{0\ell}}_\tau\leqslant \norm{Q_0}_{\tau+\norm{q_\ell}_\tau}\leqslant
	(1+\beta_m^\ell\varepsilon)^{m-2} \norm{Q_0}_{\tau}\leqslant\frac{13}{2}
	(1+\beta_m^\ell/4)^{m-2} ~\varepsilon$ and $\norm{R_{1\ell}}_\tau\leqslant 
	\norm{Q_1}_{\tau+\norm{q_\ell}_\tau}\leqslant2(1+\beta_m^\ell/4)^{m-2}\varepsilon$,  
	and where 
	\begin{align*}
	&&C_{0\ell}\leqslant 2^{\frac{\ell(\ell-1)}{2}}\frac{\min(k,\ell)^{\ell-1}\norm{Q_0}_\tau
		+D_m'\varepsilon}{\tau^\ell}\big(B_m^\ell \varepsilon\tau+\min(k,\ell)^{\ell-1}
																					(\tau+\norm{q_\ell}_\tau)\big)^\ell\\ %
	&& + \norm{[Q_0]_{\leqslant \ell}}_{2^{\min(k,\ell)-1}B_m^\ell\tau\varepsilon+
																\min(k,\ell)^{\ell-1}(\tau+\norm{q_\ell}_\tau)}\\ %
	& &\leqslant  2^{\frac{\ell(\ell-1)}{2}}\big({\scriptstyle\frac{13}{2}}\min(k,\ell)^{\ell-1}
	+D_m'\big)\big(B_m^\ell \varepsilon+\min(k,\ell)^{\ell-1}(1+\beta_m^\ell\varepsilon)
																											\big)^\ell\varepsilon\\ %
	&&+\bigg(2^{\min(k,\ell)-1}B_m^\ell\varepsilon+\min(k,\ell)^{\ell-1}(1+\beta_m^\ell
																		\varepsilon)\bigg)^{\ell}\norm{Q_0}_\tau\\ %
	&&\leqslant C_{0\ell}'\varepsilon\\ %
	&&\text{with}~C_{0\ell}'= 2^{\frac{\ell(\ell-1)}{2}}\big({\scriptstyle\frac{13}{2}}
		\min(k,\ell)^{\ell-1}+D_m'\big)\big(B_m^\ell/4+\min(k,\ell)^{\ell-1}(1+\beta_m^\ell/4)
																																\big)^\ell\\ %
	&&+{\scriptstyle\frac{13}{2}}\bigg(2^{\min(k,\ell)-1}B_m^\ell/4+\min(k,\ell)^{\ell-1}
																							(1+\beta_m^\ell/4)\bigg)^{\ell}.%
	\end{align*}
	In a similar way, we obtain that $C_{1\ell}\leqslant C_{1\ell}'\varepsilon$, with 
	\begin{align*}
	&&C_{1\ell}'= 2^{\frac{\ell(\ell-1)}{2}}\big(2\min(k,\ell)^{\ell-1}+D_m''\big)
									\big(B_m^\ell /4+\min(k,\ell)^{\ell-1}(1+\beta_m^\ell/4)\big)^\ell\\ %
	&&+2\bigg(2^{\min(k,\ell)-1}B_m^\ell/4+\min(k,\ell)^{\ell-1}(1+\beta_m^\ell/4)
																															\bigg)^{\ell}.%
	\end{align*}
	Let apply  the (Lispschitz) axiom to $\Psi(z)-q_{\ell+1}(z)$, where $q_{\ell+1}$ denotes 
	the polynomial of degree $\ell$ and valuation $2$ such that $\partial q_{\ell+1}=R_{0\ell}$ 
	and $\bar\partial q_{\ell+1}=R_{1\ell}$. We obtain that  
	\[
	\Psi(z)=z+q_{\ell+1}(z)+\jO_{\min(\ell,k)}\big(\ell+1;\max(C_{0\ell},C_{1\ell})/\tau^{\ell},
	(1-\varepsilon)\tau\big).
	\]
	Since $\norm{q_{\ell+1}}_\tau\leqslant \big(\norm{R_{0\ell}}_\tau+\norm{R_{1\ell}}_\tau
	\big)\tau$, we may set $\beta_{m}^{\ell+1}=(2+\frac{13}{2})(1+\beta_m^\ell/4)^{m-2}$ 
	and $B_m^{\ell+1}=\max(C_{0\ell}',C_{1\ell}')$. This proves the (inverse) axiom by 
	induction over $m$, with $\beta_m=\beta_m^m$ and $B_m=B_m^m$.
\end{proof}

We have also used the following properties of the space $\jO_k^\T$. Since the proofs follow 
easily from the definitions and are very similar to those of the spaces $\jO_k$, we omit them.
\begin{lemma}\label{lem:O_kT}
Assume $(k,m,n,\ell,\ell_1,\ell_2)\in\N^3\times \R^3$. Then the spaces $\jO_k^\T$
satisfy the following axioms.
\begin{description}
\item[($\T$-Derivative)] If $p\in \jO_{k}^\T(\ell;C,\tau)$ then $\partial_r^\alpha
	\partial_\theta^\beta p\in \jO_{k-\alpha-\beta}(\ell-\alpha; C,\tau)$ for all $(\alpha,\beta)\in
																		\N^2$ such that $\alpha+\beta \leqslant k$;%
\item[($\T$-Primitive)] If $p\in\jO_0^\T(\ell+1;C_0,\tau)$, $\partial_r p\in\jO_{k}^\T
	(\ell;C_1,\tau)$ and $\partial_\theta p\in\jO_{k}^\T(\ell;C_2,\tau)$ then we have $p\in  
	\jO_{k+1}^\T(\ell+1;C,\tau)$, with $C \leqslant \max(C_0,C_1,C_2)$;%
\item[($\T$-Product)] If $p\in\jO_{k}^\T(\ell_1;C_1,\tau)$ and $g\in\jO_{k}^\T(\ell_2;
	C_2,\tau)$ then we have $pq\in\jO_{k}(\ell_1+\ell_2;2^kC_1C_2,\tau)$;%
\item[($\T$-Composition)] Assume $n\geqslant k$, $\rho>0$ and $\tau>0$. Let $f~:
	\Dm(0,\rho)\to \C$ and $p~:(0;\tau]\times\T\to\C$ satisfy $p((0;\tau]\times\T)
	\subset\Dm(0,\rho)$, $f(z)=\jO_k(n;C,\rho)$ and $p=\jO_k^\T(\ell;C_1,\tau)$. Then we 
	have $f\circ p (r,\theta)=\jO_k^\T(n\ell;2^{\frac{k(k+1)}{2}}C C_1^n,\tau)$.
\end{description}
\end{lemma}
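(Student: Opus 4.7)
The plan is to establish each of the four axioms of Lemma~\ref{lem:O_kT} by directly mimicking the corresponding argument for $\jO_k$ given in the proof of Lemma~\ref{lem:O_k}. The key observation is that the spaces $\jO_k^\T$ are built on exactly the same multi-index structure as $\jO_k$, with the pair $(\partial_r,\partial_\theta)$ playing the role of $(\partial,\bar\partial)$, the radial variable $r$ playing the role of $|z|$, and $\theta\in\T$ acting as a harmless compact parameter. The one asymmetry is that in the definition of $\jO_k^\T(\ell;C,\tau)$ only $\partial_r$ consumes one unit of the order $\ell$ while $\partial_\theta$ does not; this feature dictates how one distributes orders in the estimates below but introduces no genuine new difficulty.

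For the ($\T$-Derivative) axiom, applying $\partial_r^\alpha\partial_\theta^\beta$ with $\alpha+\beta\le k$ to $p\in\jO_k^\T(\ell;C,\tau)$ leaves exactly $k-\alpha-\beta$ derivatives of the result to be controlled, still at order $\ell-\alpha$, which is precisely the definition of $\jO_{k-\alpha-\beta}^\T(\ell-\alpha;C,\tau)$. For the ($\T$-Product) axiom, expand by Leibniz:
\[
\partial_r^\alpha\partial_\theta^\beta(pq)=\sum_{\gamma_1+\gamma_2=\alpha}\sum_{\delta_1+\delta_2=\beta}\binom{\alpha}{\gamma_1}\binom{\beta}{\delta_1}\,\partial_r^{\gamma_1}\partial_\theta^{\delta_1}p\cdot\partial_r^{\gamma_2}\partial_\theta^{\delta_2}q,
\]
bound each summand by $C_1C_2r^{\ell_1+\ell_2-\alpha}$ using the defining estimates for $p$ and $q$, and observe that the total number of terms is at most $2^{\alpha+\beta}\le 2^k$. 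For the ($\T$-Primitive) axiom, inspect a generic multi-index $(a,b)$ with $a+b\le k+1$: if $a\ge 1$, rewrite $\partial_r^a\partial_\theta^b p=\partial_r^{a-1}\partial_\theta^b(\partial_r p)$ and invoke the hypothesis on $\partial_r p$; if $a=0$ and $b\ge 1$, rewrite $\partial_\theta^b p=\partial_\theta^{b-1}(\partial_\theta p)$ and invoke the hypothesis on $\partial_\theta p$; in the trivial case $a=b=0$ use the hypothesis on $p$ itself. Taking $C$ to be the maximum of the three input constants yields the announced estimate.

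The genuine work lies in the ($\T$-Composition) axiom, which I plan to prove by induction on $k$, following the template of the (Composition) axiom in Lemma~\ref{lem:O_k}. The base case $k=0$ is the chain $|f\circ p(r,\theta)|\le C|p(r,\theta)|^n\le CC_1^nr^{n\ell}$. For the inductive step, use the chain rule
\[
\partial_r(f\circ p)=(\partial f)\circ p\cdot\partial_r p+(\bar\partial f)\circ p\cdot\overline{\partial_r p},\qquad\partial_\theta(f\circ p)=(\partial f)\circ p\cdot\partial_\theta p+(\bar\partial f)\circ p\cdot\overline{\partial_\theta p},
\]
where $\partial f,\bar\partial f\in\jO_{k-1}(n-1;C,\rho)$ by the (Derivative) axiom of Lemma~\ref{lem:O_k}. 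The inductive hypothesis then places $(\partial f)\circ p$ and $(\bar\partial f)\circ p$ in $\jO_{k-1}^\T((n-1)\ell;\alpha_{k-1}CC_1^{n-1},\tau)$; combining with $\partial_r p\in\jO_{k-1}^\T(\ell-1;C_1,\tau)$ and $\partial_\theta p\in\jO_{k-1}^\T(\ell;C_1,\tau)$ through the ($\T$-Product) axiom just established, and finally applying the ($\T$-Primitive) axiom to reintegrate one derivative, produces $f\circ p\in\jO_k^\T(n\ell;\alpha_k CC_1^n,\tau)$ with $\alpha_k\le\max(\alpha_0,2^k\alpha_{k-1})$, hence $\alpha_k\le 2^{k(k+1)/2}$. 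The only real obstacle is the bookkeeping of constants at each step of the induction; this is arithmetic already performed for the (Composition) axiom of $\jO_k$ and transfers verbatim.
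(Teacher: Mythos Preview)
Your proposal is correct and is exactly the approach the paper intends: the paper in fact omits the proof of Lemma~\ref{lem:O_kT} entirely, saying only that ``the proofs follow easily from the definitions and are very similar to those of the spaces $\jO_k$,'' and your write-up carries out precisely that program, transporting each argument from Lemma~\ref{lem:O_k} with $(\partial_r,\partial_\theta)$ in place of $(\partial,\bar\partial)$ and tracking the one asymmetry that only $\partial_r$ lowers the order.
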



\vfil

\pagebreak

\section{Estimates on Gevrey maps}
\label{app:GevMapsComp}

We begin with preliminaries on the composition of Gevrey functions
(Section~\ref{secappremind}) and the flow of a Gevrey near-integrable
Hamiltonian (Section~\ref{secGevflow}),
then we prove Proposition~\ref{propPsiPhih} in Section~\ref{subsecpfpropPsiPhih}.
In all this part we omit the index~$\al$ in the Gevrey norms, writing
for instance
$\normD{\,\cdot\,}_{L,R}$ instead of $\normD{\,\cdot\,}_{\al,L,R}$.

We end Appendix~\ref{app:GevMapsComp} with a reminder of a result on
Gevrey ``bump'' functions proved in \cite{ms}
(Section~\ref{secBumpGev}), used in Section~\ref{secPfThperi} as well
as in Sections~\ref{ssec:perdomains} and~\ref{ssec:proofn}.

\subsection{Reminder on Gevrey maps and their composition}	\label{secappremind}

Let $n\ge1$, $L,R>0$, and $\ph \in G^{\al,L}(\A^n_R)$. 
We first recall the analogue of the Cauchy inequalities for the Gevrey norms~\eqref{eq:defGalL}:
if $0 < \La < L$,
then all the partial derivatives of~$\ph$ belong to $G^{\al,\La}(\A^n_R)$ and,
for each $k\in\N$,
\begin{equation}	\label{ineqGevCauch}
\sum_{m\in \N^{2n};\ |m|=k} \normD{\pa^m\ph}_{\La,R} \le 
\frac{k!^\al}{(L-\La)^{k\al}} \normD{\ph}_{L,R}
\end{equation}
(Lemma~A.2 from \cite{hms}).

To state the result on composition, we introduce a new notation:
\begin{equation}	\label{eqdefcNst}
\cN^*_{L,R}(\ph) \defeq 
\sum_{\ell\in\N^{2n},\ \ell\neq0} \frac{L^{|\ell|\al}}{\ell!^\al} 
\normD{\pa^\ell\ph}_{C^0(\A^n_R)},
\end{equation}
so that
$\normD{\ph}_{L,R} = \normD{\ph}_{C^0(\A^n_R)} + \cN^*_{L,R}(\ph)$.
Then, Proposition~A.1 of \cite{hms} yields
\begin{prop}	\label{propCompos}
Let $n\ge1$, $R,R_0>0$, $\La,L>0$, and consider a map $\phi \col \A^n_R \to
\A^n_{R_0}$, the $2n$ components of which belong\footnote{%
In fact, the first~$n$ components are of the form
$\ph \col \A^n_R \to \T$ and, for them, what we mean is that there is a lift
$\ti\ph \col \R^n\times\ov B_R \to \R$ such that
$\ti\ph_{|\FF^n_R} \in G^{\al,\La}(\FF^n_R)$, with
$\FF^n_R \defeq [-1,1]^n\times\ov B_R$; observe that $\cN^*_{\La,R}(\ph)$
stays well-defined.
}
to $G^{\al,\La}(\A^n_R)$ and satisfy
\begin{equation}	\label{ineqcNphij}
\cN^*_{\La,R}(\phi_1), \ldots, \cN^*_{\La,R}(\phi_{2n})
\le \frac{L^\al}{(2n)^{\al-1}}.
\end{equation}
Then, for any $Y \in G^{\al,L}(\A^n_{R_0})$, we have 
$Y\circ\phi\in G^{\al,\La}(\A^n_R)$ and
$\normD{Y\circ\phi}_{\La,R} \le \normD{Y}_{L,R_0}$.
\end{prop}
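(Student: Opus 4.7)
I would reduce the composition estimate to a careful expansion of $\pa^\ell(Y\circ\phi)$ via the multivariate Faà di Bruno formula, followed by a Gevrey-specific reorganization of the resulting weighted sum. Since $\phi$ maps $\A^n_R$ into $\A^n_{R_0}$, the $C^0$-bound $\normD{Y\circ\phi}_{C^0(\A^n_R)}\le\normD{Y}_{C^0(\A^n_{R_0})}$ is immediate, so all the work lies in estimating $\cN^*_{\Lambda,R}(Y\circ\phi)$ and checking it is bounded by $\cN^*_{L,R_0}(Y)$.

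For each $\ell\in\N^{2n}\setminus\{0\}$, I would invoke the (Constantine--Savits) multivariate Faà di Bruno formula to express $\pa^\ell(Y\circ\phi)$ as a finite linear combination of terms $\big((\pa^\alpha Y)\circ\phi\big)\cdot\prod_s(\pa^{p_s}\phi_{i_s})^{k_s}$, indexed by $\alpha\in\N^{2n}$ with $1\le|\alpha|\le|\ell|$, by ordered choices of component indices $(i_s)\in\{1,\ldots,2n\}^{|\alpha|}$ with multiplicity profile $\alpha$, and by decompositions of $\ell$ into nonzero multi-indices $(p_s)$ with associated multiplicities $(k_s)$; the combinatorial coefficients take the familiar form $\ell!\,\big/\,\prod_s k_s!(p_s!)^{k_s}$.

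The decisive Gevrey step is the algebraic inequality
\begin{equation*}
\binom{\ell}{\mu_1,\ldots,\mu_m}\cdot\frac{\Lambda^{|\ell|\al}}{\ell!^\al}\;\le\;\prod_{s=1}^m\frac{\Lambda^{|\mu_s|\al}}{\mu_s!^\al}\qquad\text{for any }\al\ge 1,
\end{equation*}
valid for every decomposition $\ell=\mu_1+\cdots+\mu_m$ into nonzero multi-indices, which follows at once from the identity $\ell!=\binom{\ell}{\mu_1,\ldots,\mu_m}\,\mu_1!\cdots\mu_m!$ combined with $\binom{\ell}{\mu_1,\ldots,\mu_m}^{1-\al}\le 1$. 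Applying this inequality after taking $C^0$-norms and multiplying by $\Lambda^{|\ell|\al}/\ell!^\al$ lets the double sum over $\ell$ and over decompositions of~$\ell$ factorize, yielding for each fixed $\alpha$ and each ordered sequence $(i_s)$ a product of $|\alpha|$ factors, each a copy of $\cN^*_{\Lambda,R}(\phi_{i_s})$.

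The final accounting combines the orderings of $(i_s)$ compatible with~$\alpha$, the symmetry factors produced by Faà di Bruno, and the bound $\prod_k\cN^*_{\Lambda,R}(\phi_k)^{\alpha_k}\le L^{|\alpha|\al}/(2n)^{(\al-1)|\alpha|}$ supplied by hypothesis~\eqref{ineqcNphij}. The main obstacle will be the precise combinatorial bookkeeping: one needs the Faà di Bruno symmetry factors, together with the $(2n)^{(\al-1)|\alpha|}$-gain, to collapse onto the normalization $L^{|\alpha|\al}/\alpha!^\al$ that appears in $\cN^*_{L,R_0}(Y)$. The exponent $\al-1$ on $2n$ in~\eqref{ineqcNphij} is precisely calibrated for this purpose: for $\al=1$ the argument reduces to the classical bound for compositions of analytic functions on polydisks (constant~$1$), while for $\al>1$ the factor $(2n)^{(\al-1)|\alpha|}$ absorbs both the $|\alpha|!/\alpha!$ counting of ordered component choices and the $\alpha!^{\,\al-1}$-type mismatch between the $1/\alpha!$ weights produced by Faà di Bruno and the $1/\alpha!^\al$ weights required in $\cN^*_{L,R_0}(Y)$, so that the final summation over $\alpha$ closes with constant~$1$.
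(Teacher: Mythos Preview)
The paper does not give its own proof of this proposition: it simply quotes it as Proposition~A.1 of \cite{hms}. Your Fa\`a di Bruno strategy is indeed the natural route, and it is the one followed in \cite{hms}; however, the final ``bookkeeping'' step as you describe it does \emph{not} close with constant~$1$, and this is a genuine gap rather than a detail.

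Concretely: carrying out your plan (Fa\`a di Bruno, then your multinomial inequality $\binom{\ell}{\mu_1,\dots,\mu_m}\,\La^{|\ell|\al}/\ell!^\al\le\prod_s\La^{|\mu_s|\al}/\mu_s!^\al$, then summing over~$\ell$) yields exactly
\[
\cN^*_{\La,R}(Y\circ\phi)\ \le\ \sum_{\alpha\neq 0}\frac{\|\pa^\alpha Y\|_{C^0}}{\alpha!}\ \prod_{j=1}^{2n}\bigl(\cN^*_{\La,R}(\phi_j)\bigr)^{\alpha_j}.
\]
Feeding in the hypothesis $\cN^*_{\La,R}(\phi_j)\le L^\al/(2n)^{\al-1}$ and comparing term by term with $\cN^*_{L,R_0}(Y)=\sum_{\alpha\neq 0}\frac{L^{|\alpha|\al}}{\alpha!^\al}\|\pa^\alpha Y\|_{C^0}$ forces the inequality $\alpha!^{\al-1}\le(2n)^{(\al-1)|\alpha|}$, i.e.\ $\alpha!\le(2n)^{|\alpha|}$, which is \emph{false} as soon as $\alpha$ is concentrated on one coordinate (e.g.\ $\alpha=4e_1$, $n=1$ gives $24>16$). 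So your claim that the factor $(2n)^{(\al-1)|\alpha|}$ ``absorbs both the $|\alpha|!/\alpha!$ counting and the $\alpha!^{\,\al-1}$ mismatch'' is not correct: once you have applied your multinomial inequality in full, the lost factor $\binom{\ell}{\mu_1,\dots,\mu_m}^{\al-1}$ is precisely what was needed to manufacture the missing $1/\alpha!^{\,\al-1}$, and nothing recovers it downstream.

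The fix in \cite{hms} is not to discard this multinomial factor wholesale. One must keep the weight $1/\ell!^{\al-1}$ coupled to the \emph{whole} decomposition (including the $\alpha$-level) rather than splitting it immediately across the individual blocks~$\mu_s$; only then can the $1/\alpha!$ coming from Fa\`a di Bruno be upgraded to the required $1/\alpha!^\al$. I would recommend looking at the proof in \cite{hms} directly to see how the combinatorics are organised there.
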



When testing inequalities~\eqref{ineqcNphij} to apply this result, the following may be useful:
\begin{lemma}   \label{lemusefulNst}
Let $n\ge1$ and $R>0$. Suppose $0 < \La < L$ and $\ph\in
G^{\al,L}(\A^n_R)$. Then
\begin{equation}	\label{ineqcNst}
\cN^*_{\La,R}(\ph) \le \frac{\La^\al}{(L-\La)^\al} \normD{\ph}_{L,R}.
\end{equation}
\end{lemma}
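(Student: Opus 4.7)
The plan is an entirely elementary term-by-term comparison of the two series. Writing out the definitions of $\cN^*_{\La,R}(\ph)$ and $\normD{\ph}_{L,R}$, I would note that the sum defining $\cN^*_{\La,R}(\ph)$ is restricted to multi-indices $\ell \in \N^{2n}$ with $\ell \ne 0$, so $|\ell| \ge 1$ throughout. This restriction is exactly what makes the comparison work, since it permits splitting off a single power of $\La^\al$ from each term.

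Concretely, for every $\ell \neq 0$ I would write $\La^{|\ell|\al} = \La^\al \cdot \La^{(|\ell|-1)\al}$ and use the hypothesis $\La < L$ (together with $|\ell|-1 \geq 0$) to estimate $\La^{(|\ell|-1)\al} \le L^{(|\ell|-1)\al}$. This gives the pointwise inequality
\[
\frac{\La^{|\ell|\al}}{\ell!^\al}\normD{\pa^\ell\ph}_{C^0(\A^n_R)} \le \frac{\La^\al}{L^\al}\cdot\frac{L^{|\ell|\al}}{\ell!^\al}\normD{\pa^\ell\ph}_{C^0(\A^n_R)}
\]
for every $\ell \ne 0$. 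Summing over all such~$\ell$ and completing the right-hand series by the nonnegative $\ell=0$ term would yield the intermediate estimate $\cN^*_{\La,R}(\ph) \le (\La^\al/L^\al)\,\normD{\ph}_{L,R}$. The statement then follows from the trivial bound $L^\al \ge (L-\La)^\al$, valid since $0 < L - \La < L$.

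There is no real obstacle here: the proof is a two-line rearrangement of the power series defining the Gevrey norms, and does not even require invoking the full Cauchy-type inequality~\eqref{ineqGevCauch}. The slight looseness introduced by replacing $L^\al$ with $(L-\La)^\al$ in the denominator is harmless, and presumably chosen for compatibility with the Cauchy estimate~\eqref{ineqGevCauch} as it is applied at the very end of the proof of Theorem~\ref{thmsusp} (see inequality~\eqref{ineqcompPhi}).
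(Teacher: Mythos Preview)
Your proof is correct, and in fact more elementary than the paper's. The paper proceeds by writing each nonzero multi-index~$\ell$ as $\mu+m$ with $|\mu|=1$, overcounting via $\sum_{|\mu|=1}\sum_m$, using $(\mu+m)!\ge m!$ to obtain $\cN^*_{\La,R}(\ph)\le\La^\al\sum_{|\mu|=1}\normD{\pa^\mu\ph}_{\La,R}$, and then invoking the Gevrey Cauchy inequality~\eqref{ineqGevCauch} with $k=1$ to bound the sum of first-derivative norms by $(L-\La)^{-\al}\normD{\ph}_{L,R}$. Your direct term-by-term comparison $\La^{|\ell|\al}\le(\La^\al/L^\al)L^{|\ell|\al}$ avoids both the overcounting and the appeal to~\eqref{ineqGevCauch}, and even yields the sharper intermediate bound $\cN^*_{\La,R}(\ph)\le(\La/L)^\al\normD{\ph}_{L,R}$ before you weaken it to match the stated form. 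The paper's route has the minor advantage of making the structural link to~\eqref{ineqGevCauch} explicit, but yours is shorter and gives a better constant.
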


\begin{proof}
Bounding $\cN^*_{\La,R}(\ph)$ by the sum 
$\sum_\mu\sum_m \frac{\La^{(1+|m|)\al}}{(\mu+m)!^\al} 
\normD{\pa^{(\mu+m)}\ph}_{C^0(\A^n_R)}$
over all multi-indices $\mu,m$ with $|\mu|=1$ and using $(\mu+m)!\ge m!$, we get
$\cN^*_{\La,R}(\ph) \le \La^\al \sum_\mu \normD{\pa^\mu \ph}_{\La,R}$
and we conclude by~\eqref{ineqGevCauch}.
\end{proof}

\subsection{A lemma on the flow of a Gevrey near-integrable Hamiltonian}
\label{secGevflow}

\begin{lemma}	\label{lemGevnearintflow}
Let $n\ge1$, $\al\ge1$, $L,R_0>0$, and $h\in G^{\al,L}(\ov B_{R_0})$.
Let $R,\La$ be such that 
\begin{equation}	\label{ineqRLa}
0<R<R_0, \qquad
0 < \La < 
\big(1 + 2^{4\al} L^{-2\al} \normD{h}_{L,R_0}\big)^{-1/\al}
\frac{L}{2\cdot (2n)^{(\al-1)/\al} }.
\end{equation}
Then there exist $\eps_0,C>0$ such that, for any $u\in G^{\al,L}(\A^n_{R_0})$ with
$\normD{u}_{L,R_0} < \eps_0$
and any $t\in[0,1]$,
the time-$t$ map $\Phi^{t(h+u)} \col \A^n_R \to \A^n_{R_0}$ is well-defined and satisfies
\begin{equation}	\label{ineqPhihuPhih}
\NormD{\Phi^{t(h+u)}-\Phi^{th}}_{\La,R} \le C \normD{u}_{L,R_0}
\end{equation}
(with the notation~\eqref{eqdefNotNorm}).
The numbers~$\eps_0$ and~$C$ can be chosen as depending on~$h$ only through
$\normD{h}_{L,R_0}$ and being respectively decreasing and increasing
functions of this quantity.
\end{lemma}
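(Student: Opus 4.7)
The plan is to write the flow $\Psi_t:=\Phi^{t(h+u)}$ componentwise as $\Psi_t(\th,r)=(Q(t),P(t))$ satisfying Hamilton's equations
\begin{equation*}
\dot Q=\na h(P)+\na\zz2 u(Q,P),\qquad \dot P=-\na\zz1 u(Q,P),\qquad (Q(0),P(0))=(\th,r),
\end{equation*}
and to compare it with the integrable flow $\Psi_t^0(\th,r)=(\th+t\na h(r),r)$. Setting $\de Q(t):=Q(t)-\th-t\na h(r)$ and $\de P(t):=P(t)-r$, the dynamics is equivalent to the integral system
\begin{align*}
\de Q(t) &= \int_0^t\bigl[\na h(r+\de P(s))-\na h(r)+\na\zz2 u(Q(s),P(s))\bigr]\,ds,\\
\de P(t) &= -\int_0^t\na\zz1 u(Q(s),P(s))\,ds,
\end{align*}
which I would solve by Picard iteration, viewing the right-hand sides as a map acting on the pair $(\de Q,\de P)$ in the Banach space $G^{\al,\La}(\A^n_R,\R^{2n})$.

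The first step is an a priori smallness bound. The Cauchy inequality~\eqref{ineqGevCauch} applied to $u$ yields $\sum_{i=1}^{2n}\normD{\pa_i u}_{\La,R_0}\le(L-\La)^{-\al}\normD{u}_{L,R_0}$, so taking $\normD{u}_{L,R_0}<\eps_0$ for a suitable threshold guarantees that the sup-norm of $\de P(t)$ remains below $R_0-R$ throughout $t\in[0,1]$; hence $P(t)\in\ov B_{R_0}$ and $\Psi_t$ is well-defined as a map $\A^n_R\to\A^n_{R_0}$. The heart of the argument is the composition estimate needed at each Picard iterate: I would apply Proposition~\ref{propCompos} to $\na h$ and to the partial derivatives of $u$, both composed with $(Q,P)$. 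Its hypothesis~\eqref{ineqcNphij} demands that each component of $(Q,P)$ have $\cN^*_{\La,R}$-norm bounded by $L^\al/(2n)^{\al-1}$. For the unperturbed component $Q_i^0=\th_i+t\pa_i h(r)$, Lemma~\ref{lemusefulNst} combined with~\eqref{ineqGevCauch} gives $\cN^*_{\La,R}(Q_i^0)\le\La^\al+2^\al L^{-\al}\normD{h}_{L,R_0}$; the precise quantitative threshold~\eqref{ineqRLa} is calibrated exactly so that this quantity is at most half of $L^\al/(2n)^{\al-1}$, leaving the other half available for the perturbative contribution of $\de Q$, which remains of order $\normD{u}_{L,R_0}$ throughout the iteration.

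With composition under control, the operator defined by the right-hand sides of the integral system is Lipschitz with constant $\le 1/2$ on a ball of radius proportional to $\normD{u}_{L,R_0}$ in $G^{\al,\La}(\A^n_R,\R^{2n})$, uniformly in $t\in[0,1]$, provided $\eps_0$ is small enough (the Lipschitz constant involves $(L-\La)^{-\al}\normD{u}_{L,R_0}$ times Gevrey norms of second derivatives of $h$, controlled via~\eqref{ineqGevCauch}). The resulting fixed point is $(\de Q(t),\de P(t))$, and satisfies the linear estimate $\normD{\de Q}_{\La,R}+\normD{\de P}_{\La,R}\le C\normD{u}_{L,R_0}$, which is~\eqref{ineqPhihuPhih}. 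Since every bound in the argument involves $h$ only through $\normD{h}_{L,R_0}$, the constants $\eps_0$ and $C$ inherit this dependence, with the monotonicity claims clear from their construction.

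The main obstacle will be the bookkeeping of nested Gevrey widths: one must organise the losses $(L-L_1)^{-\al}$ coming from successive applications of~\eqref{ineqGevCauch} together with the constraints~\eqref{ineqcNphij} required by Proposition~\ref{propCompos}, so that the final output lives at width $\La$; the threshold~\eqref{ineqRLa} is precisely the outcome of this optimisation after absorbing the $t\na h(r)$ term into the admissible budget for compositions. A secondary point is that $h$ is defined only on $\ov B_{R_0}$, which is handled via the canonical inclusion $G^{\al,L}(\ov B_{R_0})\hookrightarrow G^{\al,L}(\A^n_{R_0})$ already noted in the excerpt.
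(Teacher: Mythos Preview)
Your approach is essentially the paper's: both set up the integral equation for the deviation from the unperturbed flow and solve it by a fixed-point argument in Gevrey norm, using Proposition~\ref{propCompos} for the composition estimates and Lemma~\ref{lemusefulNst} with~\eqref{ineqGevCauch} to verify its hypotheses on the components of the unperturbed flow.

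There is, however, a genuine gap in your contraction step. You assert that the Lipschitz constant of the Picard operator ``involves $(L-\La)^{-\al}\normD{u}_{L,R_0}$ times Gevrey norms of second derivatives of~$h$'', and hence can be made $\le\tfrac12$ by taking $\eps_0$ small. This is not correct: the term $\na h(r+\de P)-\na h(r)$ in the $\de Q$-equation contributes a Lipschitz constant in~$\de P$ of order $\normD{D^2 h}$ alone, with no factor of~$\normD{u}$. With the unweighted norm $\normD{\de Q}_{\La,R}+\normD{\de P}_{\La,R}$ the operator is therefore \emph{not} a contraction when $\normD{D^2 h}$ is large, regardless of how small~$\eps_0$ is.

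The paper fixes this by introducing a weighted norm $\tfrac{1}{K}\normD{\de Q}+\normD{\de P}$ with $K\ge\max\{2\cdot 2^{3\al}(L-\La)^{-2\al}\normD{h}_{L,R_0},1\}$. The $\na h$-term then contributes at most $\normD{D^2 h}/K\le\tfrac12$ to the contraction estimate in this norm, while the remaining terms (all of order~$\normD{u}$, but now multiplied by~$K$ in places) are made small by the choice of~$\eps_0$. This weighting exploits the triangular structure of the system: the $\de P$-equation involves only~$u$, so $\de P$ is small independently of~$h$, and its smallness then controls the $\na h$-difference in the $\de Q$-equation. You should either introduce this weighted norm, or argue in two stages (first close the estimate on~$\de P$, then bootstrap to~$\de Q$).
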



\begin{rem}
In fact 
$\NormD{\Phi^{t(h+u)}-\Phi^{th}}_{\La,R} \le C t \normD{u}_{L,R_0}$
for all $t\in[0,1]$
(as can be seen by applying~\eqref{ineqPhihuPhih} to~$th$ and~$tu$ themselves).
\end{rem}


\begin{proof}
\textbf{a)} Let $n,\al,L,R_0,R,\La$ be as in the hypothesis, let $h\in G^{\al,L}(\ov
B_{R_0})$.
We set 
\begin{equation}	\label{eqdefK}
L' \defeq \frac{\La+L}{2}, \qquad
K \defeq \max\Big\{ 2\cdot 2^{3\al} (L-\La)^{-2\al} \normD{h}_{L,R_0} , 1 \Big\}.
\end{equation}

Let $u\in G^{\al,L}(\A^n_{R_0})$ and $\eps \defeq \normD{u}_{L,R_0}$.
We shall work in the phase space $\R^n\times\ov B_{R_0}$, denoting the variables
by
\[
x = (\th;r) = (\th_1,\ldots,\th_n;r_1,\ldots,r_n) = (x_1,\ldots,x_{2n}).
\]
We can consider that $h$ and $h+u$ generate Hamiltonian vector fields $\ti
X_{h}$ and~$\ti X_{h+u}$ which are defined on $\R^n\times\ov B_{R_0}$ and 
$1$-periodic in each of the first~$n$ variables.
The flow of~$\ti X_h$ is
\begin{equation}	\label{eqdeftiPhith}
\ti\Phi^{t h}(\th;r) = (\th + t \na h(r); r).
\end{equation}
It is $\Z^n$-equivariant, in the sense that 
$\ti\Phi^{t h}(\th+\ell;r) = \ti\Phi^{t h}(\th;r) + (\ell;0)$ for any $\ell\in\Z^n$.
We shall study the flow over the time-interval $[0,1]$ of the vector field
\begin{equation}	\label{eqdeftiXhu}
\ti X_{h+u}(\th;r) = \big( \na h(r) + \na\ccr u(\th;r); -\na\cth u(\th;r)\big),
\qquad (\th;r) \in \R^n \times \ov B_{R_0},
\end{equation}
as the solution of a fixed-point equation in a complete metric space for which the
contraction principle applies.
We shall find a unique solution 
\[
\ti\Phi^{t(h+u)} \col \R^n\times\ov B_R \to \R^n\times\ov B_{R_0},
\qquad t\in[0,1],
\]
which is a $\Z^n$-equivariant lift to $\R^n\times\ov B_{R_0}$ of the flow
of~$X_{h+u}$ in~$\A^n_{R_0}$.

\medskip

\noindent\textbf{b)} 
Let $V \defeq \big( G^{\al,\La}(\A^n_R) \big)^n$. 
%
For any $\psi\in V$, we write
\[
\psi = (\psi_1,\ldots, \psi_n), \qquad
\normD{\psi}_V \defeq \normD{\psi_1}_{\La,R} + \cdots + \normD{\psi_n}_{\La,R}.
\]
Let $W \defeq V\times V$.
For any $\eta\in W$, we write
\[
\eta = \big(\eta\cth; \eta\ccr \big) 
= (\eta_1,\ldots,\eta_n;\eta_{n+1},\ldots,\eta_{2n}), \qquad
\normD{\eta}_W \defeq \frac{1}{K}\normD{\eta\cth}_V + \normD{\eta\ccr}_V.
\]
Let $E \defeq C^0\big( [0,1], W \big)$. 
For any $\xi \in E$, we set
\[
\normD{\xi}_E \defeq \max_{t\in[0,1]} \normD{\xi(t)}_W.
\]
We get a Banach space $(E,\normD{\cdot}_E)$.

Let us denote by $\ph$ the ``unperturbed'' flow over $[0,1]$, \ie
\[
\ph(t) \defeq \ti\Phi^{t h}, \qquad t\in[0,1]
\]
defined by~\eqref{eqdeftiPhith}.
For every $\xi\in E$ and $t\in[0,1]$, $\ph(t)+\xi(t)$ can be considered as a
$\Z^n$-equivariant map $\R^n\times\ov B_R \to \R^n \times \R^n$ (identifying
functions on $\A^n_R$ with functions on $\R^n\times\ov B_R$ which are
$1$-periodic in the first~$n$ variables).
We thus can view
\[
\ph + E \defeq \{\, \ph+\xi \mid \xi\in E \,\}
\]
as a complete metric space (with the distance $\dist(\ph+\xi,\ph+\xi^*) \defeq
\normD{\xi^*-\xi}_E$) where the flow $\ti\Phi^{t(h+u)}$ is to be found.
More specifically, we restrict ourselves to the closed ball
\begin{gather}
\notag
\cB_\rho \defeq \{\, \ph+\xi \mid \normD{\xi}_E \le \rho \,\}, \\[-1.5ex]
\intertext{with}
\label{eqdefrhoepsz}
\rho \defeq \frac{2^{\al+1} \eps}{ (L-\La)^\al }, \qquad 
\eps < \eps_0 \defeq \frac{(L-\La)^\al}{2^{\al+1}} 
\min\Big\{ R_0-R,\ \bet,\ \frac{(L-\La)^\al}{2^{2\al}K} \Big\}
\end{gather}
with
$\bet \defeq \dfrac{L'^\al}{(2n)^{\al-1}} - 
\Big(1+2^{2\al} (L-\La)^{-2\al}\normD{h}_{L,R}\Big)\La^\al$
(the positiveness of~$\bet$ is ensured by~\eqref{ineqRLa} because $L-\La>L/2$).
Observe that
\begin{equation}	\label{ineqrho}
\rho \le \min\{ R_0-R, \bet \}.
\end{equation}

\medskip

\noindent\textbf{c)} 
The flow $\Psi(t) = \ti\Phi^{t(h+u)}$ that we are searching is characterised by
$\Psi(0)=\Id$ and $\frac{d\Psi}{d t}(t) = \ti X_{h+u} \circ \Psi(t)$,
or
\[
\Psi(t) = \Id + \int_0^t \ti X_{h+u}\circ\Psi(\tau) \, d\tau.
\]
Let us first check that, with our choice of~$\rho$, the formula
\begin{equation}	\label{eqdefcF}
\cF(\Psi)(t) \defeq \Id + \int_0^t \ti X_{h+u}\circ\Psi(\tau) \, d\tau,
\qquad t\in[0,1]
\end{equation}
defines a functional $\cF \col \cB_\rho \to \ph+E$.

Assume $\Psi=\ph+\xi\in\cB_\rho$.
In view of~\eqref{eqdeftiXhu}, the components of $\ti X_{h+u}$ belong to
$G^{\al,L'}(\A^n_{R_0})$. We thus only need to check that, for each $\tau\in[0,1]$, 
$\Psi(\tau)$ maps $\A^n_R$ in $\A^n_{R_0}$ and its components satisfy
$\cN^*_{\La,R}\big(\Psi_j(\tau)\big) \le L'^\al / (2n)^{\al-1}$
so as to apply Proposition~\ref{propCompos}.

The first condition is met because $\Psi\ccr(\tau) = r + \xi\ccr(\tau)$ and the
components of 
$\xi\ccr(\tau) = \big( \xi_{n+1}(\tau),\ldots,\xi_{2n}(\tau) \big)$
satisfy 
\[
\sum_{j=n+1}^{2n} \normD{\xi_j(\tau)}^2_{C^0(\A_R)} \le
\normD{\xi(\tau)}^2_W \le \normD{\xi}^2_E \le \rho^2
\le (R_0-R)^2
\]
by~\eqref{ineqrho}.
The second condition is met because, for any $1\le j\le 2n$, on the one hand
$\cN^*_{\La,R}\big(\xi_j(\tau)\big) \le \normD{\xi_j(\tau)}_{\La,R} \le
\rho$,
and on the other hand
$\cN^*_{\La,R}\big(\ph_j(\tau)\big) = \La^\al$ for $j\ge n+1$,
while for $j\le n$, by~\eqref{ineqcNst} and~\eqref{ineqGevCauch},
\begin{multline*}
\cN^*_{\La,R}\big(\ph_j(\tau)\big) \le 
\La^\al + \tau\cN^*_{\La,R}\Big(\frac{\pa h}{\pa r_j}\Big) \\
\le
\La^\al + \frac{\La^\al}{(L'-\La)^\al}\normD*{\frac{\pa h}{\pa r_j}}_{L',R} 
\le \La^\al \big( 1 + 2^{2\al} (L-\La)^{-2\al} \normD{h}_{L,R} \big)
= \frac{L'^\al}{(2n)^{\al-1}} - \bet,
\end{multline*}
which is $\le \frac{L'^\al}{(2n)^{\al-1}} -
\rho$ by~\eqref{ineqrho}.

\medskip

\noindent\textbf{d)} 
Let us now check that $\cF(\cB_\rho) \subset \cB_\rho$.
For $\Psi = \ph + \xi \in \cB_\rho$, we write $\cF(\Psi) = \ph + \eta$
and observe that, in view of~\eqref{eqdeftiPhith} and~\eqref{eqdeftiXhu},
for each $t\in[0,1]$,
\begin{gather*}
\eta\cth(t) = \int_0^t \big(g(\tau) + \na\ccr u\circ\Psi(\tau)\big) \, d\tau, 
\qquad
\eta\ccr(t) = - \int_0^t \na\cth u\circ\Psi(\tau) \, d\tau \\[-2.5ex]
\intertext{with}
g(\tau)(\th;r) \defeq 
\na h \big( r + \xi\ccr(\tau)(\th;r) \big) - \na h(r), \qquad \tau\in[0,1].
\end{gather*}
We already checked that Proposition~\ref{propCompos} applies to $\frac{\pa
h}{\pa r_i}\circ\Psi(\tau)$ and $\frac{\pa u}{\pa x_j}\circ\Psi(\tau)$. It
yields
\[
\normD*{\frac{\pa u}{\pa x_j} \circ \Psi(\tau)}_{\La,R} \le
\normD*{\frac{\pa u}{\pa x_j}}_{L',R_0}
\qquad \text{for $1\le j \le 2n$, $\tau\in[0,1]$}
\]
whence
\[
\normD{\eta\cth(t)-g(t)}_V + \normD{\eta\ccr(t)}_V \le
\sum_{j=1}^{2n} \normD*{\frac{\pa u}{\pa x_j}}_{L',R_0}
\le \frac{2^\al \eps}{(L-\La)^\al}
\]
(by~\eqref{ineqGevCauch}, recalling that $\normD{u}_{L,R_0} = \eps$).
If $1\le i\le n$, we can also apply Proposition~\ref{propCompos} to 
\[
g_i(\tau) =
\frac{\pa h}{\pa r_i} \big( r + \xi\ccr(\tau) \big) - \frac{\pa h}{\pa r_i}(r)=
\left\langle \int_0^1 \na \frac{\pa h}{\pa r_i}\big( r + s \xi\ccr(\tau) \big)\,d s,
\xi\ccr(\tau) \right\rangle,
\]
whence
$\normD{g_i(\tau)}_{\La,R} \le \sum_{j=1}^n 
\normD{\frac{\pa^2 h}{\pa r_i\pa r_j}}_{L',R_0}
\normD{\xi_{n+j}(\tau)}_{\La,R}$
and
\[
\normD{g(\tau)}_V \le \sum_{j=1}^n 
\bigg( \sum_{i=1}^n \normD*{\frac{\pa^2 h}{\pa r_i\pa r_j}}_{L',R_0} \bigg)
\normD{\xi_{n+j}(\tau)}_{\La,R}
\le \frac{2^{3\al} \normD{h}_{L,R_0} \rho}{(L-\La)^{2\al}}
\]
by~\eqref{ineqGevCauch}.
Therefore, recalling that $K\ge1$, we get
\[
\frac{1}{K} \normD{\eta\cth(t)}_{\La,R} + \normD{\eta\ccr(t)}_{\La,R} 
\le \frac{2^{3\al} \normD{h}_{L,R_0} \rho}{K (L-\La)^{2\al}} + 
\frac{2^\al \eps}{(L-\La)^\al}.
\]
Both summands are $\le \rho/2$, by our choices of~$K$ and~$\eps_0$, 
\eqref{eqdefK} and~\eqref{eqdefrhoepsz},
whence $\normD{\eta}_E \le \rho$ and $\cF(\Psi)\in\cB_\rho$ as desired.

\medskip

\noindent\textbf{e)} 
Similar computations show that~$\cF$ induces a contraction on~$\cB_\rho$:
Let $\Psi = \ph + \xi, \ti\Psi = \ph+\ti\xi \in \cB_\rho$, 
and $\cF(\Psi) = \ph + \eta$, $\cF(\ti\Psi) = \ph + \ti\eta$.
We get
\begin{gather*}
\ti\eta\cth(t) - \eta\cth(t) = \int_0^t \big( G(\tau) + U\cth(\tau) \big) \, \d\tau,
\qquad
\ti\eta\ccr(t) - \eta\ccr(t) = - \int_0^t U\ccr(\tau) \, \d\tau,
\\[-2ex]
\intertext{with}
\begin{aligned}
G_i(\tau) &= \frac{\pa h}{\pa r_i} \big( r + \ti\xi\ccr(\tau) \big)
- \frac{\pa h}{\pa r_i} \big( r + \xi\ccr(\tau) \big),
\qquad i = 1,\ldots, n \\[1ex]
U_j(\tau) &= \frac{\pa u\quad}{\pa x_{j\pm n}} \circ\ti\Psi(\tau)
- \frac{\pa u\quad}{\pa x_{j\pm n}} \circ\Psi(\tau),
\qquad j = 1,\ldots, 2n
\end{aligned}
\end{gather*}
(where $j\pm n$ stands for $j+n$ if $j\le n$, for $j-n$ else).
We obtain
\[
\frac{1}{K}\normD{G(\tau)}_V \le \frac{1}{K}\sum_{j=1}^n 
\bigg( \sum_{i=1}^n \normD*{\frac{\pa^2 h}{\pa r_i\pa r_j}}_{L',R_0} \bigg)
\normD{\ti\xi_{n+j}(\tau)-\xi_{n+j}(\tau)}_{\La,R}
\le \la_0 \normD{\ti\xi-\xi}_E,
\]
with 
$\dst \la_0 \defeq \frac{2^{3\al} \normD{h}_{L,R_0}}{K(L-\La)^{2\al}} \le \demi$,
and 
\begin{multline*}
\normD{U(\tau)}_W \le \normD{U\cth(\tau)}_V + \normD{U\ccr(\tau)}_V
\le \sum_{i=1}^{2n} 
\bigg( \sum_{j=1}^{2n} \normD*{\frac{\pa^2 u\quad}{\pa x_i\pa x_{j\pm n}}}_{L',R_0} \bigg)
\normD{\ti\xi_i(\tau)-\xi_i(\tau)}_{\La,R} \\[1ex]
\le \frac{2^{3\al}\eps}{(L-\La)^{2\al}} 
\sum_{i=1}^{2n} \normD{\ti\xi_i(\tau)-\xi_i(\tau)}_{\La,R}
\le \la_1 \normD{\ti\xi-\xi}_E,
\end{multline*}
with
$\dst \la_1 \defeq \frac{2^{3\al}\eps_0 K}{(L-\La)^{2\al}} < \demi$
by~\eqref{eqdefrhoepsz},
whence the contraction property
\[
\normD{\ti\eta-\eta}_E \le (\la_0+\la_1) \normD{\ti\xi-\xi}_E
\]
with $\la_0+\la_1 <1$.

\medskip

\noindent\textbf{f)}
Finally, we get a unique fixed point $\Psi \in \cB_\rho$ for the
functional~$\cF$, which encodes the flow of~$\ti X_{h+u}$.
The difference $\De(t) \defeq \Psi(t)-\ph(t)$, when viewed as a map $\A^n_R \to \R^{2n}$, is
a lift of the difference of flows $\Phi^{t(h+u)}-\Phi^{th}$ and
\[
\sum_{j=1}^{2n} \normD{\De_j(t)}_{\La,R} \le K\rho 
= \frac{2^{\al+1} K}{ (L-\La)^\al } \eps.
\]
\end{proof}

\subsection{Proof of Proposition~\ref{propPsiPhih}}	\label{subsecpfpropPsiPhih}

Let $n,\al,L,R,R_0>0$ and~$h$ be as in the hypothesis of
Proposition~\ref{propPsiPhih}. 
Let $R' \defeq \frac{R+R_0}{2}$.
Lemma~\ref{lemGevnearintflow} yields $\eps_0,\La,C_*>0$ such that $\La\le L/2$
and,
for any $u\in G^{\al,L}(\A^n)$, 
\begin{equation}	\label{ineqlemma}
\normD{u}_{L,\infty} < \eps_0 \ens\Rightarrow\ens
\NormD{\ti\Phi^{h+u}-\ti\Phi^h}_{\La,R'}, \;
\NormD{\ti\Phi^u-\Id}_{\La,R'} \le C_* \normD{u}_{L,\infty},
\end{equation}
where $\ti\Phi^{h+u},\ti\Phi^u \col \R^n\times\R^n \to \R^n\times\R^n$ are the lifts
of $\Phi^{h+u},\Phi^u$ obtained by flowing along the lifts to $\R^n\times\R^n$
of the corresponding vector fields (which are complete in this case) and
$\NormD{\phi}_{\La,R'} \defeq \normD{\phi_1}_{\La,R'} + \cdots + \normD{\phi_{2n}}_{\La,R'}$
for a map $\phi \col \R^n\times\R^n \to \R^n\times\R^n$.
We set 
\begin{gather*}
\eps_*  \defeq 
\min\Big\{ \eps_0, \frac{R_0-R}{C_*},\frac{\La}{2\cdot(2n)^{\al-1}C_*} \Big\},
\\[1ex]
L_* \defeq 2^{-\frac{1}{\al}} (2n)^{-\frac{\al-1}{\al}} 
\big(1 + 2^{4\al} L^{-2\al} \normD{h}_{L,R}\big)^{-\frac{1}{\al}} 
\La.
\end{gather*}


Let $m\ge1$ and $\Psi \in \Palm$ be such that $\de_m^{\al,L}(\Psi,\Phi^h) < \eps_*$.
Let $\eps$ be such that 
\[
\de_m^{\al,L}(\Psi,\Phi^h) < \eps < \eps_*.
\]
We shall prove that there is a lift $\xi \col \A^n \to \R^n\times\R^n$ of
$\Psi-\Phi^h$ such that $\NormD{\xi}_{L_*,R} \le C_* \eps$,
which is sufficient to prove the proposition.


Let us choose $u_0,u_1,\ldots,u_m \in G^{\al,L}(\A^n)$ such that
$\Psi = \Phi^{u_m}\circ \cdots \circ \Phi^{u_1} \circ \Phi^{h+u_0}$ and
\[
\normD{u_0}_{L,\infty} + \normD{u_1}_{L,\infty} + \cdots + \normD{u_m}_{L,\infty} 
< \eps.
\]
We observe that the formulae
\begin{equation}	\label{eqdefinducxizz}
\xi\zz0 \defeq \ti\Phi^{h+u_0} - \ti\Phi^h,
\qquad
\xi\zz {j+1} \defeq \xi\zz j +  
\big(\ti\Phi^{u_{j+1}} - \Id\big) \circ (\Phi^h+\xi\zz j)
\end{equation}
inductively define $\xi\zz0,\xi\zz1,\ldots,\xi\zz m \col \A^n \to\R^n\times\R^n$
so that $\xi\zz m$ is a lift of $\Psi-\Phi^h$.
It is thus sufficient to check that
\begin{equation}	\label{ineqinduc}
\NormD{\xi\zz j}_{L_*,R} \le C_*\big(
\normD{u_0}_{L,\infty} + \normD{u_1}_{L,\infty} + \cdots + \normD{u_j}_{L,\infty}
\big)
\end{equation}
for $0 \le j \le m$.


In view of~\eqref{eqdefinducxizz},
inequality~\eqref{ineqinduc} holds for $j=0$ by~\eqref{ineqlemma}, because 
$\normD{u_0}_{L,\infty} \le \eps < \eps_0$ and $L_*<\La$, $R<R'$.


Assume that~\eqref{ineqinduc} holds for a given $j<m$. We observe that
$\NormD{\ti\Phi^{u_{j+1}}-\Id}_{\La,R'} \le C_* \normD{u_{j+1}}_{L,\infty}$
by~\eqref{ineqlemma}, because
$\normD{u_{j+1}}_{L,\infty} \le \eps < \eps_0$.
We can apply Proposition~\ref{propCompos} to check that the components of
$\big(\ti\Phi^{u_{j+1}}-\Id\big) \circ (\Phi^h+\xi\zz j)$
belong to $G^{\al,L_*}(\A^n_R)$ and bound their norms,
because the inequality
\[
\sum_{i=n+1}^{2N} \normD{\xi_i\zz j}_{C^0(\A^n)} \le \NormD{\xi\zz j}_{L_*,R}
\le C_* \eps < R_0-R
\]
ensures that $\Phi^h+\xi\zz j$ maps $\A^n_R$ in $\A^n_{R_0}$, and, for $1\le i\le
n$, both
$L_*^\al + 
\cN^*_{L_*,R}\big(\frac{\pa h}{\pa r_i}\big)
+ \cN^*_{L_*,R}\big(\xi_i\zz j\big)$
and
$L_*^\al + 
\cN^*_{L_*,R}\big(\xi_{n+i}\zz j\big)$
are $\le L_*^\al + \frac{2^{4\al}L_*^\al}{L^{2\al}}\normD{h}_{L,R}
+ \NormD{\xi\zz j}_{L_*,R}$
(applying~\eqref{ineqcNst} between $L^*$ and $\frac{L_*+L}{2}$
and~\eqref{ineqGevCauch} between $\frac{L_*+L}{2}$ and~$L$),
which is 
$\le \frac{\La^\al}{2 (2n)^{\al-1}} + C_* \eps < \frac{\La^\al}{(2n)^{\al-1}}$.
We thus get
$\NormD{\big(\ti\Phi^{u_{j+1}}-\Id\big) \circ (\Phi^h+\xi\zz j)}_{L_*,R}
\le \NormD{\ti\Phi^{u_{j+1}}-\Id}_{\La,R'} \le C_*
\normD{u_{j+1}}_{L,\infty}$,
which implies~\eqref{ineqinduc} for the index~$j+1$
by virtue of~\eqref{eqdefinducxizz}.

\subsection{Gevrey bump fuctions}	\label{secBumpGev}

We call ``bump function'' a function on~$\T$ which vanishes
identically outside a given interval~$I$ and whose value is~$1$ at
each point of a given subinterval of~$I$
(so this is in fact a ``flat-top bump function'').
Of course, such a function can only exist in a non-quasianalytic
functional space.

Dealing with Gevrey functions on~$\T$, we use the
notation~\eqref{eqdefGalLT}
and quote without proof Lemma~3.3 of \cite{ms} on the existence of
Gevrey bump functions on~$\T$:


\begin{lemma}   \label{lembump}
Let $\al>1$ and $L>0$.
Then there exists a real $c(\al,L)>0$ such that, for each real $p>2$, the
space $G^{\al,L}(\T)$ contains a function $\eta_p$ which satisfies
\[
-\frac{1}{2p} \le \th \le \frac{1}{2p}
\ens \Rightarrow \ens 
\eta_p(\angD{\th}) = 1, \qquad
\frac{1}{p} \le \th \le 1 - \frac{1}{p}
\ens \Rightarrow \ens 
\eta_p(\angD{\th})=0
\]
and
\beq   \label{ineqnormetap}
\norm{\eta_p}_{\al,L} \le \exp\Big(c(\al,L) \, p^{\frac{1}{\al-1}} \Big).
\eeq
\end{lemma}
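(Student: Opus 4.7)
The plan is to build $\eta_p$ in three stages. First, construct a Gevrey-$\alpha$ mollifier $\phi_p\in C^\infty_c(\R)$ supported in $[-1/(4p),1/(4p)]$, non-negative and with $\int\phi_p=1$. Second, convolve $\phi_p$ with the characteristic function $\chi_p$ of the interval $[-3/(4p),3/(4p)]$ to obtain a flat-top bump $f_p:=\chi_p*\phi_p\in C^\infty_c(\R)$ which is identically~$1$ on $[-1/(2p),1/(2p)]$ and vanishes outside $[-1/p,1/p]$. Third, since $p>2$ the support $[-1/p,1/p]$ has length less than~$1$, so the periodization $\eta_p(\langle\theta\rangle):=\sum_{k\in\Z}f_p(\theta+k)$ is a well-defined element of $C^\infty(\T)$ with a single nonzero term at each point, giving the desired support conditions; moreover $\normD{\eta_p}_{\al,L}=\normD{f_p}_{\al,L,\,\R}$, so the whole game is to construct $\phi_p$ with the right Gevrey norm.

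For the mollifier I would start from a fixed non-negative Gevrey-$\alpha$ bump $\psi_0\in C^\infty_c(\R)$ supported in $[-1,1]$ with $\int\psi_0=1$ and estimates $|\psi_0^{(k)}(x)|\le M\rho_0^{-k\alpha}k!^{\alpha}$ for all $k$; such a $\psi_0$ is obtained by normalising $x\mapsto\exp\bigl(-(1-x^2)^{-1/(\alpha-1)}\bigr)$ on $(-1,1)$, and the Gevrey estimates follow from an explicit Faà di Bruno computation (this is standard and the exponent $1/(\alpha-1)$ in the singularity is exactly what gives Gevrey class~$\alpha$). The obvious candidate $\phi_p(x)=p\,\psi_0(px)$ satisfies $|\phi_p^{(k)}|\le Mp^{k+1}\rho_0^{-k\alpha}k!^{\alpha}$, but the naive Gevrey norm $\sum_{k}(L/\rho_0)^{k\alpha}\cdot p^{k+1}$ diverges in~$k$ once $p$ is large. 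The essential observation is that, because of the factor $k!^{\alpha}$, each individual term is no longer larger than $Mp\bigl(L^{\alpha}p/\rho_0^{\alpha}\bigr)^{k}$, so the exponential character of this sum can be converted into an exponential estimate in~$p$ by truncation at the optimal index.

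Concretely, either use the elementary bound $\sum_{k\le k_\star}(Ap)^{k}\le\exp(k_\star\log(Ap))$ with the optimal cutoff $k_\star\sim p^{1/(\alpha-1)}$ and a refined tail estimate using $k!^{\alpha}$, or equivalently replace the naive rescaling by an iterated-convolution construction $\phi_p=N_p\,(N_p\chi_{[0,1/(N_pp)]})^{*N_p}$ with $N_p\sim p^{1/(\alpha-1)}$; in either case one obtains
\[
\normD{\phi_p}_{\al,L,\,\R}\le C\,\exp\bigl(c(\al,L)\,p^{1/(\al-1)}\bigr)
\]
for suitable constants. Since $(\chi_p*\phi_p)^{(k)}=\chi_p*\phi_p^{(k)}$, Young's inequality gives $\normD{f_p^{(k)}}_{C^0}\le\normD{\chi_p}_{L^1}\,\normD{\phi_p^{(k)}}_{C^0}\le (3/(2p))\,\normD{\phi_p^{(k)}}_{C^0}$, so $\normD{f_p}_{\al,L,\,\R}\le\normD{\phi_p}_{\al,L,\,\R}$ and the same bound transfers to $\normD{\eta_p}_{\al,L}$. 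The main obstacle is the careful optimisation (or the iterated-convolution estimate) that produces the precise exponent $1/(\al-1)$; this is exactly the point where the hypothesis $\al>1$ is essential, since for $\al=1$ no such bump exists and the exponent diverges.
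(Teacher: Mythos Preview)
The paper does not prove this lemma: it quotes it as Lemma~3.3 of \cite{ms} and refers the reader to \cite[p.~1633]{ms} for the proof. So there is no in-paper argument to compare against. Your overall scheme (build a compactly supported mollifier~$\phi_p$, convolve with the indicator~$\chi_p$, periodise, and reduce to $\normD{\phi_p}_{\al,L}$ via Young's inequality) is the standard one and is in the spirit of the cited construction.

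The gap is precisely at the step you flag as ``the main obstacle'', and neither of your two routes closes it as written. In the rescaling route, with $|\psi_0^{(k)}|\le M\rho_0^{-k\alpha}k!^{\alpha}$ and $\phi_p(x)=4p\,\psi_0(4px)$, the $k!^{\alpha}$ in the derivative bound cancels \emph{exactly} against the $k!^{\alpha}$ in the denominator of the Gevrey norm, leaving the bare geometric series $4Mp\sum_k(4L^{\alpha}p/\rho_0^{\alpha})^k$; there is no residual factorial to exploit, so no ``refined tail estimate using $k!^{\alpha}$'' is available, and the sum simply diverges once $p>\rho_0^{\alpha}/(4L^{\alpha})$. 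In the iterated-convolution route, the $N_p$-fold convolution of indicators is only of class $C^{N_p-1}$, hence does not belong to $G^{\alpha,L}(\T)$ at all. A construction that actually yields the exponent~$1/(\al-1)$ must be genuinely $p$-adapted beyond a single rescaling---for instance an infinite convolution $\chi_{a_1}*\chi_{a_2}*\cdots$ with the whole sequence~$(a_j)$ chosen as a function of~$p$, or your finite $N_p$-block followed by a carefully designed infinite tail of total width~$O(1/p)$. You have correctly located the difficulty but not supplied the mechanism that resolves it.
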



\smallskip

The proof can be found in \cite[p.~1633]{ms}.


\smallskip

\begin{center}

\psset{xunit=0.9cm,yunit=1.2cm}
\begin{pspicture}(-1.7,-0.725)(11.7,1.8)
\psline[linewidth=1pt,linestyle=dashed]{->}(-1.7,0)(11.7,0)
\psline[linewidth=1pt,linestyle=dashed]{->}(2,-0.725)(2,1.6)
\psline[linewidth=1pt,linestyle=dashed](8,-0.725)(8,1.6)
\def\f{ x  4 exp  1   x   1 sub -4 mul  add  x 1 sub 2 exp 10 mul add x 1 sub 3 exp -20 mul add
 mul sqrt}
\def\h{ x 6 sub  4 exp  1   x   7 sub -4 mul  add  x 7 sub 2 exp 10 mul add x 7 sub 3 exp -20 mul add
 mul sqrt}
\def\g{4 x sub  4 exp  1   4 x sub    1 sub -4 mul  add  4 x sub  1 sub 2 exp 10 mul add  4 x sub  
1 sub 3 exp -20 mul add mul sqrt}
\def\ga{10 x sub  4 exp  1   10 x sub    1 sub -4 mul  add  10 x sub  1 sub 2 exp 10 mul add  10 x sub  
1 sub 3 exp -20 mul add mul sqrt}
\psplot[plotstyle=curve,linewidth=1.5pt,plotpoints=100]{0}{1}{\f}
\psplot[plotstyle=curve,linewidth=1.5pt,plotpoints=100]{6}{7}{\h}
\psplot[plotstyle=curve,linewidth=1.5pt,plotpoints=30]{3}{4}{\g}
\psplot[plotstyle=curve,linewidth=1.5pt,plotpoints=30]{9}{10}{\ga}
\psplot[plotstyle=curve,linewidth=1.5pt,plotpoints=30]{1}{3}{1}
\psplot[plotstyle=curve,linewidth=1.5pt,plotpoints=30]{7}{9}{1}
\psplot[plotstyle=curve,linewidth=1.5pt,plotpoints=30]{-1}{0}{0}
\psplot[plotstyle=curve,linewidth=1.5pt,plotpoints=30]{4}{6}{0}
\psplot[plotstyle=curve,linewidth=1.5pt,plotpoints=30]{10}{11}{0}
\psline[linewidth=1pt,linestyle=dotted](0,-0.2)(0,0.2)
\rput[t](0,-0.3){$-\tfrac{1}{p}\hspace{.8em}$}
\psline[linewidth=1pt,linestyle=dotted](1,-0.2)(1,1.2)
\rput[t](1,-0.3){$-\tfrac{1}{2p}\hspace{.6em}$}
\psline[linewidth=1pt,linestyle=dotted](4,-0.2)(4,0.2)
\rput[t](4,-0.3){$\tfrac{1}{p}$}
\psline[linewidth=1pt,linestyle=dotted](3,-0.2)(3,1.2)
\rput[t](3,-0.3){$\tfrac{1}{2p}$}
\psline[linewidth=1pt,linestyle=dotted](6,-0.2)(6,0.2)
\rput[t](6,-0.3){$1\!-\!\tfrac{1}{p}\hspace{.4em}$}
\psline[linewidth=1pt,linestyle=dotted](7,-0.2)(7,1.2)
\psline[linewidth=1pt,linestyle=dotted](10,-0.2)(10,0.2)
\psline[linewidth=1pt,linestyle=dotted](9,-0.2)(9,1.2)
\uput{.5em}[-45](2,-0.04){$0$}
\uput{.5em}[-45](8,-0.04){$1$}
\uput{.5em}[135](2,1){$1$}
\rput[b](11.5,0.15){$\th$}
\uput{.65em}[20](3.3,.8){$\eta_p(\angD{\th})$}
\end{pspicture}

\end{center}

\bigskip


Notice that, intuitively, higher values of~$p$ must produce a larger
norm, since the graph gets steeper between~$\frac{1}{2p}$ and~$\frac{1}{p}$
for instance, which makes the derivatives reach higher and higher
values.
In fact, one can prove that an exponential bound such as~\eqref{ineqnormetap} is optimal.



\vfil

\pagebreak

\section{Generating functions for exact symplectic $C^\infty$ maps}
\label{secGenfcns}

In this appendix we fix $n\ge1$ integer
and review the classical formalism of generating functions of
mixed sets of variables to define exact symplectic local
diffeomorphisms of~$\A^n$.


The coordinates in $\T^n\times\R^n$ will be denoted indifferently
$(\th,r)$, or $(\th_1,\ldots,\th_n,r_1,\ldots,r_n)$, or simply
$(x_1,\ldots,x_{2n})$.
For instance, the Liouville $1$-form which gives rise to the exact
symplectic structure on $\T^n\times\R^n$ can be written 
\[ \la = \sum_{i=1}^n r_i\,\dd \th_i = \sum_{i=1}^n x_{n+i}\,\dd x_i. \]
%
%
We denote the partial gradient operators by
\beq    \label{eqdefnazz}
\na\zz1 \defeq \begin{pmatrix} \pa_1 \\ \vdots \\ \pa_n \end{pmatrix},
\quad
\na\zz2 \defeq \begin{pmatrix} \pa_{n+1} \\ \vdots \\ \pa_{2n} \end{pmatrix},
\eeq
and view
$\dd\zz1 \defeq \begin{pmatrix} \pa_1 \cdots \pa_n \end{pmatrix}$
and
$\dd\zz2 \defeq \begin{pmatrix} \pa_{n+1} \cdots \pa_{2n} \end{pmatrix}$
as matrix-valued differential operators acting on vector-valued
functions.


Recall that $\angD{\,\cdot\,} \col \R^n \to \T^n$ denotes the canonical
projection.

%
\blm    \label{lemfindSigfromA}
Let $\Om,\Om' \subset \T^n\times\R^n$ be open, and denote by~$\un\Om$
and~$\un\Om'$ their lifts in $\R^n\times\R^n$. Suppose that $A \in
C^\infty(\Om')$ satisfies the property:
\beq \label{eqpartialdiffeo}
\text{The map}\ens
(\th,r') \mapsto (\th,r) = \big( \th, r' + \na\zz1A(\th,r') \big)
\ens\text{is a diffeomorphism from~$\un\Om'$ onto~$\un\Om$.}
\eeq
Denote by~$F\zz2$ the second group of components of the inverse
diffeomorphism, so that,
for each $(\th,r) \in \un\Om$,
\beq \label{eqcharaccrp}
r' = F\zz2(\th,r)
\quad\Leftrightarrow\quad
(\th,r') \in \un\Om' \ens\text{and}\ens \na\zz1S(\th,r') = r,
\eeq
where 
\beq \label{eqdefSfromA} 
S(\th,r') \defeq \sum_{i=1}^n \th_i r'_i + A(\th,r'), \qquad
(\th,r') \in \un\Om'. 
\eeq
Define $F\zz1(\th,r) \defeq
\th + {\na\zz2A\big( \th, F\zz2(\th,r) \big)} \in \R^n$
for $(\th,r) \in \un\Om$, so that
\beq \label{eqcharaccthp}
\th' = F\zz1(\th,r)
\quad\Leftrightarrow\quad
{\na\zz2S(\th,r')} = \th'.
\eeq
 
Then $\un F = \big( F\zz1, F\zz2 \big) \col \un\Om \to \R^n\times \R^n$
is $C^\infty$ and induces an exact symplectic local diffeomorphism
$\jF_A = \big( \angD*{F\zz1}, F\zz2 \big) \col \Om \to \A^n$.
The inverse Jacobian matrix of~$\jF_A$ at an arbitrary point
$(\th,r) \in \Om$ is a block matrix
$\begin{pmatrix} 
M^{1,1} & M^{1,2} \\[1ex]
M^{2,1} & M^{2,2}
\end{pmatrix}$
with
\beq    \label{eqinvJacFA}
M^{1,1} = (1_n + \dd\zz1\na\zz2 A)\ii, \quad
M^{1,2} = - (1_n + \dd\zz1\na\zz2 A)\ii \dd\zz2\na\zz2 A,
\eeq
where the partial derivatives of~$A$ are evaluated at $(\th,r') = \big(\th,F\zz2(\th,r)\big)$.
\elm

%
We shall see in the course of the proof that
\[ \jF_A^*\la - \la = \dd\Sig, \] 
where $\Sig \in C^\infty(\Om)$ is defined by
\beq \label{eqdefSig}
\Sig(\th,r) \defeq \ti\Sig\big( \th,F\zz2(\th,r) \big),
\qquad
\ti\Sig(\th,r') \defeq 
\sum_{i=1}^n r'_i\pa_{n+i}A(\th,r') - A(\th,r').
\eeq
By abuse of language, we will call any function~$A$
satisfying~\eqref{eqpartialdiffeo} a \emph{generating function
  for~$\Om'$}
(although it is the function $S(\th,r')$ that is usually called
generating function).
%

\begin{rem}
It is easy to check that, if $F=\jF_A$, then the set of all possible
generating functions of~$F$ coincide with the set of all functions
\[
(\th,r') \mapsto A(\th,r') + c+\sum_{i=1}^n\ell_i r'_i, \qquad
c\in\R, \quad \ell \in \Z^n.
\]
\end{rem}


\begin{proof}[Proof of Lemma~\ref{lemfindSigfromA}]
The Jacobian matrix of the diffeomorphism mentioned
in~\eqref{eqpartialdiffeo} can be written as the block matrix
\[ \begin{pmatrix} 
1_n & 0 \\[1ex]
\dd\zz1 \na\zz1 S(\th,r') & \dd\zz2 \na\zz1 S(\th,r') 
\end{pmatrix}. \]
The hypothesis entails that the matrix $\dd\zz2 \na\zz1 S(\th,r')$ is invertible
for each $(\th,r') \in \un\Om'$;
notice that its transpose is $\dd\zz1 \na\zz2S(\th,r')$.

The definition~\eqref{eqcharaccrp} of the map~$F\zz2$ shows that it
is~$C^\infty$ on~$\un\Om$, with
\[
\dd\zz1 \na\zz1 S + \dd\zz2\na\zz1 S \cdot \dd\zz1 F\zz2 \equiv 0,
\quad
\dd\zz2\na\zz1 S \cdot \dd\zz2 F\zz2 \equiv 1_n,
\]
where it is understood that the partial derivatives of~$F\zz2$ are
evaluated on~$(\th,r)$ and those of~$S$ on
$(\th,r') = \big( \th, F\zz2(\th,r) \big)$.
Moreover, $F\zz2$ can be equally viewed as a $C^\infty$ function
on~$\Om$ (it is $\Z^n$-periodic in~$\th$ because~$\na\zz1 S$ is).
By~\eqref{eqcharaccthp}, $F\zz1$ is~$C^\infty$ on~$\un\Om$ and
\[
\dd\zz1 F\zz1 =
\dd\zz1 \na\zz2 S + \dd\zz2\na\zz2 S \cdot \dd\zz1 F\zz2,
\quad
\dd\zz2 F\zz1 = \dd\zz2\na\zz2 S \cdot \dd\zz2 F\zz2.
\]
We have $F\zz1(\th+\ell,r) = F\zz1(\th,r)+\ell$ for all $\ell\in\Z^n$,
thus~$\un F$ induces a $C^\infty$ map $\jF_A \col \Om \to \A^n$.

\smallskip

A bit of calculus shows that the Jacobian matrix of~$\un F$,
which is 
$\begin{pmatrix} 
\dd\zz1 F\zz1 & \dd\zz2 F\zz1 \\[1ex]
\dd\zz1 F\zz2 & \dd\zz2 F\zz2
\end{pmatrix}$,
has the inverse
\[
\begin{pmatrix} 
1_n & 0 \\[1ex]
\dd\zz1 \na\zz1 S & \dd\zz2 \na\zz1 S
\end{pmatrix}
\begin{pmatrix} 
\dd\zz1 \na\zz2 S & 0 \\[1ex]
0 & \dd\zz1 \na\zz2 S
\end{pmatrix}\ii
\begin{pmatrix} 
1_n & - \dd\zz2 \na\zz2 S \\[1ex]
0 & \dd\zz1 \na\zz2 S
\end{pmatrix},
\]
so~$\un F$ and hence~$\jF_A$ are local diffeomorphisms,
and~\eqref{eqinvJacFA} is proved.


Let us denote by $F_1,\ldots,F_{2n}$ the components of~$\un F$, so that
\eqref{eqcharaccrp} and~\eqref{eqcharaccthp} entail
\beq \label{eqidentcrpcthp}
(\pa_i S)\big( \th, F\zz2(\th,r) \big) = r_i, \qquad
{(\pa_{n+i} S)\big( \th, F\zz2(\th,r) \big)} = F_i(\th,r),
\eeq
for $(\th,r)\in\un\Om$ and $i = 1,\ldots,n$,
and $\un F^*\la = \sum_{i=1}^n F_{n+i} \,\dd F_i$.
%
%
Let us define $\ti S \in C^\infty(\un\Om)$ by $\ti S(\th,r) \defeq S\big( \th, F\zz2(\th,r) \big)$.
Applying the chain rule and inserting~\eqref{eqidentcrpcthp}, we get
\begin{align*}
\dd \ti S &= \sum_{i=1}^n (\pa_i S)\big( \th, F\zz2(\th,r) \big) \,\dd\th_i
+ \sum_{i=1}^n (\pa_{n+i}S)\big( \th, F\zz2(\th,r) \big)\,\dd F_{n+i} \\[1ex]
&= \sum_{i=1}^n r_i \,\dd\th_i + \sum_{i=1}^n F_i(\th,r)\,\dd F_{n+i}
= \la - \un F^*\la + \dd \bigg(\sum_{i=1}^n F_{n+i} F_i\bigg).
\end{align*}
Thus $\un F^*\la - \la = \dd\chi$, with
\begin{align*}
\chi(\th,r) & \defeq \sum_{i=1}^n F_{n+i}(\th,r) F_i(\th,r) - \ti S(\th,r)\\[1ex]
& = \sum_{i=1}^n F_{n+i}(\th,r) (\pa_{n+i}S)\big(\th,F\zz2(\th,r)\big) 
- S\big( \th, F\zz2(\th,r) \big) \\[1ex]
&= \ti\chi\big( \th, F\zz2(\th,r) \big),
\end{align*}
where $\ti\chi(\th,r') = \sum_{i=1}^n r'_i (\pa_{n+i}S)(\th,r') - S(\th,r')$.
Inserting~\eqref{eqdefSfromA}, we see that $\chi\in C^\infty(\un\Om)$
is $\Z^n$-periodic in~$\th$ and induces the
function $\Sig \in C^\infty(\Om)$ defined by~\eqref{eqdefSig}.
\end{proof}


\blm    \label{lemfindAfromF}
Let $\Om \subset \A^n$ be open and connected.
%
%
Let $F \col \Om \to \T^n\times\R^n$ be an exact symplectic $C^\infty$
local diffeomorphism of the form
\[
F(\th,r) = \big( \th +\angD{f(\th,r)}, F\zz2(\th,r) \big), 
\qquad (\th,r) \in \Om,
\]
where $f,F\zz2 \in C^\infty(\Om,\R^n)$.
Assume that the map
\beq \label{eqdefinversPhi}
(\th,r) \mapsto (\th,r') = \big( \th, F\zz2(\th,r)\big)
\eeq
is a $C^\infty$-diffeormorphism from~$\Om$ onto an open set $\Om'
\subset \A^n$.

Then there exists a generating function $A \in C^\infty(\Om')$
for~$\Om$ such that $F = \jF_A$.
It can be obtained as follows:
let $\Phi \col \Om' \to \Om$ denote the inverse of the
diffeomorphism~\eqref{eqdefinversPhi} and set
\beq   \label{eqdefbet}
\bet \defeq \sum_{i=1}^n 
\big( \Phi_{n+i}(\th,r') - r'_i \big)\,\dd\th_i +
\sum_{i=1}^n f_i\circ\Phi(\th,r')\,\dd r'_i,
\eeq
then~$\bet$ is an exact $C^\infty$ $1$-form on~$\Om'$ and any $A\in
C^\infty(\Om')$ such that $\bet = \dd A$ satisfies $F = \jF_A$.
\elm


\begin{proof}
The $1$-form~$\bet$ can be written as
\beq    \label{eqnewformbet}
\bet = \Phi^*(\la-F^*\la) + \dd \bigg( \sum_{i=1}^n r'_i
(f_i\circ\Phi) \bigg)
\eeq
because $\Phi\zz1(\th,r') \equiv \th$ entails $\Phi^*\la =
\sum_{i=1}^n \Phi_{n+i}\,\dd \th_i$ and
$F\circ\Phi(\th,r') \equiv \big( \angD{\th + f\circ\Phi(\th,r')}, r' \big)$ 
entails $\Phi^*(F^*\la) = (F\circ\Phi)^*\la = \sum_{i=1}^n r'_i 
\,\dd(\th_i + f_i\circ\Phi)$.
%
%
Since~$F$ is exact symplectic, the $1$-form $F^*\la-\la$ is exact, and
the formula~\eqref{eqnewformbet} shows that~$\bet$ is thus exact too.

Pick any $A \in C^\infty(\Om')$ such that $\bet = \dd A$.
In view of~\eqref{eqdefbet}, the map $(\th,r') \mapsto \big(\th, r'+\na\zz1A(\th,r')\big)$
coincides with~$\Phi$ and is thus a $C^\infty$ diffeomorphism $\Om'\to\Om$, hence~$A$ is
a generating function for~$\Om$.
We have $\Phi\ii(\th,r) = \big( \th, F\zz2(\th,r) \big)$
and, again thanks to~\eqref{eqdefbet}, 
$\th + \na\zz2 A\big( \th, F\zz2(\th,r) \big) = \th + f(\th,r)$,
therefore $\jF_A = F$.
\end{proof}


\vfil

\pagebreak

\section{Proof of Lemma~\ref{sublemminvers}}
\label{apppfTechnicLem}
\subsection{Set-up}

Let us give ourselves an integer $n\ge1$, reals $\al\ge1$, $R,R_0,L_0>0$ such that $R<R_0$,
and a function $\eta \in G^{\al,L_0}([0,1])$. We assume that~$\eta$ is
not identically zero (otherwise there is nothing to be proved).
We set 
\begin{align}
\label{eqdefepsst}
\eps_* &\defeq \frac{1}{\normD{\eta}_{\al,L_0}} \min\Big\{
\frac{R_0-R}{2}, \frac{L_0^\al}{2^{\al+1} (2n+1)^{\al-1}}
\Big\},\\
\label{eqdefeeLmun}
L &\defeq \frac{L_0}{(2^{\al+1}(2n+1)^{\al-1})^{1/\al}}.
\end{align}
Given $\psi = (\psi_1,\ldots,\psi_n) \in G^{\al,L_0}(\A^n_{R_0},\R^n)$ such
that
\beq    \label{eqdefepsnormDpsi}
\eps \defeq \sum_{i=1}^n \normD{\psi_i}_{\al,L_0,R_0} \le \eps_*,
\eeq
we define for each $t\in [0,1]$ a $C^\infty$ map
\beq   \label{eqdefPsitetapsi}
\Psi_t \col (\th,r) \in \A^n_{R_0} \mapsto (\th,r') = \big( \th, r +
\eta(t) \psi(\th,r) \big) \in \A^n.
\eeq
Our aim is to prove that~$\Psi_t$ induces a $C^\infty$ diffeomorphism
from $\T^n\times B_{R_0}$ onto its image~$\Om_t$, 
to check that $\A^n_R \subset \Om_t$ 
and to study the inverse map.

\subsection{Diffeomorphism property}

Let $t\in[0,1]$.
The Jacobian matrix of~$\Psi_t$ at an arbitrary $(\th,r) \in
\A^n_{R_0}$ is the block matrix
$\begin{pmatrix} 
1_n & 0 \\
M & 1_n + N
\end{pmatrix}$,
where $M \defeq \eta(t) \dd\zz1\psi(\th,r)$ and $N \defeq \eta(t) \dd\zz2\psi(\th,r)$,
with the notations of Appendix~\ref{secGenfcns}.

The matrix norm of~$N$ subordinate to the Euclidean structure
of~$\R^n$ is $\le \sum_{1\le i,j \le n} \absD{N_{i,j}}$,
and ${N_{i,j}} = \eta(t) \pa_{n+j}\psi_i(\th,r)$,
thus this matrix norm is less than~$1$ by~\eqref{eqdefepsst} and~\eqref{eqdefepsnormDpsi}
(because $\sum_{i,j} \normD{\eta}_{C^0([0,1])} 
\normD{\pa_{n+j}\psi_i}_{C^0(\A^n_{R_0})} \le
\normD{\eta}_{\al,L_0} \sum_i L_0^{-\al} \normD{\psi_i}_{\al,L_0,R_0}
= L_0^{-\al} \eps \normD{\eta}_{\al,L_0}$)
and $1_n+N$ is invertible.
Therefore, by the Implicit Function Theorem, 
$\Psi_t$ is a $C^\infty$ local diffeomorphism on~$\A^n_{R_0}$.

Suppose that $(\th,r)$ and $(\th^*,r^*)$ have the same image
by~$\Psi_t$. Then $\th^* = \th$ and 
\[
r^* - r = -\eta(t) \big( \psi(\th,r^*) - \psi(\th,r) \big) =
-\eta(t) \int_0^1 \dd\zz2 \psi\big( \th, (1-s)r+sr^* \big) (r^*-r)
\,\dd s,
\]
whence $\normD{r^*-r} < \normD{r^*-r}$ by the above remark on
the matrix norm of $\eta(t) \dd\zz2\psi$, thus $r^* = r$.
Therefore, $\Psi_t$ is injective on~$\A^n_{R_0}$ and induces a
$C^\infty$ diffeomorphism from $\T^n\times B_{R_0}$ onto an open
subset~$\Om_t$ of~$\A^n$.

\subsection{Study of the inverse map}

We can write $\Psi_t\ii(\th,r') = \big( \th, r' + \chi(\th,r',t)
\big)$, with $\chi = (\chi_1,\ldots,\chi_n)$ and
$\chi_i(\,\cdot\,,\,\cdot\,,t) \in C^\infty(\Om_t)$ for each~$i$.
Given $(\th,r',t) \in \A^n\times[0,1]$, the point $(\th,r')$ belongs
to~$\Om_t$ if and only if there exists $u\in\R^n$ such that $r'+u \in
B_{R_0}$ and
\[
u = -\eta(t) \psi(\th,r'+u).
\]
This vector~$u$ is then unique and is $\chi(\th,r',t)$.
We must prove that $\A^n_R \subset \Om_t$, that the restriction of
the functions~$\chi_i$ to $\A^n_R \times [0,1]$ belong to
$G^{\al,L}(\A^n_R \times [0,1])$, and that their Gevrey norms satisfy~\eqref{ineqnormchiiepseta}.
All this follows from


\begin{sublem}
Consider the Banach space $V \defeq \big(G^{\al,L}(\A^n_R \times
[0,1]) \big)^n$, with the norm
\[
\normD{u}_V \defeq \normD{u_1}_{\al,L,R} + \cdots + \normD{u_n}_{\al,L,R}
\quad \text{for}\ens u = (u_1, \ldots, u_n) \in V.
\]
Let $\cB \defeq \{\,
u\in V \mid \normD{u}_V \le \eps\normD{\eta}_{\al,L_0} \,\}$.
Then, for any $u \in \cB$, the formula
\beq    \label{eqdefcFuv}
v(\th,r',t) \defeq -\eta(t) \psi\big(
\th, r' + u(\th,r',t) \big)
\eeq
makes sense for all $(\th,r',t) \in \A^n_R\times[0,1]$ and defines a
vector-valued function $v = \cF(u)$, which belongs to~$\cB$.
Moreover, the functional $\cF \col \cB \to \cB$ satisfies
\[ 
\normD{\cF(u^*)-\cF(u)}_V \le \dem \normD{u^*-u}_V,
\qquad u,u^*\in\cB.
\]
\end{sublem}


Indeed, the contraction~$\cF$ has a unique fixed point, which is
nothing but $\chi_{|\A^n_R \times [0,1]}$.


\begin{proof}[Proof of Sub-Lemma]
Let $u \in \cB$.
For each $(\th,r',t) \in \A^n_R\times[0,1]$, we have
$\normD{u(\th,r',t)} \le \sum \normD{u_i}_{C^0(\A^n_R \times[0,1])}
\le \normD{u}_V \le \eps_*\normD{\eta}_{\al,L_0} \le R_0-R$
by~\eqref{eqdefepsst} and~\eqref{eqdefepsnormDpsi}, thus
\beq    \label{eqdefUinAnRz} 
U(\th,r',t) \defeq \big( \th, r'+u(\th,r',t) \big) \in \A^n_{R_0}. 
\eeq
Therefore, the function $v \col \A^n_R\times[0,1] \to \R^n$ is well-defined as
$v = - \eta \cdot (\psi\circ U)$.

For each $i=1,\ldots,n$, we can apply Proposition~A.1 of \cite{hms} to the composition $\psi_i\circ U$:
the function $\psi_i\circ U$ belongs to $G^{\al,L}(\A^n_R\times[0,1])$
and $\normD{\psi_i\circ U}_{\al,L,R} \le \normD{\psi_i}_{\al,L_0,R_0}$ because
\beq    \label{ineqsumellnzLz}
\sum_{\ell\in\N^{2n+1},\, \ell\neq0} 
\frac{L^{\absD{\ell}\al}}{\ell !^\al} \normD{\pa^\ell U_k}_{C^0(\A^n_R\times[0,1])}
\le \frac{L_0^\al}{(2n+1)^{\al-1}},
\qquad k=1,\ldots,2n
\eeq
(indeed: for $k\le n$, the \lhs of~\eqref{ineqsumellnzLz} is
$L^\al = \frac{L_0^\al}{2^{\al+1}(2n+1)^{\al-1}}$ by~\eqref{eqdefeeLmun}, 
and for $k = n+j$ with $1\le j \le n$, the \lhs is
$\le L^\al + \normD{u_j}_{\al,L,R} \le L^\al + \eps\normD{\eta}_{\al,L_0}$,
and $\eps\normD{\eta}_{\al,L_0} \le
\frac{L_0^\al}{2^{\al+1}(2n+1)^{\al-1}}$).
Therefore, by the algebra norm property, $v_i \in G^{\al,L}(\A^n_R\times[0,1])$ and 
$\normD{v_i}_{\al,L,R} \le \normD{\eta}_{\al,L_0}
\normD{\psi_i}_{\al,L_0,R_0}$,
hence $v\in \cB$.


Let us now suppose that we are given $u,u^* \in \cB$ and consider the
difference between $v^* \defeq \cF(u^*)$ and $v \defeq \cF(u)$.
We have $v^*_i - v_i = \sum_{j=1}^n M_{i,j} (u^*_j-u_j)$ for each
$i=1,\ldots,n$, with
\[
M_{i,j}(\th,r',t) \defeq - \eta(t) \int_0^1 
\pa_{n+j} \psi_i \circ U_s(\th,r',t) \,\dd s,
\qquad j=1,\ldots,n,
\]
where, for each $s\in[0,1]$, $U_s(\th,r',t) \defeq \big( \th, r' + (1-s) u(\th,r',t)+s u^*(\th,r',t) \big) 
\in \A^n_{R_0}$ by~\eqref{eqdefUinAnRz}.

On the one hand $\pa_{n+j} \psi_i \in G^{\al,L_0/2}(\A^n_{R_0})$ and
\beq   \label{ineqCauchypsii}
\sum_{j=1}^n \normD{\pa_{n+j} \psi_i}_{\al,\frac{L_0}{2},R_0} \le
\frac{2^\al}{L_0^\al} \normD{\psi_i}_{\al,L_0,R_0}
\eeq
by~\eqref{ineqGevCauch}.
On the other hand
\[
\sum_{\ell\in\N^{2n+1},\, \ell\neq0} 
\frac{L^{\absD{\ell}\al}}{\ell !^\al} \normD{\pa^\ell U_{s,k}}_{C^0(\A^n_R\times[0,1])}
\le \frac{L_0^\al}{2^\al(2n+1)^{\al-1}},
\qquad k=1,\ldots,2n, \ens s\in[0,1]
\]
(same verification as for~\eqref{ineqsumellnzLz}).
Thus, we can apply again Proposition~A.1 of \cite{hms}: 
we get $\pa_{n+j} \psi_i \circ U_s \in G^{\al,L}(\A^n_R\times[0,1])$
and
$\normD{\pa_{n+j} \psi_i \circ U_s}_{\al,L,R} \le \normD{\pa_{n+j} \psi_i}_{\al,\frac{L_0}{2},R_0}$.

Therefore, $M_{i,j} \in G^{\al,L}(\A^n_R\times[0,1])$ for each $(i,j)$
and, in view of the algebra norm property and~\eqref{ineqCauchypsii},
\[
\sum_{i,j} \normD{M_{i,j}}_{\al,L,R} \le \normD{\eta}_{\al,L_0} 
\sum_i \frac{2^\al}{L_0^\al} \normD{\psi_i}_{\al,L_0,R_0}
= \frac{2^\al}{L_0^\al} \eps \normD{\eta}_{\al,L_0} \le \demi
\]
by~\eqref{eqdefepsst} and~\eqref{eqdefepsnormDpsi}, whence it follows that
$\normD{v^*-v}_V \le
\sum_{i,j} \normD{M_{i,j}}_{\al,L,R} \normD{u_j^*-u_j}_{\al,L,R}
\le \dem \normD{u^*-u}_V$.
\end{proof}


\vfil

\pagebreak


\end{document}